\documentclass[11pt]{amsart}

\title[Stability of Wave-Packet Scattering]{The Stability Landscape in Wave-Packet Scattering: Geometric Rigidity and Sharp Sobolev Thresholds}

\author{Max Getter}
\address{Chair for Geometry and Analysis, RWTH Aachen University, D-52062 Aachen, Germany}
\email{getter@mathga.rwth-aachen.de}

\author{S. Ivan Trapasso}
\address{Department of Mathematical Sciences ``G. L. Lagrange'', Politecnico di Torino, Italy}
\email{salvatoreivan.trapasso@polito.it}

\usepackage[a4paper,top=2.5cm,bottom=2.5cm,hmargin=2.5cm,heightrounded]{geometry}
\usepackage[T1]{fontenc}
\usepackage[utf8]{inputenc}
\usepackage{lmodern}
\usepackage{microtype}
\usepackage{amsmath,amssymb,amsthm,mathtools}
\mathtoolsset{showonlyrefs}
\usepackage{enumitem}
\usepackage{xcolor,standalone}
\usepackage{tikz,tikz-cd}
\usetikzlibrary{arrows.meta,calc}
\usepackage{hyperref}
\hypersetup{colorlinks=true,linkcolor=blue,citecolor=blue,urlcolor=blue}

\theoremstyle{plain}
\newtheorem{theorem}{Theorem}[section]
\newtheorem{proposition}[theorem]{Proposition}
\newtheorem{lemma}[theorem]{Lemma}
\newtheorem{corollary}[theorem]{Corollary}

\theoremstyle{definition}
\newtheorem{definition}[theorem]{Definition}
\newtheorem{assumption}[theorem]{Assumption}

\theoremstyle{remark}
\newtheorem{remark}[theorem]{Remark}

\newcommand{\R}{\mathbb R}
\newcommand{\C}{\mathbb C}
\newcommand{\N}{\mathbb N}
\newcommand{\Nzero}{\mathbb N_0}
\newcommand{\Z}{\mathbb Z}

\newcommand{\calA}{\mathcal A}
\newcommand{\calC}{\mathcal C}
\newcommand{\calD}{\mathcal D}
\newcommand{\calF}{\mathcal F}
\newcommand{\calH}{\mathcal H}
\newcommand{\calN}{\mathcal N}
\newcommand{\calO}{\mathcal O}
\newcommand{\calP}{\mathcal P}
\newcommand{\calQ}{\mathcal Q}
\newcommand{\calS}{\mathcal S}
\newcommand{\calT}{\mathcal T}
\newcommand{\calU}{\mathcal U}
\newcommand{\calW}{\mathcal W}

\newcommand{\Id}{\mathrm{Id}}
\newcommand{\eps}{\varepsilon}
\newcommand{\wh}{\widehat}
\newcommand{\iu}{\mathrm{i}} 
\newcommand{\one}{\mathbf 1}
\newcommand{\la}{\langle}
\newcommand{\ra}{\rangle}

\newcommand{\aff}{\operatorname{aff}}

\newcommand{\supp}{\operatorname{supp}}

\newcommand{\norm}[1]{\left\lVert #1 \right\rVert}
\newcommand{\abs}[1]{\left\lvert #1 \right\rvert}

\newcommand{\given}{\nonscript\,\delimsize\vert\nonscript\,}
\DeclarePairedDelimiterX{\set}[1]{\lbrace}{\rbrace}{#1} 

\newcommand{\bvQ}{\calQ^{(\alpha,\beta)}}

\newcommand{\lp}{\ell^2(L^2)}

\newcommand{\PathDepth}[1]{I_\ast^{#1}}
\newcommand{\PathsAll}{I_\ast^\ast}
\newcommand{\PathsLe}[1]{I_\ast^{\leq #1}}

\newcommand{\Scat}{\mathsf{S}_{\chi}}
\newcommand{\ScatSet}[1]{\mathsf{S}_{\chi}[#1]}
\newcommand{\ScatLe}[1]{\mathsf{S}_{\chi,\leq #1}}
\newcommand{\ScatOne}{\mathsf{S}_{\chi,1}}

\newcommand{\cact}{c_{\rm act}}
\newcommand{\Creg}{C_{\rm reg}}
\newcommand{\env}{{\rm env}}
\newcommand{\rgap}{r_{\rm gap}}

\newcommand{\dd}{\,\mathrm{d}}

\newcommand{\dt}{\,\mathrm{d}t}
\newcommand{\du}{\,\mathrm{d}u}

\newcommand{\dx}{\,\mathrm{d}x}
\newcommand{\dy}{\,\mathrm{d}y}
\newcommand{\dz}{\,\mathrm{d}z}

\newcommand{\deta}{\,\mathrm{d}\eta}

\newcommand{\dxi}{\,\mathrm{d}\xi}
\newcommand{\dzeta}{\,\mathrm{d}\zeta}

\subjclass[2020]{42C40, 42B35, 42B25, 94A12, 47B47} 
\keywords{Nonlinear harmonic analysis, wave-packet representations, scattering transforms, resolution--robustness trade-off, commutator estimates, microlocal analysis}

\begin{document}
\maketitle

\begin{abstract}
    A central challenge in modern harmonic analysis is to quantify the balance between the approximation power of finely resolved multiscale representations and their robustness to nonlinear changes of coordinates, a problem arising naturally in signal processing and partial differential equations. Motivated by Mallat's pioneering results on the wavelet scattering transform, we identify a sharp resolution--robustness trade-off for scattering-type nonlinear multiscale representations built upon general wave-packet systems, showing that stability under small diffeomorphisms is governed by the geometry of the underlying frequency decomposition. In particular, for wave-packet systems with finer transverse resolution than wavelets, including curvelets and shearlets, we establish a geometric rigidity phenomenon: arbitrarily small, smooth, compactly supported deformations can move high-frequency mass across adjacent channels, leading to instability already at the first scattering layer. We complement this obstruction by identifying the sharp Sobolev threshold for deformation stability: below the critical regularity no Mallat-type estimate can hold, while at and above it stability is recovered by means of matched commutator bounds that allow deformations to be propagated through the frequency channels.
    Together, these results provide a systematic deformation-stability theory for Euclidean scattering transforms and yield the first stability estimates intrinsic to the scattering architecture beyond the classical wavelet setting.
\end{abstract}

\setcounter{tocdepth}{1}
\tableofcontents

\section{Introduction}
\label{sec:introduction}

The core problem studied in this paper concerns the compatibility of finely resolved multiscale representations with smooth changes of coordinates. Directional systems such as curvelets, shearlets, and wave atoms gain their resolution and sparse-approximation power by decomposing high frequencies into increasingly thin channels \cite{CandesDonoho2004,DemanetYing2007,GuoLabate2007}. Somewhat paradoxically, we show that the very geometric refinement responsible for this gain generates an unavoidable obstruction when these systems are used to build deformation-robust representations: the thinner the channels relative to their carrier frequency, the more readily a small diffeomorphism can transport energy across them, thus requiring greater input regularity to achieve stability. This phenomenon can be quantified in the framework of two-dimensional wave-packet coverings \cite{BytchenkoffVoigtlaender2020}, where a generation-$j$ frequency tile has radial length of order $2^{\alpha j}$ and transverse width of order $2^{\beta j}$, with $0\leq\beta\leq\alpha\leq1$. The narrowest geometric extent of the frequency tiles, encoded by the transverse exponent $\beta$, determines the critical Sobolev threshold $H^{1-\beta}$: our positive and negative stability results match when $\alpha<1$, and also on the boundary $\alpha=1$ up to an arbitrarily small loss.

This resolution--robustness duality has the character of an uncertainty principle, and admits indeed a microlocal interpretation: the underlying question is whether a wave-packet system approximately intertwines the canonical phase-space action induced by pullback under a diffeomorphism, a basic Fourier integral operation \cite{CandesDemanetFIO,TataruPhaseSpace}. The principal ingredient behind our stability results is indeed a scale-sharp anisotropic commutator estimate between wave-packet multipliers and Lipschitz transport fields, obtained by the paradifferential methods used in nonlinear PDE \cite{Auscher1995,Bahouri2011,Coifman1986,Li2019,Taylor-PDC}. The resulting derivative cost is dictated by the same transverse geometry that produces the approximation gain, and its optimality is established by a matching channel-separation construction.

Remarkably, this trade-off is already visible at the linear level and is therefore not specific to scattering-type representations; nevertheless, they provide a mathematically controlled setting in which the resolution--robustness principle can be propagated through arbitrary depth and converted into sharp representation-level stability and instability results. Our focus on these transforms, first introduced by Mallat for wavelet systems \cite{Mallat2012}, is motivated in part by their demonstrated effectiveness across a broad range of real-world signal-processing applications. More importantly, however, they are singled out by the analytic problem itself: as we discuss below, requiring stability to persist through depth at the nonlinear level essentially forces a scattering-type architecture. This also reveals a striking first-principles route to the structure of convolutional neural networks, which emerged empirically from different but ultimately convergent design heuristics.

\subsection{Stable features by scattering}

A memorable turning point in the rise of multilayer representations was the performance of AlexNet in the 2012 ImageNet Large Scale Visual Recognition Challenge \cite{AlexNet}, which marked the beginning of the dominance of deep convolutional architectures in computer vision tasks. While most of the attention from the mathematical community has since been directed toward the underlying statistical learning and optimization problems, several heuristics emerging from the field have not yet been framed within what is arguably their natural analytic environment, namely modern harmonic analysis. Indeed, a convolutional architecture ultimately combines frequency-selective filters, pointwise nonlinearities and averaging/pooling mechanisms into a multilayer hierarchy that may be viewed overall as a structured representation map $\Phi\colon L^2(\R^2)\to\calH$ sending the input signal into a suitable Hilbert space $\calH$ where target tasks become more accessible. In this context, $\calH$ is commonly referred to as \textit{feature space}.

Although this clearly resonates with the tradition of Fourier analysis, designing effective feature extractors requires balancing the tension between data compression and information preservation \cite{Bengio2013,CuckerSmale2002,Mallat2016}. For instance, keeping image classification in mind for concreteness, an effective feature map is expected to collapse variations of the input that are irrelevant to the task, such as small translations, rotations or deformations modeling changes of pose and local registration errors, without losing discriminative power. A spectral perspective helps make the challenge at stake more transparent, since the high-frequency components of a signal are precisely those encoding distinctive details such as edges, contours and patterns, which make the signal identifiable, and as such they are also most sensitive to spatial variations. 

A drastic way to achieve stability is to simply discard such unstable spectral components by means of low-pass averaging: given $\varphi\in\calS(\R^2)$ with $\varphi\geq0$ and $\|\varphi\|_{L^1}=1$ (a suitably normalized Gaussian function suffices), for $s>0$ one can consider the convolution operator
\begin{equation*}
    \calA_s f(x)=\int_{\R^2}f(x-y)\varphi_s(y)\dd{y}=f*\varphi_s(x),\qquad \text{where} \quad \varphi_s(y)=s^{-2}\varphi(s^{-1}y).
\end{equation*}
A simple Schur-type argument shows that $\calA_s f$ is stable under small spatially varying translations at scale $s$. More precisely, setting $L_\tau f(x)=f(x-\tau(x))$ for $\tau\in C_b^1(\R^2;\R^2)$, if $\norm{D\tau}_{L^\infty}\leq\kappa<1$ then
\begin{equation*}
    \norm{L_\tau(\calA_s f)-\calA_s f}_{L^2}\leq C\frac{\|\tau\|_{L^\infty}}{s}\|f\|_{L^2},
\end{equation*}
where $C>0$ depends on $\varphi$ and $\kappa$, and deteriorates as $\kappa\uparrow1$.

It is also not difficult to realize that the same strategy fails on high-frequency components without additional care in the scale separation step. Indeed, the linearized action of $L_\tau$ on the spectral side is given by $\xi\mapsto(I-D\tau(x))^\top\xi$; as a result, a distortion of gradient size $\norm{D\tau}_{L^\infty}\sim\varepsilon$ may induce an absolute frequency shift of order $\sim\eps R$ on signal components localized near $\abs{\xi}\sim R$, which can displace their energy across widely separated frequency channels when $R$ is large. To mitigate this rigidity, which ultimately stems from the fixed-width channel decomposition, one should instead group frequencies into channels whose width is comparable to the magnitude of their center frequency. Such a geometry becomes increasingly tolerant, at high frequencies, to the spectral displacement predicted by the first-order deformation law.

This is precisely the paradigm of \textit{multiresolution analysis} \cite{Mallat_book2009,Meyer_book1992}, which can be concretely implemented by means of a dyadic Littlewood--Paley tiling of the frequency plane associated with a low-pass filter $\chi$ and high-pass wavelet filters $\Psi=(\psi_\lambda)_{\lambda\in\Lambda}$, where one may think of $\lambda=(j,\theta)$ as recording dyadic scale $2^j$ and orientation $\theta$, thus satisfying
\begin{equation*}
    \abs{\wh\chi(\xi)}^2+\sum_{\lambda\in\Lambda}\abs{\wh{\psi_\lambda}(\xi)}^2=1\qquad\text{for a.e. }\xi\in\R^2.
\end{equation*}
Equivalently, by Plancherel's theorem, the associated filter-bank transform is an isometry:
\begin{equation*}
    \norm{f*\chi}_{L^2}^2+\sum_{\lambda\in\Lambda}\norm{f*\psi_\lambda}_{L^2}^2=\norm{f}_{L^2}^2,\qquad f\in L^2(\R^2).
\end{equation*}
Although the coefficient $f*\psi_\lambda$ isolates an oscillatory component, averaging it directly by the same low-pass filter $\chi$ to enforce stability would lead to trivial features since their spectral supports are separated. The missing step is demodulation: interspersing a pointwise nonlinearity like the modulus transfers part of the oscillatory information to a lower-frequency envelope while preserving the $L^2$ energy and commuting with deformations, producing the stable feature $f\mapsto \abs{f*\psi_\lambda}*\chi$. Again, some information is lost by the final low-pass filter, but the whole procedure can be iterated along the whole family of filters, resulting in a layered cascade that shares intriguing similarities with that of convolutional neural networks (CNNs).
These are, in essence, the main conceptual steps guiding the construction of the wavelet scattering transform introduced by Mallat in \cite{Mallat2012} in the two-dimensional case. To be more precise, let us define paths of length $n\geq1$ over the alphabet $\Lambda$ as tuples $p=(\lambda_1,\ldots,\lambda_n)\in\Lambda^n$; we set $\Lambda^0\coloneqq\{e\}$, where $e$ denotes the empty path, and write
\[\Lambda^\ast\coloneqq\bigcup_{n\geq0}\Lambda^n.\]
For $f\in L^2(\R^2)$, we define $U[e]f\coloneqq f$; if $p=(\lambda_1,\ldots,\lambda_n)\in\Lambda^n$ with $n\geq1$, we set
\begin{equation*}
    U[p]f\coloneqq U[\lambda_n]\cdots U[\lambda_1]f,\qquad U[\lambda]g\coloneqq \abs{g*\psi_\lambda}.
\end{equation*}
The \textit{scattering transform} is then the feature map $\Phi\colon L^2(\R^2)\to\calH$ constructed above, namely
\[\Phi(f)=\Scat f=(U[p]f*\chi)_{p\in\Lambda^\ast},\]
where the feature space $\calH=\ell^2(\Lambda^\ast;L^2(\R^2))$ is equipped with the norm
\begin{equation*}
    \|\Scat f\|_{\lp}=\biggl(\sum_{p\in\Lambda^\ast}\|U[p]f*\chi\|_{L^2}^2\biggr)^{1/2}.
\end{equation*}
The value of this ingeniously crafted feature extractor relies both on its provable analytic properties and on its remarkable performance in processing tasks, despite filters being appropriately fixed rather than trained. In image analysis, scattering coefficients were applied to handwritten-digit classification and texture discrimination in \cite{BrunaMallat2013}, while rotation- and scale-invariant extensions were developed in \cite{OyallonMallat2015,SifreMallat2013}. Scattering-based multiresolution representations were employed for audio source separation in \cite{SprechmannBrunaLeCun2015}. Beyond classical signal processing, wavelet scattering models have found successful applications in quantum chemistry \cite{EickenbergExarchakisHirnMallatThiry2018,HirnMallatPoilvert2017}, as well as in physics and cosmology \cite{ChengMorelAllysMenardMallat2024,LicciardiCarboneRondoniNagar2025,ValogiannisDvorkin2022}. More recently, related multiscale harmonic principles have entered hybrid learning models, from covariance texture synthesis \cite{BrochardZhangMallat2022} to wavelet score-based generative modeling \cite{GuthCosteDeBortoliMallat2022}.

We are more interested here in the theoretical side, and we hope that our brief account of the scattering transform convinced the reader that this peculiar CNN-like architecture is ultimately forced by designing around a small set of desirable properties, which in this setting can be proved rigorously rather than treated as mere heuristics. The starting point is again Mallat's pioneering paper \cite{Mallat2012}, where the following results are first proved. 

\begin{enumerate}[label=\textup{(\roman*)}]
    \item \textit{Nonexpansiveness.} The scattering transform is nonexpansive:
    \begin{equation*}
        \|\Scat f-\Scat g\|_{\lp}\leq\norm{f-g}_{L^2}, \qquad f,g\in L^2(\R^2).
    \end{equation*}
    This follows from the Littlewood--Paley identity for the filter bank and the pointwise nonexpansiveness of the modulus. As such, this result still holds when other semidiscrete filter frames beyond wavelets and more general nonlinearities are taken into account; see \cite{CzajaLi2019,WiatowskiBolcskei2018}. 

    \item \textit{Asymptotic translation invariance.} If $\chi_{_{R}}$ denotes a low-pass filter at spatial scale $R>0$, then in the limit $R\to\infty$ the windowed scattering transform converges in a suitable sense to a translation-invariant representation. Equivalently, for each fixed translation $L_y f=f(\cdot-y)$ one has
    \begin{equation*}
        \lim_{R\to\infty}\norm{\mathsf{S}_{\chi_{_{R}}}(L_y f)-\mathsf{S}_{\chi_{_{R}}} f}_{\lp}=0.
    \end{equation*}
    A novel proof of this result, under weaker assumptions and with broader scope, was recently obtained in \cite{CzajaKolstoeKoralov2024}.

    \item \textit{Energy conservation and propagation.} The Littlewood--Paley identity yields an exact balance between the energy captured by the low-pass outputs up to a given depth and the residual energy propagated to deeper layers. For wavelet scattering, under the additional technical scattering admissibility condition of \cite[Theorem~2.6]{Mallat2012}, Mallat proved that this residual energy vanishes asymptotically. Equivalently, the full scattering transform is norm preserving:
    \[\norm{\Scat f}_{\lp}=\norm{f}_{L^2}.\]
    Waldspurger later established the same conclusion under substantially weaker assumptions on the wavelet family \cite{Waldspurger2017}. More recently, it was shown in \cite{FuhrGetter2025} that the rate of energy propagation is a delicate geometric issue: a conjecture concerning energy decay over depth was settled by proving that this decay can be arbitrarily slow for wavelet scattering on generic $L^2$ inputs. This is in sharp contrast with other non-wavelet scattering transforms like those in \cite{CzajaLi2019}, where exponential energy decay is proved in the uniform time-frequency setting.

    \item \textit{Deformation stability.} Being the central thrust behind the entire architecture, it comes with no surprise that wavelet scattering is Lipschitz stable under small smooth diffeomorphisms: if $\tau\in C_b^2(\R^2;\R^2)$ satisfies $\norm{D\tau}_{L^\infty}<\frac12$, the scattering transform with output low-pass filter $\chi_{_{R}}$ at spatial averaging scale $R$ satisfies
    \begin{equation*}
        \norm{\mathsf{S}_{\chi_{_{R}}}(L_\tau f)-\mathsf{S}_{\chi_{_{R}}} f}_{\lp}\leq C K_R(\tau)\sum_{n=0}^{\infty}\|U_n f\|_{\lp},
    \end{equation*}
    where $U_n f = (U[p]f)_{p \in \Lambda^n}$ and the deformation cost has the form
    \begin{equation*}
        K_R(\tau)=\frac{\|\tau\|_{L^\infty}}{R}+\max\{\abs{\log R},1\}\|D\tau\|_{L^\infty}+\norm{D^2\tau}_{L^\infty}.
    \end{equation*}
    The first term is the expected sensitivity to translations at output scale $R$, while the remaining ones account for the nonrigid distortion $\tau$ transverse to the translation orbit. The latter arise from sophisticated commutator bounds between the wavelet transform and the deformation operator, which make the proof of this result particularly challenging. Note also that stability holds for sufficiently regular input signals, as subtly measured by the interplay with the scattering architecture via finiteness of the mixed $\ell^1(\ell^2(L^2))$ series on the right-hand side. The regularity of the deformation plays a key role as well: it was proved recently in \cite{NicolaTrapasso2023} that stability still occurs for deformations of class $C^{\gamma}$ with $\gamma>1$, while instabilities arise at lower regularity $C^\gamma$ with $0 \le \gamma < 1$.
\end{enumerate}

\subsection{The geometric stability problem}

While our discussion of the scattering transform has deliberately remained at a broad, conceptual level and has not attempted to exhaust its many facets, it will have served its purpose if it has raised a few natural questions by this stage. In particular, it seems that some properties, such as nonexpansiveness, follow from the template scattering architecture and do not depend on the choice of the filters; others, like energy propagation, are tied much more delicately to the special geometry of the frequency covering. This naturally raises the question of how deformation stability fits into this broader perspective. The issue is particularly compelling because deformation stability was a guiding principle in the design of the scattering architecture, while the existing results in the literature establish it by strikingly different methods.

More precisely, Mallat's stability theorem in the wavelet setting gives deformation stability the status of an inherent \textit{structural} property of the representation, since its proof quantifies the interaction between the deformation and the frequency geometry of the filter bank. To the best of our knowledge, it remains at date the only full-depth theorem of this kind for scattering transforms under spatial diffeomorphisms; related results on manifolds, graphs and measure spaces concern finite-width constructions or different perturbations models for the underlying domain or operator, and are thus not direct analogues of the problem considered here \cite{ChewEtAl2024,GamaRibeiroBruna2019,KokeKutyniok2022,PerlmutterGaoWolfHirn2020}. By contrast, the known stability results for non-wavelet Euclidean scattering representations proceed through a \emph{decoupling} argument: one first restricts the input to a class that is already stable under deformations, such as bandlimited or Lipschitz signals \cite{WiatowskiBolcskei2018}, cartoon functions \cite{Donoho2001}, or multiresolution approximation spaces \cite{NicolaTrapasso2025}, and then transfers the input estimate to scattering by nonexpansiveness. For example, \cite{Koller2018} proves that for $f\in H^1(\R^2)$ and sufficiently small $\tau\in C_b^1(\R^2;\R^2)$,
\begin{equation*}
    \norm{L_\tau f-f}_{L^2}\leq 2\norm{\tau}_{L^\infty}\norm{\nabla f}_{L^2}.
\end{equation*}
It then follows that \textit{every} nonexpansive feature map $\Phi\colon L^2(\R^2)\to\calH$, of scattering type or not, satisfies
\begin{equation*}
    \|\Phi(L_\tau f)-\Phi(f)\|_{\calH}\leq\norm{L_\tau f-f}_{L^2}\leq 2 \|\tau\|_{L^\infty}\|f\|_{H^1}.
\end{equation*}
Despite its broad scope and practical advantages, this approach clearly gives no insight into the properties of $\Phi$, since the deformation estimate is established before the feature map (and hence its particular structure, if any) is taken into account.

The sharper problem for scattering-type architectures is instead how deformations interact with the frequency channelization. Equivalently, one asks whether the following diagram is approximately commutative:
\begin{equation*}
    \begin{tikzcd}[column sep=large,row sep=large] f \arrow[r,mapsto,"\calW"] \arrow[d,|->,"L_\tau"'] & \calW f \arrow[d,|->,"L_\tau"] \\ L_\tau f \arrow[r,mapsto,"\calW"'] & \calW(L_\tau f), \end{tikzcd}
\end{equation*}
where $\calW$ denotes the associated linear filter-bank transform (in particular, the wavelet transform for a wavelet filter bank)
\begin{equation*}
    \calW \colon L^2(\R^2)\to \ell^2(\{0\}\cup\Lambda;L^2(\R^2)), \quad\calW f= (f*\chi,(f*\psi_\lambda)_{\lambda\in\Lambda}).
\end{equation*}
On a more quantitative level, this reduces to controlling the commutator defect
\[[\calW,L_\tau]f=\calW(L_\tau f)-L_\tau(\calW f).\]
Measuring how frequency localization reacts to a change of variables or a transport vector field is a commutator problem in the classical harmonic-analytic sense: results of this kind are ubiquitous in Littlewood--Paley theory and paradifferential calculus \cite{Lannes2006,Taylor-PDC}, in Kato--Ponce inequalities \cite{KatoPonce} and in the analysis of transport-diffusion equations \cite{Danchin2005}. 

These considerations lead to the central question of the present work:
\begin{center}
    \emph{Which frequency geometries support deformation stability, and which are inherently unstable?}
\end{center}
We may already anticipate that the answer is governed by the relation between the magnitude of a channel's center frequency and its directional extents. Heuristically, deformation stability can persist only when these quantities remain asymptotically comparable throughout the filter bank; when this balance fails, arbitrarily small deformations may shift the active frequency region of a high-frequency channel by an amount comparable to one of its narrow scales. Such a displacement disrupts the cancellations required for the relevant diagram to approximately commute and thereby produces instability of the associated scattering architecture. In the present work, this mechanism is quantified by a single geometric parameter, that is the \emph{transverse thickness} of the frequency channels. 

This issue is particularly pertinent for directional and anisotropic systems that refine the frequency resolution of isotropic wavelets. Gabor systems provide localized time-frequency representations \cite{Grochenig2001}, while curvelets and shearlets efficiently resolve oriented singularities \cite{CandesDonoho2004,GuoLabate2007}, and wave atoms sparsely represent oscillatory patterns \cite{DemanetYing2007}. Therefore, one may expect these advantages to persist in the corresponding scattering transforms. On the other hand, the enhanced resolution is achieved through increasingly thin frequency channels, raising the question of whether small deformations can transport high-frequency content across channel boundaries.

Before stating our main results more precisely, we introduce a unified framework in which the relevant phenomena can be identified uniformly rather than on a case-by-case basis. To this end, our setting is provided by the two-dimensional wave-packet coverings $\calQ^{(\alpha,\beta)}$ introduced by Bytchenkoff and Voigtlaender \cite{BytchenkoffVoigtlaender2020}. A thorough discussion of this construction is technical and can be found in Section \ref{sec:filters} below, while here we restrict ourselves to conveying the main heuristics. Although most of what follows extends to higher dimensions at the price of additional technicalities, we regard the two-dimensional setting as the natural one in which to present our arguments: it is highly relevant for applications involving image-like input signals on the $\R^2$ plane, and it allows the underlying geometric framework to be visualized most transparently.

The wave-packet covering $\calQ^{(\alpha,\beta)}$ of the frequency plane comes with structural parameters $0\leq\beta\leq\alpha\leq1$ and consists of frequency tiles with the following property: a generation-$j$ tile is located at frequency radius comparable to $2^j$, has radial length $\sim 2^{\alpha j}$ and transverse width $\sim 2^{\beta j}$; in other words, $\alpha$ controls radial resolution of the tiles, while $\beta$ controls transverse/angular resolution. Figure~\ref{fig:bv-tile-intro} shows a model tile with the precise indexing of the covering introduced in Section~\ref{sec:filters}; in particular, the index $m$ records the radial position within the dyadic annulus, while $\Theta_{j,\ell}$ selects the angular sector.

\begin{figure}[h]
    \centering
    \includegraphics[width=0.8\textwidth]{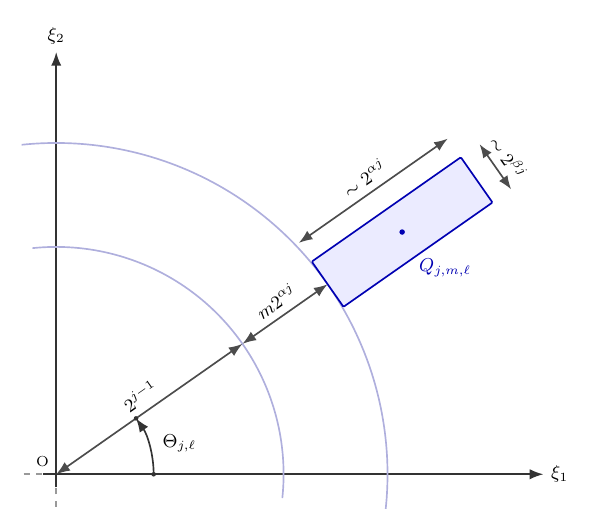}
    \caption{A generation-$j$ wave-packet tile (cf.\ Section \ref{sec:filters}). Its distance from the origin is of order $2^j$, its radial length is of order $2^{\alpha j}$, and its transverse width is of order $2^{\beta j}$. }
    \label{fig:bv-tile-intro}
\end{figure}

The $(\alpha,\beta)$ plane provides a unified picture of several interesting spectral geometries with transverse width smaller than the ambient frequency radius. For instance, the point $(\alpha,\beta)=(0,0)$ describes uniform coverings like those of Gabor type, whereas the vertical edge $\alpha=1$ hosts several well-known families of anisotropic coverings like ridgelets, curvelets and shearlets. The endpoint corner $(\alpha,\beta)=(1,1)$ corresponds to wavelets, while wave atoms correspond to $(\alpha,\beta)=(\frac12,\frac12)$; see Figure~\ref{fig:alpha-beta-plane} below for a summary phase diagram in this spirit. 

In order to discuss scattering transforms associated with a wave-packet covering it is necessary to complement the geometric view discussed so far with some analytic considerations. A first issue concerns the design of a suitable Littlewood--Paley filter bank subordinated to a given $\calQ^{(\alpha,\beta)}$, meaning that the corresponding Fourier multipliers are supported within the covering tiles. This transition is not canonical and its relevance should not be underestimated, since filters are the key objects in scattering-related arguments and the waveform design carries a significantly larger amount of information than the mere geometric one associated with the tiling. In particular, our main results rely on several mild but technical assumptions on the filters. For clarity, these are introduced progressively as needed; in Section \ref{sec:filters} we provide an explicit construction of a natural smooth Parseval filter bank satisfying all requirements imposed throughout the paper.

Another analytic aspect concerns measuring function regularity through the weighted summability of spectral coefficients associated with the frequency tiling induced by the covering $\calQ^{(\alpha,\beta)}$. This is precisely how \textit{wave-packet smoothness spaces} are defined in \cite{BytchenkoffVoigtlaender2020}: one takes a regular partition of unity $\Phi=(\varphi_i)_{i\in I}$ subordinate to $\calQ^{(\alpha,\beta)}$ and measures the size of the pieces $\calF^{-1}(\varphi_i\widehat f)$ after inserting suitable scale weights. Their properties and embeddings into more familiar regularity spaces are largely investigated in that work. For our purposes, since we are inherently working in an $L^2$-based setting, it is enough to record that the Hilbertian wave-packet smoothness spaces associated with $\calQ^{(\alpha,\beta)}$ identify with the standard Sobolev spaces $H^s(\R^2)$, with equivalent norms, which thus become the natural family of $L^2$-based regularity inputs for the purposes of stability. 

\subsection{Geometric rigidity forces instability}

A possible instability mechanism can be sensed from Figure \ref{fig:bv-tile-intro}. Since a deformation gradient of size $\sim\varepsilon$ moves a carrier at frequency $2^j$ by $\sim \varepsilon 2^j$, to cross a transverse channel of width $2^{\beta j}$ one needs these quantities to be comparable, that is $\varepsilon \sim 2^{-(1-\beta)j}$. This remark suggests that for $\beta<1$ one can potentially design arbitrarily small deformations that swap wave-packet channels at high frequency; at the same time, it seems clear that the wavelet geometry $\beta=1$ is exceptional, being structurally immune to such disruptions. 

While it is not difficult to craft affine deformation models enforcing this channel-swap phenomenon (see Lemma \ref{lem:BV-separation} below), the main challenge here is to lift these findings into quantitative lower bounds for filter-deformation commutators and scattering outputs. Among several questions, it is interesting to investigate how the stability break depends on scattering depth and, in view of the problems raised in \cite{NicolaTrapasso2023}, whether the regularity of the adversarial deformation and the input signal class play a hidden subtle role in its occurrence. The answers to these questions are given in Section \ref{sec:L2-negative}, under mild nondegeneracy and regularity assumptions on the scattering filters. Without entering into the technical details, we distill our main findings in the following claim.

\begin{theorem}\label{thm:intro-neg}
    Assume $0\leq\beta<1$ and suppose that the subordinate Parseval scattering bank satisfies the hypotheses of Section \ref{sec:L2-negative}.
    There exist functions $f_j\in\calS(\R^2)$ with $\norm{f_j}_{L^2}=1$ and compactly supported smooth deformations $\tau_j\in C_c^\infty(\R^2;\R^2)$, as well as a constant $c>0$, independent of $j$, such that, for all sufficiently large $j$,
    \begin{equation}
        \norm{\tau_j}_{C^m}\lesssim_m 2^{-(1-\beta)j}, \qquad \text{for all } m\in\N_0,
    \end{equation}
    and, setting $\ScatOne f=(\abs{f*\psi_i}*\chi)_{i\in I_\ast}$ for the first-layer scattering coefficients, 
    \begin{equation}
        \norm{\ScatOne(L_{\tau_j}f_j)-\ScatOne f_j}_{\lp}\geq c.
    \end{equation}
    Moreover, for the associated linear filter transform $\calW$ one also has, for all sufficiently large $j$,
    \begin{equation}
        \norm{[\calW, L_{\tau_j}]f_j}_{\lp}\geq c.
    \end{equation}
\end{theorem}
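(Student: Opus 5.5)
The plan is a channel-swap construction: a wave packet concentrated in one generation-$j$ tile, pushed by a small smooth deformation into a transversally adjacent tile. Fix a large $j$ and two adjacent tiles $Q_{i}$, $Q_{i'}$ of generation $j$ in the same radial position $m$ but neighbouring angular sectors. By the nondegeneracy hypotheses of Section~\ref{sec:L2-negative}, there are points $\xi_0\in Q_i$, $\xi_1\in Q_{i'}$ with $|\xi_0|\sim|\xi_1|\sim 2^j$ and $|\xi_1-\xi_0|\sim 2^{\beta j}$. Near these points $|\wh{\psi_i}|\geq c_0$ and $\wh{\psi_{i'}}$ is small, and vice versa.

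\textbf{Step 1: the affine model.} Let $A=I+E$ with $\|E\|\sim 2^{-(1-\beta)j}$ be a small rotation (or shear) such that $A^{-\top}\xi_0=\xi_1$. The angular gap between adjacent sectors is $2^{\beta j}/2^j$, so such an $E$ exists. This is the content of the channel-separation Lemma~\ref{lem:BV-separation}, which I take as given.

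\textbf{Step 2: localize.} Choose $f_j(x)=g(\delta x)e^{\iu\xi_0\cdot x}$, normalized in $L^2$, with $g\in\calS$ and $\delta=\delta_j\ll 2^{\beta j}$. Then $\wh{f_j}$ is concentrated in a ball of radius $\delta\ll 2^{\beta j}$ around $\xi_0$, well inside $Q_i$. Set $\tau_j(x)=\theta(\delta' x)\,Ex$ with a cutoff $\theta\in C_c^\infty$ equal to $1$ on a large ball. The scale $\delta'$ must be small enough that $\theta(\delta'x)=1$ on the essential support of $f_j$. To keep $\|\tau_j\|_{C^m}\lesssim 2^{-(1-\beta)j}$ we also need $\|\tau_j\|_{L^\infty}\sim\|E\|/\delta'$ bounded by $2^{-(1-\beta)j}$. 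This fails if $\delta'\to0$. The fix is to subtract a constant: use $\tau_j(x)=\theta(\delta'(x-x_0))E(x-x_0)$ with $\delta'\sim1$ and $\delta\sim1$. Then $f_j$ has spatial spread $O(1)$ and frequency spread $O(1)\ll 2^{\beta j}$, provided $\beta>0$. For $\beta=0$ the tile widths are $O(1)$, so I would take $\delta$ a small fixed constant and a tight tile margin. Every derivative of $\tau_j$ then carries the factor $\|E\|$. On the region where $\theta=1$, $L_{\tau_j}f_j$ equals $f_j\circ(I-E)$ up to a translation, so its Fourier transform is concentrated near $(I-E)^{\top}\xi_0$. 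Rechoosing $E$ so that this point is $\xi_1$, the mass of $L_{\tau_j}f_j$ sits near $\xi_1$. The cutoff tail $(1-\theta)$ contributes $O(\text{Schwartz tail})$, which is small once the supports are chosen generously.

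\textbf{Step 3: lower bounds.} For the commutator, the $i'$-component gives $\|(L_{\tau_j}f_j)*\psi_{i'}\|_{L^2}\geq c_0-o(1)$. On the other hand, $\|L_{\tau_j}(f_j*\psi_{i'})\|\lesssim\|f_j*\psi_{i'}\|=o(1)$, since $L_\tau$ is bounded on $L^2$ for $\|D\tau\|<1/2$. This yields $\|[\calW,L_{\tau_j}]f_j\|\geq c$. For the first scattering layer, $|f_j*\psi_{i'}|$ is small in $L^2$, whereas $|(L_{\tau_j}f_j)*\psi_{i'}|$ is a nonnegative envelope of $L^2$ mass at least $c_0$, slowly varying at scale $O(1)$. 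The hypotheses on $\chi$ should include $\wh\chi(0)\neq0$, and I would assume $\wh\chi$ is bounded below on a fixed neighbourhood of $0$. Then averaging with $\chi$ retains a fixed fraction of the envelope's mass, since its Fourier transform lives at frequencies $O(1)$. This requires taking $\delta$ smaller than the bandwidth of $\chi$, which is compatible with the previous steps.

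\textbf{Main obstacle.} The hardest step is making the positivity of $|\wh{\psi_{i'}}|$ uniform in $j$ near the transported frequency. This needs the filters to be rescalings of a fixed profile on each tile, which is the nondegeneracy assumption, and it needs the envelope lower bound to survive the $\chi$-averaging. I would handle the latter with Plancherel: bound $\||h|*\chi\|_{L^2}$ from below by $\inf_{|\eta|\leq r}|\wh\chi(\eta)|$ times $\||h|\|_{L^2}$, minus a tail term controlled by the spectral decay of the slowly varying envelope $|h|$.
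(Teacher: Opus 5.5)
Your Step~3 has a genuine, though repairable, gap. You read both lower bounds off the \emph{target} channel $i'$. That requires $|\wh{\psi_{i'}}|\geq c_0$ uniformly in $j$ on a neighbourhood of the transported frequency $(\Id-M_j)\xi_j$, and nothing in Section~\ref{sec:L2-negative} provides this:
\begin{enumerate}
\item \textup{(AC)} and \textup{(R)} constrain only the \emph{source} multiplier $m_{i_j}$ on $B_{r_0}(\xi_j)$.
\item Lemma~\ref{lem:BV-separation} says only that $(\Id-M_j)B_\rho(\xi_j)$ avoids every tile meeting $B_\rho(\xi_j)$. It says nothing about where the ball lands.
\item The filters are not assumed to be rescalings of one profile. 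The paper notes that even the smooth model class loses this covariance after Parseval normalization.
\end{enumerate}
On the moved ball you only know $\sum_{i'}|m_{i'}|^2=1$ with at most $\calN(\bvQ)$ nonzero terms. That does not single out one channel bounded below on the whole ball. So the step $\|(L_{\tau_j}f_j)*\psi_{i'}\|_{L^2}\geq c_0-o(1)$ is unjustified, and the ``main obstacle'' you name is not something the hypotheses supply. The other half of your ``vice versa'', namely $\wh{\psi_{i'}}=0$ near $\xi_0$ for tiles meeting the moved ball, does hold, but it comes from separation, not from nondegeneracy.

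The missing observation is that separation gives a \emph{complete exit} from the source tile, so the argument should run in channel $i_j$.
\begin{itemize}
\item \emph{Vanishing after deformation.} If $\supp\wh{f_j}\subset B_{r_0/4}(\xi_j)$ and $\ell_j(x)=M_j^\top x$, then $(L_{\ell_j}f_j)*\psi_{i_j}=0$ exactly.
\item \emph{Lower bound before deformation.} \textup{(AC)} gives $\|f_j*\psi_{i_j}\|_{L^2}\geq\cact\|f_j\|_{L^2}$. The single coordinate $i_j$ then gives $\|\ScatOne(L_{\ell_j}f_j)-\ScatOne f_j\|_{\lp}\geq\||f_j*\psi_{i_j}|*\chi\|_{L^2}$.
\item \emph{Uniformity.} Your Plancherel-tail idea makes this uniform in $j$. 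First, $|\wh\chi|=1$ on $B_{1/2}(0)$ follows automatically from subordination and the Littlewood--Paley identity, so no extra assumption on $\chi$ is needed. Second, for the demodulated envelope $G$ with $\supp\wh G\subset B_\rho(0)$, the bound $\|\nabla|G|\|_{L^2}\leq\|\nabla G\|_{L^2}\leq 2\pi\rho\|G\|_{L^2}$ yields $\||G|*\chi\|_{L^2}^2\geq(1-4\rho^2)\|G\|_{L^2}^2$.
\item \emph{Commutator.} This is then immediate. On one side, $\|L_{\tau_j}(f_j*\psi_{i_j})\|_{L^2}\geq(1+\norm{D\tau_j}_{L^\infty})^{-1}\|f_j*\psi_{i_j}\|_{L^2}$. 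On the other, $\|(L_{\tau_j}f_j)*\psi_{i_j}\|_{L^2}\leq\|L_{\tau_j}f_j-L_{\ell_j}f_j\|_{L^2}$, which your cutoff step makes small.
\end{itemize}
With a $j$-independent cutoff radius, your cutoff step is exactly Lemma~\ref{lem:compactification-transfer}. No recentring at $x_0$ and no case split in $\beta$ are needed, because the fixed scale is $r_0$.

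Once repaired, your route differs from the paper's in two useful ways.
\begin{itemize}
\item \emph{Choice of packet.} The paper pre-inverts the multiplier, $\wh{f_j^0}=\varphi(\cdot-\xi_j)/m_{i_j}$, so that $|f_j^0*\psi_{i_j}|=|\calF^{-1}\varphi|$ is independent of $j$ and Lemma~\ref{lem:TestPacket} supplies the constant. The cost is that \textup{(R)} is needed for uniform tightness, and further regularity is needed to obtain Schwartz inputs. Your plain packet $e^{2\pi\iu\xi_j\cdot x}g(\delta x)$ with $\wh g\in C_c^\infty$ is Schwartz and trivially tight without \textup{(R)}, at the price of the envelope estimate above.
\item \emph{Commutator bound.} The paper obtains the linear commutator bound indirectly from the scattering bound via Lemma~\ref{lem:appendix-low-pass-deformation-estimate}. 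The source-channel argument gives it directly.
\end{itemize}
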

Theorem~\ref{thm:intro-neg} is consequential already for the low-order scattering representations typically used in applications. Indeed, practical implementations of the scattering cascade are usually truncated after only two layers while still yielding sufficiently informative features in diverse contexts, ranging from audio recognition \cite{AndenMallat2014} to the detection of retinal abnormalities from optical coherence tomography images \cite{BaharloueiRabbaniPlonka2023}; recent work also considers stochastic path sampling as a way to reduce the computational cost of multivariable scattering transforms \cite{MitcheltreeEtAl2026}. More generally, if $\ScatLe{N}$ denotes any finite depth truncation containing the first-layer output, for every $N\geq1$ one has
\begin{equation*}
    \norm{\ScatOne(L_{\tau_j}f_j)-\ScatOne f_j}_{\lp}\leq\norm{\ScatLe{N}(L_{\tau_j}f_j)-\ScatLe{N}f_j}_{\lp}\leq\norm{\Scat(L_{\tau_j}f_j)-\Scat f_j}_{\lp}.
\end{equation*}
As a result, the same lower bound applies simultaneously to every such finite truncation and to the full infinite-depth scattering transform.

Moreover, we stress that the linear lower bound in Theorem \ref{thm:intro-neg} shows that the stability failure is caused neither by the modulus nonlinearity nor by any lack of regularity of the inputs or deformations, confirming that the underlying mismatch between frequency channelization and spatial distortion has a deeper root. On the other hand, the previous result rules out any stability estimate based solely on $L^2$ control, leaving open the possibility that the instability is merely an artifact of this level of generality, where pathological examples and ad hoc constructions are plentiful. Restricting the input class to suitable subspaces of $L^2$ is therefore unavoidable to circumvent this obstruction. A natural, if not essentially forced, choice is represented by the wave-packet decomposition spaces inherently associated with the geometry behind the network architecture; as already mentioned, in the $L^2$-based scenario these spaces collapse to the standard Sobolev regularity scale. The following result shows that the geometric instability mechanism places a precise regularity barrier to scattering stability in this setting: 

\begin{corollary} \label{cor:intro-modulus}
    Assume the hypotheses of Theorem \ref{thm:intro-neg}, and fix $m\in\Nzero$ and $s\geq0$. Let $\omega\colon(0,1]\to[0,\infty)$ be nondecreasing with $\omega(t)\to0$ as $t\downarrow0$. Suppose that there exists a constant $C<\infty$ such that
    \begin{equation*}
        \norm{\ScatOne(L_\tau f)-\ScatOne f}_{\lp}\leq C\,\omega(\norm{\tau}_{C^m})\norm{f}_{H^s}
    \end{equation*}
    for every $f\in H^s(\R^2)$ and every $\tau\in C_c^\infty(\R^2;\R^2)$ with $\norm{\tau}_{C^m}\leq1$. Then necessarily
    \[\limsup_{t\downarrow0}\frac{\omega(t)}{t^{s/(1-\beta)}}>0.\]
    In particular, for every $0\leq s<1-\beta$, no Lipschitz estimate of the form
    \begin{equation*}
        \norm{\ScatOne(L_\tau f)-\ScatOne f}_{\lp}\leq C\norm{\tau}_{C^m}\norm{f}_{H^s}
    \end{equation*}
    can hold uniformly over all $f\in H^s(\R^2)$ and all compactly supported smooth deformations $\tau$.
\end{corollary}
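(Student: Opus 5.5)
We plan to test the hypothesized estimate on the extremal family $(f_j,\tau_j)$ produced by Theorem~\ref{thm:intro-neg}. The additional input needed is a bound on $\norm{f_j}_{H^s}$. The construction behind Theorem~\ref{thm:intro-neg} concentrates $f_j$ spectrally in a generation-$j$ tile of the covering $\bvQ$, located at frequency radius $\sim 2^j$. Concretely, $f_j$ is a modulated wave packet whose Fourier transform is supported (or, for Schwartz data, essentially concentrated) in $\{\abs{\xi}\sim 2^j\}$. We therefore expect $\norm{f_j}_{H^s}\lesssim 2^{sj}\norm{f_j}_{L^2}=2^{sj}$ uniformly in $j$. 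If the construction only yields Schwartz tails, we would check that the tail contribution to $\int(1+\abs{\xi}^2)^s\abs{\wh f_j}^2\dxi$ is $O(2^{2sj})$ as well, which holds since the profile is a fixed Schwartz function rescaled anisotropically with parameters $2^{\alpha j},2^{\beta j}\le 2^j$. This is the one step where the precise form of $f_j$ from Section~\ref{sec:L2-negative} has to be invoked. I regard it as the main (though mild) obstacle, since the statement of Theorem~\ref{thm:intro-neg} by itself only records $\norm{f_j}_{L^2}=1$.

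Set $t_j\coloneqq\norm{\tau_j}_{C^m}$. By Theorem~\ref{thm:intro-neg}, $t_j\le C_m 2^{-(1-\beta)j}$, so $t_j\le1$ for $j$ large and the hypothesis applies to $\tau_j$. We may also assume $t_j>0$: if $\tau_j=0$, the left-hand side vanishes, contradicting the lower bound $c>0$. Combining the assumed estimate with the lower bound of Theorem~\ref{thm:intro-neg} gives
\[
c\le \norm{\ScatOne(L_{\tau_j}f_j)-\ScatOne f_j}_{\lp}\le C\,\omega(t_j)\,\norm{f_j}_{H^s}\lesssim \omega(t_j)\,2^{sj}.
\]
Since $2^{-j}\ge (t_j/C_m)^{1/(1-\beta)}$ and $s\ge0$, we have $2^{sj}\le (C_m/t_j)^{s/(1-\beta)}$. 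Hence
\[
\omega(t_j)\ge c'\,t_j^{s/(1-\beta)}.
\]
Because $t_j\to0$, this proves $\limsup_{t\downarrow0}\omega(t)/t^{s/(1-\beta)}\ge c'>0$. Monotonicity of $\omega$ is not even needed here, since we evaluate $\omega$ exactly at the points $t_j$.

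For the Lipschitz statement, take $\omega(t)=t$. If $s<1-\beta$, then $s/(1-\beta)<1$, so $t/t^{s/(1-\beta)}=t^{1-s/(1-\beta)}\to0$ and the limsup vanishes. This contradicts the first part, so no such estimate can hold. The case $s=0$ is covered as well: it reduces directly to Theorem~\ref{thm:intro-neg}, because $\omega(t_j)\to0$ while the left-hand side stays $\ge c$.
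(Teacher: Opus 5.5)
Your argument is correct and is essentially the paper's proof: you test the estimate on the pairs $(f_j,\tau_j)$, bound $\norm{f_j}_{H^s}\lesssim 2^{sj}$ using the frequency localization $\supp\wh{f_j}\subset B_{r_0}(\xi_j)$ with $\abs{\xi_j}\asymp 2^j$ (this is what the construction gives, so your Schwartz-tail contingency never arises), and you convert $2^{sj}$ into a negative power of the deformation size. The only difference is that you evaluate $\omega$ at $\norm{\tau_j}_{C^m}$ itself, after checking that it is nonzero, whereas the paper evaluates it at the upper bound $A_m2^{-(1-\beta)j}$; this is why your version does not need monotonicity of $\omega$.
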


The exponent $1-\beta$ is therefore the analytic manifestation of the scale disparity, already present in the first layer, between the carrier frequency $2^j$ and the transverse channel width $2^{\beta j}$. Indeed, the adversarial packets in the instability construction are concentrated at frequencies of order $2^j$, and hence, after $L^2$ normalization, their $H^s$ norms grow like $2^{sj}$. At the same time, the size of the corresponding deformations is of order $2^{-(1-\beta)j}$, which is precisely the scale required to displace such packets across a transverse channel of width $2^{\beta j}$. If a uniform estimate held with modulus $o(t^{s/(1-\beta)})$, its right-hand side would tend to zero along this sequence, while the first-layer scattering outputs remain separated by a fixed positive amount.

\subsection{Sharp stability at the Sobolev scale}

The negative results just discussed naturally raise some complementary questions: if $L^2$ stability fails for every genuinely packet-like transverse geometry $\beta<1$, and if Lipschitz stability cannot hold below $H^{1-\beta}$, does the threshold Sobolev scale $H^{1-\beta}$ actually restore stability? In other terms, is this regularity level enough to control the corresponding deformation commutator bound? 

We answer these questions in the affirmative, providing in particular what seem to be the first examples of structural scattering stability beyond the original wavelet geometry of \cite{Mallat2012}. The main ingredient of our stability theory is a Mallat-type commutator result; the proof is rather involved but follows by combining standard strategies from paradifferential calculus under mild yet technical assumptions on the filter bank (cf.\ Assumption~\ref{ass:comm-filter-ass} below). This is precisely the point where the smoothness and anisotropic scale behavior of the multipliers critically interact: the commutator argument rests on controlling scale-regularity interactions in terms of derivatives of filters conjugated by the deformation flow, which give rise to commutators between wave-packet multipliers and suitable transport vector fields. In our setting, at generation $j$ the multiplier changes at transverse scale $\sim2^{\beta j}$, while the vector field acts on an oscillation of size $\sim2^j$. The overall cost is therefore $\sim2^{(1-\beta)j}$, which is exactly the Sobolev weight corresponding to $H^{1-\beta}$ already appearing on the negative edge of the story.

To state the main result, let $U_nf$ denote the collection of propagated signals at depth $n$ and set
\[\mathfrak E_s(f)=\sum_{n=0}^{\infty}\norm{U_nf}_{\ell^2(H^s)}.\]
Intuitively, this energy-like quantity measures the Sobolev regularity that remains distributed along the whole scattering tree. Indeed, the propagated family $U_nf$ is the input of the $n$-th scattering layer, so any stability principle that is meant to be iterated through the network should measure not only the regularity of $f$ at the entrance but the regularity still present at each subsequent depth. The sum defining $\mathfrak E_s(f)$ records exactly this cumulative Sobolev content. The first positive theorem is therefore formulated in terms of $\mathfrak E_{1-\beta}(f)$:
\begin{theorem}\label{thm:intro-positive-abstract}
    Assume $0\leq\beta<1$ and suppose that the subordinate Parseval scattering bank satisfies the hypotheses of Section \ref{sec:positive-full-depth}. For every $\kappa\in(0,1)$ there exists $C>0$ such that, whenever $\tau\in W^{1,\infty}(\R^2;\R^2)$ satisfies $\norm{D\tau}_{L^\infty}\leq\kappa$, one has
    \begin{equation*}
        \norm{\Scat(L_\tau f)-\Scat f}_{\lp}\leq C\bigl(\norm{\tau}_{L^\infty}+\norm{D\tau}_{L^\infty}\bigr)\mathfrak E_{1-\beta}(f).
    \end{equation*}
\end{theorem}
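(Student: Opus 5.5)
We follow the architecture of Mallat's stability argument \cite{Mallat2012} and replace the wavelet-specific commutator bound by an anisotropic one at the $H^{1-\beta}$ scale. Let $V$ be the one-layer operator sending a family $(h_p)$ to the propagated family $(\abs{h_p*\psi_i})$, and let $\calA$ denote averaging by $\chi$. We compare $\Scat(L_\tau f)$ with $\Scat f$ through a telescoping decomposition over depth. Since $L_\tau$ commutes with the pointwise modulus, $U[p]L_\tau f$ differs from $L_\tau U[p] f$ only through the commutators of $L_\tau$ with the linear convolutions at each layer. More precisely, set $E_n = U_n(L_\tau f) - L_\tau U_n f$, the propagation error at depth $n$. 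The modulus is $1$-Lipschitz and the bank is Parseval, so $\norm{E_{n+1}}_{\lp} \le \norm{E_n}_{\lp} + \norm{[\calW_{\rm hp}, L_\tau]U_n f}_{\lp}$, where $\calW_{\rm hp}$ is the high-pass part of $\calW$. The output difference at depth $n$ is then bounded by
\[
\norm{\calA E_n}+\norm{[\calA,L_\tau]U_nf}+\norm{(L_\tau-\Id)\calA U_nf}.
\]

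Instead of summing $\norm{E_n}$ naively, which would produce a quadratic sum, we adopt Mallat's operator-level formulation. We write $\Scat L_\tau - \Scat = [\Scat, L_\tau] + (L_\tau - \Id)\Scat$ and expand $[\Scat, L_\tau]$ as a sum over $n$ of terms of the form $(\text{nonexpansive tail}) \circ [\calW, L_\tau] \circ U_n$. The nonexpansiveness of the tail scattering maps (item (i)) then bounds the total by
\[
\sum_{n}\norm{[\calW,L_\tau]U_nf}_{\lp}+\sum_n\norm{(L_\tau-\Id)\calA U_nf}_{\lp}.
\]

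The second sum is handled by the Schur-type low-pass estimate recalled in the introduction, written in the form $\norm{L_\tau \calA g - \calA g}_{L^2} \lesssim \norm{\tau}_{L^\infty}\norm{g}_{L^2}$ with the scale of $\chi$ fixed. This yields $\lesssim \norm{\tau}_{L^\infty}\sum_n\norm{U_nf}_{\ell^2(L^2)} \le \norm{\tau}_{L^\infty}\mathfrak E_{1-\beta}(f)$.

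Everything therefore reduces to the key commutator estimate, expected to follow from Assumption~\ref{ass:comm-filter-ass}:
\[
\norm{[\calW,L_\tau]g}_{\ell^2(L^2)}\le C\bigl(\norm{\tau}_{L^\infty}+\norm{D\tau}_{L^\infty}\bigr)\norm{g}_{H^{1-\beta}}.
\]
This estimate is the main obstacle. To prove it, decompose $g$ into Littlewood--Paley/wave-packet pieces and split the commutator for each channel $i$ into low-, paraproduct-type and high-frequency interactions. Near-diagonal interactions are treated as follows. Write $L_\tau$ via the flow $x \mapsto x - t\tau(x)$ and differentiate in $t$; this produces the transport commutator $[\psi_i *, \tau\cdot\nabla]$, conjugated by the flow. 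A Coifman--Meyer/Calderón commutator argument using the symbol estimates $\abs{\partial^\gamma \wh{\psi_i}} \lesssim 2^{-\beta j\abs{\gamma}}$ (transverse direction) bounds this by $\norm{D\tau}_{L^\infty}\, 2^{(1-\beta)j}\norm{g_{(j)}}_{L^2}$. Here the gain $2^{-\beta j}$ comes from the multiplier's smoothness scale and the loss $2^j$ from $\nabla$ acting on a carrier at frequency $2^j$. Far-off-diagonal interactions are controlled by almost orthogonality, using the decay of $\wh{\psi_i}$ away from its tile and the bound $\norm{D\tau} \le \kappa < 1$, so that the pullback distorts frequencies only by a bounded factor. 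Summing over channels, the finite overlap of the covering yields the $\ell^2$ bound with weight $2^{(1-\beta)j}$, that is, the $H^{1-\beta}$ norm. The low-pass component contributes only $\norm{\tau}_{L^\infty}$.

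Finally, inserting the key commutator estimate with $g = U_n f$ componentwise and summing over $n$ gives the $\norm{D\tau}_{L^\infty}\mathfrak E_{1-\beta}(f)$ term, which completes the bound.
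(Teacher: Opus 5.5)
Your reduction is sound and matches the paper's. The depth telescoping with nonexpansive tails is Lemma~\ref{lem:finite-depth-defect-propagation}, the term $(L_\tau-\Id)\calA U_nf$ is the second bound of Lemma~\ref{lem:appendix-low-pass-deformation-estimate}, and your key estimate for $[\calW,L_\tau]$ is Proposition~\ref{prop:scattering-commutator-plugin} repackaged. (The low-pass part $[\calA,L_\tau]$ actually costs $\norm{\tau}_{L^\infty}+\norm{D\tau}_{L^\infty}$ because of the Jacobian, not only $\norm{\tau}_{L^\infty}$; this is harmless.) The gap is in your sketch of the high-pass commutator bound, which carries all the content.

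You localize $g$ in frequency first and run the flow argument only on near-diagonal pieces. The flow identity $\calC_iV_1g-V_1\calC_ig=\int_0^1V_t[\calC_i,v_t\cdot\nabla]V_t^{-1}V_1g\dt$, with $v_t=-\tau\circ F_t^{-1}$, destroys that localization, because $V_t^{-1}V_1g_{(j)}$ is no longer concentrated near $\abs{\xi}\asymp2^j$. The linearization has to be done globally, using that $V_t^{\pm1}$ are bounded on $H^{1-\beta}$ (Lemma~\ref{lem:composition-small-deformation}). The harmonic analysis then concerns $\sum_i\norm{[\calC_i,b\cdot\nabla]h}_{L^2}^2$ for an arbitrary Lipschitz field $b$.

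In that estimate the decisive split is in the frequency of the vector field, not of $h$. The Coifman--Meyer argument with symbol $\eta_s\partial_rm_i(\eta+t\zeta)$ and bounds $2^{-\beta j\abs{\gamma}}$ (Lemma~\ref{lem:scale-local-CM}) only applies when the frequency $\zeta$ of $b$ is $\lesssim2^{\beta j}$, i.e.\ to $b_j^{\rm lo}=P_{\le\lfloor\beta j\rfloor}b$. For $b_j^{\rm hi}=b-b_j^{\rm lo}$ the paper does not use commutator structure at all.
\begin{itemize}
\item Bernstein gives $\norm{b_j^{\rm hi}}_{L^\infty}\lesssim2^{-\beta j}\norm{Db}_{L^\infty}$, which handles $b_j^{\rm hi}\cdot\nabla\calC_ih$ at cost $2^{(1-\beta)j}$.
\item The term $\calC_i(b_j^{\rm hi}\cdot\nabla h)$ needs Bony's paraproduct decomposition with Schur tests. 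The high--low part couples channel $j$ to all $h_k$ with $k\le j+C$, with size up to $2^{k-j}\norm{Db}_{L^\infty}\norm{h_k}_{L^2}$.
\item The resonant part couples channel $j$ to all $h_k$ with $k\geq j-C$, with size up to $\norm{Db}_{L^\infty}\norm{h_k}_{L^2}$. It is square-summable only because $1-\beta>0$.
\end{itemize}

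These are exactly the interactions your \emph{far-off-diagonal} paragraph dismisses. For $\tau\in W^{1,\infty}$ the pullback does not distort frequencies by a bounded factor. High-frequency components of $\tau$ push low-frequency pieces of $g$ into channel $j$, and they can demodulate high-frequency pieces down into it. Almost orthogonality together with $\norm{D\tau}_{L^\infty}\le\kappa$ yields only $\calO(1)$ bounds. Crude estimates of $(L_\tau-\Id)g_k$ that do not localize $\tau$ in frequency lead back to H\"older-type moduli as in Corollary~\ref{cor:sobolev-decoupling-bound}, not to a bound linear in $\norm{\tau}_{L^\infty}+\norm{D\tau}_{L^\infty}$.

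The missing idea is therefore: linearize via the flow first, then split the transport field at frequency $2^{\beta j}$ and treat its high part by Bernstein and paraproducts.
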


This Lipschitz stability estimate should be viewed as the positive counterpart of the instability mechanism discussed above. The threshold $1-\beta$ matches exactly both sides of the theory: negatively, as the inverse scale of the deformation needed to push a packet across a transverse channel; positively, as the Sobolev cost required to commute the deformation through such channels. In this sense, Theorem~\ref{thm:intro-positive-abstract} is already sharp at the level of the propagated quantity $\mathfrak E_{1-\beta}(f)$.

Nevertheless, a fully satisfactory comparison with Corollary~\ref{cor:intro-modulus} requires replacing $\mathfrak E_{1-\beta}(f)$ by a standard Sobolev norm of the input. This is a delicate energy-propagation problem, since it amounts to deciding whether the critical Sobolev content encountered along the infinite cascade can be controlled by the initial regularity of $f$. The question is surprisingly subtle even for wavelet scattering, as shown by the recent work \cite{FuhrGetter2025} on depth decay. In the wave-packet setting considered here, though, the relevant mechanism essentially reduces to a controlled form of frequency drift: after a high-frequency carrier has been demodulated by the modulus, the resulting envelope must move to a lower effective frequency scale often enough for the Sobolev costs to be summable. If the filters satisfy a suitable energy localization condition (cf.\ Assumption \ref{ass:energy-localization}), in Lemma~\ref{lem:summability-above-critical} we prove that for every $s\in(1-\beta,1]$ one has $\mathfrak E_{1-\beta}(f)\lesssim\norm{f}_{H^s}$, and we are able to fully close the stability picture above the critical threshold. 

\begin{corollary}\label{cor:intro-positive-epsilon}
    Assume $0\leq\beta<1$ and suppose that the subordinate Parseval scattering bank satisfies the hypotheses of Section \ref{sec:positive-full-depth}. For every $\kappa\in(0,1)$ and every $s\in(1-\beta,1]$ there exists $C>0$ such that, whenever $\tau\in W^{1,\infty}(\R^2;\R^2)$ satisfies $\norm{D\tau}_{L^\infty}\leq\kappa$, one has
    \begin{equation*}
        \norm{\Scat(L_\tau f)-\Scat f}_{\lp}\leq C\bigl(\norm{\tau}_{L^\infty}+\norm{D\tau}_{L^\infty}\bigr)\norm{f}_{H^s}
    \end{equation*}
    for every $f\in H^s(\R^2)$.
\end{corollary}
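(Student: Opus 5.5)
The corollary follows by composing two results already announced: the abstract stability estimate of Theorem~\ref{thm:intro-positive-abstract} and the summability bound of Lemma~\ref{lem:summability-above-critical}. Fix $\kappa\in(0,1)$ and $s\in(1-\beta,1]$. Let $f\in H^s(\R^2)$ and $\tau\in W^{1,\infty}$ with $\norm{D\tau}_{L^\infty}\le\kappa$. Theorem~\ref{thm:intro-positive-abstract} gives
\begin{equation*}
    \norm{\Scat(L_\tau f)-\Scat f}_{\lp}\le C_1\bigl(\norm{\tau}_{L^\infty}+\norm{D\tau}_{L^\infty}\bigr)\mathfrak E_{1-\beta}(f),
\end{equation*}
with $C_1$ depending only on $\kappa$ and the filter bank. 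Lemma~\ref{lem:summability-above-critical}, which is available because the bank satisfies the energy localization Assumption~\ref{ass:energy-localization} included in the hypotheses of Section~\ref{sec:positive-full-depth}, gives $\mathfrak E_{1-\beta}(f)\le C_2(s)\norm{f}_{H^s}$. Taking $C=C_1C_2$ yields the claim.

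The only points to check are bookkeeping issues.

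\begin{enumerate}[label=(\alph*)]
    \item \emph{Finiteness of the left-hand side.} If $\mathfrak E_{1-\beta}(f)=\infty$ for some $f$, the theorem would be vacuous. The lemma rules this out, since it shows the series is finite for every $f\in H^s$.
    \item \emph{Uniformity of the constants.} $C_2$ must not depend on $\tau$, and $C_1$ must not depend on $f$. Both hold by the statements: the lemma involves no deformation at all, and the theorem's constant depends only on $\kappa$.
    \item \emph{Matching hypotheses.} The lemma may need hypotheses beyond those of the theorem, but these are exactly the hypotheses of Section~\ref{sec:positive-full-depth} assumed in the corollary. The endpoint $s=1$ is included in the lemma's range.
    \item \emph{The bounded range of $s$.} The restriction $s\le1$ is harmless, since it is the range stated in the lemma.
\end{enumerate}

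If the lemma were only stated for Schwartz inputs, one would extend it by density. Take $f_k\to f$ in $H^s$. Nonexpansiveness gives continuity of $\Scat$ and $\Scat\circ L_\tau$ on $L^2$, since $L_\tau$ is bounded on $L^2$ when $\norm{D\tau}_{L^\infty}\le\kappa<1$. So the left-hand side converges, while the right-hand side converges trivially.

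I expect no real obstacle. All the difficulty is contained in the two cited results: the commutator analysis behind Theorem~\ref{thm:intro-positive-abstract}, and the frequency-drift summation across depth behind Lemma~\ref{lem:summability-above-critical}. The corollary itself is a two-line composition of them.
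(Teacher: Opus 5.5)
Your proof is correct and is essentially the paper's: Corollary~\ref{cor:full-depth-positive-epsilon} is obtained by inserting the bound $\mathfrak E_{1-\beta}(f)\lesssim\norm{f}_{H^s}$ from Lemma~\ref{lem:summability-above-critical} into Theorem~\ref{thm:abstract-full-depth-positive}, using that $H^s\subset H^{1-\beta}$. The one nuance is that the paper does not impose the energy-localization Assumption~\ref{ass:energy-localization} as a separate hypothesis: Proposition~\ref{prop:energy-localization-BV} derives it from the covering geometry for every subordinate bank, so the commutator Assumption~\ref{ass:comm-filter-ass} is the only real structural hypothesis.
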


It remains to investigate the behavior at the endpoint $s=1-\beta$. At a first inspection of the proofs this may seem a gap due to the strategy, since the underlying interpolation argument no longer supplies a summability gain. The latter may be compensated by an additional geometric mechanism, and here is where the second geometric parameter $\alpha$ plays a role: a generation-$j$ wave-packet tile has diameter of order $2^{\alpha j}$ after the carrier has been removed, and if $\alpha<1$ this envelope scale is strictly smaller than the carrier scale $2^j$. Such strict contraction is enough to sum the propagated critical Sobolev energy, and thus to sharply close the stability landscape in the strictly subradial region. 

\begin{corollary} \label{cor:intro-positive-endpoint}
    Assume $0\leq\beta\leq\alpha<1$ and suppose that the hypotheses of Section \ref{sec:positive-full-depth} hold. For every $\kappa\in(0,1)$ there exists $C>0$ such that, whenever $\tau\in W^{1,\infty}(\R^2;\R^2)$ satisfies $\norm{D\tau}_{L^\infty}\leq\kappa$, for every $f\in H^{1-\beta}(\R^2)$ one has
    \begin{equation*}
        \norm{\Scat(L_\tau f)-\Scat f}_{\lp}\leq C\bigl(\norm{\tau}_{L^\infty}+\norm{D\tau}_{L^\infty}\bigr)\norm{f}_{H^{1-\beta}}.
    \end{equation*}
\end{corollary}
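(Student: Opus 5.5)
The plan is to deduce Corollary~\ref{cor:intro-positive-endpoint} from Theorem~\ref{thm:intro-positive-abstract} with $s=1-\beta$. It then suffices to prove the endpoint summability bound
\[
\mathfrak E_{1-\beta}(f)=\sum_{n\geq0}\norm{U_nf}_{\ell^2(H^{1-\beta})}\lesssim\norm{f}_{H^{1-\beta}},\qquad 0\leq\beta\leq\alpha<1.
\]
This is the analogue of Lemma~\ref{lem:summability-above-critical} at $s=1-\beta$. The interpolation gain used there is no longer available, so it has to be replaced by the envelope contraction coming from $\alpha<1$.

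\emph{Step 1: one-layer contraction.} Fix a channel $i$ of generation $j$, with carrier $\xi_i$, $\abs{\xi_i}\sim2^j$. Then $g*\psi_i=e^{\iu x\cdot\xi_i}h_i$, where $\wh{h_i}$ is supported in a ball of radius $\lesssim2^{\alpha j}$. Since $\abs{g*\psi_i}=\abs{h_i}$ and the modulus is Lipschitz, $H^\sigma$-bounded for $0\leq\sigma\leq1$, we get
\[
\norm{\abs{g*\psi_i}}_{H^{1-\beta}}\lesssim\norm{h_i}_{H^{1-\beta}}\lesssim 2^{\alpha(1-\beta)j}\norm{g*\psi_i}_{L^2}\sim 2^{-(1-\alpha)(1-\beta)j}\norm{g*\psi_i}_{H^{1-\beta}}.
\]
The last equivalence holds because $\abs{\xi}\sim2^j$ on the tile. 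Squaring and summing over $i$ with Parseval gives
\[
\norm{U_1g}_{\ell^2(H^{1-\beta})}^2\lesssim\sum_j2^{-2\delta j}\sum_{i\in\text{gen }j}\norm{g*\psi_i}_{H^{1-\beta}}^2,\qquad \delta=(1-\alpha)(1-\beta)>0.
\]
This yields a strict gain on high-generation channels. Low generations give no gain, and they are the source of the difficulty.

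\emph{Step 2: split by generation.} Write $U_1g$ as the sum of the outputs of generations $j\geq J$ and of generations $j<J$. The first part is bounded by $C2^{-\delta J}\norm{g}_{H^{1-\beta}}$. For the low part, $H^{1-\beta}$ and $L^2$ are comparable up to $2^{(1-\beta)J}$, so the task becomes showing that low-frequency energy does not stay put. For this I will use the energy localization of Assumption~\ref{ass:energy-localization}, exactly as in the proof of Lemma~\ref{lem:summability-above-critical}. After demodulation, the mass of $\abs{h_i}$ sits at frequencies $\lesssim2^{\alpha j}$, strictly lower than $2^j$ when $j$ is large. Iterating, the effective frequency scale along any path decreases geometrically, like $2^{j}\to2^{\alpha j}\to2^{\alpha^2j}\to\cdots$, until it reaches a bounded region. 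There the Parseval/energy-decay argument gives geometric decay of $\norm{U_n}_{\ell^2(L^2)}$, i.e.\ energy leaks into the low-pass output at a fixed rate. Combining the two regimes should give
\[
\norm{U_{n}f}_{\ell^2(H^{1-\beta})}\lesssim\rho^{n}\norm{f}_{H^{1-\beta}}\quad\text{for some }\rho<1,
\]
possibly only after grouping layers in blocks of bounded length. Summing in $n$ then bounds $\mathfrak E_{1-\beta}(f)$.

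\emph{Main obstacle.} The expected hard step is making the low-frequency regime quantitative at the critical exponent. The high-generation gain $2^{-\delta j}$ is clean. Near generation $0$, however, the $H^{1-\beta}$ and $L^2$ norms coincide, and a geometric rate requires the energy-localization assumption to give a uniform fraction of energy exiting per layer. My first attempt will be to reuse verbatim the low-frequency part of the proof of Lemma~\ref{lem:summability-above-critical}, which presumably does not rely on $s>1-\beta$, and to substitute only its high-frequency interpolation estimate by the contraction bound of Step~1.
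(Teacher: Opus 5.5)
Your reduction to the endpoint bound $\mathfrak E_{1-\beta}(f)\lesssim\norm{f}_{H^{1-\beta}}$ via Theorem~\ref{thm:abstract-full-depth-positive} is the paper's. Your Step~1 is exactly the computation behind Lemma~\ref{lem:envelope-removal}, with $\delta_\env=\alpha$ from Proposition~\ref{prop:BV-positive-verification}. The step you flag as the main obstacle is already settled, and your generation split then closes in one line.

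The first half of Lemma~\ref{lem:summability-above-critical} holds for every $0<s\leq1$. At $s=1-\beta$ it gives $\norm{U_nf}_{\lp}\lesssim\theta^{(1-\beta)n/2}\norm{f}_{H^{1-\beta}}$, and hence $\sum_{n\geq0}\norm{U_nf}_{\lp}\lesssim\norm{f}_{H^{1-\beta}}$. For generations $j<J$ you have $\norm{\abs{g*\psi_i}}_{H^{1-\beta}}\leq\norm{g*\psi_i}_{H^{1-\beta}}\lesssim2^{(1-\beta)J}\norm{g*\psi_i}_{L^2}$. Fix $J$ with $C2^{-\delta J}\leq\frac12$; then
\[
E_n^{1-\beta}(f)\leq\tfrac12E_{n-1}^{1-\beta}(f)+C_J\norm{U_{n-1}f}_{\lp},\qquad n\geq1.
\]
Sum over $1\leq n\leq N$ and absorb; this is legitimate because each $E_n^{1-\beta}(f)\leq\norm{f}_{H^{1-\beta}}$ is finite. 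You get $\sum_{n=0}^NE_n^{1-\beta}(f)\leq2\norm{f}_{H^{1-\beta}}+2C_J\sum_{n\geq0}\norm{U_nf}_{\lp}$, uniformly in $N$.

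The paper combines the same two ingredients by interpolation rather than by a cutoff. It uses Lemma~\ref{lem:envelope-removal} together with the interpolation step of Lemma~\ref{lem:summability-above-critical}:
\[
E_n^{1-\beta}(f)\lesssim\norm{U_{n-1}f}_{\ell^2(H^{\alpha(1-\beta)})}\leq\norm{U_{n-1}f}_{\lp}^{1-\alpha}E_{n-1}^{1-\beta}(f)^{\alpha}.
\]
This gives $E_n^{1-\beta}(f)\lesssim\theta^{(1-\alpha)(1-\beta)(n-1)/2}\norm{f}_{H^{1-\beta}}$ directly. Your cutoff is the hands-on, real-interpolation version of the same argument. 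It trades the interpolation inequality for a recursion and also yields geometric decay.

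Drop the heuristic of scales $2^j\to2^{\alpha j}\to2^{\alpha^2j}\to\cdots$, and the idea of a uniform fraction of energy exiting per layer; neither is needed, and neither is accurate as stated.
\begin{itemize}
\item The $L^2$ decay comes from the carrier contraction (E2) by the factor $\theta$, and it holds equally when $\alpha=1$.
\item That decay is geometric only relative to $\norm{f}_{H^{1-\beta}}$. An input with Fourier support disjoint from $\supp\wh\chi$ loses no $L^2$ energy in the first layer.
\item $\abs{h_i}$ is not bandlimited, so it lives below $2^{\alpha j}$ only in the sense of the $H^s$ bound of Step~1.
\end{itemize}
The hypothesis $\alpha<1$ enters solely through Step~1.
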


In addition to the value of its structural nature, the quality of this estimate is best read against the decoupled route discussed earlier. If the latter is taken as baseline, in Corollary~\ref{cor:sobolev-decoupling-bound} below we prove that, at the critical regularity $s=1-\beta$ with $0<\beta<1$, nonexpansiveness of the scattering transform gives
\begin{equation*}
    \norm{\Scat(L_\tau f)-\Scat f}_{\lp}\lesssim\norm{\tau}_{L^\infty}^{1-\beta}\norm{f}_{H^{1-\beta}}.
\end{equation*}
The exponent $1-\beta$ is optimal for the underlying input-space estimate, already for constant translations and the identity feature map. To compare the resulting deformation moduli, write $\tau=\varepsilon\sigma$, where $\sigma\in W^{1,\infty}(\R^2;\R^2)$ is fixed and $\varepsilon\downarrow0$. The best architecture-blind estimate is only H\"older in the deformation size of order $\calO(\varepsilon^{1-\beta})$, while our structural estimate instead gives $\calO(\varepsilon)$ at the same critical scale, thus improving the deformation modulus without raising the Sobolev regularity since $\varepsilon^{1-\beta}\gg\varepsilon$ as $\varepsilon\downarrow0$. Equivalently, when $\beta>0$, the decoupled route can recover Lipschitz dependence only by going up to $H^1$ level, whereas the structural machinery achieves it already at the sharp scale $H^{1-\beta}$. 

The endpoint $\beta=0$ is qualitatively different: in this case the critical space is already $H^1$, and both the architecture-blind and architecture-intrinsic routes yield Lipschitz dependence on $\varepsilon$. Uniform time-frequency coverings, including those of Gabor type, therefore occupy the least favorable position in the present stability scale, since exploiting the scattering architecture cannot reduce the Sobolev regularity required for deformation stability. A surprising reversal emerges when the same geometries are compared from the perspective of energy propagation, which favors the endpoint geometries in precisely the opposite order: uniform time-frequency scattering is guaranteed to satisfy exponential decay of the energy propagated to deeper layers \cite{CzajaLi2019}, whereas arbitrarily slow decay may occur in the wavelet scenario \cite{FuhrGetter2025}.

The preceding result does not cover the boundary regime $\alpha=1$, $\beta<1$, which includes important anisotropic families such as curvelets and shearlets. The situation here is subtler: the commutator cost remains $1-\beta$, so Corollary~\ref{cor:intro-positive-epsilon} continues to yield deformation stability, with an arbitrarily small loss of regularity, on $H^{1-\beta+\varepsilon}$. At the endpoint, however, support geometry alone no longer guarantees the required summability: a radially elongated tile may contain well-separated high-frequency packets whose modulus still oscillates at the original carrier scale. It is expected that a deeper exploitation of the specific features of such geometries could lead to decisive endpoint results in these contexts, but exploring them here would make us depart from the systematic perspective developed so far, represented in Figure~\ref{fig:alpha-beta-plane}. 

\begin{figure}[h]
    \centering
    \includegraphics[width=0.8\textwidth]{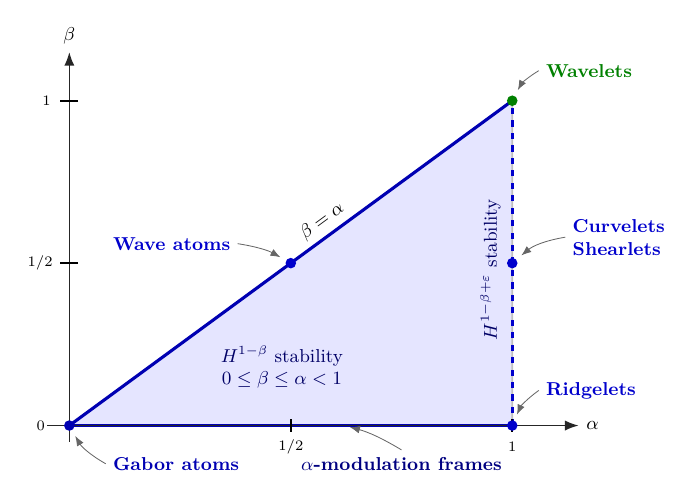}
    \caption{The parameter triangle $0\leq\beta\leq\alpha\leq1$. In the strict subradial region $\alpha<1$, the endpoint $H^{1-\beta}$ stability theorem matches the first-layer lower bound. On the boundary $\alpha=1$, $\beta<1$, the present argument gives $H^{1-\beta+\varepsilon}$ stability. The wavelet corner $(1,1)$ is immune to the channel-swap obstruction.}
    \label{fig:alpha-beta-plane}
\end{figure}

We conclude by emphasizing the exceptional role of the wavelet corner as the singular endpoint at which the geometric mechanism underlying the instability is no longer available. When $\beta=1$ the transverse width of a generation-$j$ channel is comparable to the carrier frequency scale $2^j$; a deformation capable of moving a wave packet across such a channel must therefore have order-one size, and the small-deformation channel-swap mechanism disappears. It is striking that Mallat's choice of wavelets as building blocks for the original scattering architecture lies precisely at this exceptional endpoint and is thus immune to the geometric obstruction identified here. This observation also clarifies the relation between Mallat's theorem and the present results: as $\beta\uparrow1$ the critical Sobolev exponent $1-\beta$ tends to zero, in agreement with the propagated $L^2$-type quantity entering Mallat's wavelet deformation bound. The latter should not, however, be interpreted as the limiting case of our estimates, since neither the constants nor the analytic mechanisms underlying our arguments are claimed to remain uniform as $\beta\uparrow1$.

\subsection{Organization of the paper}

In Section~\ref{sec:preliminaries} we fix notation and collect preliminary results. The goal of Section~\ref{sec:architecture-blind-stability} is to isolate the consequences of input regularity and nonexpansiveness, including the optimal Sobolev modulus available to architecture-blind arguments.  Section~\ref{sec:filters} introduces the wave-packet coverings $\calQ^{(\alpha,\beta)}$ and recalls their basic geometric properties; we also explicitly construct there a smooth Parseval class of scattering filters subordinate to the covering. Section~\ref{sec:L2-negative} is devoted to the exploration of the instability mechanism and its consequences, while the stability counterpart is investigated in Section~\ref{sec:positive-full-depth} using the commutator argument that is thoroughly developed in Section \ref{sec:appendix-commutator}. 

\section{Preliminaries}
\label{sec:preliminaries}

\subsection*{Notation and conventions}
We take $\N=\{1,2,\ldots\}$ and $\Nzero=\{0,1,2,\ldots\}$.

We work on $\R^2$ with the following normalization for the Fourier transform:
\[\wh f(\xi)=\int_{\R^2} e^{-2\pi \iu x\cdot\xi} f(x)\dd{x}.\]
For a countable family $q=(q_i)_i$ of functions, we write
\begin{equation*}
    \norm q_{\lp}=\left(\sum_i\norm{q_i}_{L^2}^2\right)^{1/2},\qquad \norm q_{\ell^2 (H^s)}=\left(\sum_i\norm{q_i}_{H^s}^2\right)^{1/2},
\end{equation*}
where $\norm f_{H^s}=\norm{\la\xi\ra^s\wh f(\xi)}_{L^2_\xi}$ denotes the standard Sobolev norm with $s\in \R$ and $\la\xi\ra=(1+\abs{\xi}^2)^{1/2}$. 

Throughout this paper, for non-negative quantities $X$ and $Y$, we write $X \lesssim Y$ to denote $X \leq CY$ for some generic constant $C > 0$ that may vary from line to line, and $X\asymp Y$ when both inequalities $X\lesssim Y$ and $Y\lesssim X$ hold. When we wish to emphasize that a constant depends on a specific parameter such as $\gamma$, we write $C_\gamma$ or $X \lesssim_\gamma Y$ and $X\asymp_\gamma Y$. 

The open Euclidean ball of radius $r>0$ centered at $x_0 \in \R^2$ is
\[B_r(x_0) = \{\xi\in\R^2:|\xi-x_0|<r\}.\]
The Minkowski sum of $E,F\subset\R^2$ is $E+F=\{e+f:e\in E, f\in F\}$, while if $A \in \R^{2\times 2}$ is a matrix we set $AE=\{Ax:x\in E\}$. The symbol $\one_E$ denotes the indicator function of a set $E$. 

A deformation field is a map $\tau \colon \R^2\to\R^2$, the corresponding deformation operator acting on $f \colon \R^2 \to \C$ being
\[L_\tau f(x)=f(x-\tau(x)).\]
For $m \in \N_0$ we set
\begin{equation*}
    \norm{\tau}_{C^m}=\sum_{\abs{\nu}\leq m}\norm{\partial^\nu\tau}_{L^\infty},
\end{equation*}
where the multi-index notation is understood. The space $C_b^k(\R^2;\R^2)$ consists of $C^k$ vector fields whose derivatives up to order $k$ are bounded. 

We first record a basic fact about Mallat-type deformations viewed as composition operators on Sobolev spaces. Besides serving as a convenient reference, it ensures that the deformation operators considered throughout the paper are well defined on the relevant function spaces. Its proof relies on elementary Sobolev theory; see, for instance, \cite{Leoni}.

\begin{lemma} \label{lem:composition-small-deformation}
    Let $\kappa\in(0,1)$, let $0\leq s\leq1$, and suppose that $\tau\in W^{1,\infty}(\R^2;\R^2)$ satisfies $\norm{D\tau}_{L^\infty}\leq\kappa$. For $0\leq t\leq1$, set $F_t\coloneqq \Id-t\tau$. Then $F_t$ is a bi-Lipschitz homeomorphism of $\R^2$ onto itself. Moreover, the composition operators $V_tf\coloneqq f\circ F_t$ and $V_t^{-1}f=f\circ F_t^{-1}$ are bounded on $H^s(\R^2)$, with
    \[\norm{V_t}_{H^s\to H^s}\leq(1-t\kappa)^{-1}\leq (1-\kappa)^{-1}\]
    and
    \begin{equation*}
        \norm{V_t^{-1}}_{H^s\to H^s}\leq(1+t\kappa)^{1-s/2}(1-t\kappa)^{-s/2}\leq(1+\kappa)^{1-s/2}(1-\kappa)^{-s/2}.
    \end{equation*}
    \begin{equation*}
        \norm{V_1}_{H^s\to H^s}\leq(1-\kappa)^{-1},\qquad \norm{V_1^{-1}}_{H^s\to H^s}\leq(1+\kappa)^{1-s/2}(1-\kappa)^{-s/2}.
    \end{equation*}
\end{lemma}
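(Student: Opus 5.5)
The plan is to prove the three claims in order: the homeomorphism property, then the $L^2$ and $\dot H^1$ bounds for $V_t$ and $V_t^{-1}$, and finally the intermediate exponents $0<s<1$ by interpolation.

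\emph{Homeomorphism.} Fix $t\in[0,1]$ and write $F_t=\Id-t\tau$. Since $\tau\in W^{1,\infty}$, the field $t\tau$ is globally Lipschitz with constant $t\kappa<1$ (use Rademacher and the mean value inequality along segments). Hence
\[(1-t\kappa)\abs{x-y}\leq\abs{F_t(x)-F_t(y)}\leq(1+t\kappa)\abs{x-y}.\]
Surjectivity follows from Banach's fixed point theorem applied to $x\mapsto y+t\tau(x)$, which is a contraction for each $y$. So $F_t$ is bi-Lipschitz, with $\operatorname{Lip}(F_t^{-1})\leq(1-t\kappa)^{-1}$. Moreover $DF_t=I-tD\tau$ a.e. and $\abs{\det DF_t}\in[(1-t\kappa)^2,(1+t\kappa)^2]$; one gets this from the singular values of $I-tD\tau$, which lie in $[1-t\kappa,1+t\kappa]$.

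\emph{$L^2$ and $\dot H^1$ bounds.} Apply the change of variables formula for bi-Lipschitz maps (area formula):
\[\norm{f\circ F_t}_{L^2}^2=\int\abs{f}^2\abs{\det DF_t^{-1}}\leq(1-t\kappa)^{-2}\norm{f}_{L^2}^2,\]
and similarly $\norm{f\circ F_t^{-1}}_{L^2}^2\leq(1+t\kappa)^2\norm f_{L^2}^2$. For $f\in H^1$, the chain rule for Lipschitz changes of variables (valid for bi-Lipschitz maps, see \cite{Leoni}) gives $\nabla(f\circ F_t)=DF_t^\top(\nabla f)\circ F_t$, with $\abs{DF_t^\top v}\leq(1+t\kappa)\abs v$. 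Combining with the Jacobian factor yields explicit bounds for the homogeneous seminorm. For the inverse, $D(F_t^{-1})=(DF_t)^{-1}\circ F_t^{-1}$ has norm at most $(1-t\kappa)^{-1}$, and the Jacobian factor is at most $(1+t\kappa)^2$. This gives
\[\norm{\nabla(f\circ F_t^{-1})}_{L^2}\leq(1+t\kappa)(1-t\kappa)^{-1}\norm{\nabla f}_{L^2}.\]
For the inhomogeneous norm we use $\norm f_{H^1}^2=\norm f_{L^2}^2+(2\pi)^{-2}\norm{\nabla f}_{L^2}^2$ with the paper's normalization, and take the maximum of the two constants.

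\emph{Interpolation for $0<s<1$.} Since $H^s=[L^2,H^1]_s$ isometrically in this Fourier-weighted norm, we get $\norm{V}_{H^s}\leq\norm{V}_{L^2}^{1-s}\norm{V}_{H^1}^{s}$. The main obstacle is matching the stated constants exactly. For $V_t^{-1}$, the endpoint constants $(1+t\kappa)$ at $s=0$ and an $H^1$ constant $(1+t\kappa)^{1/2}(1-t\kappa)^{-1/2}$ would give precisely $(1+t\kappa)^{1-s/2}(1-t\kappa)^{-s/2}$. So I would try to sharpen the gradient estimate by pairing the Jacobian with the singular values more carefully. Writing $\sigma_1\geq\sigma_2$ for the singular values of $DF_t$, the gradient factor $\sigma_2^{-2}$ times the Jacobian $\sigma_1\sigma_2$ gives $\sigma_1/\sigma_2\leq(1+t\kappa)/(1-t\kappa)$. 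This is not yet the sharp value, so the $H^1$ endpoint constant would need to be argued as stated, possibly with loose handling of the lower-order term. For $V_t$, the analogous computation gives an $H^1$ constant dominated by $(1-t\kappa)^{-1}$ after pairing: $\sigma_1^2/(\sigma_1\sigma_2)\cdot$ appropriately bounded. Together with the $L^2$ bound this yields $(1-t\kappa)^{-1}$ uniformly in $s$. Finally, monotonicity in $t$ gives the $t$-uniform bounds, and $t=1$ is the last display.
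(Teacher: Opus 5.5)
Your route is the paper's route: contraction mapping for surjectivity, singular values of $\Id-tD\tau$ in $[1-t\kappa,1+t\kappa]$, a change of variables that pairs the gradient factor with the Jacobian, and complex interpolation between $L^2$ and $H^1$. As written, however, the proposal does not establish the stated constant for $V_t^{-1}$, because you misread your own computation.

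The quantity $\sigma_2^{-2}\cdot\sigma_1\sigma_2=\sigma_1/\sigma_2$ bounds $\abs{DF_t^{-\top}v}^2\abs{\det DF_t}/\abs{v}^2$. That is the \emph{square} of the gradient constant, since
\[
\norm{\nabla(f\circ F_t^{-1})}_{L^2}^2=\int_{\R^2}\abs{DF_t(y)^{-\top}\nabla f(y)}^2\abs{\det DF_t(y)}\dy\leq\frac{1+t\kappa}{1-t\kappa}\norm{\nabla f}_{L^2}^2 .
\]
So the homogeneous constant is $(1+t\kappa)^{1/2}(1-t\kappa)^{-1/2}$, which is exactly the sharp value you wanted; no ``loose handling'' is needed.

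The lower-order term is absorbed as follows. With $A=1+t\kappa$ and $a=1-t\kappa$ one has $Aa=1-t^2\kappa^2\leq1$, hence $A^2\leq A/a$. Therefore $\norm{V_t^{-1}}_{H^1\to H^1}\leq(A/a)^{1/2}$, and interpolation gives $A^{1-s}(A/a)^{s/2}=A^{1-s/2}a^{-s/2}$.

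The same inequality $Aa\leq1$ gives $A/a\leq a^{-2}$. This is precisely what makes your paired bound $(A/a)^{1/2}$ for the gradient of $V_t$ dominated by $a^{-1}$ (the step you left as ``appropriately bounded''), so that the $H^1$ constant of $V_t$ is $a^{-1}$ and interpolation yields $a^{-1}$ for every $s$.

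The refinement is genuinely necessary. Your first estimate $(1+t\kappa)(1-t\kappa)^{-1}$ for the inverse would interpolate only to $(1+t\kappa)(1-t\kappa)^{-s}$, which is weaker than the claim. With these two lines filled in, your argument is complete and coincides with the paper's proof.
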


\begin{proof} 
    We work with the Lipschitz representative of $\tau$. Fix $0\leq t\leq1$ and write $\delta\coloneqq t\kappa$, $a\coloneqq 1-\delta$, and $A\coloneqq 1+\delta$. For all $x,y\in\R^2$,
    \[a\abs{x-y}\leq\abs{F_t(x)-F_t(y)}\leq A\abs{x-y}.\]
    Thus $F_t$ is injective, Lipschitz, and has closed range. To see that it is onto, fix $y\in\R^2$ and consider the map $x\mapsto y+t\tau(x)$. This is a contraction with constant $\delta<1$, hence has a unique fixed point $x$, which is precisely the solution of $F_t(x)=y$. Therefore $F_t$ is a bi-Lipschitz homeomorphism of $\R^2$ onto itself, with $\operatorname{Lip}(F_t)\leq A$ and $\operatorname{Lip}(F_t^{-1})\leq a^{-1}$.

    For a.e. $x\in\R^2$, all singular values of $DF_t(x)=\Id-tD\tau(x)$ lie in $[a,A]$. In particular, we have
    \begin{equation*}
        a^2\leq \abs{\det DF_t(x)}\leq A^2 \quad \text{for a.e. } x \in \R^2.
    \end{equation*}
    The change-of-variables formula gives, for $f\in L^2(\R^2)$,
    \begin{equation*}
        \norm{V_tf}_{L^2}^2=\int_{\R^2}\abs{f(y)}^2\abs{\det DF_t(F_t^{-1}y)}^{-1}\dy\leq a^{-2}\norm{f}_{L^2}^2,
    \end{equation*}
    and hence
    \[\norm{V_t}_{L^2\to L^2}\leq a^{-1}.\]
    Similarly,
    \begin{equation*}
        \norm{V_t^{-1}f}_{L^2}^2=\int_{\R^2}\abs{f(y)}^2 \abs{\det DF_t(y)} \dy\leq A^2\norm{f}_{L^2}^2,
    \end{equation*}
    so that
    \[\norm{V_t^{-1}}_{L^2\to L^2}\leq A.\]
    We next estimate the $H^1$ norms. First, note that if $s_1(x)\leq s_2(x)$ are the singular values of $DF_t(x)$ then $a\leq s_1(x)\leq s_2(x)\leq A$ and
    \begin{equation*}
        \frac{\abs{DF_t(x)^\top v}^2}{\abs{\det DF_t(x)}}\leq\frac{s_2(x)}{s_1(x)}\abs{v}^2\leq\frac{A}{a}\abs{v}^2, \qquad v \in \R^2.
    \end{equation*}
    For $f\in\calS(\R^2)$, the chain rule and the change of variables $y=F_t(x)$ then yield
    \begin{equation*}
        \norm{\nabla(f\circ F_t)}_{L^2}^2=\int_{\R^2}\frac{\abs{DF_t(F_t^{-1}y)^\top \nabla f(y)}^2}{\abs{\det DF_t(F_t^{-1}y)}}\dy\leq\frac{A}{a}\norm{\nabla f}_{L^2}^2.
    \end{equation*}
    Since $A/a\leq a^{-2}$, together with the $L^2$ estimate this implies
    \[\norm{V_tf}_{H^1}\leq a^{-1}\norm{f}_{H^1}.\]
    For the inverse composition operator, again by the chain rule and change of variables,
    \begin{equation*}
        \norm{\nabla(f\circ F_t^{-1})}_{L^2}^2=\int_{\R^2}\abs{DF_t(y)^{-\top} \nabla f(y)}^2\abs{\det DF_t(y)}\dy\leq\frac{A}{a}\norm{\nabla f}_{L^2}^2.
    \end{equation*}
    Since $A^2\leq A/a$, the $L^2$ estimate gives
    \begin{equation*}
        \norm{V_t^{-1}f}_{H^1}\leq\left(\frac{A}{a}\right)^{1/2}\norm{f}_{H^1}.
    \end{equation*}
    By density, both $H^1$ bounds extend from $\calS(\R^2)$ to all of $H^1(\R^2)$.

    The bounds for intermediate exponents $0 \le s \le 1$ then follow by complex interpolation between $L^2(\R^2)$ and $H^1(\R^2)$: for $0\leq s\leq1$ we thus obtain
    \[\norm{V_t}_{H^s\to H^s}\leq a^{-1}=(1-t\kappa)^{-1},\]
    while
    \begin{equation*}
        \norm{V_t^{-1}}_{H^s\to H^s}\leq A^{1-s}\left(\frac{A}{a}\right)^{s/2}=A^{1-s/2}a^{-s/2}=(1+t\kappa)^{1-s/2}(1-t\kappa)^{-s/2}.
    \end{equation*}
\end{proof}

\section{Architecture-blind stability from input regularity}\label{sec:architecture-blind-stability}

Before exploiting the geometry of the wave-packet transform, we record a general stability mechanism that is entirely independent of the feature extractor architecture. In accordance with the decoupling method \cite{WiatowskiBolcskei2018}, it separates the problem into continuity of the deformation action on the input class and nonexpansiveness of the feature map.

\begin{proposition}\label{prop:architecture-blind-deformation-stability}
    Let $\kappa\in(0,1)$, let $m \colon \R^2\to[1,\infty)$ be measurable, and set
    \begin{equation*}
        H_m\coloneqq\set*{f\in L^2(\R^2)\given m\wh f\in L^2(\R^2)},\qquad \norm{f}_{H_m}\coloneqq\norm{m\wh f}_2.
    \end{equation*}
    Suppose that
    \begin{equation*}
        m_\ast(R)\coloneqq\operatorname*{ess\,inf}_{\abs{\xi}>R}m(\xi)\longrightarrow\infty\quad\text{as }R\to\infty.
    \end{equation*}
    Let $\calH$ be a Hilbert space, and let $\calT \colon L^2(\R^2)\to\calH$ be nonexpansive, namely
    \begin{equation*}
        \norm{\calT f-\calT g}_{\calH}\leq\norm{f-g}_2\qquad\text{for all }f,g\in L^2(\R^2).
    \end{equation*}
    Then there exists an increasing, concave, continuous function $h\colon [0,\infty)\to[0,\infty)$ with $h(0)=0$ such that
    \begin{equation*}
        \norm{\calT(L_\tau f)-\calT f}_{\calH}\leq h\bigl(\norm{\tau}_{L^\infty}\bigr)\norm{f}_{H_m}
    \end{equation*}
    for every $f\in H_m$ and every $\tau\in W^{1,\infty}(\R^2;\R^2)$ satisfying $\norm{D\tau}_{L^\infty}\leq\kappa$. More precisely, one may take
    \begin{equation*}
        h(w)\coloneqq\frac{2\pi}{1-\kappa}\inf_{R\geq1}\left(wR+\frac{1}{m_\ast(R)}\right).
    \end{equation*}
\end{proposition}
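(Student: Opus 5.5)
By nonexpansiveness of $\calT$ it suffices to prove the input-space bound
\[
\norm{L_\tau f-f}_{L^2}\leq \frac{2\pi}{1-\kappa}\Bigl(\norm{\tau}_{L^\infty}R+\frac{1}{m_\ast(R)}\Bigr)\norm{f}_{H_m}
\]
for every $R\geq1$. Taking the infimum over $R$ then gives the claimed $h$. Note that $h$ is an infimum of affine, nondecreasing functions of $w$, so it is concave, nondecreasing and finite. Since it is concave on $[0,\infty)$, it is continuous on $(0,\infty)$. At $w=0$ we get $h(0)\le \inf_R \frac{2\pi}{(1-\kappa) m_\ast(R)}=0$ because $m_\ast(R)\to\infty$, and this also yields $h(w)\to0$ as $w\downarrow0$. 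Hence $h$ is continuous on $[0,\infty)$ with $h(0)=0$. Strict increase is not essential; if needed, one can add $w$ or replace $h$ by $h(w)+w\cdot 0^+$. I would read ``increasing'' as nondecreasing.

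For the input estimate, I split $f=f_{\le R}+f_{>R}$ with a sharp Fourier cutoff at $\abs{\xi}\le R$.

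\emph{High-frequency part.} By Lemma~\ref{lem:composition-small-deformation} with $s=0$, $\norm{L_\tau g}_{L^2}\le(1-\kappa)^{-1}\norm g_{L^2}$. Therefore
\[
\norm{L_\tau f_{>R}-f_{>R}}_{L^2}\le\bigl(1+(1-\kappa)^{-1}\bigr)\norm{f_{>R}}_{L^2}\le \frac{2}{1-\kappa}\,\frac{\norm{f}_{H_m}}{m_\ast(R)},
\]
using $\abs{\wh f}\le m\abs{\wh f}/m_\ast(R)$ a.e. on $\abs\xi>R$. Since $2\le 2\pi$, this fits the constant.

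\emph{Low-frequency part.} Write $g=f_{\le R}$, which is smooth. Then
\[
L_\tau g(x)-g(x)=-\int_0^1 \nabla g(x-t\tau(x))\cdot\tau(x)\dt ,
\]
so $\norm{L_\tau g-g}_{L^2}\le\norm{\tau}_{L^\infty}\int_0^1\norm{V_t\abs{\nabla g}}_{L^2}\dt$ by Minkowski's inequality. Lemma~\ref{lem:composition-small-deformation} gives $\norm{V_t}_{L^2\to L^2}\le(1-\kappa)^{-1}$. Moreover, with our Fourier normalization, $\norm{\nabla g}_{L^2}=2\pi\norm{\abs\xi\wh g}_{L^2}\le 2\pi R\norm{\wh g}_{L^2}\le 2\pi R\norm{f}_{H_m}$, using $m\ge1$. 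Combining the two parts yields the displayed input bound.

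The only mildly delicate points are the applicability of Lemma~\ref{lem:composition-small-deformation} to $V_t$ for each $t$, which holds since $\norm{tD\tau}\le\kappa$, and justifying the chain rule for a Lipschitz $\tau$. The latter is fine because $g$ is smooth and the formula holds pointwise. I therefore expect no real obstacle; the main bookkeeping step is the constant $2\pi$ arising from the Fourier normalization.
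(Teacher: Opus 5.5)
Your proposal is correct and follows the paper's proof essentially verbatim. Both use the same sharp Fourier splitting at radius $R$, and both treat the low-frequency part by the fundamental theorem of calculus along $F_t=\Id-t\tau$ combined with the $L^2$ bound of Lemma~\ref{lem:composition-small-deformation}. Both bound the high-frequency part crudely by $(1+(1-\kappa)^{-1})/m_\ast(R)$, then conclude by nonexpansiveness and optimization over $R$.

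Your hedging about strict monotonicity is unnecessary. The constraint $R\geq1$ gives $h(w_2)-h(w_1)\geq\frac{2\pi}{1-\kappa}(w_2-w_1)$ for $w_2>w_1$, so the stated $h$ is strictly increasing without modification.
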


\begin{proof}
    Fix $R\geq1$ and write
    \begin{equation*}
        f_{\leq R}\coloneqq\calF^{-1}\bigl(\one_{B_R(0)}\wh f\bigr),\qquad f_{>R}\coloneqq f-f_{\leq R}.
    \end{equation*}
    For $0\leq t\leq1$ we set $F_t\coloneqq\Id-t\tau$. In view of Lemma~\ref{lem:composition-small-deformation} each $F_t$ is bi-Lipschitz and the corresponding composition operator $V_tg=g\circ F_t$ satisfies
    \[\norm{V_t}_{L^2\to L^2}\leq(1-t\kappa)^{-1}\leq(1-\kappa)^{-1}.\]
    Since $f_{\leq R}\in H^1(\R^2)$, the Sobolev chain rule gives
    \begin{equation*}
        f_{\leq R}(F_1(x))-f_{\leq R}(x)=-\int_0^1\tau(x)\cdot\nabla f_{\leq R}(F_t(x))\dt
    \end{equation*}
    for almost every $x\in\R^2$. Therefore, by Minkowski's inequality, Lemma~\ref{lem:composition-small-deformation} and the Fourier support of $f_{\leq R}$, we obtain
    \begin{equation*}
        \norm{L_\tau f_{\leq R}-f_{\leq R}}_2\leq\frac{\norm{\tau}_{L^\infty}}{1-\kappa}\norm{\nabla f_{\leq R}}_2\leq\frac{2\pi R}{1-\kappa}\norm{\tau}_{L^\infty}\norm{f}_{H_m}.
    \end{equation*}
    On the other hand, since
    \begin{equation*}
        \norm{f_{>R}}_2\leq\frac{1}{m_\ast(R)}\norm{f}_{H_m},\qquad \norm{L_\tau f_{>R}}_2\leq\frac{1}{(1-\kappa)m_\ast(R)}\norm{f}_{H_m},
    \end{equation*}
    it follows that
    \begin{equation*}
        \norm{L_\tau f-f}_2\leq\frac{2\pi}{1-\kappa}\left(R\norm{\tau}_{L^\infty}+\frac{1}{m_\ast(R)}\right)\norm{f}_{H_m}.
    \end{equation*}
    Nonexpansiveness of $\calT$ and optimization over $R\geq1$ prove the claim. The function $h$ is finite and increasing, and it is concave as the pointwise infimum of affine functions of $w$; it is therefore continuous on $(0,\infty)$. Moreover, $h(0)=0$ by the assumption $m_\ast(R)\to\infty$, and the same assumption shows that $h(w)\to0$ as $w\downarrow0$.
\end{proof}

\begin{remark}[Arbitrarily weak high-frequency regularity]\label{rem:arbitrarily-weak-input-regularity}
    The preceding proposition applies in particular to weights with arbitrarily slow growth at infinity. For example, if
    \[m(\xi)=\bigl(\log(e+\abs{\xi})\bigr)^\sigma,\qquad \sigma>0,\]
    then choosing $R=w^{-1/2}$ for $0<w\leq\frac12$ gives
    \begin{equation*}
        h(w)\leq C_{\sigma}\left(\log\left(e+\frac1w\right)\right)^{-\sigma}.
    \end{equation*}
    It is then established that even a logarithmic high-frequency penalty suffices to obtain a uniform deformation modulus for every nonexpansive feature extractor.
\end{remark}

\begin{corollary}\label{cor:sobolev-decoupling-bound}
    Let $s>0$, and let $\calT \colon L^2(\R^2)\to\calH$ be nonexpansive. There exists $C_s>0$ such that
    \begin{equation*}
        \norm{\calT(L_\tau f)-\calT f}_{\calH}\leq C_s \norm{\tau}_{L^\infty}^{\min\{s,1\}}\norm{f}_{H^s}
    \end{equation*}
    whenever $f\in H^s(\R^2)$, $\tau\in C_b^1(\R^2;\R^2)$, $\norm{D\tau}_{L^\infty}\leq\frac12$, and $\norm{\tau}_{L^\infty}\leq1$. The exponent $\min\{s,1\}$ is optimal for the corresponding input-space estimate: for the constant deformation $\tau\equiv y$,
    \begin{equation*}
        \norm{L_y-\Id}_{H^s\to L^2}\asymp_s\abs{y}^{\min\{s,1\}}\qquad\text{as }\abs{y}\downarrow0.
    \end{equation*}
\end{corollary}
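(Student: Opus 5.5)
The upper bound will come from Proposition~\ref{prop:architecture-blind-deformation-stability} with the weight $m(\xi)=\la\xi\ra^s$. For this weight $\norm{f}_{H_m}=\norm{f}_{H^s}$ and $m_\ast(R)\geq R^s$ for $R\geq1$. Take $\kappa=\frac12$ and $w=\norm{\tau}_{L^\infty}\leq1$; the proposition then gives
\[
\norm{\calT(L_\tau f)-\calT f}_{\calH}\leq 4\pi\inf_{R\geq1}\bigl(wR+R^{-s}\bigr)\norm{f}_{H^s}.
\]
We split into two cases. If $0<s\leq1$, choose $R=w^{-1/(1+s)}$ when $w>0$; this is $\geq1$. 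Both terms are then $w^{s/(1+s)}$, which is too weak. So instead of using the proposition as a black box, we repeat its low/high splitting directly. The low-frequency part satisfies $\norm{L_\tau f_{\leq R}-f_{\leq R}}_2\lesssim w\norm{\nabla f_{\leq R}}_2\lesssim wR^{1-s}\norm{f}_{H^s}$, using $\abs{\xi}\leq R^{1-s}\la\xi\ra^s$ on $B_R$. The high-frequency part is bounded by $\lesssim R^{-s}\norm{f}_{H^s}$ via Lemma~\ref{lem:composition-small-deformation}. Choosing $R=w^{-1}$ balances both at $w^s$. If $s>1$, use $\norm{f}_{H^1}\leq\norm{f}_{H^s}$ and Koller's-type bound, i.e.\ the $R=\infty$ case of the low-frequency estimate. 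The case $w=0$ is trivial. Nonexpansiveness then transfers the bound to $\calT$.

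For optimality, the upper bound $\norm{L_y-\Id}_{H^s\to L^2}\lesssim\abs{y}^{\min\{s,1\}}$ follows from the above with the identity map. It is also immediate from Plancherel: the multiplier is $\abs{e^{-2\pi\iu y\cdot\xi}-1}\la\xi\ra^{-s}\leq\min\{2\pi\abs{y}\abs{\xi},2\}\la\xi\ra^{-s}$. Its supremum is exactly the operator norm, so it remains to bound that supremum from below.

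For $s\leq1$, evaluate at $\xi=\frac{y}{4\abs{y}^2}$. There $y\cdot\xi=\frac14$, so $\abs{e^{-\pi\iu/2}-1}=\sqrt2$, and $\la\xi\ra^{-s}\asymp\abs{y}^{s}$ for small $\abs{y}$. This gives $\gtrsim\abs{y}^s$.

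For $s>1$, evaluate at a fixed $\xi$ of size $1$ parallel to $y$. There $\abs{e^{-2\pi\iu y\cdot\xi}-1}\asymp\abs{y}$, so the norm is $\gtrsim\abs{y}$. This matches the upper bound with exponent $\min\{s,1\}$.

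The only delicate point is realising that the crude infimum formula from the proposition loses the exponent; the fix is to track $\norm{\nabla f_{\leq R}}_2$ with the sharper $R^{1-s}$ factor.
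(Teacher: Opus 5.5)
Your proof is correct. The optimality part matches the paper: Plancherel plus a test frequency. The paper uses $\xi=(2\abs{y})^{-1}y/\abs{y}$ and $\xi=y/\abs{y}$, where you use $\xi=y/(4\abs{y}^2)$ and a unit vector parallel to $y$; the difference is immaterial.

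For the upper bound the routes differ. The paper records the two endpoint bounds $\norm{L_\tau-\Id}_{L^2\to L^2}\leq C$ and $\norm{L_\tau-\Id}_{H^1\to L^2}\leq C\norm{\tau}_{L^\infty}$. It then invokes complex interpolation for $0<s<1$ and the embedding $H^s\hookrightarrow H^1$ for $s\geq1$. You instead redo the frequency splitting of Proposition~\ref{prop:architecture-blind-deformation-stability} by hand at the cutoff $R=\norm{\tau}_{L^\infty}^{-1}$. The key is that you keep the factor $R^{1-s}$ in $\norm{\nabla f_{\leq R}}_{L^2}$, which the proposition throws away by bounding it by $R\norm{f}_{H_m}$. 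Your diagnosis is correct: used as a black box, the proposition only gives the H\"older exponent $s/(1+s)$.

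In effect your argument is a hands-on $K$-functional (real interpolation) proof. What it buys:
\begin{itemize}
\item It is elementary and gives explicit constants.
\item It explains why the architecture-blind modulus $h$ of the proposition is not sharp for power weights.
\item It only uses $\norm{f_{>R}}_{L^2}\lesssim R^{-s}$ and $\norm{\nabla f_{\leq R}}_{L^2}\lesssim R^{1-s}$, so it extends with no real change to larger input classes such as $B^s_{2,\infty}$ for $0<s<1$.
\end{itemize}
The paper's version is shorter because it black-boxes the interpolation identity $[L^2,H^1]_s=H^s$.
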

\begin{proof}
    The change-of-variables estimate used above and the fundamental theorem of calculus give, for some $C>0$,
    \begin{equation*}
        \norm{L_\tau-\Id}_{L^2\to L^2}\leq C,\qquad \norm{L_\tau-\Id}_{H^1\to L^2}\leq C \norm{\tau}_{L^\infty}.
    \end{equation*}
    Complex interpolation yields $\norm{L_\tau-\Id}_{H^s\to L^2}\leq C_{s}\norm{\tau}_{L^\infty}^s$ for $0<s<1$, while the case $s\geq1$ follows from the $H^1$ estimate and the embedding $H^s(\R^2)\hookrightarrow H^1(\R^2)$. Nonexpansiveness of $\calT$ gives then the first assertion. 

    For a constant translation $y\in\R^2$, Plancherel's theorem yields
    \begin{equation*}
        \norm{L_y-\Id}_{H^s\to L^2}=\sup_{\xi\in\R^2}\frac{\abs{e^{-2\pi\iu y\cdot\xi}-1}}{(1+\abs{\xi}^2)^{s/2}}.
    \end{equation*}
    The claimed upper estimate then follows in the case where $0<s\leq1$ from $\abs{e^{-\iu t}-1}\lesssim\min\{1,\abs{t}\}$, and in the case $s>1$ again from the embedding $H^s(\R^2)\hookrightarrow H^1(\R^2)$. For $0<s\leq1$, the matching lower estimate is obtained by taking $\xi=(2\abs{y})^{-1}y/\abs{y}$; for $s\geq1$, it follows by taking $y/\abs{y}$.
\end{proof}

\begin{remark}[Extension to higher dimensions]
    Although the discussion in this section has been formulated on $\R^2$ for consistency with the remainder of the paper, none of the arguments is dimension-specific. The results extend verbatim to $\R^d$, $d\geq1$, with constants that may additionally depend on the dimension.
\end{remark}

\section{Wave-packet scattering filters}
\label{sec:filters}

Let us introduce the setting from which relevant scattering filters are drawn. In short, we start by outlining a frequency covering structure supplying the geometry, and then place a Parseval scattering filter bank on it.

\subsection{The covering \texorpdfstring{$\calQ^{(\alpha,\beta)}$}{Q alpha beta}}

We follow the wave-packet frequency covering introduced by Bytchenkoff and Voigtlaender in \cite{BytchenkoffVoigtlaender2020}, with minor modifications as needed below. First, fix once and for all parameters
\[0\leq \beta\leq \alpha\leq 1, \qquad \varepsilon_0\in(0,1/32),\]
and set
\begin{equation*}
    Q=(-\varepsilon_0,1+\varepsilon_0)\times(-1-\varepsilon_0,1+\varepsilon_0), \qquad P=[0,1]\times[-1,1].
\end{equation*}
The set $Q$ is the fixed open model rectangle used to generate the covering, while $P$ is the companion closed core rectangle. The small parameter $\varepsilon_0$ is not optimized here, but it is chosen to guarantee that the slightly enlarged normalized rectangles have the intended overlap pattern.

For $j\in\N$ we define
\begin{equation*}
    m_j^{\max} \coloneqq \left\lceil 2^{(1-\alpha)j-1}\right\rceil, \qquad \ell_j^{\max} \coloneqq \left\lceil 10\cdot 2^{(1-\beta)j}\right\rceil.
\end{equation*}
Denote the high-frequency index set in this setting by
\begin{equation*}
    I_\ast=I_\ast^{(\alpha,\beta)} \coloneqq \set*{(j,m,\ell)\in \N\times \N_0\times \N_0 \given m\leq m_j^{\max},\ \ell\leq \ell_j^{\max}}.
\end{equation*}
In particular, at generation $j$ we also set
\begin{equation*}
    I_{\ast,j}\coloneqq \set*{(j,m,\ell)\in I_\ast \given m \le  m_j^{\max},\ \ell \le  \ell_j^{\max}}.
\end{equation*}
For every $m \in \N_0$ such that $m\leq m_j^{\max}$ we introduce
\begin{equation*}
    A_j= \begin{pmatrix} 2^{\alpha j}&0\\ 0&2^{\beta j} \end{pmatrix}, \qquad c_{j,m}=\binom{2^{j-1}+m2^{\alpha j}}{0}.
\end{equation*}
Moreover, for every $\ell\in \N_0$ such that $\ell\leq \ell_j^{\max}$, set
\begin{equation*}
    \phi_j \coloneqq \frac{\pi}{10}\,2^{(\beta-1)j}, \qquad \theta_{j,\ell} \coloneqq 2\ell\phi_j,
\end{equation*}
and let
\begin{equation*}
    R_{j,\ell}=\begin{pmatrix} \cos\theta_{j,\ell}&-\sin\theta_{j,\ell}\\ \sin\theta_{j,\ell}&\cos\theta_{j,\ell} \end{pmatrix}
\end{equation*}
denote the rotation by $\theta_{j,\ell}$. For each $i=(j,m,\ell)\in I_\ast$ we define
\begin{equation*}
    Q_i=Q_{j,m,\ell} \coloneqq R_{j,\ell}(A_jQ+c_{j,m}), \qquad P_i=P_{j,m,\ell} \coloneqq R_{j,\ell}(A_jP+c_{j,m}).
\end{equation*}
Finally, set
\[Q_0 \coloneqq B_4(0), \qquad P_0 \coloneqq B_3(0),\]
and denote the full index set of the covering by
\[I=I^{(\alpha,\beta)}\coloneqq \{0\} \cup I_\ast.\]
The family
\[\bvQ\coloneqq (Q_i)_{i\in I}\]
is called \textit{$(\alpha,\beta)$-wave-packet covering} of $\R^2$. Let us record some properties of the covering $\bvQ$ proved in \cite[Lemmas 3.2 and 4.2]{BytchenkoffVoigtlaender2020} useful for our purposes. 
\begin{lemma}\label{lem:BV-covering-basic-properties}
    Let $0\leq\beta\leq\alpha\leq1$, and let $\bvQ= (Q_i)_{i\in I}$ and $\calP^{(\alpha,\beta)}=(P_i)_{i\in I}$ be the BV covering families. Then:
    \begin{enumerate}
        \item[(a)] The families $\calP^{(\alpha,\beta)}$ and $\calQ^{(\alpha,\beta)}$ cover the whole frequency plane:
        \begin{equation*}
            \R^2=P_0\cup\bigcup_{j=1}^{\infty}\bigcup_{m=0}^{m_j^{\max}}\bigcup_{\ell=0}^{\ell_j^{\max}}P_{j,m,\ell}=Q_0\cup\bigcup_{j=1}^{\infty}\bigcup_{m=0}^{m_j^{\max}}\bigcup_{\ell=0}^{\ell_j^{\max}}Q_{j,m,\ell}.
        \end{equation*}
        \item[(b)] The covering $\calQ^{(\alpha,\beta)}$ has uniformly finite overlap. More precisely,
        \begin{equation*}
            \calN (\calQ^{(\alpha,\beta)}) \coloneqq\sup_{i\in I^{(\alpha,\beta)}}\#\set*{i'\in I^{(\alpha,\beta)}\given Q_i\cap Q_{i'}\neq\varnothing}<\infty.
        \end{equation*}
        \item[(c)] If $(j,m,\ell),(j',m',\ell')\in I_\ast$, then
        \begin{equation*}
            Q_{j,m,\ell}\cap Q_{j',m',\ell'}=\varnothing\qquad\text{unless}\qquad \abs{j-j'}\leq3.
        \end{equation*}
        \item[(d)] Let $(j,m,\ell)\in I_\ast$ and $\xi\in Q_{j,m,\ell}$. Then
        \begin{equation*}
            2^{j-2}<2^{j-1}+2^{\alpha j}(m-\varepsilon_0)\leq\abs{\xi}\leq2^{j-1}+2^{\alpha j}(m+2+2\varepsilon_0)\leq2^j+2^{\alpha j}(3+2\varepsilon_0)<2^{j+3}.
        \end{equation*}
        In particular, the high-frequency part of the covering has a fixed gap from the origin:
        \[Q_i\cap B_{1/2}(0)=\varnothing,\qquad i\in I_\ast.\]
    \end{enumerate}
\end{lemma}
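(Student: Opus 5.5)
The plan is to prove the four items in the order (d), (c), (a), (b). The metric information in (d) is what drives both the separation in (c) and the counting in (b).

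\emph{Step 1: item (d).} Since $R_{j,\ell}$ is an isometry, it suffices to take $\xi=A_jq+c_{j,m}$ with $q\in Q$. Then
\begin{equation*}
    \xi_1\in 2^{j-1}+2^{\alpha j}(m-\varepsilon_0,\,m+1+\varepsilon_0),\qquad \abs{\xi_2}<2^{\beta j}(1+\varepsilon_0).
\end{equation*}
\begin{itemize}
    \item The lower bound is $\abs{\xi}\geq\xi_1$, together with $2^{j-1}-\varepsilon_0 2^{\alpha j}\geq 2^{j-1}(1-2\varepsilon_0)>2^{j-2}$, which uses $\varepsilon_0<1/32$.
    \item The upper bound is $\abs{\xi}\leq\xi_1+\abs{\xi_2}$, together with $2^{\beta j}\leq2^{\alpha j}$.
    \item The last two inequalities follow from $m\leq m_j^{\max}\leq 2^{(1-\alpha)j-1}+1$, so that $m2^{\alpha j}\leq 2^{j-1}+2^{\alpha j}$, and from $2^{\alpha j}\leq 2^j$.
    \item The gap $Q_i\cap B_{1/2}(0)=\varnothing$ is immediate, since $2^{j-2}\geq1/2$ for $j\geq1$.
\end{itemize}

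\emph{Step 2: item (c).} The crude annuli $(2^{j-2},2^{j+3})$ only exclude $\abs{j-j'}\geq5$, so I will use the sharper middle bounds of (d). Every point of a generation-$j$ tile satisfies
\begin{equation*}
    2^{j-1}(1-2\varepsilon_0)<\abs{\xi}\leq 2^j(4+2\varepsilon_0).
\end{equation*}
For $j'\geq j+4$ the inner radius of generation $j'$ is at least $2^{j+3}(1-2\varepsilon_0)>2^{j}(4+2\varepsilon_0)$. Hence the tiles are disjoint, which gives (c).

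\emph{Step 3: item (a).} This is the main obstacle, because it requires fitting rotated rectangles onto polar sectors. Since $Q_i\supset P_i$ and $Q_0\supset P_0$, only the $P$-statement needs proof.
\begin{itemize}
    \item \emph{Small radii.} Points with $\abs{\xi}<3$ lie in $P_0$.
    \item \emph{Radial direction.} For $r\geq3$, pick $j$ with $r\in[2^{j-1},2^{j})$. The intervals $[2^{j-1}+m2^{\alpha j},\,2^{j-1}+(m+1)2^{\alpha j}]$ with $0\leq m\leq m_j^{\max}$ cover $[2^{j-1},\,2^{j}+2^{\alpha j}]$. This leaves a radial slack of order $2^{\alpha j}$ beyond $2^j$.
    \item \emph{Angular direction.} The sector centres $\theta_{j,\ell}=2\ell\phi_j$ have spacing $2\phi_j$, and $2\phi_j(\ell_j^{\max}+1)\geq2\pi$. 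It therefore suffices that $R_{j,\ell}(A_jP+c_{j,m})$ contains all points of polar radius in the $m$-th interval and angle within $\phi_j$ of $\theta_{j,\ell}$.
    \item \emph{Transverse containment.} After rotating back, such a point has $\abs{\xi_2}\leq r\sin\phi_j\leq 2^{j}\cdot\tfrac{\pi}{10}2^{(\beta-1)j}<2^{\beta j}$, so it lies within the transverse extent of $A_jP$.
    \item \emph{Curvature correction.} The radial coordinate is $\xi_1=r\cos\varphi$, which is smaller than $r$ by at most $r\phi_j^2/2\lesssim 2^{(2\beta-1)j}$. I expect the small constant $(\pi/10)^2$ to be what closes this step. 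The idea is to absorb the shift by choosing $m$ according to $\xi_1$ rather than $r$, using the slack at the top of the range. At the bottom of the range, points with $\xi_1$ slightly below $2^{j-1}$ have $r<2^{j-1}(1+\phi_j^2)$, and I would cover them with tiles of generation $j-1$, whose radial range extends past $2^{j-1}$ by $2^{\alpha(j-1)}$. This requires $2^{(2\beta-1)j}\lesssim 2^{\alpha(j-1)}$ with a good constant; this holds because $2\beta-1\leq\alpha$, with care needed at the endpoint $\beta=\alpha=1$.
    \item \emph{Fallback.} If the constants turn out too tight, I would instead cite \cite[Lemma 3.2]{BytchenkoffVoigtlaender2020}, adjusting only for our modified normalisations.
\end{itemize}

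\emph{Step 4: item (b).} Fix $i=(j,m,\ell)$.
\begin{itemize}
    \item By (c), only generations $j'$ with $\abs{j-j'}\leq3$ can meet $Q_i$. By (d), $Q_0$ meets only tiles with $2^{j-2}<4$, which is a bounded set of indices.
    \item For fixed $j'$, the tile $Q_i$ lies in the radial band $\abs{\xi}\in[2^{j-1}+2^{\alpha j}(m-1),\,2^{j-1}+2^{\alpha j}(m+3)]$ of width $\asymp2^{\alpha j}\asymp2^{\alpha j'}$. Hence only $O(1)$ values of $m'$ are possible.
    \item The tile $Q_i$ subtends an angle $\lesssim 2^{\beta j}/2^{j}\asymp\phi_{j'}$, so only $O(1)$ values of $\ell'$ are possible, modulo the wrap-around at $2\pi$.
\end{itemize}
Multiplying these bounds gives a uniform bound on $\calN(\calQ^{(\alpha,\beta)})$.
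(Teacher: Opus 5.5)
Your argument is correct, but it takes a genuinely different route from the paper. The paper gives no proof of this lemma: it imports (a)--(d) directly from \cite[Lemmas 3.2 and 4.2]{BytchenkoffVoigtlaender2020}, which is what you list only as a fallback.

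\textbf{What your route buys.} The paper says it uses the Bytchenkoff--Voigtlaender covering ``with minor modifications'': $\varepsilon_0<1/32$, $\phi_j=\frac{\pi}{10}2^{(\beta-1)j}$, $Q_0=B_4(0)$, $P_0=B_3(0)$. The explicit numerical chain in (d) and the constant $3$ in (c) are statements about this particular normalization, and your Steps 1--2 check them directly. You are also right that the crude annuli from (d) only give $\abs{j-j'}\le4$. The sharper bounds $2^{j-1}(1-2\varepsilon_0)<\abs{\xi}<2^j(4+2\varepsilon_0)$ are what yield $3$, since $8(1-2\varepsilon_0)>4+2\varepsilon_0$.

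\textbf{Step 3 closes comfortably.} Your Step 3 closes with room to spare, so the fallback is not needed. It is cleaner to organize it by annuli:
\begin{itemize}
\item Generation $j$ alone covers the whole annulus $2^{j-1}\sec\phi_j\le\abs{\xi}\le2^j+2^{\alpha j}$. In the rotated frame the radial coordinate $r\cos\psi$ lies in $[2^{j-1},\,2^j+2^{\alpha j}]$, and the transverse coordinate is at most $r\phi_j\le\frac{2\pi}{10}2^{\beta j}<2^{\beta j}$.
\item Consecutive annuli overlap, because
\[
2^j(\sec\phi_{j+1}-1)\le2^j\phi_{j+1}^2\le(\pi/10)^2\,2^{(2\beta-1)j}\le(\pi/10)^2\,2^{\alpha j}.
\]
\item Together with $P_0=B_3(0)\supset B_{\sec\phi_1}(0)$, this gives (a).
\end{itemize}
The last inequality uses $2\beta-1\le\beta\le\alpha$, which holds for every admissible pair. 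So no special care is needed at the corner $(\alpha,\beta)=(1,1)$.

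\textbf{Step 4.} Your counting argument is fine as sketched. It is uniform in $j$ because every estimate involved is scale-invariant up to absolute constants.
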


Informally, at generation $j$ the tiles $Q_{j,m,\ell}$ are located at distance $\sim 2^j$ from the origin and have radial length comparable to $2^{\alpha j}$, transverse width comparable to $2^{\beta j}$, and angular step comparable to $2^{(\beta-1)j}$. Therefore, $\alpha$ controls the radial resolution, while $\beta$ controls the directional resolution, with smaller values of $\beta$ corresponding to finer angular localization. The wavelet corner is $(\alpha,\beta)=(1,1)$, whereas the genuinely directional wave-packet regimes are characterized by $\beta<1$.

The covering $\bvQ$ is almost structured \cite[Section 5]{BytchenkoffVoigtlaender2020}; specifically, this means that each tile is represented through an affine chart that satisfies certain additional properties \cite[Definition 2.1]{BytchenkoffVoigtlaender2020}. The most important for our purposes is the fact that, for every $i\in I_\ast$,
\begin{equation*}
    Q_i=\aff_i(Q), \qquad P_i=\aff_i(P),\qquad \aff_i \coloneqq  T_i \cdot + b_i,
\end{equation*}
where the affine map $\aff_i$ is described by
\begin{equation*}
    T_i=T_{j,m,\ell}=R_{j,\ell}A_j\in\mathrm{GL}(2,\R), \qquad b_i=b_{j,m,\ell}=R_{j,\ell}c_{j,m} \in\R^2.
\end{equation*}
Equivalently, the correspondence $\xi\mapsto T_i\xi+b_i$ maps the fixed model coordinates to the physical frequency coordinates of the tile $Q_i$.

\subsection{Wave-packet smoothness spaces}

In this section, we briefly recall how the wave-packet covering gives rise to the associated family of wave-packet smoothness spaces. 

Let $\Phi=(\varphi_i)_{i\in I}$ be a regular partition of unity subordinate to the covering $\bvQ=(Q_i)_{i\in I}$ in the sense of \cite[Definition 2.2]{BytchenkoffVoigtlaender2020}. This means that each $\varphi_i$ belongs to $C_c^\infty(\R^2)$ and
\begin{equation*}
    \supp\varphi_i\subset Q_i,\qquad \sum_{i\in I}\varphi_i\equiv 1\quad\text{on }\R^2.
\end{equation*}
Note that the sum is locally finite because $\bvQ$ is admissible in the sense of Lemma~\ref{lem:BV-covering-basic-properties}(b). With the same notation from above for the representation of the tiles through the specified affine chart, we write $\varphi_0^\natural\coloneqq \varphi_0$ as well as
\begin{equation*}
    \varphi_i^\natural(\xi) \coloneqq \varphi_i(T_i\xi+b_i),\qquad \xi\in\R^2, i\in I_\ast.
\end{equation*}
The regularity condition is
\begin{equation*}
    \sup_{i\in I}\|\partial^\gamma\varphi_i^\natural\|_{L^\infty}<\infty\qquad\text{for every }\gamma\in\mathbb N_0^2.
\end{equation*}
In particular, the smoothness degree of the partition is measured after pulling each cutoff back to the fixed model scale of its tile. As a result, although derivatives may grow in frequency variables as the tiles become thinner, they remain uniformly controlled in the normalized affine coordinates. 

In light of \cite[Lemma~6.1]{BytchenkoffVoigtlaender2020}, fix $s\in \R$ and $0<p,q\leq\infty$ and consider the scale weight
\begin{equation*}
    w_i^s=\begin{cases}2^{js}& \text{ if } i=(j,m,\ell)\in I_\ast,\\1 & \text{ if } i=0.\end{cases}
\end{equation*}
For a sequence $a=(a_i)_{i\in I}$ and $0<  q \le \infty$, we set
\begin{equation*}
    \norm{a}_{\ell^q_{w^s}} \coloneqq \left(\sum_{i\in I}\abs{w_i^s a_i}^q\right)^{1/q}, \qquad \|a\|_{\ell^\infty_{w^s}} \coloneqq \sup_{i\in I}\abs{w_i^s a_i}.
\end{equation*}
Following \cite[Definition~6.2]{BytchenkoffVoigtlaender2020}, the decomposition space $W_s^{p,q}(\alpha,\beta) \coloneqq \calD (\bvQ,L^p,\ell^q_{w^s})$ associated with the covering $\bvQ$ contains all temperate distributions $g\in\calS'(\R^2)$ such that
\begin{equation*}
    \left(\|\calF ^{-1}(\varphi_i\widehat g)\|_{L^p}\right)_{i\in I}\in\ell^q_{w^s}.
\end{equation*}
Equivalently,
\begin{equation*}
    \norm{g}_{\calD(\bvQ,L^p,\ell^q_{w^s})} \coloneqq \norm{\left(\norm{\calF ^{-1}(\varphi_i\widehat g)}_{L^p}\right)_{i\in I}}_{\ell^q_{w^s}}<\infty.
\end{equation*}
Different regular partitions of unity subordinate to the same almost structured covering induce equivalent norms, so the notation does not record the particular choice of $\Phi$. For $0<p,q\leq\infty$, this construction defines a quasi-Banach space in general, in fact a Banach space whenever $1\leq p,q\leq\infty$. Our main interest lies in the Hilbertian case $p=q=2$, where we have the following collapse to the Sobolev scale: 

\begin{lemma}[{\cite[Theorem~6.6]{BytchenkoffVoigtlaender2020}}]\label{lem:hilbertian-identification}
    For every $s\in\R$ we have $W_s^{2,2}(\alpha,\beta)=H^s(\R^2)$ with equivalent norms.
\end{lemma}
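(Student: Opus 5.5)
Since $W_s^{2,2}(\alpha,\beta)$ is defined through the pieces $g_i=\calF^{-1}(\varphi_i\wh g)$ with weights $w_i^s$, Plancherel gives
\begin{equation*}
    \norm{g}_{W_s^{2,2}}^2=\sum_{i\in I}(w_i^s)^2\int_{\R^2}\abs{\varphi_i(\xi)}^2\abs{\wh g(\xi)}^2\dxi=\int_{\R^2}\Omega_s(\xi)\abs{\wh g(\xi)}^2\dxi,
\end{equation*}
where
\begin{equation*}
    \Omega_s(\xi)\coloneqq\sum_{i\in I}(w_i^s)^2\abs{\varphi_i(\xi)}^2 .
\end{equation*}
The identification $W_s^{2,2}(\alpha,\beta)=H^s(\R^2)$ with equivalent norms therefore reduces to the pointwise two-sided estimate
\begin{equation*}
    \Omega_s(\xi)\asymp_s\la\xi\ra^{2s}\qquad\text{for all }\xi\in\R^2.
\end{equation*}
Once this is proved, the claim follows immediately for tempered distributions, both the inclusion and the norm equivalence. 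Independence of the chosen regular partition of unity is already recorded, so I may fix any one.

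For the upper bound, fix $\xi$. By Lemma~\ref{lem:BV-covering-basic-properties}(b), at most $\calN(\bvQ)$ indices $i$ satisfy $\varphi_i(\xi)\neq0$. Each $\abs{\varphi_i}$ is bounded uniformly in $i$, since $\sup_i\norm{\varphi_i^\natural}_{L^\infty}<\infty$ and $\norm{\varphi_i}_{L^\infty}=\norm{\varphi_i^\natural}_{L^\infty}$. By Lemma~\ref{lem:BV-covering-basic-properties}(d), whenever $\xi\in Q_{j,m,\ell}$ we have $2^{j-2}<\abs{\xi}<2^{j+3}$, so $w_i^s=2^{js}\asymp_s\la\xi\ra^s$ for every such $i$. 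If $\xi\in Q_0=B_4(0)$, then $w_0=1\asymp\la\xi\ra^s$ with constants depending on $s$. Summing at most $\calN(\bvQ)$ terms gives $\Omega_s(\xi)\lesssim_s\la\xi\ra^{2s}$.

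For the lower bound, I use $\sum_i\varphi_i\equiv1$. The sum at $\xi$ has at most $\calN$ nonzero terms, so Cauchy--Schwarz gives
\begin{equation*}
    1\leq\calN\sum_i\abs{\varphi_i(\xi)}^2 .
\end{equation*}
Every $i$ with $\varphi_i(\xi)\neq0$ has $(w_i^s)^2\gtrsim_s\la\xi\ra^{2s}$ by the same comparison as above. Hence
\begin{equation*}
    \Omega_s(\xi)\gtrsim_s\la\xi\ra^{2s}\sum_i\abs{\varphi_i(\xi)}^2\geq\calN^{-1}\la\xi\ra^{2s}.
\end{equation*}

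The only step needing care is the uniform comparison $w_i^s\asymp\la\xi\ra^s$ across all tiles containing $\xi$. This includes the transition region where $Q_0$ overlaps generation-$j$ tiles with small $j$, and holds for negative $s$ as well. It is handled by the explicit radial bounds in Lemma~\ref{lem:BV-covering-basic-properties}(d), and by the fact that only finitely many generations, namely $j\leq4$, meet $B_4(0)$, so on that region both sides are bounded above and below. I expect no genuine obstacle beyond this bookkeeping.
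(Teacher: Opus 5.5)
Your argument is correct, but it differs from the paper's route only in that the paper has no route of its own. It gives no argument for this lemma and simply imports it from \cite[Theorem~6.6]{BytchenkoffVoigtlaender2020}, where it is part of the general decomposition-space theory developed there. That framework also supplies independence of the partition of unity and the comparison of different coverings.

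You verify the statement directly. By Plancherel, the $W^{2,2}_s$-norm is a Fourier-side weighted $L^2$ norm with weight $\Omega_s(\xi)=\sum_i (w_i^s)^2\abs{\varphi_i(\xi)}^2$. The two-sided bound $\Omega_s\asymp_s\la\xi\ra^{2s}$ then follows from four ingredients:
\begin{itemize}
\item bounded overlap of the covering;
\item the uniform bound $\norm{\varphi_i}_{L^\infty}=\norm{\varphi_i^\natural}_{L^\infty}$;
\item Cauchy--Schwarz applied to $\sum_i\varphi_i\equiv1$;
\item the radial localization $2^{j-2}<\abs{\xi}<2^{j+3}$ on $Q_{j,m,\ell}$.
\end{itemize}
Your route is elementary and self-contained, using only facts already recorded in the paper. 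It also shows why the anisotropy parameters $(\alpha,\beta)$ play no role in the Hilbertian case: only the radial position of a tile enters, through $w_i^s=2^{js}$. The citation, by contrast, places the result inside a theory that also covers the non-Hilbertian spaces $W^{p,q}_s(\alpha,\beta)$, where the tile geometry genuinely matters.

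You pass too quickly over general tempered distributions. If $g\in\calS'$ has finite $W^{2,2}_s$-norm, then each $\varphi_i\wh g$ lies in $L^2$. By local finiteness, $\wh g=\sum_i\varphi_i\wh g$ is a locally finite sum of $L^2$ functions, hence an $L^2_{\mathrm{loc}}$ function. Only then is $\int\Omega_s\abs{\wh g}^2$ meaningful and the comparison with $\norm{\la\xi\ra^s\wh g}_{L^2}$ legitimate. With that sentence added, the proof is complete. Your bound $j\le4$ for tiles meeting $B_4(0)$ can be sharpened to $j\le3$, though this is immaterial.
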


\subsection{Scattering filter banks}

Let us now introduce an additional analytic layer on top of the decomposition-space framework discussed so far: the construction in \cite{BytchenkoffVoigtlaender2020} fixes the geometry, but in order to build a scattering structure we need to design Fourier multipliers supported in the tiles. While the underlying philosophy of decomposition spaces is that they are determined, up to equivalent norms, by the geometry of the covering rather than by a particular choice of subordinate filters. The picture is generally quite different when it comes to scattering transforms, in light of the sensitivity of extracted scattering features from filter phases and exact profiles. To resolve this dichotomy, let us first clarify what we mean by a scattering filter bank subordinate to $\bvQ$.

\begin{definition} \label{def:fixed-channel-bank}
    Let $\bvQ=(Q_i)_{i\in I}$ be the wave-packet covering introduced above. A \textit{wave-packet scattering bank} subordinate to $\bvQ$ consists of a low-pass filter $\chi\in L^1(\R^2)\cap L^2(\R^2)$ and high-pass filters $\Psi=(\psi_i)_{i\in I_\ast}$ in $L^1(\R^2)\cap L^2(\R^2)$ with Fourier multipliers $m_i\coloneqq \widehat\psi_i$ such that
    \begin{align}
        \label{eq:LPcondition} \abs{\widehat\chi(\xi)}^2+\sum_{i\in I_\ast}\abs{m_i(\xi)}^2=1\qquad\text{for a.e. }\xi\in\R^2.
    \end{align}
    Subordination to the covering here means that the filters are supported in the covering tiles:
    \begin{equation}\label{eq:supp-sub-transfer}
        \supp \wh{\chi} \subset Q_0, \qquad \supp m_i\subset Q_i \quad \text{for all } i\in I_\ast.
    \end{equation}
\end{definition}

Let us now recall the main aspects of the scattering architecture. We regard paths of length $n\geq1$ over the alphabet $I_\ast$ as tuples $p=(i_1,\ldots,i_n)\in I_\ast^n$. Further, set $I_\ast^0\coloneqq\{e\}$, where $e$ denotes the empty path, and write
\[I_\ast^\ast\coloneqq\bigcup_{n\geq0}\PathDepth{n}.\]
For $f\in L^2(\R^2)$, we define $U[e]f\coloneqq f$; if $p=(i_1,\ldots,i_n)\in I_\ast^n$ with $n\geq1$, we set
\begin{equation*}
    U[p]f\coloneqq U[i_n]\cdots U[i_1]f,\qquad U[i]g\coloneqq \abs{g*\psi_i}.
\end{equation*}
Young's convolution inequality ensures that $U[p]f\in L^2(\R^2)$. For a path set $P\subset I_\ast^\ast$, we write
\begin{equation*}
    U[P]f\coloneqq(U[p]f)_{p\in P},\qquad\ScatSet{P}f\coloneqq(U[p]f*\chi)_{p\in P}, \qquad U_nf\coloneqq U[\PathDepth{n}]f.
\end{equation*}
Finally, setting
\begin{equation*}
    \PathsLe{N}\coloneqq\bigcup_{0\leq n\leq N}\PathDepth{n},
\end{equation*}
we can introduce the full, truncated, and first-layer scattering transforms respectively:
\begin{equation*}
    \Scat f\coloneqq\ScatSet{\PathsAll}f,\qquad\ScatLe{N}f\coloneqq\ScatSet{\PathsLe{N}}f,\qquad\ScatOne f\coloneqq\ScatSet{\PathDepth{1}}f.
\end{equation*}
Equivalently,
\begin{equation*}
    \Scat f=(U[p]f*\chi)_{p\in\PathsAll},\qquad \ScatOne f=(\abs{f*\psi_i}*\chi)_{i\in I_\ast}.
\end{equation*}
All scattering output norms are $\lp$ norms over the displayed path set.

The Littlewood--Paley condition \eqref{eq:LPcondition} on the Fourier multipliers of the filters guarantees that the filters form a semi-discrete Parseval frame. This means that every $h\in L^2(\R^2)$ satisfies the energy balance 
\begin{align*}
    \norm{h*\chi}_{L^2}^2+\sum_{i\in I_\ast}\norm{h*\psi_i}_{L^2}^2=\norm h_{L^2}^2.
\end{align*}
In particular, the one-layer map
\[\calW \colon h\longmapsto (h*\chi,(h*\psi_i)_{i\in I_\ast})\]
is an isometry $L^2(\R^2) \to \ell^2(I;L^2(\R^2))$. Consequently, replacing each high-pass output by its modulus gives the estimate
\begin{equation*}
    \norm{\abs{f*\psi_i}-\abs{g*\psi_i}}_{L^2}\leq\norm{(f-g)*\psi_i}_{L^2},\qquad i\in I_\ast,
\end{equation*}
showing that the scattering cascade is nonexpansive in $L^2$ along descendant subtrees. In particular, the first layer is nonexpansive:
\begin{equation}\label{eq:first-layer-nonexpansive-transfer}
    \norm{\ScatOne f-\ScatOne g}_{\lp}\leq \norm{f-g}_{L^2}.
\end{equation}

\subsection{A smooth Parseval model class} \label{subsec:designed-model}

For the sake of completeness, let us show that the setting built above is not trivial by providing an explicit construction of a wave-packet scattering bank subordinate to the covering $\bvQ$. The construction is deliberately not canonical and allows for an arbitrary phase in the generator to emphasize that, for scattering
transforms, the filter bank contains more information than just the underlying covering.

We define the high-pass filter bank starting from a single generator in Fourier domain: fix an arbitrary function $\gamma\in C_c^{\infty}(\R^2)$ with $0\leq \abs{\gamma}\leq 1$ and
\[|\gamma|\equiv 1 \text{ on } P, \qquad \supp\gamma\subset Q.\]
For every $i=(j,m,\ell)\in I_\ast$, $\xi\in \R^2$ define
\begin{equation*}
    \gamma_i(\xi)=\gamma_{j,m,\ell}(\xi)\coloneqq \gamma\!\left(A_j^{-1}\bigl(R_{j,\ell}^{-1}\xi-c_{j,m}\bigr)\right).
\end{equation*}
Thus,
\begin{equation*}
    \abs{\gamma_{i}}\equiv 1 \text{ on } P_{i},\qquad\supp\gamma_{i}\subset Q_{i}.
\end{equation*}
Next, fix any low-pass $b_0\in C_c^\infty(\R^2)$ such that $0\leq \abs{b_0}\leq 1$ and
\begin{equation*}
    \abs{b_0}\equiv 1 \text{ on } P_0=B_3(0),\qquad\supp b_0\subset Q_0=B_4(0).
\end{equation*}
At this stage, we have already defined a system $(b_0,(\gamma_i)_{i\in I_\ast})$ of compactly supported smooth functions in Fourier domain, which is subordinate to $\bvQ$. It remains to normalize the system to guarantee that the Littlewood--Paley condition \eqref{eq:LPcondition} is satisfied. 
To this end, define
\begin{equation*}
    S(\xi)\coloneqq \abs{b_0(\xi)}^2+\sum_{i\in I_\ast}|\gamma_{i}(\xi)|^2 \qquad \text{for all } \xi\in\R^2.
\end{equation*}
Since the covering is locally finite, this sum is locally finite. As a result of the construction and by Lemma~\ref{lem:BV-covering-basic-properties}, we have the pointwise bounds
\begin{equation*}
    1\leq S(\xi)\leq \calN\left(\bvQ\right)\qquad\text{for all } \xi\in\R^2.
\end{equation*}
This justifies the following normalization. We define the output-generating low-pass filter for the scattering transform on the Fourier-side by
\begin{equation*}
    \widehat{\chi}(\xi)\coloneqq \frac{b_0(\xi)}{\sqrt{S(\xi)}} \qquad\text{for all } \xi\in\R^2.
\end{equation*}
Likewise, we obtain our family of high-pass filters on the Fourier-side by letting
\begin{equation*}
    m_i(\xi)=\widehat{\psi}_{i}(\xi)\coloneqq \frac{\gamma_{i}(\xi)}{\sqrt{S(\xi)}} \qquad\text{for all } \xi\in\R^2, i\in I_\ast.
\end{equation*}
After normalization it still holds
\begin{equation*}
    \supp\widehat{\chi}\subset Q_0=B_4(0),\qquad\supp\widehat{\psi}_{i}\subset Q_{i}, \quad i\in I_\ast,
\end{equation*}
and by construction
\begin{equation*}
    \abs{\widehat{\chi}(\xi)}^2+\sum_{i\in I_\ast}\abs{m_{i}(\xi)}^2=1\qquad\text{for all } \xi\in\R^2.
\end{equation*}
Writing $\Psi\coloneqq \set*{\psi_{i}\given i\in I_\ast}$ for the high-pass filters,
the tuple $(\chi,\Psi)$ forms a wave-packet scattering bank in the sense of Definition~\ref{def:fixed-channel-bank}. Moreover, since $b_0$ and all $\gamma_{i}$, $i\in I_\ast$ are smooth and compactly supported, and
since $S$ is smooth and bounded away from zero, all Fourier-side filters are again smooth and compactly supported.
Thus,
\[\chi,\psi_{i}\in\calS(\R^2)\subset L^1(\R^2)\cap L^2(\R^2).\]
\begin{remark}[Loss of exact covariance under Parseval normalization]
    Before normalization, the filters $\gamma_i$, $i=(j,m,\ell)\in I_\ast$ are generated from one
    single prototype by rotation, anisotropic dilation, and translation. In
    particular, for fixed $j$ and $m$, the angular copies are exactly related by
    rotations. After the Parseval normalization by $S^{-1/2}$, this exact covariance
    is generally lost: one only has
    \begin{equation*}
        \widehat\psi_{j,m,\ell}(U_{\ell,\ell'}\xi) = \left( \frac{S(\xi)}{S(U_{\ell,\ell'}\xi)} \right)^{1/2} \widehat\psi_{j,m,\ell'}(\xi),
    \end{equation*}
    where
    \[U_{\ell,\ell'}\coloneqq R_{j,\ell}R_{j,\ell'}^{-1}.\]
    The normalizing factor $S$ is uniformly bounded above and below, but it is not
    in general invariant under the angular rotations generating the filter bank.
    Thus the construction is Parseval and geometrically adapted, but not exactly
    generated by a single covariant high-pass prototype after normalization.
\end{remark}

\section{Geometric instability mechanisms}
\label{sec:L2-negative}

Consider from now on the setting introduced in Section \ref{sec:filters}. 

\subsection{Transverse channel swaps}
We now record the geometric displacement argument, which is the key technical ingredient in the instability mechanism. The argument is carried out in the narrower transverse direction of the frequency tiles: at frequency scale $2^j$, the transverse width of a tile is comparable to $2^{\beta j}$. Thus, one may separate the tile by applying a displacement of this order in a direction transverse to its radial direction. Since the carrier frequency itself has size comparable to $2^j$, such a transverse displacement is produced by a deformation of size $\asymp2^{-(1-\beta)j}$. 

\begin{lemma}\label{lem:BV-separation}
    Under the standing assumption $0\leq\beta\leq\alpha\leq1$, assume further that $\beta<1$. 

    There exist $j_0\in\N$, $r_0>0$, indices $i_j\in I_\ast$, and points $\xi_j\in P_{i_j}$, $j\geq j_0$, such that $\abs{\xi_j}\asymp2^j$ and
    \begin{equation}\label{eq:active-core-ball}
        B_{r_0}(\xi_j)\subset P_{i_j}\subset Q_{i_j}.
    \end{equation}
    Moreover, there are matrices $M_j\in\R^{2\times2}$ such that, for all $j\geq j_0$,
    \begin{equation}\label{eq:transverse-matrix-size}
        \norm{M_j}\asymp2^{-(1-\beta)j},\qquad \Id-M_j\in\mathrm{GL}(2,\R),
    \end{equation}
    and for every $0<\rho\leq r_0$, and all $i\in I$,
    \begin{equation}\label{eq:BV-separation}
        Q_i\cap B_\rho(\xi_j)\neq\varnothing\quad\Longrightarrow\quad Q_i\cap (\Id-M_j) B_\rho(\xi_j)=\varnothing.
    \end{equation}
\end{lemma}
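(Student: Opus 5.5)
The plan is to work with a single explicit tile on the positive horizontal axis and a deformation matrix that is an infinitesimal rotation. The separation \eqref{eq:BV-separation} will then come from an angular argument. Radial extents play no role, since every tile of generation $j'$ subtends an angle $\lesssim 2^{(\beta-1)j'}$ as seen from the origin.

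\emph{Choice of tile, point and radius.} Take $i_j=(j,0,0)$. Then $R_{j,0}=\Id$ and $P_{i_j}=[2^{j-1},2^{j-1}+2^{\alpha j}]\times[-2^{\beta j},2^{\beta j}]$. Set $\xi_j=(2^{j-1}+2^{\alpha j}/2,0)$ and $r_0=\tfrac14$. Since $2^{\alpha j},2^{\beta j}\geq1$, the ball $B_{r_0}(\xi_j)$ lies in $P_{i_j}\subset Q_{i_j}$, and clearly $|\xi_j|\asymp 2^j$, which gives \eqref{eq:active-core-ball}.

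\emph{Choice of $M_j$.} Let $J$ denote rotation by $\pi/2$, and set $M_j\coloneqq -t_jJ$ with $t_j\coloneqq K2^{\beta j}/|\xi_j|\asymp K2^{-(1-\beta)j}$. Here $K$ is a large absolute constant fixed below. Then $\Id-M_j=\Id+t_jJ$ has determinant $1+t_j^2>0$, so it is invertible, and $\norm{M_j}=t_j$, which gives \eqref{eq:transverse-matrix-size}. Moreover, $\Id+t_jJ$ is $\sqrt{1+t_j^2}$ times a rotation by the angle $\arctan t_j$. Hence $(\Id-M_j)B_\rho(\xi_j)$ is the ball of radius $\rho\sqrt{1+t_j^2}\leq 2\rho$ centred at the rotation of $\sqrt{1+t_j^2}\,\xi_j$ by $\arctan t_j\asymp K2^{(\beta-1)j}$.

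\emph{Angular bound for single tiles.} Let $\xi\in Q_{j',m,\ell}$ and write $R_{j',\ell}^{-1}\xi=(x,y)$. From the definition of $Q$, $x\geq 2^{j'-1}-\eps_0 2^{\alpha j'}>2^{j'-2}$ and $|y|\leq(1+\eps_0)2^{\beta j'}$. Hence the polar angle of $\xi$ differs from $\theta_{j',\ell}$ by at most $\arctan(|y|/x)\leq 8\cdot2^{(\beta-1)j'}$. Consequently, any two points of the same tile differ in angle by at most $16\cdot 2^{(\beta-1)j'}$.

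\emph{Conclusion.} Suppose some $Q_i$ meets both $B_\rho(\xi_j)$ and its image. Choose $j_0$ so large that $|\xi_j|>8$; then both balls lie outside $Q_0=B_4(0)$, so $i\in I_\ast$. Both balls also lie in the annulus $2^{j-2}<|\xi|<2^{j+3}$, and by Lemma~\ref{lem:BV-covering-basic-properties}(d) this forces $|j'-j|\leq 5$. Since $\beta\leq1$, the previous step then bounds the angular spread of $Q_i$ by $16\cdot 2^{5(1-\beta)}2^{(\beta-1)j}$. On the other hand, for $j$ large, a ball of radius $\leq 2r_0$ at distance $\sim2^j$ from the origin subtends an angle $\lesssim 2^{-j}\ll 2^{(\beta-1)j}$. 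So the angular gap between the two balls is at least $\arctan t_j-O(2^{-j})\geq cK2^{(\beta-1)j}$. Choosing $K>32\cdot2^{5(1-\beta)}/c$, and then $j_0$ large, gives a contradiction, which proves \eqref{eq:BV-separation}.

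The only delicate point is the bookkeeping of constants. $K$ must be fixed before $j_0$, and the generation window $|j'-j|\leq5$ must be obtained from part (d) rather than part (c), because (c) compares two tiles, not a tile and a point. The rest is elementary geometry.
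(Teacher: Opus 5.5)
Your argument is correct and is essentially the paper's: an infinitesimal rotation $M_j=\pm t_jJ$ with $t_j\asymp 2^{-(1-\beta)j}$ moves the active ball transversally by $\asymp K2^{\beta j}$, while every tile of a nearby generation has transverse extent $\lesssim 2^{\beta j}$ near the axis. You phrase this with polar angles instead of the paper's $e_2$-projection, which spares you the paper's separate bound on $\abs{\sin\Theta}$, and you use the tile $m=0$ instead of $m=m_j'$; both changes are harmless.

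One small correction to your bookkeeping concerns $\beta=0$. There the $O(2^{-j})$ angular width of the two balls is comparable to $2^{(\beta-1)j}$, not $\ll$ it, so it must be absorbed by enlarging $K$ rather than $j_0$. This works because the constant is absolute: $\rho/\abs{\xi_j}\leq 2^{-j-1}$.
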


\begin{proof}
    Throughout the proof, the symbols $\lesssim$, $\gtrsim$, and $\asymp$ refer to constants which are independent of the scale parameters, the indices of the covering, and the specific choice of radius $\rho\in (0,r_0]$.

    Let us first prove \eqref{eq:active-core-ball}.

    Fix $\xi_\ast \coloneqq (1/2,0)^\top \in \R^2$. We construct a sequence of radial interior cores in the direction $e_1$: if $m_j^{\max}$ denotes the largest radial index at scale $j$, choose $j_0\in \N$ so that
    \begin{equation*}
        0\leq \left\lfloor 2^{(1-\alpha)j-2}\right\rfloor\leq m_j^{\max} \quad \text{ for all } j\geq j_0.
    \end{equation*}
    For such $j$ then set
    \begin{equation*}
        m_j^{\prime} \coloneqq \left\lfloor 2^{(1-\alpha)j-2}\right\rfloor,\qquad i_j \coloneqq (j,m_j^{\prime},0),\qquad \xi_j \coloneqq \aff_{i_j}(\xi_\ast)=A_j\xi_\ast+c_{j,m_j^\prime}.
    \end{equation*}
    In particular, $i_j\in I_\ast$. Since $\ell=0$, the point $\xi_j$ lies on the positive $e_1$-axis, more precisely
    \begin{equation*}
        \xi_j=\left(2^{j-1}+\left(m_j^{\prime}+\frac12\right)2^{\alpha j}\right)e_1.
    \end{equation*}
    Let us write for convenience $\xi_j=\mu_je_1$, so that the choice of $m_j^{\prime}$ gives $\abs{\xi_j}=\mu_j\asymp2^j$. 
    Moreover, since $\xi_\ast\in P^\circ$, we can choose $r_0>0$ such that $B_{r_0}(\xi_\ast)\subset P$. Note that the linear part of $\aff_{i}$ is $R_{j,\ell}A_j$, whose least singular value is $2^{\beta j}\geq1$. Therefore,
    \begin{equation*}
        B_{r_0}(\aff_{i} (\xi_\ast))\subset \aff_{i}(B_{r_0}(\xi_\ast))\subset \aff_{i}(P)=P_i,
    \end{equation*}
    and choosing $i=i_j$ yields the claimed inclusions
    \begin{equation*}
        B_{r_0}(\xi_j)\subset P_{i_j}\subset Q_{i_j} \quad \text{ for all } j \ge j_0.
    \end{equation*}
    Let us now turn our attention to \eqref{eq:BV-separation}. After increasing $j_0$, if necessary, we may assume that $Q_0\cap B_{r_0}(\xi_j)=\varnothing$ for all $j\geq j_0$. Hence it remains to consider high-frequency indices $i=(k,m,\ell)\in I_\ast$.
    Recall that there are constants $0<c<C<\infty$ such that every high-frequency tile $Q_{k,m,\ell}$ is contained in the dyadic annulus
    \[\{\eta:c2^k\leq |\eta|\leq C2^k\}.\]
    As a result of this and since $\abs{\xi_j}\asymp2^j$, if a tile $Q_{k,m,\ell}$ intersects $B_\rho(\xi_j)$ (hence also the larger ball $B_{r_0}(\xi_j)$), then $\abs{k-j}\leq C_0$ for some constant $C_0>0$ independent of the specific choice $0<\rho\leq r_0$.
    Set $\Theta \coloneqq \theta_{k,\ell}$ and write points of $Q_{k,m,\ell}$ via the respective rotation matrix $R_\Theta$ as 
    $R_\Theta (x,t)^\top $
    with $x\gtrsim 2^k$ and $\abs{t}\lesssim 2^{\beta k}$. Choose then
    \[\eta=R_\Theta \binom{x}{t}\in Q_{k,m,\ell}\cap B_\rho(\xi_j),\]
    and note that we have $|\langle\eta,e_{2}\rangle|\leq \rho \leq r_0$, since $\xi_j$ lies on the positive $e_1$-axis. Therefore, we obtain
    \begin{equation*}
        \abs{x\sin\Theta}\leq |\langle\eta,e_{2}\rangle|+\abs{t\cos\Theta}\lesssim 2^{\beta k}.
    \end{equation*}
    Combining the previous estimate with $x\gtrsim 2^k$ and $\abs{k-j}\leq C_0$ gives
    $ \abs{\sin\Theta}\lesssim 2^{(\beta-1)j}$. 

    Let $\eta',\zeta'\in Q_{k,m,\ell}$. Before rotation their difference is a point in the plane with coordinates $(u,v)^\top \in \R^2$, where $\abs{u}\lesssim 2^{\alpha k}$ and $\abs{v}\lesssim 2^{\beta k}$. Hence
    \begin{equation*}
        |\langle\eta'-\zeta',e_{2}\rangle| \leq |u||\sin\Theta|+|v||\cos\Theta| \lesssim 2^{\alpha k}2^{(\beta-1)j}+2^{\beta k} \lesssim 2^{\beta j},
    \end{equation*}
    where in the last step we used $\abs{k-j}\leq C_0$ and $\alpha\leq1$. Consequently, there exists $C_1>0$ such that every tile $Q_{k,m,\ell}$ intersecting $B_\rho(\xi_j)$ satisfies
    \begin{equation*}
        \sup_{\eta,\zeta\in Q_{k,m,\ell}}|\langle\eta-\zeta,e_{2}\rangle|\leq C_1 2^{\beta j}.
    \end{equation*}
    We now design the deformation: set
    \begin{equation*}
        J \coloneqq \begin{pmatrix}0&-1\\1&0\end{pmatrix},\qquad M_j \coloneqq \pi_jJ,\qquad \pi_j \coloneqq L2^{-(1-\beta)j},
    \end{equation*}
    where $L>0$ will be fixed in a moment. We have $\norm{M_j}=\pi_j\asymp2^{-(1-\beta)j}$ and $\Id-M_j$ is invertible for every $j$, since $\det(\Id-\pi_jJ)=1+\pi_j^2$.

    Choose now $c_0>0$ such that $\abs{\xi_j}\geq c_02^j$ for all $j\geq j_0$, and choose $L$ so large to ensure
    \[c_0L>C_1+3r_0+1.\]
    Increasing $j_0$ once more, if necessary, we may also assume that
    $\pi_j=L2^{-(1-\beta)j}\leq 1$ for all $j\geq j_0$.
    This does not affect any of the preceding conclusions.

    For the sake of contradiction, suppose that some tile $Q_i=Q_{k,m,\ell}$ meets both $B_\rho(\xi_j)$ and $(\Id-M_j) B_\rho(\xi_j)$. Choose
    \begin{equation*}
        p\in Q_i\cap B_\rho(\xi_j),\qquad q\in Q_i\cap (\Id-M_j) B_\rho(\xi_j),
    \end{equation*}
    then $q=(\Id-M_j)\eta$ for some $\eta\in B_\rho(\xi_j)$. We have $J\xi_j=|\xi_j|e_{2}$ and thus, for all $j\geq j_0$,
    \begin{align}
        |\langle q-p,e_{2}\rangle|
        &=|\langle \eta-p,e_{2}\rangle-\pi_j\langle J\eta,e_{2}\rangle| \notag \\
        &\geq \pi_j|\xi_j|-\abs{\eta-\xi_j}-\abs{p-\xi_j}-\pi_j|\eta-\xi_j| \notag \\
        &\geq c_0L2^{\beta j}-3r_0.
    \end{align}
    On the other hand, since $p,q\in Q_i$, the transverse projection estimate above yields 
    \begin{align*}
        |\langle q-p,e_2\rangle|\leq C_1 2^{\beta j}.
    \end{align*}
    Combining the two estimates, we obtain
    \[(c_0L-C_1)2^{\beta j}\leq 3r_0.\]
    This is impossible by the choice of $L$, since $2^{\beta j}\geq 1$. Consequently, we derived the intended contradiction, and the separation property follows.
\end{proof}

\begin{remark}[Transverse versus radial mechanisms] \label{rm:transverseVSradialDisplacement}
    The proof above exploits a \textit{transverse} displacement of the wave-packet geometry. Likewise, a similar geometric displacement argument in \textit{radial} direction applies whenever $\alpha<1$: at frequency radius $2^j$ a tile has radial length comparable to $2^{\alpha j}$. Therefore, a deformation whose frequency action displaces a packet radially by $2^{\alpha j}$ can move a fixed packet core outside every channel that intersects the original one. 

    Although the displacement argument in the radial direction is technically somewhat more straightforward, the transverse formulation used above has two advantages. First, it covers the full admissible parameter range
    \[0\leq \beta\leq \alpha\leq 1,\qquad \beta<1,\]
    in which the instability mechanism applies; the excluded endpoint $(\alpha,\beta)=(1,1)$ is precisely the wavelet geometry, where stability is known. Second, the deformation required in the transverse construction has size only of order $2^{-(1-\beta)j}$. Whenever $\beta<\alpha$, this is strictly smaller than the corresponding radial deformation scale $2^{-(1-\alpha)j}$. Thus the transverse argument detects the sharper instability scale, in agreement with the critical Sobolev threshold $H^{1-\beta}$, which is essentially both necessary and sufficient for Lipschitz stability.
\end{remark}

\subsection{Transfer to scattering stability and linear commutator estimates}

Let us show how to turn the geometric channel swap phenomenon into a genuine scattering separation result. We will focus on the first layer output 
\begin{equation}\label{eq:first-layer}
    \ScatOne f=(\abs{f*\psi_i}*\chi)_{i\in I_\ast},
\end{equation}
with output norm denoted by $\norm{\cdot}_{\lp}$. Note that any lower bound obstructing stability at the level of the first layer automatically propagates to the full scattering transform, whenever the output norm dominates the contribution of the first-layer coefficients. In other words, if stability fails for the first-order coefficients, then it also fails for the scattering transform as a whole.

Throughout the remainder of this section, $(i_j,\xi_j)_{j\geq j_0}$, $r_0$, and $(M_j)_{j\geq j_0}$ denote, respectively, the active-core sequence, the active-core radius, and the transverse matrices furnished by Lemma~\ref{lem:BV-separation} and its proof. The basic hypothesis needed to convert the geometric channel swap into a quantitative lower bound for the scattering transform is the following.

\begin{assumption}[Active core condition (AC)]\label{ass:active-core}
    There exists
    $c_{\rm act}>0$ such that, for all $j\geq j_0$,
    \begin{equation*}
        \abs{m_{i_j}(\xi)}\geq c_{\rm act} \qquad \text{for all } \xi\in B_{r_0}(\xi_j).
    \end{equation*}
\end{assumption}

We record that, for filter banks subordinate to the wave-packet covering, the active-core condition is not an additional hypothesis but follows directly from the underlying support geometry.

\begin{proposition}\label{prop:model-ACass}
    The smooth Parseval model class from Section~\ref{subsec:designed-model} satisfies \textup{(AC)}.  
\end{proposition}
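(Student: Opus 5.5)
We need to show $\abs{m_{i_j}(\xi)}\geq c_{\rm act}$ on $B_{r_0}(\xi_j)$, where $m_{i_j}=\gamma_{i_j}/\sqrt{S}$. The plan is to bound the numerator from below and the normalizer from above, separately and pointwise on the ball.

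For the numerator, recall from the proof of Lemma~\ref{lem:BV-separation} that $B_{r_0}(\xi_j)\subset P_{i_j}$ for all $j\geq j_0$. By construction $\abs{\gamma}\equiv1$ on $P$, and $\gamma_{i_j}=\gamma\circ\aff_{i_j}^{-1}$ with $\aff_{i_j}(P)=P_{i_j}$. Hence $\abs{\gamma_{i_j}}\equiv1$ on $P_{i_j}$, and in particular on the ball.

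For the normalizer, Lemma~\ref{lem:BV-covering-basic-properties}(b) gives the pointwise bound $S(\xi)\leq\calN(\bvQ)$. This holds because each term $\abs{b_0}^2$, $\abs{\gamma_i}^2$ is at most $1$ and is supported in the corresponding $Q_i$, so at any point at most $\calN(\bvQ)$ terms are nonzero. Combining the two bounds,
\[\abs{m_{i_j}(\xi)}=\frac{\abs{\gamma_{i_j}(\xi)}}{\sqrt{S(\xi)}}\geq\calN(\bvQ)^{-1/2}\eqqcolon c_{\rm act}\qquad\text{for all }\xi\in B_{r_0}(\xi_j),\ j\geq j_0.\]
The constant is independent of $j$ because $\calN(\bvQ)$ is finite and uniform in $j$.

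There is no real obstacle here. The only points to check are the inclusion $B_{r_0}(\xi_j)\subset P_{i_j}$, which was already established using the least singular value $2^{\beta j}\geq1$ of $T_{i_j}$, and the fact that the counting bound on $S$ involves $\#\{i:\xi\in Q_i\}$, which is dominated by the overlap number $\calN(\bvQ)$.
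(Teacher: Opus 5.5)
Your proof is correct and is essentially the paper's argument: the paper likewise uses $\abs{m_i}\geq\calN(\bvQ)^{-1/2}$ on $P_i$ (from $\abs{\gamma_i}\equiv1$ on $P_i$ and $S\leq\calN(\bvQ)$) together with the inclusion $B_{r_0}(\xi_j)\subset P_{i_j}$ from Lemma~\ref{lem:BV-separation}, giving $c_{\rm act}=\calN(\bvQ)^{-1/2}$. The only difference is that you spell out the two pointwise bounds, which the paper simply attributes to the construction.
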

\begin{proof}
    By construction, $\abs{m_i}\geq \calN(\bvQ)^{-1/2}$ on $P_i$, $i\in I_\ast$. Thus, the active core condition \textup{(AC)} follows directly from Lemma \ref{lem:BV-separation} with $\cact=\calN(\bvQ)^{-1/2}$.
\end{proof}

Let us briefly outline how this condition enables conversion of the geometric transposition principle from the previous section into a channel-swap mechanism for the scattering transform. Starting from a fixed, well-localized test packet in the Fourier domain, we construct a sequence $(f_j)_{j\geq j_0}$ by frequency shifts, so that each $f_j$ is localized in a small region where a suitable multiplier $m_{i_j}$ is active. By the \textup{(AC)} assumption, this multiplier can be inverted locally and stably on the active region. The deformation constructed in Lemma~\ref{lem:BV-separation} then moves this region into a different channel, thereby destroying the cancellation effects responsible for deformation stability and yielding a genuine instability.

The following auxiliary lemma records a basic consequence of the fact that $\chi$ is genuinely low-pass. 

\begin{lemma}\label{lem:TestPacket}
    Let $0\neq\varphi\in C_c^\infty(B_{r_0/4}(0))$. Then, with $h \coloneqq \calF^{-1}\varphi$,
    \begin{equation*}
        \norm{\abs{h}*\chi}_{L^2}>0.
    \end{equation*}
\end{lemma}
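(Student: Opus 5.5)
Since $h\neq0$, the function $\abs{h}$ is a nonzero, nonnegative function in $L^1(\R^2)\cap L^2(\R^2)$; in fact $h\in\calS(\R^2)$ as the inverse Fourier transform of a compactly supported smooth function. The plan is to use Plancherel: $\norm{\abs{h}*\chi}_{L^2}=\norm{\wh{\abs{h}}\,\wh\chi}_{L^2}$, so it suffices to show that $\wh{\abs{h}}\,\wh\chi$ does not vanish almost everywhere. Two ingredients are needed. First, $\wh{\abs h}(0)=\int\abs{h}\,\dx=\norm{h}_{L^1}>0$, and $\wh{\abs h}$ is continuous since $\abs h\in L^1$; hence $\abs{\wh{\abs h}}\geq\frac12\norm{h}_{L^1}$ on some ball $B_\delta(0)$. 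Second, $\wh\chi$ must be nonvanishing on a set of positive measure inside $B_\delta(0)$.

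For the second ingredient I would use the Littlewood--Paley identity \eqref{eq:LPcondition} together with the support condition \eqref{eq:supp-sub-transfer} and Lemma~\ref{lem:BV-covering-basic-properties}(d). By part (d), every high-pass tile satisfies $Q_i\cap B_{1/2}(0)=\varnothing$ for $i\in I_\ast$, so all $m_i$ vanish on $B_{1/2}(0)$. Then \eqref{eq:LPcondition} forces $\abs{\wh\chi(\xi)}=1$ for a.e.\ $\xi\in B_{1/2}(0)$. Hence for a.e.\ $\xi\in B_{\min\{\delta,1/2\}}(0)$ we get $\abs{\wh{\abs h}(\xi)\wh\chi(\xi)}\geq\frac12\norm{h}_{L^1}$. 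Integrating over this ball of positive measure gives $\norm{\abs h*\chi}_{L^2}^2\geq\frac14\norm{h}_{L^1}^2\,\abs{B_{\min\{\delta,1/2\}}(0)}>0$.

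There is no serious obstacle. The only point needing care is that $\wh\chi$ is a priori only an $L^\infty$ function determined almost everywhere, which is why I would argue through the almost-everywhere identity rather than continuity of $\wh\chi$. One should also note that the support hypothesis on $\varphi$ plays no role here beyond guaranteeing $h\in\calS$. The nonvanishing of $h$ follows from $\varphi\neq0$ by Fourier injectivity, which gives $\norm{h}_{L^1}>0$.
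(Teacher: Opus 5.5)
Your proposal is correct and follows essentially the same route as the paper. Both arguments obtain $\abs{\wh\chi}=1$ a.e.\ on $B_{1/2}(0)$ from Lemma~\ref{lem:BV-covering-basic-properties}(d), support subordination and the Littlewood--Paley identity, combine this with continuity of $\wh{\abs h}$ and $\wh{\abs h}(0)=\norm{h}_{L^1}>0$, and conclude by Plancherel. Your version just makes the measure-theoretic details and the explicit lower bound visible.
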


\begin{proof}
    Lemma~\ref{lem:BV-covering-basic-properties}, support subordination, and the Littlewood--Paley condition~\eqref{eq:LPcondition} imply that $\chi$ is genuinely low-pass. In particular, we have $\abs{\wh{\chi}(\xi)}\equiv1$ on $B_{1/2}(0)$. Since $\wh{\abs{h}}(0)=\norm{h}_{L^1}>0$ and $\wh{\abs{h}}$ is continuous, there exists a neighborhood of the origin on which $\wh{\abs{h}}$ is bounded away from zero. Combining this with Parseval's theorem concludes the proof.
\end{proof}

Under the active-core assumption \textup{(AC)} we can first establish the instability mechanism for linear deformation fields.

\begin{proposition}\label{prop:first-layer-separation}
    Under the standing assumption $0\leq\beta\leq\alpha\leq1$, assume further that $\beta<1$, that the support subordination condition \eqref{eq:supp-sub-transfer} holds, and that the active-core condition \textup{(AC)} is satisfied. Let $(M_j)_{j\geq j_0}$ be the transverse matrices supplied by Lemma \ref{lem:BV-separation}, and define 
    \begin{equation}\label{eq:linear-field}
        \ell_j(x) \coloneqq M_j^\top x,\qquad x\in\R^2.
    \end{equation}

    Then 
    \begin{equation}\label{eq:small-linear-def}
        \norm{D \ell_j}_{L^\infty}=\norm{M_j}\asymp 2^{-(1-\beta)j}\to0 \qquad\text{as }j\to\infty.
    \end{equation}
    
    Nevertheless, there exist functions $f_j\in L^2(\R^2)$ with compact Fourier support and $\norm{f_j}_{L^2}=1$, and a constant $c>0$, independent of $j$, such that
    \begin{equation}\label{eq:linear-first-layer-sep}
        \norm{\ScatOne(L_{\ell_j}f_j)-\ScatOne f_j}_{\lp}\geq c
    \end{equation}
    for all sufficiently large $j$.
\end{proposition}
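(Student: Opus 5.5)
The plan is to build $f_j$ so that its Fourier support sits in a small ball around the active core $\xi_j$, and then use the separation property \eqref{eq:BV-separation} of Lemma~\ref{lem:BV-separation}. That property forces every channel that sees $f_j$ to be blind to $L_{\ell_j}f_j$, and conversely. The identity \eqref{eq:small-linear-def} is immediate: $D\ell_j\equiv M_j^\top$ and $\norm{M_j^\top}=\norm{M_j}\asymp 2^{-(1-\beta)j}$ by \eqref{eq:transverse-matrix-size}.

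\emph{Construction of $f_j$.} Fix $0\neq\varphi\in C_c^\infty(B_{r_0/4}(0))$ and put $h\coloneqq\calF^{-1}\varphi$. Set
\begin{equation*}
    \wh{g_j}(\xi)\coloneqq\frac{\varphi(\xi-\xi_j)}{m_{i_j}(\xi)}\,\one_{B_{r_0}(\xi_j)}(\xi),
\end{equation*}
which is well defined by \textup{(AC)}. Then $\wh{g_j}$ is bounded and supported in $B_{r_0/4}(\xi_j)$, and $m_{i_j}\wh{g_j}=\varphi(\cdot-\xi_j)$. Since $\abs{m_{i_j}}\leq1$ by \eqref{eq:LPcondition}, Plancherel gives
\begin{equation*}
    \norm{\varphi}_{L^2}\leq\norm{g_j}_{L^2}\leq\cact^{-1}\norm{\varphi}_{L^2}.
\end{equation*}
We set $f_j\coloneqq g_j/\norm{g_j}_{L^2}$, which has unit norm and compact Fourier support. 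Moreover $g_j*\psi_{i_j}=e^{2\pi\iu\xi_j\cdot x}h$, so $\abs{g_j*\psi_{i_j}}=\abs{h}$ for every $j$.

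\emph{Fourier support of the deformed signal.} We have $L_{\ell_j}f(x)=f((\Id-M_j^\top)x)$, and $\Id-M_j^\top$ is invertible by Lemma~\ref{lem:BV-separation}. A linear change of variables gives
\begin{equation*}
    \wh{L_{\ell_j}f}(\xi)=\abs{\det(\Id-M_j)}^{-1}\,\wh f\bigl((\Id-M_j)^{-1}\xi\bigr).
\end{equation*}
Hence $\supp\wh{L_{\ell_j}f_j}\subset(\Id-M_j)B_\rho(\xi_j)$ with $\rho=r_0/4$.

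\emph{Channel swap.} Let $S_j\coloneqq\{i\in I_\ast: Q_i\cap B_\rho(\xi_j)\neq\varnothing\}$.
\begin{itemize}
    \item For $i\in S_j$, \eqref{eq:BV-separation} and \eqref{eq:supp-sub-transfer} give $(L_{\ell_j}f_j)*\psi_i=0$.
    \item For $i\notin S_j$, \eqref{eq:supp-sub-transfer} gives $f_j*\psi_i=0$.
\end{itemize}
So on every channel one of the two first-layer coefficients vanishes, and the difference of the outputs equals the nonvanishing one. Keeping only $i_j\in S_j$ yields
\begin{equation*}
    \norm{\ScatOne(L_{\ell_j}f_j)-\ScatOne f_j}_{\lp}^2\geq\sum_{i\in S_j}\norm{\abs{f_j*\psi_i}*\chi}_{L^2}^2\geq\frac{\norm{\abs{h}*\chi}_{L^2}^2}{\norm{g_j}_{L^2}^2}\geq\cact^2\,\frac{\norm{\abs{h}*\chi}_{L^2}^2}{\norm{\varphi}_{L^2}^2}.
\end{equation*}
Lemma~\ref{lem:TestPacket} shows that this right-hand side is a strictly positive constant independent of $j$, and it serves as $c^2$.

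The only delicate point is the local inversion of $m_{i_j}$. Without any smoothness on $m_i$, $\wh{g_j}$ is merely bounded and compactly supported, so $f_j\in L^2$ has compact Fourier support but need not be Schwartz. That is all the proposition asks for; smoothness would only matter later, for Theorem~\ref{thm:intro-neg}. The demodulation identity $\abs{g_j*\psi_{i_j}}=\abs{h}$ is what makes the lower bound uniform in $j$. The remaining checks are routine bookkeeping: the transpose convention in $\ell_j$ matching $(\Id-M_j)B_\rho$, and excluding $i=0$, which is automatic because $\ScatOne$ involves only $i\in I_\ast$.
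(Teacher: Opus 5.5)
Your proof is correct and follows essentially the same route as the paper. Both use the locally inverted packet $\varphi(\cdot-\xi_j)/m_{i_j}$ (you build it directly at $\xi_j$ instead of modulating a baseband $g_j$), the same Fourier-support computation for $L_{\ell_j}$, and the same single-channel lower bound at $i_j$, which gives $c=\cact\norm{\abs{h}*\chi}_{L^2}/\norm{\varphi}_{L^2}$. Your extra observation that every channel sees at most one of $f_j$ and $L_{\ell_j}f_j$ is correct but not needed, since the paper keeps only the coordinate $i_j$.
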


\begin{proof}
    Fix any $0\neq\varphi\in C_c^\infty(B_{r_0/4}(0))$ and set $h \coloneqq \calF^{-1}\varphi$ as well as $a_\chi \coloneqq \norm{\abs{h}*\chi}_{L^2}$. Then define
    \begin{equation}
        q_j(\zeta)\coloneqq
        \begin{cases}
            \dfrac{\varphi(\zeta)}{m_{i_j}(\xi_j+\zeta)}, & \zeta\in B_{r_0}(0)\\[4pt]
            0, & \zeta\notin B_{r_0}(0),
        \end{cases}
        \qquad g_j\coloneqq\calF^{-1}q_j,\qquad f_j^0(x)\coloneqq e^{2\pi\iu\xi_j\cdot x}g_j(x).
    \end{equation}
    We have $\supp\widehat f_j^0\subset B_{r_0/4}(\xi_j)$.

    By Assumption~\ref{ass:active-core} and the Littlewood--Paley condition, the multiplier satisfies the pointwise bounds 
    \begin{equation*}
        \cact\leq \abs{m_{i_j}(\xi_j+\zeta)}\leq 1 \qquad \text{for all } \zeta \in \supp(\varphi)\subseteq B_{r_0/4}(0).
    \end{equation*}
    We thus obtain bounds that are uniform over all $j\geq j_0$:
    \begin{equation}
        0<\norm{\varphi}_{L^2}\leq \norm{f_j^0}_{L^2}=\norm{g_j}_{L^2}\leq \cact^{-1} \, \norm{\varphi}_{L^2}.
    \end{equation}
    Set then, for all $j\geq j_0$,
    \[f_j \coloneqq \dfrac{f_j^0}{\norm{f_j^0}_{L^2}}.\]
    By construction we have $\widehat{f_j^0*\psi_{i_j}}(\xi)=\varphi(\xi-\xi_j)$, hence
    \begin{equation*}
        f_j^0*\psi_{i_j}=e^{2 \pi \iu \xi_j\cdot x}h(x),\qquad \abs{f_j^0*\psi_{i_j}}=\abs{h},
    \end{equation*}
    and thus
    \[\norm{|f_j^0*\psi_{i_j}|*\chi}_{L^2}=a_\chi>0,\]
    where the positivity of the last expression is guaranteed by Lemma~\ref{lem:TestPacket}. 

    Let us now consider the linear deformation. Since $L_{\ell_j}F(x)=F((\Id-M_j^\top)x)$, we have
    \begin{equation}
        \widehat{L_{\ell_j}F}(\xi)=\abs{\det(\Id-M_j)}^{-1}\widehat F((\Id-M_j)^{-1}\xi),
    \end{equation}
    which implies
    \begin{equation}
        \supp\widehat{L_{\ell_j}f_j^0}\subset (\Id-M_j)B_{r_0/4}(\xi_j).
    \end{equation}
    Recall that $B_{r_0/4}(\xi_j)\subset B_{r_0}(\xi_j) \subset Q_{i_j}$, so Lemma \ref{lem:BV-separation} entails $Q_{i_j}\cap (\Id-M_j)B_{r_0/4}(\xi_j)=\varnothing$. Support subordination \eqref{eq:supp-sub-transfer} then implies $(L_{\ell_j}f_j^0)*\psi_{i_j}=0$, and the single active coordinate $i_j$ finally supplies the lower bound
    \begin{align*}
        \norm{\ScatOne(L_{\ell_j}f_j^0)-\ScatOne f_j^0}_{\lp} \geq \norm{\abs{f_j^0*\psi_{i_j}}*\chi}_{L^2} =a_\chi,
    \end{align*}
    which after normalization proves \eqref{eq:linear-first-layer-sep} with $c=\dfrac{a_\chi \cact}{\norm{\varphi}_{L^2}}>0$.  
\end{proof}

The preceding results isolate the instability mechanism for linear deformation fields. These fields have the advantage that their action in frequency is completely explicit, but they are not compactly supported and their displacement is not uniformly bounded. In this sense, they still differ from the deformation model usually considered in the stability theory of scattering transforms, where smallness is measured in a global $C^m$-norm.

We now pass from this affine model to compactly supported smooth deformations. The idea is to cut off the linear fields on a spatial region containing the relevant wave packets, so that the deformation agrees with the affine one where the channel-swap mechanism takes place, while becoming compactly supported and globally small in the required $C^m$-topology. To carry out this localization without losing the lower bound, we impose the following additional mild regularity assumption on the active filters.

\begin{assumption}[Uniform active-core regularity \textup{(R)}]\label{ass:active-coreRegularity}
    There is a constant $\Creg<\infty$ such that
    \begin{equation*}
        \sup_{j\geq j_0}\norm{\nabla m_{i_j}}_{L^\infty(B_{r_0}(\xi_j))}\leq \Creg.
    \end{equation*}
\end{assumption}

\begin{remark}[Locality of the active-core assumptions]
    The active-core condition \textup{(AC)} ensures that the selected multiplier is uniformly nondegenerate on a fixed interior core. By contrast, the uniform active-core regularity condition \textup{(R)} provides the local first-order control needed to show that the associated demodulated profiles have uniformly small $L^2$-tails. Both conditions are local and are satisfied by the smooth adapted Parseval model class introduced in Section~\ref{subsec:designed-model}: condition \textup{(AC)} was verified above, while condition \textup{(R)} will be established in Proposition~\ref{prop:model-Rass} below. More precisely, Lemma~\ref{lem:BV-separation} remains valid for every sufficiently small core radius. We may therefore decrease the initial choice of $r_0$, if necessary, and assume that \textup{(AC)} and \textup{(R)} hold simultaneously on the same active balls $B_{r_0}(\xi_j)$.
\end{remark}

It remains to verify the regularity condition \textup{(R)} for the smooth adapted Parseval model class.

\begin{proposition}\label{prop:model-Rass}
    The smooth Parseval model class from Section~\ref{subsec:designed-model} satisfies \textup{(R)}.  
\end{proposition}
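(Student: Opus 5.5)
We need a uniform bound on $\nabla m_{i_j}$ over $B_{r_0}(\xi_j)$, where $m_{i_j}=\gamma_{i_j}/\sqrt S$. By the quotient rule it suffices to bound $\gamma_{i_j}$ and $\nabla\gamma_{i_j}$ on the active ball, together with $S^{-1/2}$ and $\nabla S$ there. Two of these are immediate. First, $|\gamma_{i_j}|\le1$. Second, $S\ge1$, so $S^{-1/2}\le1$ and $|\nabla(S^{-1/2})|\le\frac12|\nabla S|$. The proof therefore reduces to the bound
\[
\sup_i\|\nabla\gamma_i\|_{L^\infty}<\infty
\]
and to a bound on $\nabla S$ near $\xi_j$.

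For the single filter, use the chain rule. Since $\gamma_i(\xi)=\gamma\bigl(A_j^{-1}(R_{j,\ell}^{-1}\xi-c_{j,m})\bigr)$, we get
\[
\nabla\gamma_i(\xi)=R_{j,\ell}A_j^{-1}(\nabla\gamma)(\cdots),\qquad \text{hence}\qquad |\nabla\gamma_i|\le\|A_j^{-1}\|\,\|\nabla\gamma\|_{L^\infty}.
\]
Here $A_j^{-1}=\mathrm{diag}(2^{-\alpha j},2^{-\beta j})$, and because $\beta\ge0$ we have $\|A_j^{-1}\|\le 2^{-\beta j}\le1$. So $|\nabla\gamma_i|\le\|\nabla\gamma\|_{L^\infty}$, uniformly in $i\in I_\ast$. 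The least singular value $2^{\beta j}\ge1$ is exactly what was already used in Lemma~\ref{lem:BV-separation}. For the low-pass term, $b_0$ is a single fixed smooth function, so $|\nabla |b_0|^2|\le 2\|\nabla b_0\|_{L^\infty}$.

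Next, bound $\nabla S$ using finite overlap. We have
\[
\nabla S=\nabla|b_0|^2+\sum_{i}\nabla|\gamma_i|^2,\qquad |\nabla|\gamma_i|^2|\le 2|\gamma_i||\nabla\gamma_i|\le 2\|\nabla\gamma\|_{L^\infty}.
\]
At any point $\xi$, only indices with $\xi\in Q_i$ contribute. Fix $\xi\in B_{r_0}(\xi_j)\subset Q_{i_j}$. Every contributing $Q_i$ meets $Q_{i_j}$, so by Lemma~\ref{lem:BV-covering-basic-properties}(b) at most $\calN(\bvQ)$ terms are nonzero. Hence
\[
|\nabla S(\xi)|\le 2\calN(\bvQ)\bigl(\|\nabla\gamma\|_{L^\infty}+\|\nabla b_0\|_{L^\infty}\bigr).
\]
This bound is in fact global; it is not restricted to the active ball.

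Combining the pieces, $\nabla m_{i_j}=\nabla\gamma_{i_j}\,S^{-1/2}-\tfrac12\gamma_{i_j}S^{-3/2}\nabla S$. This gives (R) with
\[
\Creg\le\|\nabla\gamma\|_{L^\infty}+\calN(\bvQ)\bigl(\|\nabla\gamma\|_{L^\infty}+\|\nabla b_0\|_{L^\infty}\bigr),
\]
which is independent of $j$.

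The only point needing care is the uniformity of the chain-rule bound. This is exactly where the normalization $2^{\beta j}\ge1$ matters: the frequency-side filters become flatter rather than steeper as $j$ grows. One should also note that $\gamma$ is complex-valued, so $|\gamma_i|^2$ must be differentiated as $\gamma_i\overline{\gamma_i}$. With that, there is no genuine obstacle.
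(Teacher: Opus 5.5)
Your proof is correct and follows the paper's argument essentially step for step. Both use three ingredients: the chain-rule bound $\|\nabla\gamma_i\|_{L^\infty}\le\|\nabla\gamma\|_{L^\infty}$, which comes from the largest singular value $2^{-\beta j}\le 1$ of $A_j^{-1}R_{j,\ell}^{-1}$; a global, $j$-independent bound on $\nabla S$ obtained from smoothness of $b_0,\gamma_i$ and uniformly finite overlap of the covering; and the quotient rule for $m_i=\gamma_i S^{-1/2}$ together with $S\ge 1$. Your constant differs from the paper's only in the bookkeeping, which is immaterial.
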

\begin{proof}
    First note that, for $i=(j,m,\ell)\in I_\ast$, we have $\gamma_i=\gamma\circ \aff_i^{-1}$, where $\aff_i^{-1}$ is the inverse of the affine map associated with the almost structured covering, i.e., 
    $\aff_i^{-1}= A_j^{-1}(R_{j,\ell}^{-1} \cdot -c_{j,m})$. Since the maximum singular value of the Jacobian $D(\aff_i^{-1})=A_j^{-1}R_{j,\ell}^{-1}$ is $2^{-\beta j}\leq 1$, we get the uniform bound
    \begin{equation*}
        \norm{\nabla \gamma_{i}}_{L^\infty}\leq \norm{\nabla\gamma}_{L^\infty}<\infty.
    \end{equation*}
    Recall from Section~\ref{subsec:designed-model} the definition of the auxiliary function $S=\abs{b_0}^2+\sum_{i\in I_\ast} |\gamma_i|^2$. Since all atoms are smooth and the sum is everywhere locally finite by the admissibility of the covering, so is $S$ smooth with
    \begin{equation*}
        \norm{\nabla S}_{L^\infty}\leq 2 (\norm{\nabla b_0}_{L^\infty}+\calN(\bvQ) \norm{\nabla\gamma}_{L^\infty}).
    \end{equation*}
    Since $S$ is bounded from below by $1$, the previous estimates applied to $m_{i}=S^{-1/2} \gamma_{i}$ yield
    \begin{equation*}
        \norm{\nabla m_i}_{L^\infty} \leq \norm{\nabla \gamma_i}_{L^\infty} + \norm{\gamma_i}_{L^\infty} \norm{\nabla S}_{L^\infty}\leq \norm{\nabla\gamma}_{L^\infty} + 2 (\norm{\nabla b_0}_{L^\infty}+\calN(\bvQ) \norm{\nabla\gamma}_{L^\infty})=:\Creg.
    \end{equation*}
    In particular, this proves the uniform active-core regularity \textup{(R)} of the system.
\end{proof}

The next lemma illustrates how we get from linear to compactly supported smooth deformation instability.
\begin{lemma}\label{lem:compactification-transfer}
    Let $j_0$, $(\xi_j)_{j\ge j_0}$ and $(M_j)_{j\ge j_0}$ be as in Lemma \ref{lem:BV-separation}, and define
    \begin{equation}\label{eq:linear-field-transfer}
        \ell_j(x)\coloneqq M_j^\top x,\qquad x\in\R^2.
    \end{equation}
    Let $F_j(x)=e^{2\pi \iu\xi_j\cdot x}G_j(x)$ be a sequence such that
    \begin{equation}\label{eq:uniform-tightness-transfer}
        \lim_{R\to\infty}\sup_{j\geq j_0}\norm{G_j}_{L^2(\{\abs{x}>R\})}=0.
    \end{equation}
    Then, for every $\varepsilon>0$ there exist deformations $\tau_j\in C_c^\infty(\R^2;\R^2)$, $j \ge j_0$, such that, for every $m\in\N_0$, there is a constant $C_m<\infty$, independent of $j$, with
    \[\norm{\tau_j}_{C^m}\leq C_m2^{-(1-\beta)j},\qquad j\geq j_0,\]
    and
    \begin{equation*}
        \limsup_{j\to\infty}\norm{L_{\tau_j}F_j-L_{\ell_j}F_j}_{L^2}\leq\eps.
    \end{equation*}
\end{lemma}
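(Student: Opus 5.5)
The idea is to cut off the linear field $\ell_j$ with a fixed spatial cutoff at a scale $R$ chosen from the tightness hypothesis \eqref{eq:uniform-tightness-transfer}. Fix $\varepsilon>0$, and choose $R\geq1$ with
\[
\sup_{j\geq j_0}\norm{G_j}_{L^2(\{\abs{x}>R\})}\leq\delta,
\]
where $\delta=\delta(\varepsilon)$ will be fixed at the end. Take $\eta\in C_c^\infty(\R^2)$ with $0\leq\eta\leq1$, $\eta\equiv1$ on $B_{2R}(0)$ and $\supp\eta\subset B_{4R}(0)$, and set $\tau_j\coloneqq\eta\,\ell_j$, that is, $\tau_j(x)=\eta(x)M_j^\top x$.

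The $C^m$ bound follows from Leibniz' rule. Every derivative of $\eta\,\ell_j$ is a sum of terms of the form $\partial^\nu\eta\cdot\partial^\mu\ell_j$ with $\abs{\mu}\leq1$, and $\abs{M_j^\top x}\leq 4R\norm{M_j}$ on $\supp\eta$. Together with \eqref{eq:transverse-matrix-size} this gives
\[
\norm{\tau_j}_{C^m}\leq C_{m,R}\norm{M_j}\leq C_m2^{-(1-\beta)j},
\]
where $R$ depends only on $\varepsilon$, so the constant is independent of $j$. We also have $\norm{D\tau_j}_{L^\infty}\lesssim_R2^{-(1-\beta)j}\leq\frac12$ for large $j$, and likewise $\norm{M_j}\leq\frac12$. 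Hence Lemma~\ref{lem:composition-small-deformation} applies to both $\tau_j$ and $\ell_j$, and gives
\[
\norm{L_{\tau_j}}_{L^2\to L^2},\ \norm{L_{\ell_j}}_{L^2\to L^2}\leq2.
\]

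For the $L^2$ estimate, note that $\tau_j=\ell_j$ on $B_{2R}(0)$, so $L_{\tau_j}F_j(x)=L_{\ell_j}F_j(x)$ whenever $\abs{x}<2R$. For $\abs{x}\geq2R$, split $F_j=F_j\one_{B_R}+F_j\one_{B_R^c}$. The second piece gives a contribution of at most
\[
\bigl(\norm{L_{\tau_j}}+\norm{L_{\ell_j}}\bigr)\norm{F_j\one_{B_R^c}}_{L^2}\leq4\delta,
\]
since $\abs{F_j}=\abs{G_j}$. For the first piece we need both $L_{\tau_j}(F_j\one_{B_R})$ and $L_{\ell_j}(F_j\one_{B_R})$ to vanish on $\{\abs{x}\geq2R\}$. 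Indeed, if $\abs{x}\geq2R$, then $x-\tau_j(x)=x-\eta(x)M_j^\top x$ satisfies
\[
\abs{x-\tau_j(x)}\geq(1-\norm{M_j})\abs{x}\geq R
\]
for large $j$, and the same lower bound holds for $x-\ell_j(x)$. So $F_j\one_{B_R}$ evaluated at these points vanishes. Hence
\[
\norm{L_{\tau_j}F_j-L_{\ell_j}F_j}_{L^2}\leq4\delta
\]
for all sufficiently large $j$, and choosing $\delta=\varepsilon/4$ gives the claim, even without the $\limsup$.

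The only delicate point is that the cutoff scale $R$ must be independent of $j$ so that the $C^m$ constants stay uniform; this is exactly what the uniform tightness hypothesis \eqref{eq:uniform-tightness-transfer} provides. The modulation $e^{2\pi\iu\xi_j\cdot x}$ plays no role, because the argument only uses $\abs{F_j}$ and the support bookkeeping above.
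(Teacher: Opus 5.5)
Your argument is correct and is essentially the paper's proof: a cutoff $\tau_j=\eta\,\ell_j$ at a fixed scale $R$ chosen from the uniform tightness, exact agreement with $\ell_j$ on a ball, and control of the exterior via $\abs{x-\tau_j(x)}\geq(1-\norm{M_j})\abs{x}$ together with uniform $L^2$ bounds on the deformation operators. The paper changes variables on the exterior region directly instead of splitting $F_j$, which is only a cosmetic difference. One small point: $\ell_j$ is unbounded, so Lemma~\ref{lem:composition-small-deformation} does not literally apply to it, but the bound you need follows at once from the linear change of variables, $\norm{L_{\ell_j}}_{L^2\to L^2}=\abs{\det(\Id-M_j)}^{-1/2}\leq(1-\norm{M_j})^{-1}\leq2$.
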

\begin{proof}
    Fix a cutoff $\eta\in C_c^\infty(\R^2)$ such that $\eta=1$ on $B_1(0)$. For $R\geq1$ set
    \[\tau_{j,R}(x)\coloneqq M_j^\top x\,\eta(x/R),\qquad x\in\R^2.\]
    Then $\tau_{j,R}\in C_c^\infty(\R^2;\R^2)$ and $\tau_{j,R}=\ell_j$ on $B_R(0)$. Moreover, for every $m\in\Nzero$ and fixed $R$,
    \begin{equation*}
        \norm{\tau_{j,R}}_{C^m}\lesssim_{m,R}\norm{M_j}\lesssim_{m,R}2^{-(1-\beta)j},
    \end{equation*}
    by Lemma~\ref{lem:BV-separation}.
    
    We next compare the two deformation operators. Let
    \begin{equation*}
        \Phi_{j,R}(x)\coloneqq x-\tau_{j,R}(x),\qquad \Phi_j^{\rm lin}(x)\coloneqq x-\ell_j(x).
    \end{equation*}
    Since $M_j\to0$, there is $J_R\geq j_0$ such that, for all $j\geq J_R$, both $\Phi_{j,R}$ and $\Phi_j^{\rm lin}$ have Jacobians bounded from above and below by constants independent of $j$. Moreover, after increasing $J_R$ if necessary,
    \begin{equation*}
        \abs{\Phi_{j,R}(x)}\geq \frac12\abs{x},\qquad \abs{\Phi_j^{\rm lin}(x)}\geq \frac12\abs{x},\qquad j\geq J_R.
    \end{equation*}
    Hence both maps send $\set*{\abs{x}>R}$ into $\set*{\abs{y}>R/2}$. Since $\tau_{j,R}=\ell_j$ on $B_R(0)$, the two outputs agree there, and a change of variables gives, for all $j\geq J_R$,
    \begin{equation*}
        \norm{L_{\tau_{j,R}}F_j-L_{\ell_j}F_j}_{L^2}\lesssim \norm{F_j}_{L^2(\set*{\abs{y}>R/2})}.
    \end{equation*}
    Here the implicit constant is independent of $j$ and $R$. Since $\abs{F_j}=\abs{G_j}$, the uniform tightness assumption \eqref{eq:uniform-tightness-transfer} implies
    \begin{equation*}
        \lim_{R\to\infty}\sup_{j\geq j_0}\norm{F_j}_{L^2(\set*{\abs{y}>R/2})}=0.
    \end{equation*}
    We may therefore choose $R\geq1$ so large that the corresponding limsup is at most $\eps$. With this $R$ fixed, define $\tau_j=\tau_{j,R}$ for all sufficiently large $j$, and define the remaining finitely many $\tau_j$ arbitrarily, for instance by $\tau_j=0$. Enlarging the constants $C_m$ to account for these finitely many indices, we obtain, for every $m\in\Nzero$,
    \[\norm{\tau_j}_{C^m}\leq C_m2^{-(1-\beta)j},\qquad j\geq j_0,\]
    with $C_m$ independent of $j$, and
    \begin{equation*}
        \limsup_{j\to\infty}\norm{L_{\tau_j}F_j-L_{\ell_j}F_j}_{L^2}\leq\eps.
    \end{equation*}
\end{proof}
Let us finally present one last auxiliary lemma, in which we establish the role of assumption \textup{(R)} in the proceeding results. In a nutshell, the imposed regularity on the multipliers ensures uniform $L^2$-concentration of the adversarial packets in the space domain.

\begin{lemma}\label{lem:uniform-tightness}
    Fix $0\neq\varphi\in C_c^\infty(B_{r_0/4}(0))$, and define, for all $j\geq j_0$,
    \begin{equation} 
        q_j(\zeta)\coloneqq 
        \begin{cases}
            \dfrac{\varphi(\zeta)}{m_{i_j}(\xi_j+\zeta)}, & \zeta\in B_{r_0}(0) \\[4pt]
            0, & \zeta\notin B_{r_0}(0)
        \end{cases},
        \qquad g_j\coloneqq\calF^{-1}q_j.
    \end{equation}
    Assume both \textup{(AC)} and \textup{(R)}. Then, 
    \begin{align*}
        \lim_{R\to\infty}\sup_{j\geq j_0}\norm{g_j}_{L^2(\{\abs{x}>R\})}=0.
    \end{align*}
\end{lemma}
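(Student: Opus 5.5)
The plan is to turn uniform smoothness of the Fourier profiles $q_j$ into uniform spatial decay of $g_j=\calF^{-1}q_j$, through a first-moment weighted $L^2$ bound and Chebyshev's inequality. The quantitative target is
\[
    \sup_{j\geq j_0}\norm{\abs{x}g_j}_{L^2}<\infty,
\]
which at once yields
\[
    \norm{g_j}_{L^2(\{\abs{x}>R\})}\leq R^{-1}\norm{\abs{x}g_j}_{L^2}\lesssim R^{-1},
\]
uniformly in $j$. That is the claim.

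By Plancherel, and since $2\pi\iu\,x_k g_j$ is the inverse Fourier transform of $-\partial_{\zeta_k}q_j$ with the paper's normalization, we have $\norm{\abs{x}g_j}_{L^2}=(2\pi)^{-1}\norm{\nabla q_j}_{L^2}$. It therefore suffices to bound $\norm{\nabla q_j}_{L^2}$ uniformly in $j$.

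The first step is to check that $q_j$ really is $C^1$ (in fact Lipschitz) on all of $\R^2$. The definition by cases on $B_{r_0}(0)$ is harmless here, because $\varphi$ vanishes outside the compact set $\supp\varphi\subset B_{r_0/4}(0)$. The quotient $\varphi(\zeta)/m_{i_j}(\xi_j+\zeta)$ is then well defined and regular on $B_{r_0}(0)$: by (AC), $\abs{m_{i_j}}\geq\cact$ there, and by (R) the multiplier $m_{i_j}$ is Lipschitz on $B_{r_0}(\xi_j)$. Moreover the quotient vanishes identically near $\partial B_{r_0}(0)$, so the zero extension is Lipschitz with compact support in $B_{r_0/4}(0)$. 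If the filters are merely Lipschitz rather than $C^1$, I will work with weak gradients throughout; the estimate below is unaffected.

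The second step is the quotient rule on $B_{r_0/4}(0)$:
\[
    \nabla q_j(\zeta)=\frac{\nabla\varphi(\zeta)}{m_{i_j}(\xi_j+\zeta)}-\frac{\varphi(\zeta)\,\nabla m_{i_j}(\xi_j+\zeta)}{m_{i_j}(\xi_j+\zeta)^2}.
\]
Using (AC) in the denominators and (R) for $\nabla m_{i_j}$, this gives the pointwise bound
\[
    \abs{\nabla q_j}\leq\cact^{-1}\abs{\nabla\varphi}+\cact^{-2}\Creg\abs{\varphi}.
\]
Taking $L^2$ norms,
\[
    \norm{\nabla q_j}_{L^2}\leq\cact^{-1}\norm{\nabla\varphi}_{L^2}+\cact^{-2}\Creg\norm{\varphi}_{L^2},
\]
and the right-hand side does not depend on $j$.

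Combining the two steps gives the uniform first-moment bound and hence the uniform tightness. The only point requiring care is the regularity justification in the first step: whether $m_{i_j}$ is differentiable or only Lipschitz, and hence whether the Plancherel identity holds for weak derivatives. This is covered because Lipschitz functions with compact support lie in $H^1$. No other obstacle is expected.
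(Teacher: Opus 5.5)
Your proposal is correct and matches the paper's proof step for step: you bound $\nabla q_j$ via the quotient rule using (AC) and (R), then use Plancherel to turn this into a uniform bound on $\norm{\abs{x}g_j}_{L^2}$, and conclude with the Chebyshev-type tail estimate of order $R^{-1}$. Your remark on working with weak gradients when the multipliers are only Lipschitz is a harmless refinement that the paper leaves implicit.
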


\begin{proof}
    First, using assumption \textup{(AC)}, we estimate
    \begin{align*}
        \norm{\nabla q_j}_{L^2}
        &\leq \norm{\frac{\nabla \varphi}{m_{i_j}(\xi_j+\cdot)}}_{L^2} + \norm{\frac{\varphi \nabla m_{i_j}(\xi_j+\cdot)}{m_{i_j}^2(\xi_j+\cdot)}}_{L^2} \\
        &\leq \frac{\norm{\nabla \varphi}_{L^2}}{\cact} + \frac{\norm{\varphi}_{L^2}\norm{\nabla m_{i_j}}_{L^\infty(B_{r_0}(\xi_j))}}{\cact^2}.
    \end{align*}
    In particular, from assumption \textup{(R)} we can now infer that
    \begin{align*}
        C\coloneqq \sup_{j\geq j_0}\norm{\nabla q_j}_{L^2}<\infty.
    \end{align*}
    It finally follows from Parseval's theorem that
    \begin{align*}
        \int_{\R^2}\abs{x}^2 \abs{g_j(x)}^2 \dx = \frac{1}{4\pi^2} \norm{\nabla q_j}_{L^2}^2\leq C^2,
    \end{align*}
    which itself entails the uniform bound 
    \begin{align*}
        \norm{g_j}_{L^2(\{\abs{x}>R\})}^2\leq R^{-2} \int_{\R^2}\abs{x}^2 \abs{g_j(x)}^2 \dx \leq \left(\frac{C}{R}\right)^2 \to 0 \qquad \text{as } R\to\infty.
    \end{align*}
\end{proof}

With these preparations in place, we can now prove the main result of this section.
\begin{theorem} \label{thm:first-layer-separation}
    Under the standing assumption $0\leq\beta\leq\alpha\leq1$, assume further that $\beta<1$, that the support subordination condition \eqref{eq:supp-sub-transfer} holds, and that both the active-core condition \textup{(AC)} and the uniform active-core regularity \textup{(R)} are satisfied. 
    
    There exist functions $f_j\in L^2(\R^2)$ with compact Fourier support and $\norm{f_j}_{L^2}=1$, and compactly supported smooth deformations $\tau_j\in C_c^\infty(\R^2;\R^2)$, as well as a constant $c>0$, independent of $j$, such that, for all sufficiently large $j$,
    \begin{equation}\label{eq:compact-first-layer-data}
        \norm{\tau_j}_{C^m}\lesssim_m 2^{-(1-\beta)j}, \qquad \text{for all } m\in\N_0,
    \end{equation}
    and
    \begin{equation}\label{eq:compact-first-layer-sep}
        \norm{\ScatOne(L_{\tau_j}f_j)-\ScatOne f_j}_{\lp}\geq c.
    \end{equation}

    Moreover, for the associated canonical linear filter map $\calW$ one also has
    \begin{equation}\label{eq:first-layer-sep-linear-comm}
        \norm{[\calW, L_{\tau_j}]f_j}_{\lp}\geq c
    \end{equation}
    for all sufficiently large $j$.

    If, in addition, $(i_j,\xi_j)_{j\geq j_0}$ and $r_0>0$ can be chosen so that the active core assumption \textup{(AC)} holds with $\cact=1$, 
    then the functions $f_j$ have uniformly bounded mixed $\ell^1(\ell^2(L^2))$-scattering norm,
    \[\sup_{j\geq j_0}\sum_{n=0}^\infty \norm{U_n f_j}_{\lp}<\infty.\]
\end{theorem}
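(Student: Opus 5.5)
The plan is to combine Proposition~\ref{prop:first-layer-separation} with the compactification Lemma~\ref{lem:compactification-transfer}, using Lemma~\ref{lem:uniform-tightness} to verify its tightness hypothesis. Take $f_j^0=e^{2\pi\iu\xi_j\cdot x}g_j$ exactly as in the proof of Proposition~\ref{prop:first-layer-separation}, and set $f_j=f_j^0/\norm{f_j^0}_{L^2}$. Its envelope $G_j=g_j/\norm{f_j^0}_{L^2}$ satisfies $\norm{f_j^0}_{L^2}\geq\norm{\varphi}_{L^2}$. Lemma~\ref{lem:uniform-tightness}, which uses (AC) and (R), therefore gives the uniform tightness \eqref{eq:uniform-tightness-transfer} for $G_j$.

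Let $c_1\coloneqq a_\chi\cact/\norm{\varphi}_{L^2}$ be the linear separation constant. Apply Lemma~\ref{lem:compactification-transfer} with $\eps=c_1/2$. This yields $\tau_j\in C_c^\infty$ satisfying \eqref{eq:compact-first-layer-data} and $\limsup_j\norm{L_{\tau_j}f_j-L_{\ell_j}f_j}_{L^2}\leq c_1/2$. Nonexpansiveness \eqref{eq:first-layer-nonexpansive-transfer} and the triangle inequality then give
\begin{equation*}
    \norm{\ScatOne(L_{\tau_j}f_j)-\ScatOne f_j}_{\lp}\geq c_1-\norm{L_{\tau_j}f_j-L_{\ell_j}f_j}_{L^2}.
\end{equation*}
To keep a strict margin, I would instead choose $\eps=c_1/4$, which gives the bound $c=c_1/2$ for large $j$.

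For the commutator bound \eqref{eq:first-layer-sep-linear-comm}, look at the $i_j$ coordinate of $[\calW,L_{\tau_j}]f_j$, namely $(L_{\tau_j}f_j)*\psi_{i_j}-L_{\tau_j}(f_j*\psi_{i_j})$. Replacing $\tau_j$ by $\ell_j$ costs at most $\norm{L_{\tau_j}f_j-L_{\ell_j}f_j}_{L^2}\leq c_1/4$ in the first term, since convolution with $\psi_{i_j}$ has norm at most $1$. In the linear case the first term vanishes by the support separation. The second term $L_{\ell_j}(f_j*\psi_{i_j})$ has $L^2$ norm at least $(1+\norm{M_j})^{-1}\norm{f_j*\psi_{i_j}}_{L^2}$ by the Jacobian bound. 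Moreover $\norm{f_j*\psi_{i_j}}_{L^2}=\norm{h}_{L^2}/\norm{f_j^0}_{L^2}\geq\cact\norm{h}_{L^2}/\norm{\varphi}_{L^2}$, and $\norm{h}_{L^2}=\norm{\varphi}_{L^2}\geq a_\chi$ because $\abs{\wh\chi}\le1$.

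The second term also has to be compared between $\tau_j$ and $\ell_j$. The function $f_j*\psi_{i_j}=e^{2\pi\iu\xi_j\cdot x}h/\norm{f_j^0}_{L^2}$ has a fixed Schwartz envelope $h$, so it is uniformly tight. The proof of Lemma~\ref{lem:compactification-transfer} shows the two deformations agree on $B_R(0)$, so this comparison is controlled by that tail and is small after enlarging $R$. The same $R$ can serve both functions by choosing it so that both tails are below $\eps$. Shrinking the final $c$ then yields the bound.

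The last claim, with $\cact=1$, is the main obstacle. If $\abs{m_{i_j}}=1$ on $B_{r_0}(\xi_j)$, the Parseval identity forces every other multiplier, and $\wh\chi$, to vanish there. Hence $f_j*\psi_i=0$ for $i\neq i_j$, so $U_1f_j$ consists of the single function $\abs{h}/\norm{f_j^0}_{L^2}$. Here $\norm{f_j^0}_{L^2}=\norm{\varphi}_{L^2}$, so this function is independent of $j$. Thus $\sum_n\norm{U_nf_j}_{\lp}=1+\sum_{n\geq1}\norm{U_n\abs{h}}_{\lp}/\norm{\varphi}_{L^2}$, and it remains to show that this fixed series is finite.

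I would get finiteness from decay of the propagated energy for a single fixed Schwartz function. A direct route is to prove a bound $\norm{U_{n+1}g}_{\lp}^2\leq\norm{U_ng}_{\lp}^2-\norm{U_ng*\chi}_{\lp}^2$ with a geometric gain, using the gap Lemma~\ref{lem:BV-covering-basic-properties}(d) and the fact that moduli are concentrated near zero frequency. That argument is delicate, and it is the step I would scrutinize most.

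One can sidestep it if $h$ can be chosen so that $\abs{h}$ is itself low-pass. For instance, take $\varphi$ with $\wh{\abs h}$ supported in $B_{1/2}(0)$, which is achievable by taking $h$ nonnegative with band-limited $h$, so that $h=\abs h$. Then $\abs{h}*\psi_i=0$ for all $i\in I_\ast$, giving $U_n f_j=0$ for $n\geq2$, and the sum is trivially bounded. I would adopt this choice of $\varphi$: for example $\varphi=\rho*\tilde\rho$ with $\rho\in C_c^\infty(B_{r_0/8}(0))$ and $\tilde\rho(\zeta)=\overline{\rho(-\zeta)}$, rescaled so that the support lies in $B_{\min(r_0/4,1/2)}(0)$. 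Then $h=\abs{\calF^{-1}\rho}^2\ge0$, and all the preceding steps remain valid.
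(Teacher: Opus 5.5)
Your proposal is correct. The first-layer separation matches the paper's argument: linear separation from Proposition~\ref{prop:first-layer-separation}, tightness via Lemma~\ref{lem:uniform-tightness}, compactification with $\eps=c_1/4$, then nonexpansiveness. The $\cact=1$ part also matches: you choose $\varphi=\rho*\tilde\rho$, so that $h=\calF^{-1}\varphi\geq0$ has Fourier support in $B_{1/2}(0)$ and hence $U_nf_j=0$ for $n\geq2$. The energy-decay route you considered first is not needed.

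The genuine difference is in the commutator bound \eqref{eq:first-layer-sep-linear-comm}. The paper does not inspect a particular channel. It splits $[\ScatOne,L_{\tau_j}]=\calC_\chi[U[\Psi],L_{\tau_j}]+[\calC_\chi,L_{\tau_j}]U[\Psi]$ and invokes the low-pass estimate of Lemma~\ref{lem:appendix-low-pass-deformation-estimate} twice. This gives $\norm{\ScatOne(L_{\tau_j}f_j)-\ScatOne f_j}_{\lp}\leq\norm{[\calW,L_{\tau_j}]f_j}_{\lp}+C(2\norm{\tau_j}_{L^\infty}+\norm{D\tau_j}_{L^\infty})$, so the nonlinear lower bound passes to the linear commutator because $\norm{\tau_j}_{C^1}\to0$.

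That reduction is structural: any first-layer separation along deformations of vanishing $C^1$ size forces a linear commutator separation, however it was produced. The price is the Schur-test estimate for $\chi$ from Section~\ref{sec:appendix-commutator}.

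Your argument instead reads off the $i_j$-coordinate directly. Support separation kills $(L_{\ell_j}f_j)*\psi_{i_j}$, and compactification costs at most $\norm{L_{\tau_j}f_j-L_{\ell_j}f_j}_{L^2}$. The Jacobian bound keeps $\norm{L_{\tau_j}(f_j*\psi_{i_j})}_{L^2}$ comparable to $\norm{f_j*\psi_{i_j}}_{L^2}\geq\cact$. This is more elementary and self-contained. It uses nothing about $\chi$, not even Lemma~\ref{lem:TestPacket}, and the resulting commutator lower bound is of order $\cact$.

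One simplification: you need not compare $L_{\tau_j}$ with $L_{\ell_j}$ on the second term, nor reopen the proof of Lemma~\ref{lem:compactification-transfer} to find a common radius $R$. The inverse Jacobian bound of Lemma~\ref{lem:composition-small-deformation} gives $\norm{L_{\tau_j}F}_{L^2}\geq(1+\norm{D\tau_j}_{L^\infty})^{-1}\norm{F}_{L^2}$ directly, and $\norm{D\tau_j}_{L^\infty}\to0$.
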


\begin{proof}
    We start out by providing the proof of \eqref{eq:compact-first-layer-sep}.
    Let the auxiliary functions $g_j$, the modulated test packets $f_j^0$, as well as their $L^2$-normalized versions $f_j$ be exactly as specified in Proposition \ref{prop:first-layer-separation}. Recall that the same result guarantees the existence of $c_\ast>0$ such that, for the linear deformation $\ell_j(x)=M_j^\top x$ we have the lower bound
    \[\norm{\ScatOne(L_{\ell_j}f_j)-\ScatOne f_j}_{\lp}\geq c_\ast.\]
    It remains to replace the linear deformation by a compactly supported one, given the additional hypothesis \textup{(R)}. The normalized functions $f_j$ have the form
    \begin{equation}
        f_j(x)=e^{2\pi \iu \xi_j\cdot x}G_j(x),  \qquad G_j \coloneqq \frac{g_j}{\norm{f_j^0}_{L^2}}.
    \end{equation}
    The family $(G_j)_{j\geq j_0}$ is uniformly $L^2$-tight. Since we now assume both conditions \textup{(AC)} and \textup{(R)}, Lemma~\ref{lem:uniform-tightness} states that the sequence $(g_j)_{j\geq j_0}$ is uniformly $L^2$-tight. Moreover, the factors $\norm{f_j^0}_{L^2}$ are bounded below uniformly in $j$. Consequently, the normalization involved in the definition of $G_j$ preserves uniform $L^2$-tightness, i.e.,
    \begin{equation*}
        \lim_{R\to\infty}\sup_j\norm{G_j}_{L^2(\{\abs{x}>R\})}=0.
    \end{equation*}

    We resort to Lemma \ref{lem:compactification-transfer} for the normalized sequence $F_j=f_j$ with $\varepsilon=c_*/4$: possibly after increasing $j_0$, we obtain $\tau_j\in C_c^\infty(\R^2;\R^2)$ such that
    \begin{equation}
        \norm{\tau_j}_{C^m}\lesssim_m \norm{M_j}\asymp2^{-(1-\beta)j}, \qquad 
        \norm{L_{\tau_j}f_j-L_{\ell_j}f_j}_{L^2}\leq \frac{c_*}{2}
    \end{equation}
    for all $j\geq j_0$. By the first-layer nonexpansiveness \eqref{eq:first-layer-nonexpansive-transfer}, we obtain
    \begin{equation}
        \norm{\ScatOne(L_{\tau_j}f_j)-\ScatOne(L_{\ell_j}f_j)}_{\lp} \leq \norm{L_{\tau_j}f_j-L_{\ell_j}f_j}_{L^2}
        \leq \frac{c_*}{2}.
    \end{equation}
    Combining this estimate with the lower bound for the linear deformation, we finally get
    \begin{align}
        \norm{\ScatOne(L_{\tau_j}f_j)-\ScatOne f_j}_{\lp}
        &\geq \norm{\ScatOne(L_{\ell_j}f_j)-\ScatOne f_j}_{\lp} - \norm{\ScatOne(L_{\tau_j}f_j)-\ScatOne(L_{\ell_j}f_j)}_{\lp} \\
        &\geq c_*-\frac{c_*}{2} = \frac{c_*}{2},
    \end{align}
    which proves the first part of the theorem with $c=c_*/2$.

    We next prove the linear commutator lower bound \eqref{eq:first-layer-sep-linear-comm}. Recall that
    \[\ScatOne=\calC_\chi\circ U[\Psi],\]
    where $\calC_\chi$ denotes componentwise convolution with the low-pass filter $\chi$. The commutator identity gives
    \begin{equation*}
        [\ScatOne,L_{\tau_j}]=\calC_\chi[U[\Psi],L_{\tau_j}]+[\calC_\chi,L_{\tau_j}]U[\Psi].
    \end{equation*}
    Since $\calC_\chi$ is nonexpansive on $\lp$, the pointwise inequality
    \[\abs{\abs{z}-\abs{w}}\leq \abs{z-w}\]
    implies
    \begin{align}
        \norm{[\ScatOne,L_{\tau_j}]f_j}_{\lp}
        &\leq \norm{[U[\Psi],L_{\tau_j}]f_j}_{\lp} +\norm{[\calC_\chi,L_{\tau_j}]U[\Psi]f_j}_{\lp} \notag \\
        &\leq \norm{[\calW,L_{\tau_j}]f_j}_{\lp} +C(\|\tau_j\|_{L^\infty}+\norm{D\tau_j}_{L^\infty}) \norm{U[\Psi]f_j}_{\lp}. \label{eq:nonlinear-linear-comm-transfer}
    \end{align}
    Here, the last inequality is due to Lemma~\ref{lem:appendix-low-pass-deformation-estimate} for a constant $C>0$ depending only on the low-pass filter $\chi$; the lemma applies since, for all sufficiently large $j$, we have $\norm{D\tau_j}_{L^\infty}\leq \frac12$.
    Moreover, the Parseval property and the invariance of the $L^2$-norm under the modulus give
    \begin{equation*}
        \norm{U[\Psi]f_j}_{\lp}=\norm{(f_j*\psi_i)_{i\in I_\ast}}_{\lp}\leq \norm{f_j}_{L^2}=1.
    \end{equation*}
    Consequently,
    \begin{equation}
        \norm{[\ScatOne,L_{\tau_j}]f_j}_{\lp}
        \leq \norm{[\calW,L_{\tau_j}]f_j}_{\lp}+C(\|\tau_j\|_{L^\infty}+\norm{D\tau_j}_{L^\infty}).
        \label{eq:first-layer-comm-upper-linear}
    \end{equation}
    
    It remains to compare the nonlinear commutator with the difference appearing in \eqref{eq:compact-first-layer-sep}. Again by Lemma~\ref{lem:appendix-low-pass-deformation-estimate},
    \begin{align}
        \norm{\ScatOne f_j-L_{\tau_j}\ScatOne f_j}_{\lp} \leq C \norm{\tau_j}_{L^\infty} \norm{(f_j*\psi_i)_{i\in I_\ast}}_{\lp} \leq C \norm{\tau_j}_{L^\infty}. \label{eq:formed-coefficient-deformation}
    \end{align}
    On the other hand,
    \begin{equation*}
        \ScatOne(L_{\tau_j}f_j)-\ScatOne f_j=[\ScatOne,L_{\tau_j}]f_j+L_{\tau_j}\ScatOne f_j-\ScatOne f_j.
    \end{equation*}
    Combining the lower bound already established for the compactly supported deformations with \eqref{eq:first-layer-comm-upper-linear} and \eqref{eq:formed-coefficient-deformation}, we obtain
    \begin{align*}
        \frac{c_\ast}{2} \leq \norm{\ScatOne(L_{\tau_j}f_j)-\ScatOne f_j}_{\lp} \leq \norm{[\calW,L_{\tau_j}]f_j}_{\lp} +C (2\norm{\tau_j}_{L^\infty}+\norm{D\tau_j}_{L^\infty}).
    \end{align*}
    By \eqref{eq:compact-first-layer-data},
    \begin{equation*}
        2\norm{\tau_j}_{L^\infty}+\norm{D\tau_j}_{L^\infty}\lesssim 2^{-(1-\beta)j}\longrightarrow 0.
    \end{equation*}
    After increasing $j_0$ once more, we may therefore arrange that
    \[\norm{[\calW,L_{\tau_j}]f_j}_{\lp}\geq \frac{c_\ast}{4},\]
    which proves \eqref{eq:first-layer-sep-linear-comm}. Thus, upon decreasing the constant in the statement to $c=c_\ast/4$, if necessary, the same constant applies to both conclusions.

    Let us finally prove the asserted uniform bound for the mixed $\ell^1(\lp)$ scattering norm under the additional assumption that the active-core condition \textup{(AC)} holds with $\cact=1$. Recall from the construction in Proposition~\ref{prop:first-layer-separation} that one starts with an arbitrary function $0\neq\varphi\in C_c^\infty(B_{r_0/4}(0))$. We may therefore choose $\varphi$ such that
    \begin{equation*}
        \supp(\varphi)\subset B_{r_0/4}(0)\cap B_{1/2}(0),\qquad \norm{\varphi}_{L^2}=1,
    \end{equation*}
    and $\calF^{-1}\varphi$ is pointwise nonnegative. Such a choice is obtained, for instance, by taking
    \begin{equation*}
        \varphi=\sigma \,\phi*\phi^\ast, \quad \text{where } \phi^*(\xi) \coloneqq \overline{\phi(-\xi)},
    \end{equation*}
    for some $0\neq\phi\in C_c^\infty(B_{r^\prime}(0))$, where ${r^\prime}<\min\{r_0/8,1/4\}$ and the constant $\sigma>0$ is chosen so that $\norm{\varphi}_{L^2}=1$. Indeed, note that $\calF^{-1}\varphi = \sigma \abs{\calF^{-1}\phi}^2 \ge 0$.
    
    For the functions constructed in Proposition~\ref{prop:first-layer-separation}, we then have
    \[\supp(\wh{f_j})=\xi_j+\supp(\varphi)\subset B_{r_0/4}(\xi_j).\]
    Since $\cact=1$, condition \textup{(AC)} gives $\abs{m_{i_j}}=1$ on $B_{r_0}(\xi_j)$. Indeed, the reverse inequality follows from the Parseval identity, which also implies that
    \[m_i=0\qquad\text{on }B_{r_0}(\xi_j)\]
    for every $i\in I_\ast\setminus\set*{i_j}$. Recalling the definition of $f_j^0$ in the proof of Proposition~\ref{prop:first-layer-separation}, we consequently obtain
    \[\norm{f_j^0}_{L^2}=\norm{\varphi}_{L^2}=1,\]
    and hence $f_j=f_j^0$. Moreover,
    \[\abs{f_j*\psi_{i_j}}=\abs{\calF^{-1}\varphi}=\calF^{-1}\varphi,\]
    whereas
    \[f_j*\psi_i=0\]
    for every $i\in I_\ast\setminus\set*{i_j}$.
    
    Finally, since $\supp(\varphi)\subset B_{1/2}(0)$, Lemma~\ref{lem:BV-covering-basic-properties}d) yields
    \[(\calF^{-1}\varphi)*\psi_i=0\]
    for every $i\in I_\ast$. It follows that $U_2f_j=0$, and hence, recursively, $U_nf_j=0$ for every $n\geq 2$.
    Since $\norm{f_j}_{L^2}=1$ and the only nonzero first-layer coefficient is $\calF^{-1}\varphi$, Plancherel's theorem gives
    \begin{equation*}
        \norm{U_0f_j}_{\lp}=1,\qquad \norm{U_1f_j}_{\lp}=\norm{\calF^{-1}\varphi}_{L^2}=\norm{\varphi}_{L^2}=1.
    \end{equation*}
    Therefore,
    \[\sup_{j\geq j_0}\sum_{n=0}^\infty\norm{U_nf_j}_{\lp}=2<\infty,\]
    as claimed.
\end{proof}

\begin{corollary} \label{cor:modulus-obstruction}
    Assume the hypotheses of Theorem \ref{thm:first-layer-separation} and fix $m\in\Nzero$ and $s\geq0$. Let $\omega \colon (0,1]\to[0,\infty)$ be nondecreasing with $\omega(t)\to0$ as $t\downarrow0$. Suppose that there exists a constant $C<\infty$ such that
    \begin{equation}\label{eq:modulus-stability}
        \norm{\ScatOne(L_\tau f)-\ScatOne f}_{\lp} \leq C\,\omega(\norm{\tau}_{C^m})\,\norm{f}_{H^s}
    \end{equation}
    for all deformations $\tau \in C^\infty_c(\R^2;\R^2)$ satisfying $\norm{\tau}_{C^m}\leq1$ and all $f\in H^s(\R^2)$. Then necessarily
    \begin{equation}\label{eq:necessary-modulus-growth}
        \limsup_{t\downarrow0}\frac{\omega(t)}{t^{s/(1-\beta)}}>0.
    \end{equation}
    Equivalently, no estimate of the form \eqref{eq:modulus-stability} can hold with $\omega(t)=o(t^{s/(1-\beta)})$ as $t\downarrow0$.

    In particular, for every $0\leq s<1-\beta$, no Lipschitz estimate of the form
    \begin{equation*}
        \norm{\ScatOne(L_\tau f)-\ScatOne f}_{\lp}\leq C\norm{\tau}_{C^m}\norm{f}_{H^s}
    \end{equation*}
    can hold uniformly for compactly supported smooth deformations $\tau$ and inputs $f\in H^s(\R^2)$. 

    If, in addition, the hypotheses ensuring the strengthened version of Theorem~\ref{thm:first-layer-separation} hold, then the counterexamples used above satisfy
    \[\sup_{j\geq j_0} \sum_{n=0}^\infty \norm{U_n f_j}_{\lp}<\infty.\]
    Consequently, the obstruction persists even after restricting the input class to signals with uniformly bounded mixed $\ell^1(\lp)$ scattering norm.

    The same obstruction applies to the full scattering transform whenever the output norm contains the first layer as a subcollection of features.
\end{corollary}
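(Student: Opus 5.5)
The plan is to argue by contradiction, testing the assumed estimate \eqref{eq:modulus-stability} on the adversarial pairs $(f_j,\tau_j)$ from Theorem~\ref{thm:first-layer-separation}. Those pairs satisfy $\norm{f_j}_{L^2}=1$ and $\norm{\tau_j}_{C^m}\leq C_m2^{-(1-\beta)j}$, and the first-layer outputs are separated by $c>0$ for all large $j$. Two preliminary bounds are needed.

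First, the Sobolev size of $f_j$. By construction $\supp\wh f_j\subset B_{r_0/4}(\xi_j)$ with $\abs{\xi_j}\asymp 2^j$, so $\la\xi\ra^s\lesssim 2^{sj}$ on the Fourier support. Plancherel then gives $\norm{f_j}_{H^s}\lesssim 2^{sj}$, with a constant independent of $j$ since $s\geq0$ is fixed.

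Second, the deformation size. Set $t_j\coloneqq\norm{\tau_j}_{C^m}\leq C_m2^{-(1-\beta)j}$, which lies in $(0,1]$ for large $j$. We may assume $\tau_j\neq0$: otherwise $L_{\tau_j}f_j=f_j$, contradicting the separation bound.

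Inserting these into \eqref{eq:modulus-stability} gives
\[
c\leq C\,\omega(t_j)\,2^{sj}.
\]
The case $s=0$ is immediate: the right-hand side is $C\omega(t_j)\to0$ because $t_j\to0$, which is a contradiction. So no estimate with $\omega\to0$ holds at $s=0$, and any admissible $\omega$ would force \eqref{eq:necessary-modulus-growth} with exponent $0$.

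For $s>0$, monotonicity of $\omega$ lets us replace $t_j$ by the comparison scale $\bar t_j\coloneqq C_m2^{-(1-\beta)j}\geq t_j$, so that $\omega(t_j)\leq\omega(\bar t_j)$. Since $2^{sj}=(C_m/\bar t_j)^{s/(1-\beta)}$, this yields
\[
\frac{\omega(\bar t_j)}{\bar t_j^{\,s/(1-\beta)}}\gtrsim c\,C^{-1}C_m^{-s/(1-\beta)}>0
\]
along the sequence $\bar t_j\downarrow0$, and hence $\limsup_{t\downarrow0}\omega(t)/t^{s/(1-\beta)}>0$. This monotonicity step is the one delicate point. The real deformation sizes $t_j$ could be much smaller than $\bar t_j$, so one must evaluate $\omega$ at $\bar t_j$ rather than at $t_j$. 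Monotonicity is exactly what justifies this, and it is why the hypothesis that $\omega$ is nondecreasing matters.

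The Lipschitz statement is the special case $\omega(t)=t$. For $s<1-\beta$ we have $t/t^{s/(1-\beta)}=t^{1-s/(1-\beta)}\to0$, which contradicts \eqref{eq:necessary-modulus-growth}.

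For the strengthened claim, the same $f_j$ from the final part of Theorem~\ref{thm:first-layer-separation} are used, under the hypothesis $\cact=1$. Their $\ell^1(\lp)$ scattering norms are uniformly bounded, equal to $2$, and their Fourier supports still lie in $B_{r_0/4}(\xi_j)$. The whole argument therefore runs unchanged inside this restricted input class.

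Finally, if the output norm of the full (or any truncated) scattering transform contains the first layer as a subcollection of features, its difference dominates $\norm{\ScatOne(L_\tau f)-\ScatOne f}_{\lp}$. The lower bound $c$ therefore transfers, and the same contradiction follows.
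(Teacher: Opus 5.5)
Your proposal is correct and follows essentially the same route as the paper: test the estimate on the pairs $(f_j,\tau_j)$ from Theorem~\ref{thm:first-layer-separation}, bound $\norm{f_j}_{H^s}\lesssim 2^{sj}$, and use monotonicity of $\omega$ to pass to the comparison scale $A_m2^{-(1-\beta)j}$. Your additional remarks (that $\tau_j\neq0$ so $\omega$ is evaluated inside $(0,1]$, and the separate treatment of $s=0$) are harmless refinements of the same argument.
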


\begin{proof}
    On the one hand, the functions $f_j$ from Theorem \ref{thm:first-layer-separation} satisfy $\supp\widehat f_j\subset B_{r_0}(\xi_j)$ and $\abs{\xi_j}\asymp2^j$, therefore $\norm{f_j}_{H^s} \le B_s 2^{sj}$ for some $B_s>0$. On the other hand, the compactly supported deformations $\tau_j$ from Theorem \ref{thm:first-layer-separation} satisfy $\norm{\tau_j}_{C^m} \le A_m  2^{-(1-\beta)j}$ for some $A_m>0$. Set then $t_j=A_m2^{-(1-\beta)j}$, so that $t_j\downarrow0$ and in particular $t_j\leq1$ for all sufficiently large $j$. Since $\omega$ is nondecreasing, if \eqref{eq:modulus-stability} held, then
    \begin{equation}
        0< c \leq C\, \omega(\norm{\tau_j}_{C^m})\norm{f_j}_{H^s} \le CB_s\omega(t_j)2^{sj}.
    \end{equation}
    Furthermore, $2^{sj}=A_m^{s/(1-\beta)}t_j^{-s/(1-\beta)}$ and thus
    \begin{equation*}
        \frac{\omega(t_j)}{t_j^{s/(1-\beta)}}\geq \frac{c}{CB_sA_m^{s/(1-\beta)}}>0,
    \end{equation*}
    which in particular implies
    \[\limsup_{j\to\infty}\frac{\omega(t_j)}{t_j^{s/(1-\beta)}}>0.\]
    Since $t_j\downarrow0$, this rules out $\omega(t)=o(t^{s/(1-\beta)})$ as $t\downarrow0$.

    The asserted uniform bound in the mixed $\ell^1(\lp)$ scattering norm was established in the proof of Theorem~\ref{thm:first-layer-separation}. The remaining conclusions follow directly by combining the preceding results.
\end{proof}

\begin{remark}[Regularity of the active-core multipliers]\label{rm:active-core-regularity}
    The active-core condition \textup{(AC)} alone suffices for the construction of $L^2$ counterexamples with compact Fourier support. The uniform active-core regularity \textup{(R)} is used only in the subsequent compactification argument, where uniform first-order control of the active-core multipliers is required. If one wishes to obtain counterexamples in the Schwartz class, it is enough to strengthen the uniform active-core regularity \textup{(R)} by requiring that, for every multi-index $\nu\in\Nzero^2$ with $\abs{\nu}\geq 1$, there exists a constant $C_\nu<\infty$ such that
    \begin{equation*}
        \sup_{j\geq j_0}
        \norm{\partial^\nu m_{i_j}}_{L^\infty(B_{r_0}(\xi_j))}
        \leq C_\nu.
    \end{equation*}
    Together with the uniform lower bound furnished by \textup{(AC)}, these estimates yield uniform control of all derivatives of $1/m_{i_j}$ on the active cores and thereby allow the compactly supported Fourier-side construction to be regularized without leaving the Schwartz class. This stronger $C^\infty$ active-core regularity is satisfied by the smooth Parseval model considered in Section~\ref{subsec:designed-model}.
\end{remark}

\begin{remark}[Sharpness of the parameter regime]
    The observed instability mechanism is sharp over the full parameter regime
    \[0\leq \beta\leq \alpha\leq 1\]
    for the studied wave-packet coverings $\bvQ$. In fact, the results apply to all admissible pairs $(\alpha,\beta)$ with $\beta<1$, while $\beta=1$ corresponds to the wavelet endpoint case $(\alpha,\beta)=(1,1)$, for which stability of the scattering transform was proved in Mallat's seminal work \cite{Mallat2012}.
\end{remark}

We conclude this section by explaining to what extent the preceding results extend to more general $d$-dimensional settings.

\begin{remark}[Higher-dimensional extensions]
    The systematic theory of wave-packet coverings used in this work is presently available only in dimension $d=2$. To the best of our knowledge, there is no directly comparable higher-dimensional theory, at least not with the same geometric precision and classification. This is one reason why the two-dimensional framework is particularly well suited to our purposes: thanks to the detailed geometric analysis of wave-packet coverings developed in \cite{BytchenkoffVoigtlaender2020}, the instability mechanism can ultimately be traced back to the geometry of the covering, rather than to a special choice of subordinate filters.

    In dimensions where such a geometric covering theory is not available, the same arguments can still be carried out once the relevant geometric features are imposed directly at the level of the filters. For a fixed high-pass filter bank $\Psi\subset L^1(\R^d)\cap L^2(\R^d)$ and $U\subset \R^d$ define
    \begin{equation*}
        \Psi(U)\coloneqq \set*{\psi\in \Psi\given \supp(\wh{\psi})\cap U \neq \varnothing}.
    \end{equation*}
    Assume that there exist $r_0>0$, points $\xi_j\in \R^d$,  auxiliary functions $g_j\in L^1(\R^d)\cap L^2(\R^d)$, high-pass filters $\psi_j\in \Psi$, and real matrices $M_j$ such that the following hold:
    \begin{enumerate}
        \item[(a)] $\norm{M_j}\to 0$ as $j\to\infty$;
        \item[(b)] $\sup_{j\in \N} \norm{g_j}_{L^1}<\infty$;
        \item[(c)] $\wh{g_j*\psi_j}\equiv 1$ on $B_{r_0}(\xi_j)$;
        \item[(d)] $\Psi(B_{r_0}(\xi_j))\cap \Psi((\Id-M_j)B_{r_0}(\xi_j))=\varnothing$.
    \end{enumerate}
    These assumptions are sufficient filter-level substitutes for the geometric properties supplied by Lemma \ref{lem:BV-separation} in the two-dimensional wave-packet setting. Under them, the analogs of the main instability statements of this section follow by similar arguments.
\end{remark}

\section{Stability at the critical Sobolev regularity scale} \label{sec:positive-full-depth}

We now give a positive counterpart to the geometric instability mechanism. In particular, note that the negative result shows that the transverse tile width forces the Sobolev exponent $1-\beta$. Our purpose in this section is to prove a matching stability estimate at this level of regularity, in the spirit of the classical one in \cite{Mallat2012} for the wavelet parameter corner $(\alpha,\beta)=(1,1)$. 

Throughout the arguments, the covering $\bvQ$ and the subordinate Parseval scattering bank $(\chi,\Psi)$ are fixed as in Definition~\ref{def:fixed-channel-bank}, and we assume $0 \le \beta<1$.

For $i\in I_\ast$, write $m_i=\widehat\psi_i$ and introduce the high-pass convolution operator $\calC_i h=h*\psi_i$. For $j\in\N$ we set
\begin{equation*}
    I_{\ast,j}\coloneqq \set*{(j,m,\ell)\in I_\ast \given m \le  m_j^{\max},\ \ell \le  \ell_j^{\max}}.
\end{equation*}
Recall that, for a family $q=(q_p)_p$ of functions, we define the high-pass modulus layer and the low-pass output layer by
\begin{equation*}
    (\calU q)_{(p,i)}=\abs{q_p*\psi_i},\qquad \calA q=(q_p*\chi)_p.
\end{equation*}
Thus $U_nf=\calU^nf$, $\ScatLe{N}f=(\calA U_nf)_{0 \le  n \le  N}$, and $\Scat f=(\calA U_nf)_{n\geq0}$.

For $0 \le  s \le 1$ and $f\in H^s(\R^2)$ we introduce the quantities
\begin{equation*}
    E_n^s(f) \coloneqq \|U_nf\|_{\ell^2(H^s)},\qquad \mathfrak E_s(f) \coloneqq \sum_{n=0}^\infty E_n^s(f)\in[0,\infty].
\end{equation*}
Note that $U_0f=f$, hence $E_0^s(f)=\norm{f}_{H^s}$. 

Let us record a simple technical result for later use. 

\begin{lemma} \label{lem:Sobolev-nonexpansiveness}
    For every $0 \le  s \le 1$ and countable family $q=(q_p)_p$ with $q\in\ell^2(H^s)$, we have
    \[\|\calU q\|_{\ell^2(H^s)} \le \|q\|_{\ell^2(H^s)}.\]
    As a result,
    \begin{equation*}
        E_{n+1}^s(f) \le  E_n^s(f) \le \norm{f}_{H^s},\qquad 0 \le  s \le 1,\quad n\geq0.
    \end{equation*}
\end{lemma}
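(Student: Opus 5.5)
The plan is to prove the one-step bound $\|\calU q\|_{\ell^2(H^s)}\le\|q\|_{\ell^2(H^s)}$ first at the endpoints $s=0$ and $s=1$, and then pass to intermediate $s$ through a representation of the $H^s$ norm that the modulus cannot increase. Complex interpolation is not available here, because $\calU$ is nonlinear.

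\emph{Case $s=0$.} Since $\|\abs{g}\|_{L^2}=\|g\|_{L^2}$, the Parseval identity \eqref{eq:LPcondition} gives
\[
\sum_i\|\abs{q_p*\psi_i}\|_{L^2}^2=\sum_i\|q_p*\psi_i\|_{L^2}^2=\|q_p\|_{L^2}^2-\|q_p*\chi\|_{L^2}^2\le\|q_p\|_{L^2}^2.
\]
Summing over $p$ yields the claim.

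\emph{Case $s=1$.} For complex $g\in H^1$ one has $\abs{g}\in H^1$ with $\abs{\nabla\abs{g}}\le\abs{\nabla g}$ a.e. This is the standard diamagnetic inequality. Hence $\|\abs{g}\|_{H^1}\le\|g\|_{H^1}$, where $\|g\|_{H^1}^2=\|g\|_{L^2}^2+(2\pi)^{-2}\|\nabla g\|_{L^2}^2$ in the paper's Fourier normalization. Convolution with $\psi_i$ is a Fourier multiplier, so
\[
\sum_i\|q_p*\psi_i\|_{H^1}^2=\int\la\xi\ra^2\abs{\wh q_p}^2\sum_i\abs{m_i}^2\dxi\le\|q_p\|_{H^1}^2,
\]
and the claim follows after summing over $p$.

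\emph{Intermediate $0<s<1$.} Here I would use the Gagliardo (Aronszajn) characterization
\[
\int\abs{\xi}^{2s}\abs{\wh g(\xi)}^2\dxi=c_s\iint\frac{\abs{g(x)-g(y)}^2}{\abs{x-y}^{2+2s}}\dx\dy,
\]
so that $\|g\|_{H^s}^2$ is equivalent to $\|g\|_{L^2}^2+\iint\abs{g(x)-g(y)}^2\abs{x-y}^{-2-2s}\dx\dy$. The standard norm $\la\xi\ra^{2s}$ is not exactly the sum of $1$ and $\abs{\xi}^{2s}$, so I would handle it through the identity $(1+r)^s=c_s'\int_0^\infty(1-e^{-tr})\,t^{-1-s}\dt+1$. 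This expresses $\|g\|_{H^s}^2$ as $\|g\|_{L^2}^2$ plus a positive superposition over $t$ of $\|g\|_{L^2}^2-\|e^{t\Delta/4\pi^2}\text{-type heat}\,g\|^2$-style quantities. Those in turn equal averages of $\abs{g(x)-g(y)}^2$ against positive kernels $K_t(x-y)$, together with $L^2$ terms carrying positive weight. Concretely,
\[
\int(1-e^{-t\abs{\xi}^2})\abs{\wh g}^2\dxi=\tfrac12\iint G_t(x-y)\abs{g(x)-g(y)}^2\dx\dy
\]
for a Gaussian $G_t\ge0$, and an analogous decomposition handles the factor $e^{-t}$ coming from the "1+". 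The result is a representation
\[
\|g\|_{H^s}^2=a\|g\|_{L^2}^2+\iint K(x-y)\abs{g(x)-g(y)}^2\dx\dy,\qquad a\ge0,\ K\ge0 .
\]
Every term in it is pointwise nonincreasing under $g\mapsto\abs{g}$, because $\big|\abs{g(x)}-\abs{g(y)}\big|\le\abs{g(x)-g(y)}$. This gives $\|\abs{g}\|_{H^s}\le\|g\|_{H^s}$ exactly, with constant $1$. The multiplier step $\sum_i\|q_p*\psi_i\|_{H^s}^2\le\|q_p\|_{H^s}^2$ then goes through exactly as in the case $s=1$. (This positive-kernel representation also reproduces the endpoints, which gives a uniform argument.)

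The iterated bound is then immediate: $U_{n+1}f=\calU U_nf$, so $E^s_{n+1}(f)\le E^s_n(f)$, and induction from $E_0^s(f)=\|f\|_{H^s}$ gives $E^s_n(f)\le\|f\|_{H^s}$.

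The main obstacle is getting the sharp constant $1$ for the inhomogeneous norm $\la\xi\ra^{2s}$, rather than merely an equivalence. That is why I would write it as a positive mixture of heat-semigroup quadratic forms. I would check carefully that $(1+r)^s$ is a Bernstein function in $r$, which holds since $s\le1$, so that the Lévy–Khintchine form above holds with nonnegative measure. This is exactly where the restriction $s\le1$ enters.
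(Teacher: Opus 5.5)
Your proposal is correct and is essentially the paper's proof: $s=1$ via $\abs{\nabla\abs{g}}\le\abs{\nabla g}$; $0<s<1$ via a positive heat-kernel (Bernstein/L\'evy--Khintchine) representation of $\norm{g}_{H^s}^2$, on which $g\mapsto\abs{g}$ acts pointwise contractively; then Plancherel with $\sum_i\abs{m_i}^2\le1$ and iteration. One slip: your displayed identity $(1+r)^s=c_s'\int_0^\infty(1-e^{-tr})t^{-1-s}\dt+1$ is false as written, since its right side equals $1+r^s$ up to normalization; the correct form, which is the one the paper uses, is $(1+r)^s=1+c_s\int_0^\infty(1-e^{-tr})e^{-t}t^{-1-s}\dt$ with $c_s>0$, and it yields your representation with $a=1$ and $K=\tfrac{c_s}{2}\int_0^\infty e^{-t}t^{-1-s}G_t\dt$.
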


\begin{proof}
    We shall use the contraction property
    \begin{align}
        \label{eq:Hs-norm-of-|g|-vs-g} \||g|\|_{H^s}\leq \|g\|_{H^s},\qquad 0\leq s\leq1.
    \end{align}
    For $s=0$ this is immediate. Let $0<s<1$. By the identity
    \begin{equation*}
        (1+\lambda)^s=1+c_s\int_0^\infty (1-e^{-t\lambda})e^{-t}\frac{\dd t}{t^{1+s}},\qquad \lambda\geq0,
    \end{equation*}
    with $c_s>0$, Plancherel gives
    \begin{equation*}
        \norm{g}_{H^s}^2=\norm{g}_2^2+c_s\int_0^\infty e^{-t}t^{-1-s}\bigl(\norm{g}_2^2-\langle e^{t\Delta/(4\pi^2)}g,g\rangle\bigr)\dd t.
    \end{equation*}
    Since $e^{t\Delta/(4\pi^2)}$ is convolution with the positive heat kernel $p_t$, we have
    \begin{equation*}
        \norm{g}_2^2-\langle e^{t\Delta/(4\pi^2)}g,g\rangle=\frac12\int_{\R^2}\int_{\R^2}\abs{g(x)-g(y)}^2p_t(x-y)\dd{x}\dd{y}.
    \end{equation*}
    Applying this identity to $\abs{g}$ and using $\||g|\|_2=\|g\|_2$ together with $\bigl||g(x)|-|g(y)|\bigr|\leq |g(x)-g(y)|$ yields $\||g|\|_{H^s}\leq \|g\|_{H^s}$ for $0<s<1$. Finally, the case $s=1$ follows from the weak chain rule, which gives $|\nabla |g||\leq |\nabla g|$.

    Combining the contraction \eqref{eq:Hs-norm-of-|g|-vs-g} with the Parseval--Plancherel theorem, since $\sum_{i\in I_\ast}\abs{m_i(\xi)}^2 \le 1$, we obtain
    \begin{align}
        \|\calU q\|_{\ell^2(H^s)}^2
        &=\sum_p\sum_{i\in I_\ast}\||q_p*\psi_i|\|_{H^s}^2 \leq \sum_p\sum_{i\in I_\ast}\|q_p*\psi_i\|_{H^s}^2 \\
        &=\sum_p\int_{\R^2}\langle\xi\rangle^{2s}\left(\sum_{i\in I_\ast}|m_i(\xi)|^2\right)|\widehat q_p(\xi)|^2 \dd{\xi} \leq \sum_p\|q_p\|_{H^s}^2.
    \end{align}
    This proves the first assertion. The second follows by iteration, applied to $q=U_nf$.
\end{proof}

\subsection{A commutator-based approach}

As in \cite{Mallat2012}, we are going to exploit a commutator-bound argument. To this end, some additional regularity assumptions on the filters are needed. 
\begin{assumption} \label{ass:comm-filter-ass}
    We assume that $\beta<1$ and that the fixed filter bank satisfies the following conditions.
    \begin{enumerate}[label={(C\arabic*)}]
        \item There exists $j_{\rm reg}\in\N$ such that all the multipliers $m_i$ with
        \[i\in I_{\rm fin}\coloneqq \bigcup_{1\le j<j_{\rm reg}}I_{\ast,j}\]
        have compact support and bounded derivatives up to order $4$.
        \item For every multi-index $\nu$ with $1\le \abs{\nu}\le 4$,
        \begin{equation*}
            \sup_{j\geq j_{\rm reg}}\sup_{\xi\in\R^2}2^{\beta j|\nu|}\left(\sum_{i\in I_{\ast,j}}|\partial^\nu m_i(\xi)|^2\right)^{1/2}<\infty.
        \end{equation*}
    \end{enumerate}
\end{assumption}

\begin{remark}[The fourth-order smoothness requirement]
    The numerical order four in \textup{(C2)} is imposed only for the commutator estimate in Section~\ref{sec:appendix-commutator}. In dimension two, the scale-local Coifman--Meyer estimate requires three derivatives in the localized frequency variable, reflecting the summability of $\sum_{n\in\Z^2}(1+\abs{n})^{-3}$. The commutator symbol involves one further derivative of the filter multiplier, hence the need for multiplier bounds up to order four. This is the only point where this amount of smoothness is used.
\end{remark}

\begin{proposition} 
    The smooth Parseval model class constructed in Section~\ref{subsec:designed-model} satisfies Assumption~\ref{ass:comm-filter-ass}.
\end{proposition}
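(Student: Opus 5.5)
We have to verify (C1) and (C2) for $m_i=\gamma_i S^{-1/2}$. The plan is to compute derivatives in the affine model coordinates of each tile and then exploit finite overlap.

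\textbf{(C1).} Take $j_{\rm reg}=1$, so that $I_{\rm fin}=\varnothing$ and (C1) holds vacuously. Alternatively, with any finite $j_{\rm reg}$, each $m_i$ is a compactly supported smooth function, because $\gamma_i\in C_c^\infty$ and $S$ is smooth with $S\geq1$. Its derivatives are therefore bounded, and (C1) is immediate either way.

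\textbf{(C2), bound on the numerator.} Since $\gamma_i=\gamma\circ\aff_i^{-1}$ and $D(\aff_i^{-1})=A_j^{-1}R_{j,\ell}^{-1}$ has operator norm $2^{-\beta j}$, the chain rule gives
\[
\norm{\partial^\nu\gamma_i}_{L^\infty}\lesssim_\nu 2^{-\beta j\abs{\nu}}\max_{\abs{\mu}=\abs{\nu}}\norm{\partial^\mu\gamma}_{L^\infty}.
\]
This uses only that $\aff_i^{-1}$ is affine, so only the top-order derivatives of $\gamma$ appear, and the radial factor $2^{-\alpha j}$ is even better.

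\textbf{(C2), bound on the normalizer.} This is the main point. We need $\abs{\partial^\nu S^{-1/2}(\xi)}\lesssim 2^{-\beta j\abs{\nu}}$ for $\xi$ in a generation-$j$ tile. Near such a $\xi$, only the finitely many tiles meeting $Q_i$ contribute to $S$; there are at most $\calN(\bvQ)$ of them, by Lemma~\ref{lem:BV-covering-basic-properties}(b). By Lemma~\ref{lem:BV-covering-basic-properties}(c) these have generation $k$ with $\abs{k-j}\leq3$, so each satisfies $\norm{\partial^\mu\gamma_{i'}}_{L^\infty}\lesssim 2^{-\beta k\abs{\mu}}\lesssim 2^{-\beta j\abs{\mu}}$. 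The term $\abs{b_0}^2$ is relevant only for bounded $j$, because $Q_0\cap Q_i=\varnothing$ once $j$ is large by Lemma~\ref{lem:BV-covering-basic-properties}(d); for those finitely many generations we absorb it into the constant. Hence $\abs{\partial^\mu S}\lesssim 2^{-\beta j\abs{\mu}}$ on $Q_i$ for $\abs{\mu}\leq4$.

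\textbf{Conclusion.} Apply Faà di Bruno to $S^{-1/2}$, using $S\geq1$, and then Leibniz to $m_i=\gamma_iS^{-1/2}$. Every term carries a total of $\abs{\nu}$ derivatives, each costing $2^{-\beta j}$, so $\abs{\partial^\nu m_i(\xi)}\lesssim 2^{-\beta j\abs{\nu}}$ uniformly in $i\in I_{\ast,j}$. Finally, for fixed $\xi$ at most $\calN(\bvQ)$ indices $i\in I_{\ast,j}$ have $\xi\in\supp m_i$, so
\[
\Bigl(\sum_{i\in I_{\ast,j}}\abs{\partial^\nu m_i(\xi)}^2\Bigr)^{1/2}\leq \calN(\bvQ)^{1/2}\sup_i\abs{\partial^\nu m_i(\xi)}\lesssim 2^{-\beta j\abs{\nu}}.
\]
Multiplying by $2^{\beta j\abs{\nu}}$ gives a bound uniform in $j$ and $\xi$, which is (C2).

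The only delicate point is uniformity across neighboring generations in the bound on $S$, and the constraint $\abs{k-j}\leq3$ handles it.
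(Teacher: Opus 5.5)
Your proof is correct and follows the paper's argument. The chain rule through the affine chart gives the factor $2^{-\beta j\abs{\nu}}$ for $\gamma_i$. Finite overlap together with $\abs{k-j}\le 3$ transfers this bound to $S$. Fa\`a di Bruno and Leibniz then give it for $m_i$, and finite overlap once more controls the $\ell^2$ sum.

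The only difference is cosmetic. The paper chooses $j_{\rm reg}$ large so that $b_0$ vanishes on all relevant tiles. You take $j_{\rm reg}=1$ and absorb the $\abs{b_0}^2$ term into the constant for the finitely many low generations, which is equally valid.
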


\begin{proof}
    Recall the notation of the smooth Parseval construction: the generators are
    \begin{equation*}
        \gamma_i(\xi)=\gamma\big(A_j^{-1}(R_{j,\ell}^{-1}\xi-c_{j,m})\big),\qquad i=(j,m,\ell),
    \end{equation*}
    and
    \begin{equation*}
        m_i(\xi)=\gamma_i(\xi)S(\xi)^{-1/2},\qquad S(\xi)=\abs{b_0(\xi)}^2+\sum_{k\in I_\ast}|\gamma_k(\xi)|^2.
    \end{equation*}
    Recall that $\supp\gamma_i\subset Q_i$, so $\supp m_i\subset Q_i$ and $1 \le  S \le \calN(\bvQ)$.

    Choose now $j_{\rm reg}$ so large that $Q_0$ does not meet $Q_i$ for $i\in I_{\ast,j}$ and $j\ge j_{\rm reg}$; this implies that the set $I_{\rm fin}=\bigcup_{1 \le  j<j_{\rm reg}}I_{\ast,j}$ is finite, and all the corresponding multipliers are smooth and compactly supported by design, proving (C1).

    It remains to prove (C2). Given a multi-index $\nu \in \Nzero^2$ with $1 \le \abs{\nu} \le  4$, the chain rule gives
    \begin{equation*}
        \abs{\partial^\nu\gamma_i(\xi)} \lesssim_\nu \norm{A_j^{-1}}^{\abs{\nu}} \lesssim_\nu 2^{-\beta j|\nu|},\qquad i\in I_{\ast,j},
    \end{equation*}
    where we used that the smallest singular value of $A_j$ is $2^{\beta j}$. Since the base regularity is $C_c^\infty$, these estimates are available for every derivative order, hence in particular for all orders up to four. 

    We next prove the corresponding estimates for the normalization factor. Fix $i\in I_{\ast,j}$ with $j\ge j_{\rm reg}$ and $\xi\in\supp\gamma_i$; by the way $j_{\rm reg}$ is chosen, the low-pass term $b_0$ vanishes in a neighborhood of $\xi$. Moreover, if $\gamma_k(\xi)\neq0$ for some $k\in I_{\ast,j'}$ then $\xi\in\supp\gamma_i\cap\supp\gamma_k\subset Q_i\cap Q_k$. Now, by the finite-band property of the covering and uniformly finite overlap, this implies $\abs{j-j'} \le  C$ with only $\calO(1)$ such indices $k$ contributing at the point $\xi$. 
    As a consequence, for every $\nu\in\Nzero^2$ with $1 \le \abs{\nu} \le  4$,
    \begin{equation*}
        \abs{\partial^\nu S(\xi)}\lesssim_\nu 2^{-\beta j|\nu|},\qquad \xi\in\supp\gamma_i,\quad i\in I_{\ast,j}, \qquad j\ge j_{\rm reg}.
    \end{equation*}
    Since $S\geq1$, Faà di Bruno's formula gives the same estimate for the derivatives of $S^{-1/2}$:
    \begin{equation*}
        \abs{\partial^\nu(S^{-1/2})(\xi)}\lesssim_\nu 2^{-\beta j|\nu|},\qquad \xi\in\supp\gamma_i,\quad i\in I_{\ast,j},\quad 1 \le \abs{\nu} \le  4.
    \end{equation*}
    Therefore, differentiating $m_i=\gamma_iS^{-1/2}$ and using Leibniz' rule yields
    \begin{equation*}
        \abs{\partial^\nu m_i(\xi)} \lesssim_\nu  2^{-\beta j|\nu|},\qquad i\in I_{\ast,j}.
    \end{equation*}
    Finally, at each fixed $\xi$, uniformly finite overlap implies that only $\calO(1)$ terms in the sum over $I_{\ast,j}$ are nonzero, hence
    \begin{equation*}
        \bigg(\sum_{i\in I_{\ast,j}}|\partial^\nu m_i(\xi)|^2\bigg)^{1/2} \lesssim_\nu 2^{-\beta j|\nu|}, \qquad j\ge j_{\rm reg},
    \end{equation*}
    which is precisely (C2).
\end{proof}

The following lemma is a key technical ingredient for reducing the stability of the scattering transform under small diffeomorphisms to a commutator estimate for the deformation operator $L_\tau$ and the one-step scattering propagator $\calU$.

\begin{lemma}\label{lem:finite-depth-defect-propagation}
    For every $N\in\N_0$ and arbitrary families $y_n\in\ell^2(I_\ast^n;L^2)$, $0\leq n\leq N$, one has
    \begin{equation*}
        \left(\sum_{n=0}^N \|\calA \calU^n y_0-\calA  y_n\|_{\lp}^2\right)^{1/2}\leq \sum_{\ell=0}^{N-1}\|\calU y_\ell-y_{\ell+1}\|_{\lp},
    \end{equation*}
    with the convention that the empty sum is zero.
\end{lemma}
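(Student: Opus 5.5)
The argument is a telescoping in the depth index, driven by the nonexpansiveness of $\calU$ and $\calA$ on $\lp$. For $0\le n\le N$ I will compare $\calU^n y_0$ with $y_n$ by inserting the intermediate families $\calU^{n-\ell}y_\ell$:
\begin{equation*}
    \calU^n y_0-y_n=\sum_{\ell=0}^{n-1}\bigl(\calU^{n-\ell}y_\ell-\calU^{n-\ell-1}y_{\ell+1}\bigr).
\end{equation*}
This reduces the claim to controlling each increment $\calU^{n-\ell-1}(\calU y_\ell)-\calU^{n-\ell-1}(y_{\ell+1})$, mapped into depth $n$, in terms of the one-step defect $d_\ell\coloneqq\calU y_\ell-y_{\ell+1}\in\ell^2(I_\ast^{\ell+1};L^2)$.

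A naive estimate uses the pointwise contraction of $\calU$. From $\abs{\abs{a}-\abs{b}}\le\abs{a-b}$ and the Parseval condition \eqref{eq:LPcondition} one gets $\|\calU q-\calU q'\|_{\lp}\le\|q-q'\|_{\lp}$, and $\calA$ is nonexpansive since $\abs{\wh\chi}\le1$. Applying $\calA$, the triangle inequality, and then the sum over $n$ gives a bound $\sum_n\sum_{\ell<n}\|d_\ell\|$. The outer sum over $n$ costs a factor $N$, which is useless.

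The key refinement is the sharper energy identity for the cascade. Since $\abs{\wh\chi}^2+\sum_i\abs{m_i}^2=1$ and the modulus preserves $L^2$ norms of differences only up to contraction, for families $q,q'$ one has
\begin{equation*}
    \|\calA q-\calA q'\|_{\lp}^2+\|\calU q-\calU q'\|_{\lp}^2\le\|q-q'\|_{\lp}^2 .
\end{equation*}
Iterating this for a fixed $\ell$ with $q=\calU y_\ell$ and $q'=y_{\ell+1}$ gives
\begin{equation*}
    \sum_{k\ge0}\|\calA\calU^k(\calU y_\ell)-\calA\calU^k y_{\ell+1}\|_{\lp}^2\le\|d_\ell\|_{\lp}^2 .
\end{equation*}
Write $D_\ell\coloneqq(\calA\calU^{n-\ell}y_\ell-\calA\calU^{n-\ell-1}y_{\ell+1})_{n}$, a vector indexed by depth, understood as $0$ for $n\le\ell$. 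The displayed bound says $\|D_\ell\|_{\ell^2_n(\lp)}\le\|d_\ell\|_{\lp}$.

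Combining the pieces, the telescoping gives $(\calA\calU^ny_0-\calA y_n)_{n\le N}=\sum_{\ell=0}^{N-1}D_\ell$ restricted to $n\le N$. Minkowski's inequality in $\ell^2_n(\lp)$ then yields the claim, with the empty sum covering $N=0$.

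The main step to check carefully is the energy inequality. Applying Parseval to $q_p-q'_p$ gives $\|(q_p-q'_p)*\chi\|^2+\sum_i\|(q_p-q'_p)*\psi_i\|^2=\|q_p-q'_p\|^2$. The modulus contraction then applies termwise to the high-pass part, and one sums over $p$; iteration in $k$ is a straightforward induction.
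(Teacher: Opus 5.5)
Your proposal is correct and follows essentially the same route as the paper. You telescope through the intermediate terms $\calA\calU^{n-\ell}y_\ell$, apply Minkowski's inequality in $\ell^2_n(\lp)$, and control each shifted defect vector by iterating the one-step energy inequality $\|\calA q-\calA q'\|_{\lp}^2+\|\calU q-\calU q'\|_{\lp}^2\le\|q-q'\|_{\lp}^2$, which gives nonexpansiveness of the scattering tail.
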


\begin{proof}
    The cases $N=0$ and $N=1$ are immediate. For $2\leq n\leq N$, we insert the intermediate terms
    \[\calA \calU^{n-\ell}y_\ell,\qquad 1\leq \ell\leq n-1,\]
    and obtain
    \begin{equation*}
        \|\calA \calU^n y_0-\calA  y_n\|_{\lp}\leq\sum_{\ell=0}^{n-1}\|\calA \calU^{n-\ell-1}(\calU y_\ell)-\calA \calU^{n-\ell-1}y_{\ell+1}\|_{\lp}.
    \end{equation*}
    Taking the $\ell^2$-norm in $n$ and using Minkowski's inequality gives
    \begin{align*}
        \left(\sum_{n=0}^N \|\calA \calU^n y_0-\calA y_n\|_{\lp}^2\right)^{1/2}
        &\leq \sum_{\ell=0}^{N-1} \left(\sum_{n=\ell+1}^N \|\calA \calU^{n-\ell-1}(\calU y_\ell) -\calA \calU^{n-\ell-1}y_{\ell+1}\|_{\lp}^2 \right)^{1/2} \\
        &= \sum_{\ell=0}^{N-1} \left(\sum_{r=0}^{N-\ell-1} \|\calA \calU^r(\calU y_\ell) -\calA \calU^r y_{\ell+1}\|_{\lp}^2 \right)^{1/2}.
    \end{align*}
    The Littlewood--Paley identity and the nonexpansiveness of the modulus give, for compatible arrays $q$ and $\widetilde{q}$,
    \begin{equation*}
        \norm{\calA q-\calA \widetilde{q}}_{\lp}^2+\norm{\calU q-\calU \widetilde{q}}_{\lp}^2\leq\norm{q-\widetilde{q}}_{\lp}^2.
    \end{equation*}
    Telescoping over this one-step inequality yields nonexpansiveness of the truncated scattering tail
    \begin{equation*}
        q\mapsto \bigl(\calA \calU^r q\bigr)_{0\leq r\leq N-\ell-1}.
    \end{equation*}
    This yields the claimed upper bound.
\end{proof}

Let us stress that conditions \textup{(C1)}--\textup{(C2)} are the minimal ingredients to obtain the following scattering commutator bounds, whose proof is deferred to Section~\ref{sec:appendix-commutator}. Intuitively, the statement says that a small deformation can be pushed through one high-pass modulus layer at the cost of exactly one Sobolev norm $H^{1-\beta}$. From a heuristic point of view, after applying a generation-$j$ wave-packet filter the deformation error is measured at the carrier scale $2^j$, while the channel resolution in the transverse direction is $2^{\beta j}$. The effective cost of moving the deformation through the layer is therefore $2^j/2^{\beta j}=2^{(1-\beta)j}$, which is precisely the dyadic weight of $H^{1-\beta}$ (equivalently, of the $L^2$-based wave-packet decomposition norm $W_{1-\beta}^{2,2}(\alpha,\beta)$). On the other hand, the low-pass part does not suffer from such critical cost and is controlled directly by the size of the deformation.

\begin{proposition} \label{prop:scattering-commutator-plugin}
    Suppose that Assumption~\ref{ass:comm-filter-ass} holds. Then, for every $\kappa\in(0,1)$ there exists a constant $C>0$ such that, whenever $\tau\in W^{1,\infty}(\R^2;\R^2)$ satisfies $\norm{D\tau}_{L^\infty}\leq \kappa$, we have, for every countable family $q=(q_p)_p \in \ell^2(H^{1-\beta})$,
    \begin{equation*}
        \|\calU L_\tau q-L_\tau\calU q\|_{\lp} \le  C\|D\tau\|_{L^\infty}\|q\|_{\ell^2(H^{1-\beta})}.
    \end{equation*}
    If in addition $q=(q_p)_p \in \lp$ then
    \begin{equation*}
        \|\calA L_\tau q-\calA q\|_{\lp} \le  C(\|\tau\|_{L^\infty}+\norm{D\tau}_{L^\infty})\|q\|_{\lp}.
    \end{equation*}
\end{proposition}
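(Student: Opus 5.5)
We handle the two estimates separately. The low-pass bound is the easier one. Since $\calA$ acts componentwise, it suffices to show that for a single $g\in L^2$
\[
\norm{(L_\tau g)*\chi-g*\chi}_{L^2}\lesssim(\norm{\tau}_{L^\infty}+\norm{D\tau}_{L^\infty})\norm{g}_{L^2},
\]
and then square-sum over $p$. We split the difference as
\[
(L_\tau g)*\chi-g*\chi=[\calC_\chi,L_\tau]g+(L_\tau-\Id)(g*\chi).
\]
The second term is controlled by the argument in the proof of Proposition~\ref{prop:architecture-blind-deformation-stability}: the fundamental theorem of calculus along $F_t=\Id-t\tau$ and Lemma~\ref{lem:composition-small-deformation} give
\[
\norm{(L_\tau-\Id)(g*\chi)}_{L^2}\le(1-\kappa)^{-1}\norm{\tau}_{L^\infty}\norm{\nabla\chi}_{L^1}\norm{g}_{L^2}.
\]
For the commutator we write its kernel explicitly. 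After the change of variables $y\mapsto F_1(y)$, it is
\[
k(x,y)=\chi(x-y)\abs{\det DF_1^{-1}(y)}\ \text{-corrected}-\chi\bigl(F_1(x)-y\bigr)
\]
(schematically). Its difference is bounded by $\abs{\tau(x)-\tau(y)}\,\abs{\nabla\chi}$-type envelopes plus $\norm{D\tau}_{L^\infty}\abs{\chi}$. Using $\abs{\tau(x)-\tau(y)}\le\norm{D\tau}_{L^\infty}\abs{x-y}$ and the Schwartz decay of $\chi$, Schur's test gives the bound $C\norm{D\tau}_{L^\infty}$. This is presumably the content of Lemma~\ref{lem:appendix-low-pass-deformation-estimate}, which is already used in Section~\ref{sec:L2-negative}.

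For the high-pass estimate, the modulus commutes with composition, so $\abs{L_\tau h}=L_\tau\abs{h}$. Together with $\abs{\abs z-\abs w}\le\abs{z-w}$ this reduces the nonlinear estimate to the linear one:
\[
\norm{\calU L_\tau q-L_\tau\calU q}_{\lp}\le\biggl(\sum_p\sum_{i\in I_\ast}\norm{[\calC_i,L_\tau]q_p}_{L^2}^2\biggr)^{1/2}.
\]
It therefore suffices to prove the vector-valued commutator estimate
\[
\biggl(\sum_{i\in I_\ast}\norm{[\calC_i,L_\tau]g}_{L^2}^2\biggr)^{1/2}\lesssim\norm{D\tau}_{L^\infty}\norm{g}_{H^{1-\beta}}.
\]
We decompose $g=\sum_k\Delta_kg$ into dyadic Littlewood--Paley pieces. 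The commutator $[\calC_i,L_\tau]$ for $i\in I_{\ast,j}$ acting on $\Delta_kg$ is treated in two cases.

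\emph{Far-off-diagonal part, $\abs{j-k}$ large.} Here $\calC_i\Delta_k=0$ by Lemma~\ref{lem:BV-covering-basic-properties}(c),(d). The remaining term $\calC_iL_\tau\Delta_kg$ is small because $L_\tau$ only mildly spreads frequencies. This needs a paradifferential decomposition of $\tau$ into low-frequency pieces (relative to $2^j$) and high-frequency pieces.

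\emph{Near-diagonal part.} Freeze $x$ and write the commutator via the flow $F_t$:
\[
[\calC_i,L_\tau]=\int_0^1\frac{\dd}{\dd t}\bigl(V_t^{-1}\calC_iV_t\bigr)\dt\ \text{(conjugated)}.
\]
The derivative is $V_t^{-1}[\calC_i,\tau\cdot\nabla]V_t$ up to factors involving $D\tau$. The commutator of the Fourier multiplier $m_i$ with the transport field $\tau\cdot\nabla$ has symbol
\[
\nabla_\xi m_i(\xi)\cdot\bigl(D\tau\,\xi\bigr)
\]
plus Coifman--Meyer remainders. By (C2) with $\abs\nu=1$, the square sum over $I_{\ast,j}$ of $\nabla m_i$ is $\lesssim2^{-\beta j}$, while $\abs\xi\sim2^j$. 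This yields the per-scale cost $2^{(1-\beta)j}\norm{D\tau}_{L^\infty}$. The scale-local Coifman--Meyer bound uses derivatives of the symbol up to order three (hence order four on $m_i$), rescaled at the transverse scale $2^{\beta j}$. We then sum in $j$ using almost-orthogonality (finite-band overlap in $k$) to obtain $\norm{D\tau}_{L^\infty}\bigl(\sum_j2^{2(1-\beta)j}\norm{\Delta_jg}_{L^2}^2\bigr)^{1/2}\asymp\norm{D\tau}_{L^\infty}\norm g_{H^{1-\beta}}$. The finitely many low generations in $I_{\rm fin}$ are handled by (C1) using a crude symbol estimate.

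The main obstacle is this near-diagonal commutator step. The difficulty is to carry out the Coifman--Meyer/paradifferential estimate for an anisotropic symbol that is only controlled in $\ell^2$ over $i\in I_{\ast,j}$, with the vector field merely Lipschitz. Control by $\norm{D\tau}_{L^\infty}$ alone, rather than $\norm{\tau}_{W^{2,\infty}}$, requires the paraproduct split $\tau=S_{j-N}\tau+(\tau-S_{j-N}\tau)$. The high-frequency part $\tau-S_{j-N}\tau$ is estimated using $\norm{\tau-S_{j-N}\tau}_{L^\infty}\lesssim2^{-j}\norm{D\tau}_{L^\infty}$ against $\norm{\calC_i}\le1$ and a derivative on $g$ of size $2^j$. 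For the $\beta<1$ case this step sits in the slack between $2^j$ and $2^{(1-\beta)j}$ and needs care. If it fails, we would instead interpolate between a trivial $L^2$ bound and an $H^1$ bound.
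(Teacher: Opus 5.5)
Your outline follows the paper's: reduce via the modulus to $[\calC_i,L_\tau]$, linearize along $F_t=\Id-t\tau$, split the vector field in frequency (Coifman--Meyer on one side, Bernstein on the other), and bound $\calA$ by Schur's test. The decisive step, however, is left open, and as you set it up it does not close.

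\textbf{Wrong splitting scale.} You cut the field at the carrier scale, $\tau=S_{j-N}\tau+(\tau-S_{j-N}\tau)$, and then apply the scale-local Coifman--Meyer bound to the smooth part ``rescaled at the transverse scale $2^{\beta j}$''. These two choices are incompatible. The symbol of the smooth part is $\sigma_i(\zeta,\eta)=\eta_s\,\partial_rm_i(\eta+t\zeta)$, and (C2) only gives $\bigl(\sum_{i\in I_{\ast,j}}\abs{\partial_\zeta^\gamma\sigma_i}^2\bigr)^{1/2}\lesssim2^{j}2^{-\beta j(\abs{\gamma}+1)}$. So Lemma~\ref{lem:scale-local-CM}, with a coefficient at frequency $\lambda$, costs $2^{(1-\beta)j}\max_{\abs\gamma\le3}(\lambda2^{-\beta j})^{\abs\gamma}$, which for $\lambda\sim2^{j-N}$ is of order $2^{4(1-\beta)j}$.

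The missing idea is to cut exactly at the transverse scale, $b=P_{\le\lfloor\beta j\rfloor}b+b_j^{\rm hi}$. Then the smooth part costs $2^{(1-\beta)j}\norm{Db}_{L^\infty}$, using the single bounded coefficient $P_{\le\lfloor\beta j\rfloor}\partial_rb_s$. Bernstein gives $\norm{b_j^{\rm hi}}_{L^\infty}\lesssim2^{-\beta j}\norm{Db}_{L^\infty}$, which against the carrier $2^j$ again costs exactly $2^{(1-\beta)j}$. There is no slack to manage: both halves land on the critical weight.

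\textbf{The rough term.} Even with the right cut, $\calC_i(b_j^{\rm hi}\cdot\nabla h)$ cannot be bounded by ``a derivative on $g$ of size $2^j$'', because $h$ is not localized near $2^j$. The paper needs a Bony decomposition of $\Pi_j(b_j^{\rm hi}\partial_sh)$ into low-high, high-low and resonant pieces, followed by Schur's test on the sequence $2^{(1-\beta)k}\norm{P_kh}_{L^2}$. This includes the resonant piece, where inputs at $2^k\gg2^j$ feed the output at $2^j$.

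\textbf{The fallback.} Interpolation does not help. By Theorem~\ref{thm:first-layer-separation} there is no $L^2$ bound proportional to $\norm{D\tau}_{L^\infty}$, and interpolating an $O(1)$ $L^2$ bound with an $H^1$ bound yields only a H\"older modulus $\norm{D\tau}_{L^\infty}^{1-\beta}$.

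\textbf{Order of conjugation.} Differentiating $V_t^{-1}\calC_iV_t$ gives $V_t^{-1}[\calC_i,w_t\cdot\nabla]V_t$ with $w_t=-(\Id-tD\tau)^{-1}\tau$. Since $Dw_t$ contains $D^2\tau$, Proposition~\ref{prop:critical-vector-field-commutator} is unusable for $\tau\in W^{1,\infty}$. The paper instead differentiates $Y_i(t)=V_t\calC_iV_t^{-1}V_1h$. This produces $-V_t[\calC_i,v_t\cdot\nabla]V_t^{-1}V_1h$ with $v_t=-\tau\circ F_t^{-1}$ and $\norm{Dv_t}_{L^\infty}\le(1-\kappa)^{-1}\norm{D\tau}_{L^\infty}$. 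The uniform $H^{1-\beta}$ bounds on $V_t^{-1}V_1$ then come from Lemma~\ref{lem:composition-small-deformation}.

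\textbf{The near/far split of $g$.} Apply this identity to the whole commutator. Your preliminary near/far-diagonal split of $g$ rests on the claim that $\calC_iL_\tau\Delta_kg$ is small for $\abs{j-k}$ large because $L_\tau$ ``only mildly spreads frequencies''. You give no argument for this with Lipschitz $\tau$, and it is the original problem in disguise. After linearization the expression is bilinear in $(v_t,h)$, so paraproducts apply directly.

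\textbf{Low-pass part.} Schwartz decay of $\chi$ is not among the hypotheses. You only have $\chi\in L^1\cap L^2$ with $\supp\wh\chi\subset Q_0$, which gives $\chi\in W^{1,1}$ but no moment bound on $\nabla\chi$. The fix is to use $\abs{\tau(x)-\tau(y)}\le2\norm{\tau}_{L^\infty}$, as in Lemma~\ref{lem:appendix-low-pass-deformation-estimate}. This is why the stated bound carries the $\norm{\tau}_{L^\infty}$ term.
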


With this in hand, we are able to obtain the following endpoint stability result. 
\begin{theorem} \label{thm:abstract-full-depth-positive}
    Suppose that Assumption~\ref{ass:comm-filter-ass} holds. Then, for every $\kappa\in(0,1)$ there exists a constant $C>0$ such that, whenever $\tau\in W^{1,\infty}(\R^2;\R^2)$ satisfies $\norm{D\tau}_{L^\infty}\leq \kappa$, we have, for every $f\in H^{1-\beta}(\R^2)$,
    \begin{equation*}
        \norm{\Scat(L_\tau f)-\Scat f}_{\lp}\leq C\bigl(\norm{\tau}_{L^\infty}+\norm{D\tau}_{L^\infty}\bigr)\mathfrak E_{1-\beta}(f).
    \end{equation*}
\end{theorem}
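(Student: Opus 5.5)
We follow Mallat's telescoping strategy, with Lemma~\ref{lem:finite-depth-defect-propagation} absorbing the propagation through depth and Proposition~\ref{prop:scattering-commutator-plugin} supplying the one-layer cost. Fix $N\in\N_0$. Applying Lemma~\ref{lem:finite-depth-defect-propagation} with $y_0=L_\tau f$ and $y_n=L_\tau U_nf$ for $1\le n\le N$, we have $\calA\calU^ny_0=\calA U_n(L_\tau f)$. The truncated output $\ScatLe{N}(L_\tau f)$ is then compared to the family $(\calA L_\tau U_nf)_{n\le N}$, and we get
\begin{equation*}
    \Bigl(\sum_{n=0}^N\norm{\calA U_n(L_\tau f)-\calA L_\tau U_nf}_{\lp}^2\Bigr)^{1/2}\le\sum_{\ell=0}^{N-1}\norm{\calU L_\tau U_\ell f-L_\tau\calU U_\ell f}_{\lp},
\end{equation*}
since $\calU L_\tau U_\ell f-y_{\ell+1}=\calU L_\tau U_\ell f-L_\tau \calU U_\ell f$. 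By the first estimate of Proposition~\ref{prop:scattering-commutator-plugin} with $q=U_\ell f$, each summand is at most $C\norm{D\tau}_{L^\infty}E_\ell^{1-\beta}(f)$. Here $U_\ell f\in\ell^2(H^{1-\beta})$ by Lemma~\ref{lem:Sobolev-nonexpansiveness}, so the right-hand side is bounded by $C\norm{D\tau}_{L^\infty}\mathfrak E_{1-\beta}(f)$.

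It remains to compare $\calA L_\tau U_nf$ with $\calA U_nf$. By the second estimate of Proposition~\ref{prop:scattering-commutator-plugin}, $\norm{\calA L_\tau U_nf-\calA U_nf}_{\lp}\le C(\norm{\tau}_{L^\infty}+\norm{D\tau}_{L^\infty})\norm{U_nf}_{\lp}$. The $\ell^2$ sum over $n$ of these terms is bounded by their $\ell^1$ sum, and $\norm{U_nf}_{\lp}\le E_n^{1-\beta}(f)$ because $\norm{\cdot}_{L^2}\le\norm{\cdot}_{H^{1-\beta}}$. The triangle inequality in $\ell^2_n(\lp)$ then yields
\begin{equation*}
    \norm{\ScatLe{N}(L_\tau f)-\ScatLe{N}f}_{\lp}\le C\bigl(\norm{\tau}_{L^\infty}+\norm{D\tau}_{L^\infty}\bigr)\mathfrak E_{1-\beta}(f),
\end{equation*}
with $C$ independent of $N$. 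If $\mathfrak E_{1-\beta}(f)=\infty$ there is nothing to prove. Otherwise we let $N\to\infty$ by monotone convergence of the partial sums defining the $\lp$ norm over $\PathsAll$. Both $\Scat(L_\tau f)$ and $\Scat f$ lie in $\lp$ by nonexpansiveness, since $L_\tau f\in L^2$ by Lemma~\ref{lem:composition-small-deformation}.

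The main obstacle lies entirely in Proposition~\ref{prop:scattering-commutator-plugin}, whose proof is deferred. At the level of this theorem, the delicate point is bookkeeping. The commutator must be taken with $q=U_\ell f$ rather than with $L_\tau U_\ell f$, so that the Sobolev cost falls on the undeformed propagated signals measured by $\mathfrak E_{1-\beta}$. We also need $L_\tau$ to act componentwise on path families, so that $L_\tau\calU q$ is indexed compatibly with $y_{\ell+1}$. Both points are arranged by the choice $y_n=L_\tau U_nf$ above.
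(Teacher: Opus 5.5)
Your proposal is correct and follows the paper's proof essentially step for step. It makes the same choice $y_n=L_\tau U_nf$ in Lemma~\ref{lem:finite-depth-defect-propagation}, uses the same triangle-inequality split into the propagated commutator defects and the low-pass terms (controlled by the two estimates of Proposition~\ref{prop:scattering-commutator-plugin}), bounds the $\ell^2$ sum in $n$ by the $\ell^1$ sum, and passes to $N\to\infty$ monotonically. The only cosmetic difference is that you justify $U_\ell f\in\ell^2(H^{1-\beta})$ via Lemma~\ref{lem:Sobolev-nonexpansiveness}, whereas the paper uses finiteness of $\mathfrak E_{1-\beta}(f)$; either is fine.
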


\begin{proof}
    It suffices to prove the estimate under the assumption $\mathfrak E_{1-\beta}(f)<\infty$, first for finite-depth outputs and then by passing to the limit. 
    
    Fix $N\in\N_0$. Applying Lemma~\ref{lem:finite-depth-defect-propagation} to $y_n\coloneqq L_\tau U_nf$, $0\leq n\leq N$, we obtain
    \begin{align*}
        \|\ScatLe{N}(L_\tau f)-\ScatLe{N}f\|_{\lp}
        &\leq \left(\sum_{n=0}^N \|\calA U_n(L_\tau f)-\calA L_\tau U_nf\|_{\lp}^2 \right)^{1/2} \\
        &\quad+ \left(\sum_{n=0}^N \|\calA L_\tau U_nf-\calA U_nf\|_{\lp}^2 \right)^{1/2} \\
        &\leq \sum_{\ell=0}^{N-1} \|\calU L_\tau U_\ell f-L_\tau\calU U_\ell f\|_{\lp} \\
        &\quad+ \left(\sum_{n=0}^N \|\calA L_\tau U_nf-\calA U_nf\|_{\lp}^2 \right)^{1/2}.
    \end{align*}

    Since $\mathfrak E_{1-\beta}(f)<\infty$, every $U_\ell f$ belongs to $\ell^2(H^{1-\beta})$, and Proposition~\ref{prop:scattering-commutator-plugin} applies. Thus
    \begin{equation*}
        \sum_{\ell=0}^{N-1}\|\calU L_\tau U_\ell f-L_\tau\calU U_\ell f\|_{\lp} \lesssim \|D\tau\|_{L^\infty}\sum_{\ell=0}^{N-1}E_\ell^{1-\beta}(f) \le \norm{D\tau}_{L^\infty}\mathfrak E_{1-\beta}(f),
    \end{equation*}
    and
    \begin{equation*}
        \|\calA L_\tau U_nf-\calA U_nf\|_{\lp} \lesssim (\|\tau\|_{L^\infty}+\norm{D\tau}_{L^\infty})\|U_nf\|_{\lp}.
    \end{equation*}
    Since $\norm{U_nf}_{\lp} \le  E_n^{1-\beta}(f)$, we infer
    \begin{align}
        \left(\sum_{n=0}^N\|\calA L_\tau U_nf-\calA U_nf\|_{\lp}^2\right)^{1/2}
        &\lesssim (\|\tau\|_{L^\infty}+\norm{D\tau}_{L^\infty})\left(\sum_{n=0}^NE_n^{1-\beta}(f)^2\right)^{1/2} \\
        &\lesssim (\|\tau\|_{L^\infty}+\norm{D\tau}_{L^\infty})\mathfrak E_{1-\beta}(f).
    \end{align}
    Combining the preceding estimates gives
    \begin{equation*}
        \|\ScatLe{N}(L_\tau f)-\ScatLe{N}f\|_{\lp} \lesssim (\|\tau\|_{L^\infty}+\norm{D\tau}_{L^\infty})\mathfrak E_{1-\beta}(f),
    \end{equation*}
    with a constant independent of $N$. Finally, the squared norms of the truncated output differences increase monotonically to the squared full output norm; letting $N\to\infty$ therefore proves the claim.
\end{proof}

\subsection{Energy propagation at the Sobolev regularity scale}

Just like the deformation stability result in \cite{Mallat2012} for the wavelet-based scattering transform, Theorem \ref{thm:abstract-full-depth-positive} proves stability conditional on a regularity assumption for input signals that quantifies the interaction with the scattering architecture. In particular, we require here the summability of the propagated critical Sobolev norms, namely
\begin{equation*}
    \mathfrak E_{1-\beta}(f)  = \sum_{n=0}^{\infty}\|U_n f\|_{\ell^2(H^{1-\beta})} <\infty.
\end{equation*}
This condition is---a priori---stronger than the endpoint assumption $f\in H^{1-\beta}(\R^2)$, since Sobolev nonexpansiveness only gives
\begin{equation*}
    \norm{U_n f}_{\ell^2 (H^{1-\beta})}   \le   \norm{f}_{H^{1-\beta}}, \qquad n\geq0,
\end{equation*}
and thus does not enforce summability in $n$. On the other hand, the geometry of $\bvQ$ naturally suggests the wave-packet decomposition spaces $W_s^{p,q}(\alpha,\beta)$ as a stability space, and in the current $L^2$-based setting Lemma~\ref{lem:hilbertian-identification} gives $W_s^{2,2}(\alpha,\beta)=H^s(\R^2)$ with equivalent norms. Therefore, the Sobolev scale is precisely the Hilbertian realization of the associated decomposition space. 

It is then natural to wonder whether better understanding of propagation of energy through the scattering tree can lead to improvement on this assumption. More precisely, in order to replace the abstract quantity $\mathfrak E_{1-\beta}(f)$ by a standard Sobolev norm one needs an additional mechanism forcing $\norm{U_n f}_{\ell^2 (H^{1-\beta})}$ to decay with the depth $n$. This matter has been largely explored, among others related to scattering energy propagation, in \cite{FuhrGetter2025}. We take this inspiration to design the following additional assumption on filters, which isolates the frequency-localization mechanism that will be used to prove decay of the propagated Sobolev energy.

\begin{assumption} \label{ass:energy-localization}
    The subordinate Parseval scattering bank $(\chi,\Psi)$ in Definition~\ref{def:fixed-channel-bank} satisfies the following conditions. 

    \begin{enumerate}[label={(E\arabic*)}]
        \item There is a high-pass gap: for some $\rgap>0$,
        \[\sum_{i\in I_\ast}|m_i(\xi)|^2=0\qquad\text{for }|\xi|<\rgap.\]
        \item There are $\theta\in(0,1)$ and points $\nu_i\in\R^2$, $i\in I_\ast$, such that
        \begin{equation*}
            \supp m_i\subset \set*{\xi\in\R^2 \given |\xi-\nu_i| \le  \theta |\xi|}\qquad\text{for every }i\in I_\ast.
        \end{equation*}
    \end{enumerate}
\end{assumption}

For the wave-packet covering $\bvQ$, this condition is not an additional restriction on the particular choice of filter profiles. It is a consequence of the covering geometry itself.

\begin{proposition}\label{prop:energy-localization-BV}
    Every filter bank subordinate to the wave-packet covering $\bvQ$ satisfies Assumption~\ref{ass:energy-localization}. In particular, this includes the smooth Parseval model class constructed in Section~\ref{subsec:designed-model}.
\end{proposition}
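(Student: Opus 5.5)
The plan is to verify (E1) and (E2) directly from the support subordination \eqref{eq:supp-sub-transfer} and the geometric bounds of Lemma~\ref{lem:BV-covering-basic-properties}(d), without using any property of the filter profiles beyond $\supp m_i\subset Q_i$. Since every bank subordinate to $\bvQ$ has this property, the statement for the smooth Parseval model class of Section~\ref{subsec:designed-model} is then immediate.

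For (E1), Lemma~\ref{lem:BV-covering-basic-properties}(d) gives $Q_i\cap B_{1/2}(0)=\varnothing$ for every $i\in I_\ast$. Hence $m_i(\xi)=0$ whenever $\abs{\xi}<1/2$, and so $\sum_{i\in I_\ast}\abs{m_i(\xi)}^2=0$ there. We may therefore take $\rgap=1/2$.

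For (E2), fix $i=(j,m,\ell)\in I_\ast$ and choose $\nu_i\coloneqq b_i=R_{j,\ell}c_{j,m}$, the affine base point of the tile. Any $\xi\in Q_i$ can be written as $\xi=R_{j,\ell}(A_j\eta+c_{j,m})$ with $\eta\in Q$. Because $Q\subset(-\varepsilon_0,1+\varepsilon_0)\times(-1-\varepsilon_0,1+\varepsilon_0)$, we get
\begin{equation*}
    \abs{\xi-\nu_i}=\abs{A_j\eta}\leq(1+\varepsilon_0)\bigl(2^{2\alpha j}+2^{2\beta j}\bigr)^{1/2}\leq\sqrt2\,(1+\varepsilon_0)\,2^{\alpha j},
\end{equation*}
where we used $\beta\leq\alpha$. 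On the other hand, Lemma~\ref{lem:BV-covering-basic-properties}(d) gives $\abs{\xi}>2^{j-2}$. Combining the two,
\begin{equation*}
    \abs{\xi-\nu_i}\leq 4\sqrt2\,(1+\varepsilon_0)\,2^{-(1-\alpha)j}\abs{\xi}.
\end{equation*}
When $\alpha<1$ this ratio tends to zero, so it is at most $1/2$ for $j$ large. The finitely many small generations still need a constant strictly below one.

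The main obstacle is that the crude bound above does not give $\theta<1$ uniformly. This fails in particular for $\alpha=1$, and also for small $j$, where the tile's radial extent is comparable to its distance from the origin. The remedy is to use a better center: take $\nu_i$ to be the point $t\,b_i/\abs{b_i}$ on the tile's axis with $t$ well chosen, rather than the corner $b_i$. In the rotated frame, write $\xi=(x,y)$, so that the tile lies in $x\in[a,b]$ with $a\geq 2^{j-1}-\varepsilon_0 2^{\alpha j}>0$ and $\abs{y}\leq(1+\varepsilon_0)2^{\beta j}$.

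For the $x$-direction, note that $\abs{x-t}\leq\theta x$ holds for all $x\in[a,b]$ provided $t=2ab/(a+b)$ and $\theta=(b-a)/(a+b)$. By the bounds in (d), $b\leq 2^{j-1}+2^{\alpha j}(m+2+2\varepsilon_0)\leq 2^j+2^{\alpha j}(3+2\varepsilon_0)$, so the ratio $b/a$ is bounded uniformly in $i$. Consequently $(b-a)/(a+b)\leq\theta_1<1$ uniformly.

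For the transverse direction, the aperture $\abs{y}/x\lesssim 2^{(\beta-1)j}$ is small for large $j$ because $\beta<1$. We then finish by enlarging $\theta$ slightly: we use $\abs{\xi-\nu_i}\leq\abs{x-t}+\abs{y}$ together with $x\leq\abs{\xi}$, and check that $\theta_1+C2^{(\beta-1)j}<1$ for $j\geq j_1$. For the remaining finitely many generations $j<j_1$, each tile $Q_i$ is a compact-closure set avoiding the origin and contained in a convex sector of aperture less than $\pi$. For such a tile one can choose some $\nu_i$ and some $\theta_i<1$ directly, and taking the maximum over these finitely many $\theta_i$ together with the uniform $\theta$ from large $j$ yields a single $\theta\in(0,1)$.
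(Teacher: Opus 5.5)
Your argument is correct under the standing hypothesis $\beta<1$ of Section~\ref{sec:positive-full-depth}, and your treatment of (E1) matches the paper's, but for (E2) you take a longer route than necessary.

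The paper makes one uniform choice $\nu_i=R_{j,\ell}(a2^j,0)$ with $0<a<c$. Here $c2^j\leq x$ and $\abs{\xi}\leq C2^j$ on every $Q_i$, in the rotated frame $(x,y)=R_{j,\ell}^{-1}\xi$. It then expands exactly:
$\abs{\xi-\nu_i}^2=\abs{\xi}^2-2a2^jx+a^22^{2j}\leq\abs{\xi}^2-(2ac-a^2)2^{2j}\leq\bigl(1-(2ac-a^2)/C^2\bigr)\abs{\xi}^2$.
This covers all generations at once and uses neither $\alpha<1$ nor $\beta<1$; it works even at the wavelet corner.

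Your split $\abs{\xi-\nu_i}\leq\abs{x-t}+\abs{y}$ discards the fact that $y^2$ also appears inside $\abs{\xi}^2$. You therefore have to absorb $\abs{y}$ through the smallness of the aperture $2^{(\beta-1)j}$, and that is what forces $\beta<1$, the threshold $j_1$, and a separate argument for low generations.

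Your low-generation step is exactly the paper's computation: a centre at a small multiple of a direction $u$ with $\langle\xi,u\rangle$ bounded below on the tile. Since the bounds $x\geq c2^j$ and $\abs{\xi}\leq C2^j$ are invariant under the rescaling $\xi\mapsto2^{-j}\xi$, that step already gives a uniform $\theta$ for every $j$. The harmonic-mean analysis for large $j$ is therefore superfluous.

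The one thing your finer analysis adds (as does your first corner estimate) is that for $\alpha<1$ the contraction constant can be taken generation-dependent, with $\theta_j\lesssim2^{-(1-\alpha)j}\to0$. That is more than (E2) requires, and the paper does not use it.
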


\begin{proof}
    The high-pass gap (E1) is automatic from the wave-packet design, since all high-pass supports are contained in generations $j\geq1$ and these supports stay a positive distance from the origin.

    Concerning (E2), recall that there are constants $0<c<C$ such that for every $i=(j,m,\ell)$ and every $\xi\in Q_i$, if $R_{j,\ell}^{-1}\xi=(x,y)$ then
    \[x\geq c 2^j, \qquad |\xi|=|(x,y)| \le  C 2^j.\]
    Choose now $a\in(0,c)$ and define $\nu_i=R_{j,\ell}(a2^j,0)$. Then, for $\xi\in Q_i$ we have
    \begin{align*}
        \abs{\xi-\nu_i}^2
        &=\abs{(x,y)-(a2^j,0)}^2 \\
        &=\abs{\xi}^2-2a2^jx+a^2 2^{2j} \\
        &\le \abs{\xi}^2-(2ac-a^2)2^{2j} \\
        &\le \bigg(1-\frac{2ac-a^2}{C^2}\bigg)|\xi|^2.
    \end{align*}
    We have $2ac-a^2>0$ by construction, and (after possibly decreasing $a$) also
    \[\theta=\bigg(1-\dfrac{2ac-a^2}{C^2}\bigg)^{1/2} \in (0,1).\]
    We thus obtain $\abs{\xi-\nu_i} \le \theta|\xi|$ on $Q_i$, and therefore on $\supp m_i$, proving (E2).
\end{proof}

Assumption~\ref{ass:energy-localization} is the part of the localization mechanism from \cite{FuhrGetter2025} that is needed for Sobolev control of the propagated energy. Condition \textup{(E2)} says that each high-pass filter has a carrier frequency $\nu_i$ such that, after demodulation, the remaining envelope frequencies are bounded by a fixed fraction $\theta<1$ of the original frequency. Together with the high-pass gap \textup{(E1)}, this prevents the cascade from maintaining the same frequency size along propagation and forces energy to drift toward the low-pass region. 

The conical and radial localization hypotheses in \cite[Assumption~3.1]{FuhrGetter2025} yield the same carrier contraction through \cite[Lemma~3.3]{FuhrGetter2025}, and the proof of \cite[Theorem~3.5]{FuhrGetter2025} exploits this mechanism together with \cite[Lemma~3.4]{FuhrGetter2025}. The present formulation is less restrictive for the purpose of proving energy decay, since it does not require a conical scale partition and allows irregular high-pass supports. It is not intended, however, to replace the framework of \cite{FuhrGetter2025} in its full generality, where the decay rate is quantified in terms of additional geometric parameters.

We can now relate energy propagation to the Sobolev regularity scale thanks to the framework developed in \cite{FuhrGetter2025}. 

\begin{lemma} \label{lem:summability-above-critical}
    Under Assumption~\ref{ass:energy-localization}, for every $0<s\leq1$ we have
    \begin{equation*}
        \norm{U_nf}_{\lp}^2\leq \frac{4}{\rgap}\, \theta^{s(n-1)}\norm{f}_{H^s}^2,\qquad n\in\N,\quad f\in H^s(\R^2).
    \end{equation*}
    Consequently, if $1-\beta<s\leq1$, then there exists a constant $C>0$ such that
    \begin{equation*}
        \mathfrak E_{1-\beta}(f)\leq C\norm{f}_{H^s},\qquad f\in H^s(\R^2).
    \end{equation*}
\end{lemma}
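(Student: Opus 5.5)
The plan is to prove a one-step contraction inequality for a weighted energy functional and then iterate it. For a family $q=(q_p)_p$, set
\[
\Lambda_s(q)\coloneqq\sum_p\int_{\R^2}\abs{\xi}^{s}\abs{\wh{q_p}(\xi)}^2\dxi .
\]
This is a homogeneous $\dot H^{s/2}$-type energy. The core claim is
\[
\Lambda_s(\calU q)\leq\theta^s\,\Lambda_s(q).
\]
To prove it, fix $p$ and $i$ and write $q_p*\psi_i=e^{2\pi\iu\nu_i\cdot x}g$, where $\wh g(\zeta)=\wh{q_p}(\zeta+\nu_i)m_i(\zeta+\nu_i)$. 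By (E2), $\wh g$ is supported where $\abs{\zeta}\le\theta\abs{\zeta+\nu_i}$. Then $\abs{q_p*\psi_i}=\abs{g}$, and the contraction \eqref{eq:Hs-norm-of-|g|-vs-g}, in its homogeneous form for exponent $s/2\in(0,1/2]$ (same heat-kernel proof, dropping the $L^2$ term), gives
\[
\int\abs{\xi}^s\abs{\wh{\abs g}}^2\leq\int\abs{\zeta}^s\abs{\wh g}^2\leq\theta^s\int\abs{\xi}^s\abs{m_i}^2\abs{\wh{q_p}}^2 .
\]
Summing over $i$ with $\sum_i\abs{m_i}^2\le1$ and then over $p$ yields the claim.

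Next we control $\norm{U_nf}_{\lp}^2$ by $\Lambda_s$ of the previous layer. Since $\calU$ preserves $L^2$ mass exactly on the high-pass part, (E1) gives
\[
\norm{U_{n}f}_{\lp}^2=\sum_p\int\sum_i\abs{m_i}^2\abs{\wh{(U_{n-1}f)_p}}^2\leq\rgap^{-s}\Lambda_s(U_{n-1}f),
\]
because the integrand lives on $\abs\xi\ge\rgap$. Iterating the contraction gives $\Lambda_s(U_{n-1}f)\le\theta^{s(n-1)}\Lambda_s(f)$, and $\Lambda_s(f)\le\norm{f}_{H^{s/2}}^2\le\norm{f}_{H^s}^2$. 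Together these produce
\[
\norm{U_nf}_{\lp}^2\lesssim\theta^{s(n-1)}\norm{f}_{H^s}^2 .
\]
The constant comes out as $\max\{1,\rgap^{-s}\}$ rather than the stated $4/\rgap$. Since $s\le1$, one gets $\rgap^{-s}\le\max\{1,\rgap^{-1}\}$, which is $\le 4/\rgap$ when $\rgap\le4$; this is harmless, as (E1) may always be applied with $\rgap$ shrunk. The exact constant is not essential.

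The summability claim uses interpolation. By Lemma~\ref{lem:Sobolev-nonexpansiveness}, $\norm{U_nf}_{\ell^2(H^{s})}\le\norm{f}_{H^s}$. Interpolating on each $\wh{(U_nf)_p}$ via Hölder in $\langle\xi\rangle$, with $\sigma=1-\beta=\vartheta s$ and $\vartheta<1$,
\[
\norm{U_nf}_{\ell^2(H^{1-\beta})}\leq\norm{U_nf}_{\lp}^{1-\vartheta}\norm{U_nf}_{\ell^2(H^s)}^{\vartheta}\lesssim\theta^{s(n-1)(1-\vartheta)/2}\norm{f}_{H^s}.
\]
This is a geometric series in $n$ because $\vartheta<1$, i.e.\ $s>1-\beta$. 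Adding the $n=0$ term $\norm{f}_{H^{1-\beta}}\le\norm f_{H^s}$ gives $\mathfrak E_{1-\beta}(f)\le C\norm f_{H^s}$.

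The main obstacle is the contraction step. It requires justifying the homogeneous fractional contraction $\norm{\abs g}_{\dot H^{s/2}}\le\norm g_{\dot H^{s/2}}$, which follows from the Gagliardo form of the seminorm. It also requires care with the demodulation, which must be done channel by channel since the carriers $\nu_i$ depend on $i$. A further subtlety is that $\Lambda_s$ uses only half the derivative, which is precisely why the decay rate involves $\theta^{s}$ rather than $\theta^{2s}$.
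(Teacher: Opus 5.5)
Your proof is correct, but it reaches the decay estimate by a different mechanism from the paper's.

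The paper follows F\"uhr--Getter. It introduces a nonnegative auxiliary kernel $\vartheta$ with $\wh\vartheta(\xi)=(1-\rgap^{-1}\abs{\xi})_+^2$, which (E1) makes dominated by $\abs{\wh\chi}$. It then proves by induction that $\norm{U_nf}_{\lp}^2\le\int\abs{\wh f(\xi)}^2\bigl(1-\abs{\wh\vartheta(\theta^{n-1}\xi)}^2\bigr)\dxi$. The inductive step imports Lemma~3.4 of F\"uhr--Getter, which rests on $\abs{(e^{-2\pi\iu\nu_i\cdot x}F)*\vartheta}\le\abs{F}*\vartheta$ for $\vartheta\ge0$. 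It then uses (E2) together with the monotone radial profile of $\wh\vartheta$ to pass from scale $\theta^{n-1}$ to scale $\theta^n$. Only at the end does it invoke $1-\abs{\wh\vartheta(t\xi)}^2\le4\rgap^{-1}t^s\la\xi\ra^{2s}$. Since that weight is linear in $\abs{\xi}$ at the origin, this is where the paper's half derivative originates.

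So the paper pulls an inhomogeneous, depth-dependent weight back to the input and uses only $L^2$ identities and positivity. You instead push a homogeneous seminorm forward through the cascade. Demodulating at $\nu_i$ and applying the modulus contraction in $\dot H^{s/2}$ gives the one-step inequality $\Lambda_s(\calU q)\le\theta^s\Lambda_s(q)$, and (E1) converts $\Lambda_s$ into $L^2$ mass only at the last layer.

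Your route is self-contained and proves more. The right-hand side can be taken as $\rgap^{-s}\theta^{s(n-1)}\norm{f}_{\dot H^{s/2}}^2$. Running the same argument with the full weight $\abs{\xi}^{2s}$, using the modulus contraction in $\dot H^{s}$ for $0<s\le1$, gives the faster rate $\theta^{2s(n-1)}$ with constant $\rgap^{-2s}$. So the half derivative is a choice, not a necessity. The paper's route, in turn, plugs directly into the F\"uhr--Getter framework, where decay is quantified through further geometric parameters.

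On the constant, no shrinking of $\rgap$ is needed. Since $\supp\wh\chi\subset B_4(0)$, the Littlewood--Paley identity forces $\sum_i\abs{m_i}^2=1$ a.e.\ on $\abs{\xi}\ge4$. Hence (E1) can only hold with $\rgap\le4$, and then $\rgap^{-s}\le4/\rgap$ for every $0<s\le1$.

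The summability part coincides with the paper's interpolation argument.
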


\begin{proof}
    The proof follows the strategy of \cite[Theorem~3.5]{FuhrGetter2025}, with the conclusion of \cite[Lemma~3.3]{FuhrGetter2025} replaced directly by (E2).

    We shall use an auxiliary low-pass kernel constructed as follows. Define $\vartheta_0$ by prescribing its Fourier transform as Euclid's hat function:
    \[\wh{\vartheta_0}(\xi)\coloneqq (1-|\xi|)_+^2,\qquad \xi\in\R^2.\]
    It is classical that $0\leq \vartheta_0\in L^1(\R^2)\cap L^2(\R^2)$; see, for instance, \cite{Gneiting1999}. Note that we may assume $\rgap\leq 1$, potentially after decreasing the value of $\rgap$. Then setting $\vartheta\coloneqq \rgap^2\vartheta_0(\rgap\cdot)$ yields
    \begin{equation*}
        \wh{\vartheta}(\xi)=\wh{\vartheta_0}(\rgap^{-1}\xi)=\bigl(1-\rgap^{-1}|\xi|\bigr)_+^2.
    \end{equation*}
    In particular, $\vartheta\in L^1(\R^2)$ is nonnegative and $\abs{\wh{\vartheta}(\xi)}=\eta(\abs{\xi})$, where $\eta$ is nonincreasing and satisfies $\eta(0)=1$. Moreover, $\wh{\vartheta}$ is pointwise bounded by one and vanishes outside the high-pass gap guaranteed by assumption \textup{(E1)}. Hence, by the Littlewood--Paley identity \eqref{eq:LPcondition}, we have $\abs{\wh{\chi}(\xi)}=1$ on $\supp \wh{\vartheta}$, and consequently
    \[|\wh{\vartheta}(\xi)|\leq |\wh{\chi}(\xi)|,\qquad \xi\in\R^2.\]
    Let $\theta$ be the constant from assumption (E2) on the support concentration of the filter multipliers. Our first goal is to prove by induction that, for every $n\in \N$, any $f\in L^2(\R^2)$ satisfies
    \begin{equation}\label{eq:energy-induction}
        W_n(f)\coloneqq \norm{U_nf}_{\lp}^2 \le  \int_{\R^2}|\wh f(\xi)|^2\bigl(1-\abs{\wh\vartheta(\theta^{n-1}\xi)}^2\bigr)\dxi.
    \end{equation}
    The base case $n=1$ follows from the pointwise comparison between $\wh\vartheta$ and $\wh\chi$. Indeed, Parseval's theorem readily implies
    \begin{equation*}
        W_1(f)=\sum_{i\in I_\ast} \norm{f*\psi_i}_{L^2}^2=\norm{f}_{L^2}^2-\norm{f*\chi}_{L^2}^2\leq \norm{f}_{L^2}^2-\norm{f*\vartheta}_{L^2}^2.
    \end{equation*}
    Assume now that \eqref{eq:energy-induction} holds at depth $n$; since
    \[W_{n+1}(f)=\sum_{i\in I_\ast}W_n(|f*\psi_i|),\]
    the induction hypothesis gives
    \begin{align*}
        W_{n+1}(f)
        &\le \sum_{i\in I_\ast} \int |\calF(\abs{f*\psi_i})(\xi)|^2 \bigl(1-\abs{\widehat\vartheta(\theta^{n-1}\xi)}^2\bigr)\dxi.
    \end{align*}
    For $i\in I_\ast$, applying \cite[Lemma~3.4]{FuhrGetter2025} with input $f*\psi_i$ in place of $f$, with $g=\theta^{-2(n-1)}\vartheta(\theta^{-(n-1)}\cdot)$, so that $\wh{g}(\xi)=\wh\vartheta(\theta^{n-1}\xi)$, and with $\nu=\nu_i$, gives
    \begin{align*}
        \int |\calF(\abs{f*\psi_i})(\xi)|^2\bigl(1-\abs{\widehat\vartheta(\theta^{n-1}\xi)}^2\bigr)\dxi \le \int |\widehat f(\xi)|^2\abs{m_i(\xi)}^2\bigl(1-\abs{\widehat\vartheta(\theta^{n-1}(\xi-\nu_i))}^2\bigr)\dxi.
    \end{align*}
    This is the point where we resort to (E2), which ensures that $\abs{\xi-\nu_i} \le  \theta|\xi|$ on $\supp m_i$. Therefore, since $1-\eta^2$ is nondecreasing,
    \begin{equation*}
        1-\abs{\widehat\vartheta(\theta^{n-1}(\xi-\nu_i))}^2  \le  1-\abs{\widehat\vartheta(\theta^{n}\xi)}^2.
    \end{equation*}
    Summing over $i\in I_\ast$ and using $\sum_i\abs{m_i}^2 \le 1$ thus proves \eqref{eq:energy-induction} at depth $n+1$.

    Finally, for all $0<s \le 1$, all $t>0$, and all $\xi\in\R^2$, we have
    \[1-|\wh\vartheta(t\xi)|^2 \le  4 \rgap^{-1} t^{s}\la\xi\ra^{2s}.\]
    Indeed, recall that we may, without loss of generality, assume $\rgap\leq 1$. Thus, if $t|\xi| \le \rgap$ then
    \begin{equation*}
        1-\abs{\wh\vartheta(t\xi)}^2=1-(1-\rgap^{-1}\abs{t\xi})_+^{4}\leq 4\rgap^{-1} t|\xi|\le 4\rgap^{-1} t^{s}|\xi|^{s} \le 4\rgap^{-1} t^{s}\la\xi\ra^{2s},
    \end{equation*}
    while if $t|\xi|>\rgap$ the right-hand side is bounded from below by four:
    \begin{equation*}
        4\rgap^{-1}t^s\la\xi\ra^{2s} \geq 4\rgap^{-1}(t|\xi|)^s\geq 4\rgap^{s-1}\geq 4\geq 1-\abs{\wh\vartheta(t\xi)}^2.
    \end{equation*}
    Combining this estimate with \eqref{eq:energy-induction} concludes the first part of the claim.

    The final part of the claim follows now by interpolation between $L^2$ and $H^s$. Since $1-\beta<s$, for every $n\ge 0$ we have
    \begin{equation*}
        E_n^{1-\beta}(f)  \le  \norm{U_nf}_{\lp}^{1-(1-\beta)/s} E_n^s(f)^{(1-\beta)/s}.
    \end{equation*}
    By the first part of the lemma, after enlarging the implicit constant to absorb the fixed factor $\theta^{-s/2}$ and to include the case $n=0$, we have
    \begin{equation*}
        \norm{U_nf}_{\lp} \lesssim  \theta^{sn/2}\norm f_{H^s}, \qquad n\geq0,
    \end{equation*}
    while Lemma~\ref{lem:Sobolev-nonexpansiveness} gives $E_n^s(f) \le \norm f_{H^s}$. Thus,
    \begin{equation*}
        E_n^{1-\beta}(f) \lesssim \theta^{sn(1-(1-\beta)/s)/2}\norm f_{H^s},
    \end{equation*}
    and the desired summability in $n$ follows since $1-(1-\beta)/s>0$. 
\end{proof}

We can now state the positive stability result above the critical Sobolev threshold. In the standing wave-packet setting of this section, Assumption~\ref{ass:energy-localization} is automatic by Proposition~\ref{prop:energy-localization-BV}; hence the only remaining structural hypothesis is the commutator assumption.
\begin{corollary}\label{cor:full-depth-positive-epsilon}
    Suppose that Assumption~\ref{ass:comm-filter-ass} holds. For every $\kappa\in(0,1)$ and every $s\in(1-\beta,1]$ there exists a constant $C>0$ such that, whenever $\tau\in W^{1,\infty}(\R^2;\R^2)$ satisfies $\norm{D\tau}_{L^\infty}\leq\kappa$, we have, for every $f\in H^s(\R^2)$,
    \begin{equation*}
        \norm{\Scat(L_\tau f)-\Scat f}_{\lp}\leq C\bigl(\norm{\tau}_{L^\infty}+\norm{D\tau}_{L^\infty}\bigr)\norm{f}_{H^s}.
    \end{equation*}
\end{corollary}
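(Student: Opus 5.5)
This follows by combining Theorem~\ref{thm:abstract-full-depth-positive} with the summability bound of Lemma~\ref{lem:summability-above-critical}. Assumption~\ref{ass:energy-localization} is available for free. By Proposition~\ref{prop:energy-localization-BV}, every filter bank subordinate to $\bvQ$ satisfies (E1)--(E2), with some gap radius $\rgap>0$ and contraction factor $\theta\in(0,1)$. These depend only on the covering and the fixed bank, not on $f$ or $\tau$. Hence the only hypothesis we need to import is Assumption~\ref{ass:comm-filter-ass}, which is exactly the hypothesis of Theorem~\ref{thm:abstract-full-depth-positive}.

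Fix $\kappa\in(0,1)$ and $s\in(1-\beta,1]$, and let $f\in H^s(\R^2)$. Since $s>1-\beta$, we have $H^s(\R^2)\hookrightarrow H^{1-\beta}(\R^2)$, so $f$ is an admissible input for Theorem~\ref{thm:abstract-full-depth-positive}. Invoking that theorem, for every $\tau\in W^{1,\infty}$ with $\norm{D\tau}_{L^\infty}\leq\kappa$ we get
\begin{equation*}
    \norm{\Scat(L_\tau f)-\Scat f}_{\lp}\leq C_\kappa\bigl(\norm{\tau}_{L^\infty}+\norm{D\tau}_{L^\infty}\bigr)\mathfrak E_{1-\beta}(f).
\end{equation*}
The second part of Lemma~\ref{lem:summability-above-critical} then gives $\mathfrak E_{1-\beta}(f)\leq C_s\norm{f}_{H^s}$. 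Substituting, the claim holds with $C=C_\kappa C_s$.

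The only point to check is that the constant $C_s$ is independent of $f$, and that it is finite. Recall how it arises in the proof of Lemma~\ref{lem:summability-above-critical}. One interpolates $E_n^{1-\beta}(f)\leq \norm{U_nf}_{\lp}^{1-(1-\beta)/s}E_n^s(f)^{(1-\beta)/s}$. Then one uses the geometric $L^2$ decay $\norm{U_nf}_{\lp}\lesssim\theta^{sn/2}\norm{f}_{H^s}$ together with the Sobolev nonexpansiveness $E_n^s(f)\leq\norm{f}_{H^s}$ from Lemma~\ref{lem:Sobolev-nonexpansiveness}. This yields a geometric series with ratio $\theta^{(s-(1-\beta))/2}<1$. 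That ratio depends only on $s,\beta,\theta$ and $\rgap$, so the series is finite uniformly in $f$.

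There is no genuine obstacle here. The substantive work lies in Proposition~\ref{prop:scattering-commutator-plugin}, deferred to Section~\ref{sec:appendix-commutator}, and in the energy-decay Lemma~\ref{lem:summability-above-critical}. Both have already been established, so the corollary is a direct composition of these results.
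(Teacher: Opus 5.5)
Your proposal is correct and is essentially the paper's own argument: the paper likewise obtains the corollary by combining Theorem~\ref{thm:abstract-full-depth-positive} with the bound $\mathfrak E_{1-\beta}(f)\lesssim\norm{f}_{H^s}$ from Lemma~\ref{lem:summability-above-critical}, whose energy-localization hypothesis holds automatically by Proposition~\ref{prop:energy-localization-BV}. One small correction: the geometric ratio $\theta^{(s-(1-\beta))/2}$ depends only on $s$, $\beta$ and $\theta$, not on $\rgap$; the gap radius $\rgap$ enters only the prefactor.
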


\subsection{Endpoint stability}

The interpolation argument in Lemma~\ref{lem:summability-above-critical} breaks down at the critical threshold $s=1-\beta$, where it yields no decay factor unless an additional mechanism is available. For summability, it would suffice, for instance, to force the modulus layer to transfer energy to a strictly lower regularity scale. The following asymptotic condition is designed precisely to compensate for this loss.
\begin{definition} \label{def:strict-env-loc}
    We say that the scattering filter bank has \textit{envelope localization of order $\delta_\env\in[0,1]$} if there exist $j_\env\in\N$ and $C_\env<\infty$ such that, for every $j\ge j_\env$ and every high-pass index $i\in I_{\ast,j}$, there is a point $c_i\in\R^2$ with
    \[\supp m_i-c_i\subset B_{C_\env 2^{\delta_\env j}}(0).\]
    When $\delta_\env<1$, we refer to this as \textit{strict envelope localization}.
\end{definition}

For filter banks subordinate to the wave-packet covering, this condition is not tied to the particular choice of the filter profiles. Rather, it is a geometric consequence of the covering itself. In particular, it applies to the smooth Parseval model introduced in Section~\ref{subsec:designed-model}.

\begin{proposition}\label{prop:BV-positive-verification}
    Every filter bank subordinate to the wave-packet covering $\bvQ$ has envelope localization of order $\delta_\env=\alpha$. In particular, it has strict envelope localization whenever $\alpha<1$.
\end{proposition}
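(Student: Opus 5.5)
Since subordination gives $\supp m_i\subset Q_i$ for every $i\in I_\ast$, it suffices to prove the purely geometric statement: there is $C_\env<\infty$ such that every tile $Q_i$, $i\in I_{\ast,j}$, lies in a ball of radius $C_\env 2^{\alpha j}$ around a suitable center $c_i$. Then $\supp m_i-c_i\subset B_{C_\env2^{\alpha j}}(0)$ for all $j\ge j_\env\coloneqq 1$. I will use the affine chart representation $Q_i=\aff_i(Q)=T_iQ+b_i$ with $T_i=R_{j,\ell}A_j$ and $b_i=R_{j,\ell}c_{j,m}$.

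The natural choice of center is the image of a fixed point of the model rectangle, for instance $c_i\coloneqq \aff_i(0)=b_i$ (or the image of the center of $Q$; either works). For $\xi\in Q_i$, write $\xi=T_i\zeta+b_i$ with $\zeta\in Q$. Then
\[
\abs{\xi-c_i}=\abs{R_{j,\ell}A_j\zeta}=\abs{A_j\zeta}\le \norm{A_j}\,\abs{\zeta}\le 2^{\alpha j}\sup_{\zeta\in Q}\abs{\zeta}.
\]
Here I use that rotations are isometries and that $\norm{A_j}=\max\{2^{\alpha j},2^{\beta j}\}=2^{\alpha j}$ because $\beta\le\alpha$. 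Since $Q$ is bounded, with $\sup_{\zeta\in Q}\abs{\zeta}\le \sqrt{(1+\eps_0)^2+(1+\eps_0)^2}<2$, one can take $C_\env=2$. This bound holds for every $j\in\N$, every $m\le m_j^{\max}$ and every $\ell\le\ell_j^{\max}$, uniformly.

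The strict statement follows immediately: $\delta_\env=\alpha<1$ is exactly strict envelope localization in the sense of Definition~\ref{def:strict-env-loc}. Closure of supports causes no trouble, since the ball can be taken closed or its radius slightly enlarged. No step here is a genuine obstacle. The only point needing care is that the bound uses the \emph{largest} singular value of $A_j$, so it is the radial length $2^{\alpha j}$, not the transverse width $2^{\beta j}$, that governs the envelope scale.
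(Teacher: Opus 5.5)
Your argument is correct and follows the same route as the paper: subordination reduces the claim to bounding the size of the tiles $Q_i$, which is $\calO(2^{\alpha j})$ because $\beta\le\alpha$. You make the paper's one-line diameter remark explicit via the affine chart $Q_i=R_{j,\ell}A_jQ+b_i$ and $\norm{A_j}=2^{\alpha j}$, which yields the concrete constant $C_\env=2$ uniformly in $m$, $\ell$, and all $j\geq1$.
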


\begin{proof}
    By subordination of the filter bank, $\supp m_i\subset Q_i$, $i\in I_\ast$. A generation-$j$ tile has radial length $\calO(2^{\alpha j})$ and transverse width $\calO(2^{\beta j})$. Since $\beta \le \alpha$, its diameter is $\calO(2^{\alpha j})$, and the claim follows.
\end{proof}

We now prove the endpoint estimate under strict envelope localization. The first step is the following refinement of Lemma~\ref{lem:Sobolev-nonexpansiveness}.

\begin{lemma}\label{lem:envelope-removal}
    Assume strict envelope localization $\delta_\env <1$. For every $0 \le  s \le 1$ there exists a constant $C>0$ such that, for every countable family $q\in\ell^2(H^{\delta_\env s})$, one has
    \begin{equation*}
        \|\calU q\|_{\ell^2(H^s)} \le  C\|q\|_{\ell^2(H^{\delta_\env s})}.
    \end{equation*}
\end{lemma}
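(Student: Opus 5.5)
The plan is to reduce to a single channel and a single member of the family, because the claim is a sum of per-channel estimates. Concretely, I would aim to show
\[
\sum_{i\in I_\ast}\norm{\abs{g*\psi_i}}_{H^s}^2\lesssim\norm{g}_{H^{\delta_\env s}}^2
\]
for each $g\in H^{\delta_\env s}$, and then sum over $p$. The endpoints $s=0$ and $s=1$ are the natural cases to treat directly. Intermediate $s$ could come by interpolation, but $g\mapsto\abs{g*\psi_i}$ is not linear. It is therefore cleaner to argue directly for general $s$ with the demodulation trick.

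For $i\in I_{\ast,j}$ with $j\ge j_\env$, write $\abs{g*\psi_i}=\abs{e^{-2\pi\iu c_i\cdot x}(g*\psi_i)}$. The function $h_i\coloneqq e^{-2\pi\iu c_i\cdot x}(g*\psi_i)$ has Fourier support in $\supp m_i-c_i\subset B_{C_\env2^{\delta_\env j}}(0)$. The contraction \eqref{eq:Hs-norm-of-|g|-vs-g} from the proof of Lemma~\ref{lem:Sobolev-nonexpansiveness} gives $\norm{\abs{g*\psi_i}}_{H^s}=\norm{\abs{h_i}}_{H^s}\le\norm{h_i}_{H^s}$. By the Fourier support of $h_i$, we get
\[
\norm{h_i}_{H^s}\lesssim 2^{\delta_\env js}\norm{h_i}_{L^2}=2^{\delta_\env js}\norm{g*\psi_i}_{L^2}.
\]
By Lemma~\ref{lem:BV-covering-basic-properties}(d), or more abstractly by the high-pass gap together with $\supp m_i\subset\{\abs{\xi}\asymp2^j\}$, we have $\la\xi\ra\asymp 2^j$ on $\supp m_i$. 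Hence
\[
2^{\delta_\env js}\norm{g*\psi_i}_{L^2}\asymp\norm{\la\xi\ra^{\delta_\env s}m_i\wh g}_{L^2}.
\]
Summing the squares over $i$ and using $\sum_i\abs{m_i}^2\le1$ yields the bound $\lesssim\norm{g}_{H^{\delta_\env s}}^2$ for the channels with $j\ge j_\env$.

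The finitely many generations $j<j_\env$ are handled without demodulation. There $\supp m_i$ lies in a fixed ball of radius $\lesssim 2^{j_\env}$, so
\[
\norm{\abs{g*\psi_i}}_{H^s}\le\norm{g*\psi_i}_{H^s}\lesssim_{j_\env}\norm{g*\psi_i}_{L^2}\le\norm{m_i\wh g}_{L^2}.
\]
The sum over these $i$ is controlled by $\norm{g}_{L^2}\le\norm{g}_{H^{\delta_\env s}}$, with a constant depending only on $j_\env$ and $C_\env$.

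The main point requiring care is the frequency-localization claim $\la\xi\ra\asymp 2^j$ on the support of a generation-$j$ multiplier. Definition~\ref{def:strict-env-loc} alone does not give it, so I would take it from the covering geometry in Lemma~\ref{lem:BV-covering-basic-properties}(d), where the filter bank is subordinate to $\bvQ$ as in the standing assumptions. Only the upper bound $\la\xi\ra\lesssim2^j$ is actually needed, which is where $\delta_\env$ enters favorably. The estimate $\norm{h}_{H^s}\le\la C_\env2^{\delta_\env j}\ra^s\norm{h}_{L^2}$ uses only $s\ge0$ and the support of $h_i$.

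Finally, I would square, sum over $p$, and take square roots to obtain $\norm{\calU q}_{\ell^2(H^s)}\le C\norm{q}_{\ell^2(H^{\delta_\env s})}$.
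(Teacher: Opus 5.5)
Your proposal is correct and follows essentially the same route as the paper: you use the fixed-band bound for the finitely many generations $j<j_\env$, then demodulation by $c_i$, the modulus contraction, the bound $\norm{h}_{H^s}\lesssim 2^{\delta_\env sj}\norm{h}_{L^2}$ for $\wh h$ supported in $B_{C_\env 2^{\delta_\env j}}(0)$, and finally summation using $\sum_i\abs{m_i}^2\le1$. One correction to your side remark: converting $2^{\delta_\env sj}$ into $\la\xi\ra^{\delta_\env s}$ on $\supp m_i$ uses the \emph{lower} bound $\la\xi\ra\gtrsim 2^j$ from Lemma~\ref{lem:BV-covering-basic-properties}(d), not the upper bound, so your main argument (which invokes $\asymp$) is fine but the claim that only the upper bound is needed is backwards.
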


\begin{proof}
    The finitely many indices with $j<j_\env$ are harmless, since their supports lie in a fixed compact frequency region; the fixed-band estimate
    \[\||q_p*\psi_i|\|_{H^s}\lesssim \|q_p*\psi_i\|_{L^2}\]
    follows from Lemma~\ref{lem:Sobolev-nonexpansiveness}, leading to a contribution bounded by $\norm{q}_{\lp} \le  \norm{q}_{\ell^2(H^{\delta_\env s})}$. 

    Let us then focus on the regime $j\geq j_\env$. For each component $q_p$ and $i\in I_{\ast,j}$, set for convenience
    \[g_{p,i}(x)\coloneqq e^{-2\pi \iu\, c_i\cdot x}(q_p*\psi_i)(x),\]
    so that $\abs{g_{p,i}}=\abs{q_p*\psi_i}$ and $\supp\widehat g_{p,i}\subset B_{C_\env 2^{\delta_\env j}}(0)$. Since $\supp\widehat g\subset B(0,R)$ with $R\geq1$ implies $\norm{g}_{H^s}\lesssim R^s\|g\|_{L^2}$, the modulus contraction recalled in Lemma~\ref{lem:Sobolev-nonexpansiveness} gives
    \begin{equation*}
        \||q_p*\psi_i|\|_{H^s}=\||g_{p,i}|\|_{H^s} \le \|g_{p,i}\|_{H^s} \lesssim 2^{\delta_\env sj}\|q_p*\psi_i\|_{L^2}.
    \end{equation*}
    As a result, since $\sum_i\abs{m_i}^2 \le 1$, we infer
    \begin{align}
        \sum_{j(i)\geq j_\env }\||q_p*\psi_i|\|_{H^s}^2
        &\lesssim \int\sum_{j(i)\geq j_\env }2^{2\delta_\env sj(i)}|m_i(\xi)|^2|\widehat q_p(\xi)|^2\dd{\xi} \\
        &\lesssim \int\langle\xi\rangle^{2\delta_\env s}\sum_{j(i)\geq j_\env }|m_i(\xi)|^2|\widehat q_p(\xi)|^2\dd\xi \\
        &\lesssim \int\langle\xi\rangle^{2\delta_\env s}|\widehat q_p(\xi)|^2\dd\xi \\
        &= \|q_p\|_{H^{\delta_\env s}}^2,
    \end{align}
    and summing in $p$ proves the claim. 
\end{proof}

The argument of Lemma~\ref{lem:summability-above-critical}, with the evident modifications, then yields endpoint control.
\begin{lemma} \label{lem:endpoint-summability-envelope}
    Suppose that Assumption~\ref{ass:energy-localization} holds and that the scattering filter bank has strict envelope localization $\delta_\env\in[0,1)$. Then there exists a constant $C>0$ such that, for every $f\in H^{1-\beta}(\R^2)$,
    \[\mathfrak E_{1-\beta}(f) \le  C\|f\|_{H^{1-\beta}}.\]
\end{lemma}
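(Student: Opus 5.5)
We have to show that $\sum_n E_n^{1-\beta}(f)\lesssim \|f\|_{H^{1-\beta}}$. Write $s_0\coloneqq 1-\beta$ and $\delta\coloneqq\delta_\env<1$. The idea is to combine three ingredients: the geometric $L^2$ decay from Lemma~\ref{lem:summability-above-critical}, the regularity gain of Lemma~\ref{lem:envelope-removal}, and interpolation.

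\textbf{Step 1: one-step bound.} For $n\geq1$ we have $U_nf=\calU U_{n-1}f$. Lemma~\ref{lem:envelope-removal}, applied with $s=s_0$, gives
\[E_n^{s_0}(f)\le C\,\|U_{n-1}f\|_{\ell^2(H^{\delta s_0})}.\]
Since $\delta s_0<s_0$, the right-hand side is a strictly lower Sobolev norm. The factor $\delta<1$ is exactly the gain that was missing in the endpoint case.

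\textbf{Step 2: interpolate the lower norm.} Interpolate componentwise (Hölder on the Fourier side, then Hölder in $p$) between $\ell^2(L^2)$ and $\ell^2(H^{s_0})$:
\[\|U_{n-1}f\|_{\ell^2(H^{\delta s_0})}\le \|U_{n-1}f\|_{\lp}^{1-\delta}\,E_{n-1}^{s_0}(f)^{\delta}.\]
Then bound each factor separately.
\begin{itemize}
\item Lemma~\ref{lem:Sobolev-nonexpansiveness} gives $E_{n-1}^{s_0}(f)\le\|f\|_{H^{s_0}}$.
\item Lemma~\ref{lem:summability-above-critical}, applied with $s=s_0$, gives $\|U_{n-1}f\|_{\lp}\lesssim \theta^{s_0(n-1)/2}\|f\|_{H^{s_0}}$.
\end{itemize}
The case $\beta=1$ is excluded, so $s_0>0$ and that lemma applies; the case $n-1=0$ is trivial after enlarging constants.

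\textbf{Step 3: sum the series.} Combining Steps 1 and 2 yields
\[E_n^{s_0}(f)\lesssim \theta^{s_0(1-\delta)(n-1)/2}\|f\|_{H^{s_0}}.\]
This decays geometrically because $\theta<1$ and $\delta<1$. Summing over $n\ge1$ and adding $E_0^{s_0}(f)=\|f\|_{H^{s_0}}$ gives $\mathfrak E_{1-\beta}(f)\le C\|f\|_{H^{1-\beta}}$, with $C$ depending only on $\theta$, $\rgap$, $\delta$, $\beta$ and the constant of Lemma~\ref{lem:envelope-removal}.

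\textbf{Main point to check.} The delicate step is the interpolation in Step 2 for vector-valued families. Pointwise in $\xi$ we have $\la\xi\ra^{2\delta s_0}=(\la\xi\ra^{2s_0})^{\delta}\cdot 1^{1-\delta}$, so Hölder's inequality with exponents $1/\delta$ and $1/(1-\delta)$ applies to each component. One more Hölder in the path index $p$ then gives the stated bound with constant $1$. The cases $\delta=0$ and $\delta s_0=0$ are immediate.

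The earlier lemmas must also apply as quoted. Assumption~\ref{ass:energy-localization} is hypothesized. The low-generation indices in Lemma~\ref{lem:envelope-removal} are already absorbed into its constant, so no further case split is needed.
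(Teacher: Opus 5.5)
Your proof is correct, and it is the argument the paper intends: the paper gives no separate proof, saying only that the proof of Lemma~\ref{lem:summability-above-critical} goes through ``with the evident modifications.'' Your steps supply exactly those modifications: one application of Lemma~\ref{lem:envelope-removal} lowers the regularity to $H^{\delta_\env(1-\beta)}$ at depth $n-1$, and Hölder interpolation between $\ell^2(L^2)$ and $\ell^2(H^{1-\beta})$ then converts the geometric $\ell^2(L^2)$ decay into a summable factor $\theta^{(1-\beta)(1-\delta_\env)(n-1)/2}$.
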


We can now state the positive stability result above the critical Sobolev threshold. For the wave-packet setting, Assumption~\ref{ass:energy-localization} is automatic by Proposition~\ref{prop:energy-localization-BV} and strict envelope localization applies whenever $\alpha<1$; hence the only remaining structural hypothesis is the commutator assumption.
\begin{corollary}\label{cor:full-depth-positive-endpoint}
    Suppose that $\alpha<1$ and that Assumption~\ref{ass:comm-filter-ass} holds. For every $\kappa\in(0,1)$ there exists a constant $C>0$ such that, whenever $\tau\in W^{1,\infty}(\R^2;\R^2)$ satisfies $\norm{D\tau}_{L^\infty}\leq\kappa$, we have, for every $f\in H^{1-\beta}(\R^2)$,
    \begin{equation*}
        \norm{\Scat(L_\tau f)-\Scat f}_{\lp}\leq C\bigl(\norm{\tau}_{L^\infty}+\norm{D\tau}_{L^\infty}\bigr)\norm{f}_{H^{1-\beta}}.
    \end{equation*}
\end{corollary}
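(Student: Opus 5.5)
The corollary follows by combining Theorem~\ref{thm:abstract-full-depth-positive} with Lemma~\ref{lem:endpoint-summability-envelope}. The only work is to check that every hypothesis is supplied by the wave-packet setting.

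First, Assumption~\ref{ass:comm-filter-ass} holds by hypothesis. So Theorem~\ref{thm:abstract-full-depth-positive} gives, for each $\kappa\in(0,1)$, a constant $C_1$ such that
\begin{equation*}
    \norm{\Scat(L_\tau f)-\Scat f}_{\lp}\leq C_1\bigl(\norm{\tau}_{L^\infty}+\norm{D\tau}_{L^\infty}\bigr)\mathfrak E_{1-\beta}(f)
\end{equation*}
for every $f\in H^{1-\beta}(\R^2)$ and every admissible $\tau$. If $\mathfrak E_{1-\beta}(f)=\infty$ the inequality is vacuous, so the task reduces to bounding $\mathfrak E_{1-\beta}(f)\leq C_2\norm{f}_{H^{1-\beta}}$.

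Second, I would verify the hypotheses of Lemma~\ref{lem:endpoint-summability-envelope}.
\begin{itemize}
    \item Assumption~\ref{ass:energy-localization} holds for every bank subordinate to $\bvQ$, by Proposition~\ref{prop:energy-localization-BV}.
    \item Envelope localization of order $\delta_\env=\alpha$ holds by Proposition~\ref{prop:BV-positive-verification}. Since $\alpha<1$, this localization is strict.
\end{itemize}
The lemma then yields $\mathfrak E_{1-\beta}(f)\leq C_2\norm{f}_{H^{1-\beta}}$. Inserting this into the display gives the claim with $C=C_1C_2$, which depends only on $\kappa$ and the filter bank.

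The main obstacle is that Lemma~\ref{lem:endpoint-summability-envelope} is only asserted (``with the evident modifications''). If I had to supply its argument, I would interpolate as in Lemma~\ref{lem:summability-above-critical}, but replace the contractive bound $E_n^s\leq\norm f_{H^s}$ by the gain from Lemma~\ref{lem:envelope-removal}. Writing $s_0=1-\beta$, that lemma applied to $q=U_{n-1}f$ gives
\begin{equation*}
    E_n^{s_0}(f)\lesssim E_{n-1}^{\alpha s_0}(f).
\end{equation*}
Since $\alpha s_0<s_0$, I would interpolate $H^{\alpha s_0}$ between $L^2$ and $H^{s_0}$:
\begin{equation*}
    E_{n-1}^{\alpha s_0}\leq\norm{U_{n-1}f}_{\lp}^{1-\alpha}\,\norm f_{H^{s_0}}^{\alpha}.
\end{equation*}
The first factor decays geometrically by Lemma~\ref{lem:summability-above-critical}, which gives $\norm{U_{n-1}f}_{\lp}\lesssim\theta^{s_0(n-2)/2}\norm f_{H^{s_0}}$. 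Combining, $E_n^{s_0}(f)\lesssim\theta^{s_0(1-\alpha)n/2}\norm f_{H^{s_0}}$, which is summable in $n$. The case $\beta=1$ is excluded by the standing assumption $\beta<1$, so $s_0>0$ and Lemma~\ref{lem:summability-above-critical} applies.
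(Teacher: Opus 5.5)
Your proposal is correct and follows the paper's own route: Lemma~\ref{lem:endpoint-summability-envelope} is fed into Theorem~\ref{thm:abstract-full-depth-positive}, and Propositions~\ref{prop:energy-localization-BV} and~\ref{prop:BV-positive-verification} supply that lemma's hypotheses when $\alpha<1$. Your sketch of the endpoint summability lemma is a valid way to fill in the ``evident modifications'' that the paper leaves implicit: you use Lemma~\ref{lem:envelope-removal} to get $E_n^{1-\beta}(f)\lesssim E_{n-1}^{\alpha(1-\beta)}(f)$, interpolate between $L^2$ and $H^{1-\beta}$, and then invoke the geometric $L^2$ decay from Lemma~\ref{lem:summability-above-critical} (this also covers $\alpha=0$).
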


\begin{remark}[The endpoint obstruction for $\alpha=1$]
    It is not hard to see that the endpoint refinement above does not extend, in general, to the boundary case $\alpha=1$ and $\beta<1$, which includes relevant parabolic models such as curvelets and shearlets. Nevertheless, the commutator estimate still holds at the critical scale $H^{1-\beta}$, and Corollary~\ref{cor:full-depth-positive-epsilon} still gives stability in $H^r$ for every $1-\beta<r\leq1$. What fails is the endpoint summability gain: when $\alpha=1$, the order of the envelope localization supplied by the support geometry is only $\delta_\env=1$, which is not enough to combine the interpolation argument with the lossless Sobolev bound $\norm{\calU q}_{\ell^2(H^{1-\beta})}\lesssim\norm{q}_{\ell^2(H^{1-\beta})}$.

    Let us stress that this loss is not merely a defect of the proof. Under the present support and smoothness assumptions, one should not expect a gain with exponent strictly smaller than one when $\alpha=1$. Indeed, a long radial tile may contain two small frequency packets separated by a distance $\simeq 2^j$. If both lie in a region where the same smooth multiplier $m_i$ is nontrivial, one can choose $q_j$ so that
    \begin{equation*}
        (q_j*\psi_i)(x)=e^{2\pi\iu\xi_j^+\cdot x}h(x)+e^{2\pi\iu\xi_j^-\cdot x}h(x),\qquad \Delta_j\coloneqq |\xi_j^+-\xi_j^-|\simeq2^j.
    \end{equation*}
    After removing a common carrier, the modulus is still localized around the beat frequency, since
    \begin{equation*}
        \abs{(q_j*\psi_i)(x)}=2\abs{h(x)}|\cos(\pi(\xi_j^+-\xi_j^-)\cdot x)|,
    \end{equation*}
    thus the modulus eliminates only the phase $e^{\pi \iu(\xi_j^++\xi_j^-)\cdot x}$, while retaining an oscillation at the beat frequency $\Delta_j$. Consequently, the $H^{1-\beta}$-weight of this oscillation is of size $\abs{\Delta_j}^{1-\beta}\asymp 2^{(1-\beta)j}$, so the corresponding $H^{1-\beta}$ norm may carry precisely such a factor. This heuristic shows that, at the level of support geometry alone, one cannot expect an estimate of the form $\norm{\calU q}_{\ell^2(H^{1-\beta})}\lesssim\norm{q}_{\ell^2(H^{\rho(1-\beta)})}$ with $\rho<1$.

    In conclusion, the endpoint result applies to any boundary wave-packet bank whose multiplier supports satisfy strict envelope localization. This includes radially refined models such as wave-atom tilings \cite{DemanetYing2007}, but not the standard dyadic-radial curvelet or shearlet tilings, unless these are refined at the outset. For such models, endpoint summability gains would have to come from additional, model-specific arguments rather than from support geometry alone.
\end{remark}

\section{Proof of the commutator bound} \label{sec:appendix-commutator}

\subsection{Preliminary estimates}

The goal of this section is to prove Proposition~\ref{prop:scattering-commutator-plugin}. Throughout, we fix a covering $\bvQ$ and a subordinate filter bank, for instance the smooth model class from Section~\ref{subsec:designed-model}, and assume that Assumption~\ref{ass:comm-filter-ass} holds.

Let us start with a number of technical preliminaries. First, we need a scale-local vector-valued generalization of the Coifman--Meyer method \cite{Coifman1986} for boundedness of bilinear multipliers, cf.\ \cite[Theorem 7.5.3]{Grafakos}. 

\begin{lemma}\label{lem:scale-local-CM}
    Fix $0<R_0<R_1$ and $\lambda\geq 1$. Let $(\sigma_i)_{i\in J}$ be a countable family of measurable functions on $\R^2_\zeta\times\R^2_\eta$ such that, for every fixed $\eta\in\R^2$, the section $\zeta\mapsto\sigma_i(\zeta,\eta)$ belongs to $C^3(B_{R_1\lambda}(0))$. Assume that
    \begin{equation*}
        A\coloneqq \max_{\abs{\gamma}\leq 3}\sup_{\eta\in\R^2}\sup_{\abs{\zeta}\le R_1\lambda}\lambda^{\abs{\gamma}}\left(\sum_{i\in J}|\partial_\zeta^\gamma\sigma_i(\zeta,\eta)|^2\right)^{1/2}<\infty.
    \end{equation*}
    For Schwartz functions $g,h\in\calS(\R^2)$ define
    \begin{equation*}
        B_i(g,h)(x)=\iint_{\R^2\times\R^2}e^{2 \pi \iu x\cdot(\zeta+\eta)}\sigma_i(\zeta,\eta)\wh g(\zeta)\wh h(\eta)\dzeta\deta.
    \end{equation*}
    Then the operators $B_i$ admit a canonical extension to all $g\in L^\infty(\R^2)$ with $\supp\wh g\subset B_{R_0\lambda}(0)$, where $\widehat g$ is understood as a temperate distribution via the canonical embedding $L^\infty(\R^2)\hookrightarrow\calS'(\R^2)$, and to all $h\in L^2(\R^2)$. This extension satisfies
    \begin{equation*}
        \left(\sum_{i\in J}\norm{B_i(g,h)}_{L^2}^2\right)^{1/2}\leq C A\norm{g}_{L^\infty}\norm{h}_{L^2},
    \end{equation*}
    where $C>0$ depends only on $R_0$ and $R_1$.
\end{lemma}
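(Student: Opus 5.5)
The plan is to reduce to the classical Coifman--Meyer Fourier-series argument, rescaled to frequency scale $\lambda$, and to carry the $\ell^2(J)$-valued symbol along. First we rescale. Replacing $\zeta$ by $\lambda\zeta$ and $x$ by $x/\lambda$, we note that $g(\cdot/\lambda)$ has spectrum in $B_{R_0}(0)$ and that the $L^\infty$ and (normalized) $L^2$ norms transform compatibly. The symbols $\tilde\sigma_i(\zeta,\eta)=\sigma_i(\lambda\zeta,\lambda\eta)$ then satisfy the hypothesis with $\lambda=1$ and the same constant $A$. It therefore suffices to treat $\lambda=1$, keeping in mind that the derivative bounds concern only the $\zeta$ variable and only on $B_{R_1}(0)$.

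Next, we localize and expand in a Fourier series. We fix a cutoff $\rho\in C_c^\infty(B_{R_1}(0))$ with $\rho\equiv1$ on $B_{R_0}(0)$, and a cube $K=[-R_1,R_1]^2$ containing $B_{R_1}(0)$. Since $\wh g$ is supported in $B_{R_0}(0)$, we may replace $\sigma_i$ by $\rho(\zeta)\sigma_i(\zeta,\eta)$ without changing $B_i(g,h)$. For each fixed $\eta$ this is a $C^3$ function compactly supported in $K$, so we expand it in $\zeta$ as
\[
\rho(\zeta)\sigma_i(\zeta,\eta)=\sum_{n\in\Z^2}c_{i,n}(\eta)\,e^{2\pi\iu n\cdot\zeta/(2R_1)}\qquad\text{on }K.
\]
Integrating by parts three times in $\zeta$ and using Minkowski's inequality in $\ell^2(J)$ gives
\[
\Bigl(\sum_i|c_{i,n}(\eta)|^2\Bigr)^{1/2}\lesssim_{R_0,R_1}A\,(1+|n|)^{-3},
\]
uniformly in $\eta$. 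Here the Leibniz terms falling on $\rho$ are harmless, because they carry lower derivatives of $\sigma_i$, which are also controlled by $A$.

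We now insert this expansion into $B_i$. The exponential $e^{2\pi\iu n\cdot\zeta/(2R_1)}$ turns $g$ into its translate $g_n=g(\cdot+n/(2R_1))$, for which $\|g_n\|_{L^\infty}=\|g\|_{L^\infty}$ and whose spectrum is still in $B_{R_0}(0)$. The factor $c_{i,n}(\eta)$ defines a linear Fourier multiplier acting on $h$, which we call $T_{i,n}$. This yields
\[
B_i(g,h)=\sum_n g_n\cdot T_{i,n}h.
\]
By Plancherel, $\sum_i\|T_{i,n}h\|_2^2\le\sup_\eta\sum_i|c_{i,n}(\eta)|^2\,\|h\|_2^2$. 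Applying Minkowski's inequality over $n$ in $\ell^2(J;L^2)$ and bounding $g_n$ in $L^\infty$ then gives
\[
\Bigl(\sum_i\|B_i(g,h)\|_2^2\Bigr)^{1/2}\lesssim A\|g\|_\infty\|h\|_2\sum_n(1+|n|)^{-3}.
\]
The series converges in dimension two, and this is exactly where three derivatives are needed.

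The main obstacle is justifying the identity above for $g\in L^\infty$, whose Fourier transform is only a distribution, and defining the canonical extension. The plan is to take the right-hand side of the identity as the definition. For Schwartz $g,h$ the identity is rigorous, since the Fourier series of a $C^3$ function converges absolutely and uniformly, so the sum and integral may be interchanged. For $g\in L^\infty$ with $\supp\wh g\subset B_{R_0}(0)$, we approximate by $g_k=g\cdot\phi(\cdot/k)$ convolved with a band-limiting kernel whose Fourier transform equals $1$ on $B_{R_0}(0)$. We then choose the spectral enlargement to stay inside $B_{R_1'}(0)$ for some $R_0<R_1'<R_1$, so that $\rho$ can be taken equal to $1$ there. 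The approximants $g_k$ are bounded in $L^\infty$ and converge locally uniformly. The series representation is uniform in $k$, and its terms converge in $L^2_{\mathrm{loc}}$, so dominated convergence in the absolutely summable series in $n$ shows the limit is well defined, independent of the approximation, and satisfies the same bound. The extension in $h$ to all of $L^2$ then follows by density.
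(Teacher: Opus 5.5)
Your proposal is correct and follows essentially the paper's route. Both arguments cut off in $\zeta$ and expand in a Fourier series in $\zeta$, obtaining $\ell^2(J)$-valued coefficients that decay like $(1+|n|)^{-3}$ after three integrations by parts. Both then rewrite $B_i(g,h)$ as a sum over $n\in\Z^2$ of translates of $g$ times Fourier multipliers applied to $h$, and define the extension to $g\in L^\infty$ by that series. The differences are cosmetic: you rescale to $\lambda=1$ at the outset, whereas the paper keeps $\lambda$ in the period $L\lambda$. You also add an explicit Schwartz-approximation argument for canonicity of the extension, which the paper justifies only by agreement on Schwartz inputs.
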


\begin{proof}
    Choose $L>2R_1$ and $\Theta\in C_c^\infty((-L/2,L/2)^2)$ such that $\supp\Theta\subset B_{R_1}(0)$ and $\Theta\equiv 1$ on $B_{R_0}(0)$. For $u\in(-L/2,L/2)^2$ and $\eta\in\R^2$, set
    \[\widetilde\sigma_i(u,\eta)=\Theta(u)\sigma_i(\lambda u,\eta).\]
    We extend $\widetilde\sigma_i(\cdot,\eta)$ $L$-periodically in $u$ and write its Fourier series in the form
    \begin{equation*}
        \widetilde\sigma_i(u,\eta)=\sum_{n\in\Z^2}a_{i,n}(\eta)e^{2 \pi \iu n\cdot u/L},\qquad a_{i,n}(\eta)=L^{-2}\int_{(-L/2,L/2)^2}\widetilde\sigma_i(u,\eta)e^{-2 \pi \iu n\cdot u/L}\du.
    \end{equation*}
    We first record the required estimate for the Fourier coefficients. For $0\leq q\leq 3$ and $r\in\set*{1,2}$, the product rule gives
    \begin{equation*}
        \partial_{u_r}^q\widetilde\sigma_i(u,\eta)=\sum_{\nu=0}^q\binom q\nu(\partial_{u_r}^{q-\nu}\Theta)(u)\lambda^\nu(\partial_{\zeta_r}^\nu\sigma_i)(\lambda u,\eta).
    \end{equation*}
    Since $\supp\Theta\subset B_{R_1}(0)$, the point $\lambda u$ belongs to $B_{R_1\lambda}(0)$ whenever $u\in\supp\Theta$. Hence the hypothesis implies
    \begin{equation*}
        \sup_{\eta\in\R^2}\sup_{u\in(-L/2,L/2)^2}\left(\sum_{i\in J}\abs{\partial_{u_r}^q\widetilde\sigma_i(u,\eta)}^2\right)^{1/2}\leq C_\Theta A,\qquad 0\leq q\leq 3,\quad r\in\set*{1,2}.
    \end{equation*}
    For $n=0$, Minkowski's integral inequality in $\ell^2(J)$ gives
    \begin{equation*}
        \left(\sum_{i\in J}\abs{a_{i,0}(\eta)}^2\right)^{1/2}\leq L^{-2}\int_{(-L/2,L/2)^2}\left(\sum_{i\in J}\abs{\widetilde\sigma_i(u,\eta)}^2\right)^{1/2}\du\leq C A.
    \end{equation*}
    If $n\neq 0$, choose $r\in\set*{1,2}$ such that $\abs{n_r}\geq \abs{n}/\sqrt 2$. Integrating by parts three times in the variable $u_r$ yields
    \begin{equation*}
        a_{i,n}(\eta)=\left(\frac{L}{2 \pi \iu  n_r}\right)^3 L^{-2}\int_{(-L/2,L/2)^2}\partial_{u_r}^3\widetilde\sigma_i(u,\eta)e^{-2 \pi \iu  n\cdot u/L}\du.
    \end{equation*}
    Using again Minkowski's integral inequality in $\ell^2(J)$, we obtain
    \begin{equation*}
        \left(\sum_{i\in J}\abs{a_{i,n}(\eta)}^2\right)^{1/2}\leq C A\abs{n}^{-3}.
    \end{equation*}
    Combining the cases $n=0$ and $n\neq 0$, we have proved
    \begin{equation*}
        \sup_{\eta\in\R^2}\left(\sum_{i\in J}\abs{a_{i,n}(\eta)}^2\right)^{1/2}\leq C A(1+\abs{n})^{-3},\qquad n\in\Z^2.
    \end{equation*}
    Let $M_{i,n}$ denote the linear Fourier multiplier with symbol $a_{i,n}$, that is,
    \[\wh{M_{i,n}h}(\eta)=a_{i,n}(\eta)\wh h(\eta).\]
    By Plancherel's theorem and the preceding coefficient estimate,
    \begin{equation*}
        \left(\sum_{i\in J}\norm{M_{i,n}h}_{L^2}^2\right)^{1/2}=\left(\int_{\R^2}\sum_{i\in J}\abs{a_{i,n}(\eta)}^2\abs{\wh h(\eta)}^2\deta\right)^{1/2}\leq C A(1+\abs{n})^{-3}\norm{h}_{L^2}.
    \end{equation*}
    Now assume first that $g,h\in\calS(\R^2)$ and $\supp\wh g\subset B_{R_0\lambda}(0)$. Since $\Theta(\zeta/\lambda)=1$ on $\supp\wh g$, the Fourier expansion above gives, for every $\zeta\in\supp\wh g$,
    \begin{equation*}
        \sigma_i(\zeta,\eta)=\sum_{n\in\Z^2}a_{i,n}(\eta)e^{2 \pi \iu  n\cdot\zeta/(L\lambda)}.
    \end{equation*}
    The coefficient estimate implies absolute convergence of this series, uniformly in $\eta$ with values in $\ell^2(J)$. Therefore we may insert the expansion into the definition of $B_i(g,h)$ and obtain
    \begin{equation*}
        B_i(g,h)=\sum_{n\in\Z^2}G_n\,M_{i,n}h,\qquad G_n(x)=\calF^{-1}\left(e^{2 \pi \iu  n\cdot\zeta/(L\lambda)}\wh g(\zeta)\right)(x)=g\left(x+\frac{n}{L\lambda}\right).
    \end{equation*}
    Consequently $\norm{G_n}_{L^\infty}=\norm{g}_{L^\infty}$ for all $n\in\Z^2$. Hence, by the triangle inequality in $\ell^2(J;L^2(\R^2))$,
    \begin{align*}
        \left(\sum_{i\in J}\norm{B_i(g,h)}_{L^2}^2\right)^{1/2}
        &\leq \sum_{n\in\Z^2}\norm{\left(G_nM_{i,n}h\right)_{i\in J}}_{L^2(\ell^2(J))} \\
        &\leq \norm{g}_{L^\infty}\sum_{n\in\Z^2}\left(\sum_{i\in J}\norm{M_{i,n}h}_{L^2}^2\right)^{1/2} \\
        &\leq C A\norm{g}_{L^\infty}\norm{h}_{L^2}\sum_{n\in\Z^2}(1+\abs{n})^{-3}.
    \end{align*}
    The final sum is finite.

    For general $g\in L^\infty(\R^2)$ with $\supp\wh g\subset B_{R_0\lambda}(0)$ and $h\in L^2(\R^2)$, the same formula
    \begin{equation*}
        B_i(g,h)=\sum_{n\in\Z^2}g\left(\cdot+\frac{n}{L\lambda}\right)M_{i,n}h
    \end{equation*}
    defines an unconditionally convergent series in $\ell^2(J;L^2(\R^2))$, by the estimate just proved for the summands. This gives the announced extension and the same bound. For Schwartz inputs the extension agrees with the original oscillatory integral, as shown above.
\end{proof}

The second ingredient is a routine Calder\'on-type result that will later allow us to handle the (finitely many) low-scale channels in the main commutator argument. Recall that, for $i\in I_\ast$, $m_i=\widehat\psi_i$ denotes the corresponding high-pass Fourier multiplier, and we set
\begin{equation*}
    \calC_i h \coloneqq h*\psi_i=\calF ^{-1}(m_i\widehat h),\qquad h\in L^2(\R^2).
\end{equation*}
\begin{lemma} \label{lem:fixed-low-scale-commutators}
    Given a finite subset $J\subset I_\ast$, assume that the multipliers $m_i$, $i \in J$, have compact support and bounded derivatives up to order four. Then, there exists a constant $C>0$ such that, for every $b\in W^{1,\infty}(\R^2;\R^2)$ and $h\in L^2(\R^2)$, the (weakly defined) commutators $[\calC_i,b\cdot\nabla]h$ satisfy
    \begin{equation*}
        \left(\sum_{i\in J}\|[\calC_i,b\cdot\nabla]h\|_{L^2}^2\right)^{1/2} \leq C\|Db\|_{L^\infty}\|h\|_{L^2}.
    \end{equation*}
\end{lemma}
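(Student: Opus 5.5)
Since $J$ is finite, it suffices to prove the bound for each fixed $i\in J$ and then take the $\ell^2(J)$ sum, which costs a factor $(\#J)^{1/2}$. For fixed $i$ I treat $[\calC_i,b\cdot\nabla]$ as a classical Calder\'on first commutator with a smooth, compactly supported multiplier; the high-frequency degeneracy of Lemma~\ref{lem:scale-local-CM} never enters. First I would fix the weak definition. For $h\in\calS(\R^2)$,
\[
[\calC_i,b\cdot\nabla]h=\calC_i(b\cdot\nabla h)-b\cdot\nabla\calC_i h ,
\]
where $b\cdot\nabla h\in L^2$ (indeed $b\in L^\infty$ and $\nabla h\in L^2$). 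I would then rewrite the first term as
\[
\calC_i(b\cdot\nabla h)=\sum_r\partial_r\calC_i(b_rh)-\calC_i\bigl((\operatorname{div}b)h\bigr).
\]
The second piece is bounded in $L^2$ by $\norm{Db}_{L^\infty}\norm{h}_{L^2}$ because $\abs{m_i}\le1$. It remains to control
\[
T_rh\coloneqq\partial_r\calC_i(b_rh)-b_r\partial_r\calC_ih ,
\]
which is the commutator $[\partial_r\calC_i,b_r]$ with a single multiplication operator.

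Next I would write $K\coloneqq\partial_r\psi_i$, whose Fourier transform is $2\pi\iu\xi_rm_i(\xi)$. By assumption this symbol is compactly supported with bounded derivatives up to order four. Hence $\abs{K(x)}+\abs{x}\abs{K(x)}\lesssim(1+\abs{x})^{-4}$, and in particular $\abs{x}K\in L^1$. In kernel form,
\[
T_rh(x)=\int K(x-y)\bigl(b_r(y)-b_r(x)\bigr)h(y)\dy .
\]
The mean value theorem gives $\abs{b_r(y)-b_r(x)}\le\norm{Db}_{L^\infty}\abs{x-y}$. This step is legitimate because $b$ is Lipschitz, which I can use in place of the $L^\infty$ bound that $b$ itself need not satisfy uniformly in the estimate. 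It follows that $\abs{T_rh}\le\norm{Db}_{L^\infty}\,(\abs{\cdot}\abs{K})*\abs{h}$, and Young's inequality yields
\[
\norm{T_rh}_{L^2}\le\norm{\abs{x}K}_{L^1}\norm{Db}_{L^\infty}\norm{h}_{L^2}.
\]
Notably, the estimate does not involve $\norm{b}_{L^\infty}$, which is exactly what the statement requires.

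The last step is to extend from $\calS$ to $L^2$ by density, which defines the weak commutator. Here I must check that the formula above agrees with the distributional definition $\langle\calC_i(b\cdot\nabla h)-b\cdot\nabla\calC_ih,\phi\rangle$, which is obtained via integration by parts against test functions. I expect this compatibility check, together with the decay $\abs{x}K\in L^1$, to be the only delicate points. The decay requires at least three derivatives of $\xi_rm_i$ in dimension two, and the assumed four derivatives suffice, for example via $(1+\abs{x}^2)^2K=\calF^{-1}\bigl((1-\Delta/4\pi^2)^2(\xi_rm_i)\bigr)\in L^\infty$. The constant depends only on $J$ and the multipliers.
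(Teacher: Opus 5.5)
Your argument is correct and is essentially the paper's proof. The paper writes the commutator in kernel form and integrates by parts in $y_s$, which produces exactly your two pieces: a $\operatorname{div} b$ term, and the first commutator with kernel $\partial_s\psi_i(x-y)\bigl(b_s(y)-b_s(x)\bigr)$, controlled by the Lipschitz bound and Young's inequality with $\abs{x}\nabla K\in L^1$. You perform the same integration by parts at the operator level and bound the divergence term by $\abs{m_i}\le1$ rather than $\norm{K}_{L^1}$. One small slip: four derivatives give $\abs{K(x)}\lesssim(1+\abs{x})^{-4}$ and hence only $\abs{x}\abs{K(x)}\lesssim(1+\abs{x})^{-3}$, not $(1+\abs{x})^{-4}$. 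This is still integrable in dimension two, so your conclusion stands.
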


\begin{proof}
    It suffices to prove the claim for one index only, which will be omitted to lighten the notation. Note that the assumptions imply $m\in W^{4,1}(\R^2)$ and $\xi_s m(\xi) \in W^{4,1}(\R^2)$ for each coordinate $s$. Setting $K\coloneqq \calF ^{-1}m$, integration by parts in the Fourier inversion formula gives
    \begin{equation*}
        \abs{K(x)}\lesssim (1+\abs{x})^{-4}, \qquad |\nabla K(x)|\lesssim (1+\abs{x})^{-4},
    \end{equation*}
    ensuring $K\in L^1(\R^2)$ and $x\mapsto \abs{x}\nabla K(x)\in L^1(\R^2)$. 

    Write $\calC h=K*h$ for the fixed multiplier under consideration. Choosing $h\in\calS(\R^2)$ and $b\in C_b^\infty(\R^2;\R^2)$ initially, we have
    \begin{equation*}
        [\calC,b\cdot\nabla]h(x)=\sum_{s=1}^2 \int K(x-y)(b_s(y)-b_s(x)) \partial_s h(y)\dd{y}.
    \end{equation*}
    Integrating by parts in $y_s$ gives a sum of two terms: 
    The first has kernel $K(x-y)\partial_s b_s(y)$ and is bounded on $L^2$ by $\norm{K}_{L^1}\|Db\|_{L^\infty}\|h\|_{L^2}$. The second has kernel $\partial_sK(x-y)(b_s(y)-b_s(x))$, and we use
    \[\abs{b_s(y)-b_s(x)}\leq\norm{Db}_{L^\infty}\abs{x-y}.\]
    Thus the corresponding operator is dominated by convolution with $\norm{Db}_{L^\infty}\abs{\cdot}\abs{\partial_sK}$, and Young's inequality gives the bound
    \begin{equation*}
        \norm{\abs{\cdot}\nabla K}_{L^1}\norm{Db}_{L^\infty}\norm{h}_{L^2}.
    \end{equation*}
    Summing over $s$, we obtain
    \begin{equation*}
        \norm{[\calC,b\cdot\nabla]h}_{L^2}\lesssim \norm{Db}_{L^\infty}\norm{h}_{L^2}.
    \end{equation*}
    The same estimate in the case $b\in W^{1,\infty}(\R^2;\R^2)$ follows by standard mollification, and summing over the finite set $J$ proves the claim for $h\in\calS(\R^2)$; the claim extends to $h\in L^2(\R^2)$ by density.
\end{proof}

\subsection{The linear commutator bound}

We are now in a position to state the main commutator estimate at the infinitesimal deformation level. The result quantifies the following heuristic mechanism: differentiating a multiplier at generation $j$ costs the transverse packet scale $2^{-\beta j}$, while the derivative in the vector field contributes the carrier size $2^j$. Their combination produces the critical loss $2^{(1-\beta)j}$, which is captured by the Sobolev exponent $1-\beta$. We thus obtain the wave-packet analogue of Mallat's wavelet commutator estimate \cite[Lemma~2.14]{Mallat2012}, which is a key ingredient in the proof of deformation stability \cite[Theorem~2.12]{Mallat2012}. In the wavelet case, the multiplier derivative costs $2^{-j}$ and therefore cancels the carrier contribution $2^j$. Although we present all the details because they are specific to the underlying wave-packet geometry, the proof resorts to standard tools and ideas from harmonic analysis, in particular from paradifferential calculus \cite{Taylor-PDC}. We do not assert any uniformity of the constant as $\beta\uparrow1$; indeed, the proof exhibits degeneracies as $1-\beta\downarrow0$.

\begin{proposition} \label{prop:critical-vector-field-commutator}
    Under Assumption~\ref{ass:comm-filter-ass}, there exists a constant $C>0$ such that, for every vector field $b\in W^{1,\infty}(\R^2;\R^2)$ and $h\in H^{1-\beta}(\R^2)$,
    \begin{equation*}
        \left(\sum_{i\in I_\ast}\|[\calC_i,b\cdot\nabla]h\|_{L^2}^2\right)^{1/2} \le  C \|Db\|_{L^\infty}\|h\|_{H^{1-\beta}}.
    \end{equation*}
\end{proposition}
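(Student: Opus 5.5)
We would prove the estimate by a paraproduct decomposition of the vector field $b$ against the input $h$, treating the finitely many low generations separately. First, the indices $i\in I_{\rm fin}$ are handled directly by Lemma~\ref{lem:fixed-low-scale-commutators} with $J=I_{\rm fin}$, which is applicable by (C1); this gives a bound $\lesssim\norm{Db}_{L^\infty}\norm{h}_{L^2}\le\norm{Db}_{L^\infty}\norm{h}_{H^{1-\beta}}$. For $j\ge j_{\rm reg}$ we fix a generation $j$ and work with the vector $([\calC_i,b\cdot\nabla]h)_{i\in I_{\ast,j}}$. Since $\supp m_i\subset Q_i$ lies in the annulus $|\xi|\asymp 2^j$ by Lemma~\ref{lem:BV-covering-basic-properties}(d), only frequencies of $b\cdot\nabla h$ near $2^j$ matter, and we split $b=S_{j-N}b+(b-S_{j-N}b)$ with a Littlewood--Paley low-pass $S_{j-N}$ at frequency $2^{j-N}$ for a fixed large $N$.

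Low-frequency part of $b$. On the Fourier side,
\begin{equation*}
    \calF\bigl([\calC_i,S_{j-N}b\cdot\nabla]h\bigr)(\zeta+\eta)=2\pi\iu\,\bigl(m_i(\zeta+\eta)-m_i(\eta)\bigr)\,\eta\cdot\wh{S_{j-N}b}(\zeta)\,\wh h(\eta),
\end{equation*}
with $|\zeta|\lesssim 2^{j-N}$ and, by the support of $m_i$ and $N$ large, $|\eta|\asymp 2^j$ (we may insert a fattened annular cutoff $\widetilde\Delta_j h$). Writing $m_i(\zeta+\eta)-m_i(\eta)=\int_0^1\zeta\cdot\nabla m_i(\eta+t\zeta)\dt$ and moving $\zeta$ onto $b$ (so that $\zeta_r\wh b_k(\zeta)\sim\wh{\partial_r b_k}(\zeta)$), we obtain bilinear operators $B_i(\partial_r S_{j-N}b_k,\widetilde\Delta_j h)$ with symbols
\begin{equation*}
    \sigma_i(\zeta,\eta)=\int_0^1\partial_r m_i(\eta+t\zeta)\dt\;\eta_k\,\widetilde\varphi_j(\eta).
\end{equation*}
By (C2), $\lambda^{|\gamma|}\bigl(\sum_{i\in I_{\ast,j}}|\partial_\zeta^\gamma\sigma_i|^2\bigr)^{1/2}\lesssim 2^j\,2^{-\beta j}$ for $|\gamma|\le3$ when we choose $\lambda=2^{\beta j}$; this is exactly where one derivative in $\nabla m_i$ plus three $\zeta$-derivatives use up the four orders in (C2). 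The snag is that Lemma~\ref{lem:scale-local-CM} requires $\supp\wh g\subset B_{R_0\lambda}$ with $\lambda=2^{\beta j}$, whereas $S_{j-N}b$ has frequencies up to $2^{j}$. So we further split $S_{j-N}b=S_{\beta j}b+\sum_{\beta j<k\le j-N}\Delta_kb$: the piece $S_{\beta j}b$ is treated by Lemma~\ref{lem:scale-local-CM} with $\norm{\partial S_{\beta j}b}_{L^\infty}\lesssim\norm{Db}_{L^\infty}$, producing $\lesssim 2^{(1-\beta)j}\norm{Db}_{L^\infty}\norm{\widetilde\Delta_j h}_{L^2}$; for each dyadic block $\Delta_kb$ with $k>\beta j$ we apply Lemma~\ref{lem:scale-local-CM} with $\lambda=2^k$, where the $\zeta$-derivatives of $\sigma_i$ now cost only $2^{-\beta j}\le2^{-k}\cdot 2^{k-\beta j}$ — this is problematic, so instead for these blocks we avoid Taylor expansion and bound $m_i(\zeta+\eta)$ and $m_i(\eta)$ separately, using $\norm{\Delta_kb}_{L^\infty}\lesssim 2^{-k}\norm{Db}_{L^\infty}$ against $|\eta|\asymp2^j$, giving $2^{j-k}\le 2^{(1-\beta)j}$ per block; summing the geometric series $\sum_{k>\beta j}2^{j-k}\lesssim 2^{(1-\beta)j}$ gives the same bound. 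The $\ell^2$ orthogonality over $I_{\ast,j}$ in the separate terms follows from $\sum_i|m_i|^2\le1$.

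High-frequency part of $b$ and summation. For $b-S_{j-N}b=\sum_{k>j-N}\Delta_kb$ we do not use the commutator structure: the term $\calC_i(\Delta_kb\cdot\nabla h)$ and $\Delta_kb\cdot\nabla\calC_ih$ are estimated separately. In the first only $S_{k+2}h$ contributes in the high--low and high--high regimes, and rewriting $\Delta_kb\cdot\nabla h=\nabla\cdot(\Delta_kb\,h)-(\operatorname{div}\Delta_kb)h$ yields $\lesssim 2^{j}2^{-k}\norm{Db}_{L^\infty}\norm{\widetilde S_{k}h}_{L^2}$-type bounds whose squared sum over $j$ is controlled by $\norm{h}_{L^2}$, which is harmless. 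The second term is bounded by $2^{-k}\norm{Db}_{L^\infty}2^j\norm{\calC_ih}_{L^2}$, summable in $k>j-N$. Finally, square-summing the generation bounds $2^{(1-\beta)j}\norm{Db}_{L^\infty}\norm{\widetilde\Delta_jh}_{L^2}$ over $j$ with finite overlap gives $\norm{Db}_{L^\infty}\norm{h}_{H^{1-\beta}}$. The main obstacle is the intermediate band $\beta j<k\le j-N$ of $b$: the symbol is smooth only at scale $2^{\beta j}$, so Lemma~\ref{lem:scale-local-CM} cannot be applied at the natural scale of $\Delta_kb$, and one must verify that the crude non-commutator bound $2^{j-k}$ sums to $2^{(1-\beta)j}$ without losing the $\ell^2(I_{\ast,j})$ structure; this is the step we would check first and most carefully.
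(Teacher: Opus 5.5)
Your architecture matches the paper's. You split $b$ at frequency $2^{\beta j}$ and treat $S_{\beta j}b$ by Lemma~\ref{lem:scale-local-CM} with $\lambda\simeq2^{\beta j}$; your symbol count $\lambda^{|\gamma|}2^{j}2^{-\beta j(|\gamma|+1)}=2^{(1-\beta)j}$ is right. You then handle the remaining frequencies of $b$ without the commutator structure, via Bernstein. Your ``intermediate band'' $\beta j<k\le j-N$ is exactly the paper's low--high term together with its term $b_j^{\rm hi}\cdot\nabla\calC_ih$, bounded using $\sum_{q>N_j}2^{-q}\lesssim2^{-\beta j}$. It works as you describe, via $\sum_i|m_i|^2\le1$ and $\bigl(\sum_{i\in I_{\ast,j}}\norm{\nabla\calC_ih}_{L^2}^2\bigr)^{1/2}\lesssim2^j\norm{\Pi_jh}_{L^2}$. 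It is not the delicate step.

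The gap is in the last band $k>j-N$, which you dismiss as harmless. There are two problems.

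First, the divergence rewriting does not yield $2^{j-k}\norm{Db}_{L^\infty}\norm{\widetilde S_kh}_{L^2}$. The piece $(\operatorname{div}\Delta_kb)h$ carries no factor $2^{j-k}$, and for $k\gg j$ no such gain exists. Take $b=(2^{-k}\cos(2^kx_1),0)$ and $h=\sin(2^kx_1)\phi$, with $\widehat\phi$ concentrated where the generation-$j$ multipliers are active. Then $b\cdot\nabla h=\tfrac12\phi+{}$(terms at frequency $\simeq2^k$), and $\calC_ih=0$ for $i\in I_{\ast,j}$. So the generation-$j$ commutator has size $\simeq\norm{Db}_{L^\infty}\norm{h}_{L^2}$, uniformly in $k$.

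Second, even where your blockwise bound is valid ($k\approx j$, high--low), it reads $\norm{Db}_{L^\infty}\norm{S_jh}_{L^2}$. But $\sum_j\norm{S_jh}_{L^2}^2=\infty$ for every $h\neq0$. Blockwise bounds summed in $k$ and then squared in $j$ therefore cannot give control by $\norm{h}_{L^2}$.

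The repair is the paper's paraproduct bookkeeping:
\begin{itemize}
\item In the high--low regime, keep the derivative on the low-frequency factor: $\norm{\Delta_kb\cdot\nabla h_{k'}}_{L^2}\lesssim2^{k'-k}\norm{Db}_{L^\infty}\norm{h_{k'}}_{L^2}$ with $|k-j|\le C$. This gives the Schur-bounded kernel $2^{k'-j}\one_{\{k'\le j+C\}}$.
\item In the resonant regime, accept the non-decaying bound $\norm{Db}_{L^\infty}\norm{h_k}_{L^2}$ for all $k\ge j-C$ and use the Sobolev weight. With $u_k=2^{(1-\beta)k}\norm{h_k}_{L^2}$, the kernel $\one_{\{k\ge j-C\}}2^{-(1-\beta)k}$ has row sums $\lesssim2^{-(1-\beta)j}$ and column sums $\lesssim(k+1)2^{-(1-\beta)k}$. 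Both are bounded precisely because $\beta<1$.
\end{itemize}
The resonant step is where the positivity of $1-\beta$ enters beyond the choice of splitting scale. It, rather than the intermediate band, is the place to check carefully. An $L^2$-only bound there, if you wanted one, cannot come from blockwise estimates.

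A minor point: ``only $S_{k+2}h$ contributes'' omits low--high interactions for $j-N<k\le j-4$. These are harmless, since $N$ is fixed.
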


\begin{proof}
    We first prove the estimate for $b\in C_b^\infty(\R^2;\R^2)$ and $h\in\calS(\R^2)$. All implicit constants below may depend on the constants in Assumption~\ref{ass:comm-filter-ass}, on the fixed Littlewood--Paley cutoffs, on the auxiliary cutoffs introduced below, and on $\beta$, but not on $b$ or $h$.

    Fix an inhomogeneous Littlewood--Paley partition $(P_k)_{k\geq0}$ on frequency space, with $P_0$ supported near the origin and $P_k$ supported where $\abs{\xi}\simeq2^k$ for $k\geq1$. We write $P_{\leq N}=\sum_{0\leq k\leq N}P_k$. The finitely many indices in $I_{\rm fin}\coloneqq \bigcup_{1\leq j<j_{\rm reg}}I_{\ast,j}$ are handled by Lemma~\ref{lem:fixed-low-scale-commutators}. For $j\geq j_{\rm reg}$, set
    \begin{equation*}
        N_j\coloneqq \max\{0,\lfloor \beta j\rfloor\},\qquad b_j^{\rm lo}\coloneqq P_{\leq N_j}b,\qquad b_j^{\rm hi}\coloneqq b-b_j^{\rm lo},\qquad \lambda_j\coloneqq 2^{N_j}.
    \end{equation*}

    By the geometry of the covering, there are constants $0<c<C$ such that
    \begin{equation*}
        \supp m_i\subset Q_i\subset\set*{\xi\in\R^2\given c2^j\leq\abs{\xi}\leq C2^j},\qquad i\in I_{\ast,j}.
    \end{equation*}
    Since $\supp\partial^\nu m_i\subset\supp m_i$, the condition $\partial^\nu m_i(\eta+t\zeta)\neq0$ implies $c2^j\leq\abs{\eta+t\zeta}\leq C2^j$. Let $A_{\rm LP}\ge 1$ be such that $\supp\wh{P_{\leq N}f}\subset B_{A_{\rm LP}2^N}(0)$ for every $N\geq0$. Here the notation $\abs{\zeta}\lesssim\lambda_j$ below always means $\abs{\zeta}\leq A_{\rm LP}\lambda_j$, with this fixed constant. We shall apply Lemma~\ref{lem:scale-local-CM} with $\lambda=A_{\rm LP}\lambda_j$, $R_0=1$ and $R_1=2$. Choose $j_{\rm aux}\geq j_{\rm reg}$ so large that, for all $j\geq j_{\rm aux}$, $2 A_{\rm LP}\lambda_j\leq\eps2^j$ with $\eps<c/2$. This is possible because $\lambda_j/2^j\to0$ for $0\leq\beta<1$. Hence, whenever $\abs{\zeta}\leq 2 A_{\rm LP}\lambda_j$ and $\partial^\nu m_i(\eta+t\zeta)\neq0$ with $0\leq t\leq1$, we have
    \[(c-\eps)2^j\leq\abs{\eta}\leq(C+\eps)2^j.\]
    Consequently, there are constants $0<c'<C'$ such that, whenever $j\geq j_{\rm aux}$, $i\in I_{\ast,j}$, $\abs{\zeta}\leq 2 A_{\rm LP}\lambda_j$, $0\leq t\leq1$, and $\partial^\nu m_i(\eta+t\zeta)\neq0$ for some $1\leq\abs{\nu}\leq4$, one has
    \[\eta\in\set*{\xi\in\R^2\given c'2^j\leq\abs{\xi}\leq C'2^j}.\]
    Let $\widetilde P_j$ be a smooth annular Fourier projection which is equal to one on this annulus and supported in a fixed slightly larger annulus at the same dyadic scale. The family $(\widetilde P_j)_j$ has uniformly finite overlap. The remaining finitely many generations $j_{\rm reg}\leq j<j_{\rm aux}$ form the auxiliary set $I_{\rm aux}\coloneqq \bigcup_{j_{\rm reg}\leq j<j_{\rm aux}}I_{\ast,j}$ and are again covered by Lemma~\ref{lem:fixed-low-scale-commutators}. It remains to estimate the generations $j\geq j_{\rm aux}$.

    \bigskip
    \textbf{Step 1.} \textit{Low-frequency terms.}

    We first treat the low-frequency coefficient terms $[\calC_i,b_j^{\rm lo}\cdot\nabla]h$. For a single component $b_{j,s}^{\rm lo}\partial_s$, the Fourier representation is
    \begin{equation*}
        \wh{[\calC_i,b_{j,s}^{\rm lo}\partial_s]h}(\omega)=2\pi\iu\int_{\R^2}\wh b_{j,s}^{\rm lo}(\zeta)\eta_s\bigl(m_i(\eta+\zeta)-m_i(\eta)\bigr)\wh h(\eta)\deta,\qquad \zeta=\omega-\eta.
    \end{equation*}
    Since
    \begin{equation*}
        m_i(\eta+\zeta)-m_i(\eta)=\sum_{r=1}^2\zeta_r\int_0^1\partial_rm_i(\eta+t\zeta)\dt,\qquad \wh{P_{\leq N_j}\partial_rb_s}(\zeta)=2\pi\iu\,\zeta_r\wh b_{j,s}^{\rm lo}(\zeta),
    \end{equation*}
    we can write
    \begin{equation*}
        [\calC_i,b_j^{\rm lo}\cdot\nabla]h=\sum_{r,s=1}^2\int_0^1 B_{i,r,s,t,j}(P_{\leq N_j}\partial_rb_s,h)\dt,
    \end{equation*}
    where
    \begin{equation*}
        \wh{B_{i,r,s,t,j}(g,u)}(\omega)=\int_{\R^2}\wh g(\zeta)\sigma_{i,r,s,t,j}(\zeta,\eta)\wh u(\eta)\deta,\qquad \zeta=\omega-\eta,
    \end{equation*}
    and
    \begin{equation*}
        \sigma_{i,r,s,t,j}(\zeta,\eta)\coloneqq \eta_s\partial_rm_i(\eta+t\zeta),\qquad 0\leq t\leq1.
    \end{equation*}
    The Fourier support of $P_{\leq N_j}\partial_rb_s$ is contained in $B_{A_{\rm LP}\lambda_j}(0)$. Therefore, by the localization established above, whenever the integrand is nonzero, the variable $\eta$ lies in the annulus on which $\widetilde P_j$ is identically one. Thus $h$ may be replaced by $\widetilde P_jh$, and on the same support $\abs{\eta_s}\lesssim2^j$.

    For $\abs{\gamma}\leq3$,
    \begin{equation*}
        \partial_\zeta^\gamma\sigma_{i,r,s,t,j}(\zeta,\eta)=t^{\abs{\gamma}}\eta_s\partial^{\gamma+e_r}m_i(\eta+t\zeta).
    \end{equation*}
    For $\abs{\zeta}\leq 2A_{\rm LP}\lambda_j$, the localization established above shows that, whenever $\partial^{\gamma+e_r}m_i(\eta+t\zeta)\neq0$, one has $\abs{\eta_s}\lesssim2^j$. Since $\abs{\gamma}+1\leq4$, Assumption~\ref{ass:comm-filter-ass} gives, uniformly in $t\in[0,1]$,
    \begin{equation*}
        \left(\sum_{i\in I_{\ast,j}}\abs{\partial_\zeta^\gamma\sigma_{i,r,s,t,j}(\zeta,\eta)}^2\right)^{1/2}\lesssim2^j2^{-\beta j(\abs{\gamma}+1)}.
    \end{equation*}
    Since $A_{\rm LP}$ is fixed and $\lambda_j\lesssim2^{\beta j}$, this implies
    \begin{equation*}
        (A_{\rm LP}\lambda_j)^{\abs{\gamma}}\left(\sum_{i\in I_{\ast,j}}\abs{\partial_\zeta^\gamma\sigma_{i,r,s,t,j}(\zeta,\eta)}^2\right)^{1/2}\lesssim2^{(1-\beta)j},\qquad \abs{\gamma}\leq3,
    \end{equation*}
    uniformly for $\abs{\zeta}\leq 2 A_{\rm LP}\lambda_j$, $\eta\in\R^2$, and $t\in[0,1]$.
    Moreover,
    \begin{equation*}
        \norm{P_{\leq N_j}\partial_rb_s}_{L^\infty}\lesssim\norm{Db}_{L^\infty}.
    \end{equation*}
    Since $\supp\wh{P_{\leq N_j}\partial_rb_s}\subset B_{A_{\rm LP}\lambda_j}(0)$, Lemma~\ref{lem:scale-local-CM} applies with
    \begin{equation*}
        \lambda=A_{\rm LP}\lambda_j,\qquad R_0=1,\qquad R_1=2,\qquad g=P_{\leq N_j}\partial_rb_s,\qquad h=\widetilde P_jh.
    \end{equation*}
    Therefore,
    \begin{equation*}
        \left(\sum_{i\in I_{\ast,j}}\norm{B_{i,r,s,t,j}(P_{\leq N_j}\partial_rb_s,\widetilde P_jh)}_{L^2}^2\right)^{1/2}\lesssim\norm{Db}_{L^\infty}2^{(1-\beta)j}\norm{\widetilde P_jh}_{L^2},
    \end{equation*}
    uniformly in $r,s,t$, and $j$.
    Minkowski's inequality handles the integration in $t$, and the finite sum over $r,s$ only changes the implicit constant. Hence
    \begin{equation*}
        \left(\sum_{i\in I_{\ast,j}}\norm{[\calC_i,b_j^{\rm lo}\cdot\nabla]h}_{L^2}^2\right)^{1/2}\lesssim\norm{Db}_{L^\infty}2^{(1-\beta)j}\norm{\widetilde P_jh}_{L^2},\qquad j\geq j_{\rm aux}.
    \end{equation*}
    Squaring and summing over $j\geq j_{\rm aux}$, and using the uniformly finite overlap of $(\widetilde P_j)_j$, gives
    \begin{equation*}
        \sum_{j\geq j_{\rm aux}}\sum_{i\in I_{\ast,j}}\norm{[\calC_i,b_j^{\rm lo}\cdot\nabla]h}_{L^2}^2\lesssim\norm{Db}_{L^\infty}^2\sum_{j\geq j_{\rm aux}}2^{2(1-\beta)j}\norm{\widetilde P_jh}_{L^2}^2\lesssim\norm{Db}_{L^\infty}^2\norm{h}_{H^{1-\beta}}^2.
    \end{equation*}
    
    \bigskip
    \textbf{Step 2.} \textit{High-frequency terms.}

    We next estimate the high-frequency coefficient terms. Let $\Pi_j$ be a smooth annular Fourier projection such that $\Pi_j\equiv1$ on $\bigcup_{i\in I_{\ast,j}}\supp m_i$ and such that $\Pi_j$ is supported in a fixed annular enlargement at scale $2^j$. Since the filter bank satisfies the Littlewood--Paley condition, $\sum_{i\in I_{\ast,j}}\abs{m_i(\xi)}^2\leq1$. Therefore, for every $g\in L^2(\R^2)$,
    \begin{equation*}
        \sum_{i\in I_{\ast,j}}\norm{\calC_ig}_{L^2}^2=\int_{\R^2}\sum_{i\in I_{\ast,j}}\abs{m_i(\xi)}^2\abs{\wh{\Pi_jg}(\xi)}^2\dxi\leq\norm{\Pi_jg}_{L^2}^2.
    \end{equation*}
    Hence
    \begin{equation}\label{eq:genwise_square-fct_estimate_vs_proj}
        \left(\sum_{i\in I_{\ast,j}}\norm{\calC_ig}_{L^2}^2\right)^{1/2}\leq\norm{\Pi_jg}_{L^2}.
    \end{equation}
    Write $b_q\coloneqq P_qb$ and $h_k\coloneqq P_kh$. For $q\geq1$, Bernstein's inequality gives
    \[\norm{b_q}_{L^\infty}\lesssim2^{-q}\norm{\nabla b}_{L^\infty}.\]
    Indeed, on the support of $P_q$ one can write $b_q=2^{-q}\sum_{\ell=1}^2\widetilde P_{q,\ell}\partial_\ell b$ with multipliers $\widetilde P_{q,\ell}$ uniformly bounded on $L^\infty$.

    We decompose $b_j^{\rm hi}\partial_sh$ by Bony's paraproducts \cite{Bony1981}. Since $b_j^{\rm hi}=\sum_{q>N_j}b_q$ and $h=\sum_{k\geq0}h_k$,
    \begin{equation*}
        \Pi_j(b_j^{\rm hi}\partial_sh)=\mathrm{LH}_{j,s}+\mathrm{HL}_{j,s}+\mathrm{HH}_{j,s},
    \end{equation*}
    where
    \begin{equation*}
        \mathrm{LH}_{j,s}\coloneqq \sum_{\substack{q>N_j\\ q\leq k-4}}\Pi_j(b_q\partial_sh_k),\qquad \mathrm{HL}_{j,s}\coloneqq \sum_{\substack{q>N_j\\ k\leq q-4}}\Pi_j(b_q\partial_sh_k),\qquad \mathrm{HH}_{j,s}\coloneqq \sum_{\substack{q>N_j\\ \abs{q-k}\leq3}}\Pi_j(b_q\partial_sh_k).
    \end{equation*}
    In what follows, $C_0>0$ denotes a fixed integer depending only on the supports of the Littlewood--Paley cutoffs and on the auxiliary annular projections; its value may change from line to line.

    In the low-high region, $b_q\partial_sh_k$ is localized at frequency $\asymp2^k$, and $\Pi_j$ forces $\abs{k-j}\leq C_0$. Therefore
    \begin{equation*}
        \norm{\mathrm{LH}_{j,s}}_{L^2}\lesssim\norm{Db}_{L^\infty}\sum_{\abs{k-j}\leq C_0}2^k\norm{h_k}_{L^2}\sum_{\substack{q>N_j\\ q\leq k-4}}2^{-q}.
    \end{equation*}
    Since $\sum_{q>N_j}2^{-q}\lesssim2^{-N_j}\lesssim2^{-\beta j}$, we obtain
    \begin{equation*}
        \norm{\mathrm{LH}_{j,s}}_{L^2}\lesssim\norm{Db}_{L^\infty}2^{(1-\beta)j}\sum_{\abs{k-j}\leq C_0}\norm{h_k}_{L^2}.
    \end{equation*}
    In the high-low region, $b_q\partial_sh_k$ is localized at frequency $\asymp2^q$, and $\Pi_j$ forces $\abs{q-j}\leq C_0$. Hence
    \begin{equation*}
        \norm{\mathrm{HL}_{j,s}}_{L^2}\lesssim\norm{Db}_{L^\infty}\sum_{\substack{\abs{q-j}\leq C_0\\ k\leq q-4}}2^{-q}2^k\norm{h_k}_{L^2}\lesssim\norm{Db}_{L^\infty}\sum_{k\leq j+C_0}2^{-j}2^k\norm{h_k}_{L^2}.
    \end{equation*}
    In the resonant region $\abs{q-k}\leq3$, cancellations may generate lower output frequencies, so $\Pi_j(b_q\partial_sh_k)\neq0$ does not force $\abs{j-k}\leq C_0$. It does, however, imply the one-sided relation $j\leq k+C_0$. Therefore
    \begin{equation*}
        \norm{\mathrm{HH}_{j,s}}_{L^2}\lesssim\norm{Db}_{L^\infty}\sum_{k\geq j-C_0}\norm{h_k}_{L^2}.
    \end{equation*}
    \textit{Controlling $\calC_i(b_j^{\rm hi}\cdot\nabla h)$.} Set $u_k\coloneqq 2^{(1-\beta)k}\norm{h_k}_{L^2}$. Up to the common factor $\norm{Db}_{L^\infty}$, the preceding estimates become
    \begin{equation*}
        \norm{\mathrm{LH}_{j,s}}_{L^2}\lesssim\sum_{\abs{k-j}\leq C_0}2^{(1-\beta)(j-k)}u_k,\qquad \norm{\mathrm{HL}_{j,s}}_{L^2}\lesssim\sum_{k\leq j+C_0}2^{-j+\beta k}u_k,
    \end{equation*}
    and
    \begin{equation*}
        \norm{\mathrm{HH}_{j,s}}_{L^2}\lesssim\sum_{k\geq j-C_0}2^{-(1-\beta)k}u_k.
    \end{equation*}
    The low-high operator is bounded on $\ell^2(\Nzero)$ because it has finite bandwidth. The high-low operator is bounded on $\ell^2(\Nzero)$ by Schur's test, since
    \begin{equation*}
        \sup_j\sum_{k\leq j+C_0}2^{-j+\beta k}<\infty,\qquad \sup_k\sum_{j\geq k-C_0}2^{-j+\beta k}<\infty.
    \end{equation*}
    The same argument applies to the resonant kernel $\one_{\{k\geq j-C_0\}}2^{-(1-\beta)k}$: for fixed $j$,
    \[\sum_{k\geq j-C_0}2^{-(1-\beta)k}\lesssim 2^{-(1-\beta)j},\]
    while for fixed $k$,
    \begin{equation*}
        \sum_{j\leq k+C_0}2^{-(1-\beta)k}\lesssim(k+1)2^{-(1-\beta)k}\lesssim 1.
    \end{equation*}
    Thus the resonant contribution is also bounded on $\ell^2$.

    Consequently,
    \begin{equation*}
        \sum_{j\geq j_{\rm aux}}\norm{\Pi_j(b_j^{\rm hi}\cdot\nabla h)}_{L^2}^2\lesssim\norm{Db}_{L^\infty}^2\sum_{k\geq0}2^{2(1-\beta)k}\norm{h_k}_{L^2}^2\lesssim\norm{Db}_{L^\infty}^2\norm{h}_{H^{1-\beta}}^2.
    \end{equation*}
    Combining this with the estimate from \eqref{eq:genwise_square-fct_estimate_vs_proj} yields
    \begin{equation*}
        \sum_{j\geq j_{\rm aux}}\sum_{i\in I_{\ast,j}}\norm{\calC_i(b_j^{\rm hi}\cdot\nabla h)}_{L^2}^2\lesssim\norm{Db}_{L^\infty}^2\norm{h}_{H^{1-\beta}}^2.
    \end{equation*}
    \textit{Controlling $b_j^{\rm hi}\cdot\nabla\calC_ih$.} It remains to estimate the second high-frequency term, $b_j^{\rm hi}\cdot\nabla\calC_ih$. Bernstein's inequality gives
    \begin{equation*}
        \norm{b_j^{\rm hi}}_{L^\infty}\leq\sum_{q>N_j}\norm{b_q}_{L^\infty}\lesssim\norm{Db}_{L^\infty}\sum_{q>N_j}2^{-q}\lesssim\norm{Db}_{L^\infty}2^{-N_j}\lesssim\norm{Db}_{L^\infty}2^{-\beta j}.
    \end{equation*}
    Using the fixed-generation Littlewood--Paley bound with one derivative and the support condition $\supp m_i\subset\set*{\abs{\xi}\lesssim2^j}$ for $i\in I_{\ast,j}$, we obtain
    \begin{equation*}
        \left(\sum_{i\in I_{\ast,j}}\norm{\nabla\calC_ih}_{L^2}^2\right)^{1/2}\lesssim2^j\norm{\Pi_jh}_{L^2}.
    \end{equation*}
    Hence
    \begin{equation*}
        \left(\sum_{i\in I_{\ast,j}}\norm{b_j^{\rm hi}\cdot\nabla\calC_ih}_{L^2}^2\right)^{1/2}\lesssim\norm{Db}_{L^\infty}2^{(1-\beta)j}\norm{\Pi_jh}_{L^2}.
    \end{equation*}
    Squaring and summing in $j$, and using the uniformly finite overlap of $(\Pi_j)_j$, gives
    \begin{equation*}
        \sum_{j\geq j_{\rm aux}}\sum_{i\in I_{\ast,j}}\norm{b_j^{\rm hi}\cdot\nabla\calC_ih}_{L^2}^2\lesssim\norm{Db}_{L^\infty}^2\sum_{j\geq j_{\rm aux}}2^{2(1-\beta)j}\norm{\Pi_jh}_{L^2}^2\lesssim\norm{Db}_{L^\infty}^2\norm{h}_{H^{1-\beta}}^2.
    \end{equation*}
    
    \bigskip
    \textbf{Step 3.} \textit{Conclusion.}

    Putting together the low-frequency estimate, the two high-frequency estimates, and the finite contributions from $I_{\rm fin}\cup I_{\rm aux}$, we obtain
    \begin{equation*}
        \left(\sum_{i\in I_\ast}\norm{[\calC_i,b\cdot\nabla]h}_{L^2}^2\right)^{1/2}\leq C\norm{Db}_{L^\infty}\norm{h}_{H^{1-\beta}},
    \end{equation*}
    for all $b\in C_b^\infty(\R^2;\R^2)$ and $h\in\calS(\R^2)$, where $C>0$ is independent of $b$ and $h$.

    It remains to pass from smooth vector fields to arbitrary $b\in W^{1,\infty}(\R^2;\R^2)$. Let $\rho\in C_c^\infty(\R^2)$ be a standard mollifier with $\rho\geq0$ and $\int_{\R^2}\rho(x)\dx=1$, set $\rho_\eps(x)\coloneqq \eps^{-2}\rho(x/\eps)$, and define $b_\eps\coloneqq \rho_\eps*b$ componentwise. Then $b_\eps\in C_b^\infty(\R^2;\R^2)$, $\norm{Db_\eps}_{L^\infty}\leq\norm{Db}_{L^\infty}$, and $b_\eps\to b$ locally uniformly, after choosing the Lipschitz representative of $b$. Fix $h\in\calS(\R^2)$ and set $F_\eps\coloneqq ([\calC_i,b_\eps\cdot\nabla]h)_{i\in I_\ast}$. The estimate already proved shows that $(F_\eps)_\eps$ is bounded in $\ell^2(I_\ast;L^2)$, uniformly in $\eps$. Hence, along any sequence $\eps_n\downarrow0$, we may extract a subsequence, not relabeled, such that $F_{\eps_n}$ converges weakly in $\ell^2(I_\ast;L^2)$ to some $F$. For each fixed $i$, the $i$-th component $[\calC_i,b_{\eps_n}\cdot\nabla]h$ converges distributionally to $[\calC_i,b\cdot\nabla]h$, because $b_{\eps_n}\to b$ locally uniformly and $h$ is Schwartz. On the other hand, the coordinate projection from $\ell^2(I_\ast;L^2)$ to $L^2$ is continuous, so the same component converges weakly in $L^2$ to $F_i$. Thus $F_i=[\calC_i,b\cdot\nabla]h$ as a distribution for every fixed $i$. Weak lower semicontinuity in $\ell^2(I_\ast;L^2)$ then gives
    \begin{equation*}
        \left(\sum_{i\in I_\ast}\norm{[\calC_i,b\cdot\nabla]h}_{L^2}^2\right)^{1/2}=\norm{F}_{\ell^2(I_\ast;L^2)}\leq\liminf_{n\to\infty}\norm{F_{\eps_n}}_{\ell^2(I_\ast;L^2)}\leq C\norm{Db}_{L^\infty}\norm{h}_{H^{1-\beta}}.
    \end{equation*}
    Therefore the desired estimate holds for every $b\in W^{1,\infty}(\R^2;\R^2)$ and $h\in\calS(\R^2)$, with the same constant $C$.

    Finally, let $h\in H^{1-\beta}(\R^2)$ and choose $h_n\in\calS(\R^2)$ with $h_n\to h$ in $H^{1-\beta}(\R^2)$. The preceding estimate applied to $h_n-h_m$ shows that $([\calC_i,b\cdot\nabla]h_n)_i$ is Cauchy in $\ell^2(I_\ast;L^2)$. Its limit defines $([\calC_i,b\cdot\nabla]h)_i$ by completion; the definition is independent of the chosen approximating sequence by the same difference estimate. Passing to the limit yields
    \begin{equation*}
        \left(\sum_{i\in I_\ast}\norm{[\calC_i,b\cdot\nabla]h}_{L^2}^2\right)^{1/2}\leq C\norm{Db}_{L^\infty}\norm{h}_{H^{1-\beta}},
    \end{equation*}
    which proves the claim.
\end{proof}

\subsection{Lifting to scattering}

We are now ready to transfer the commutator bound to a stability result for the scattering transform. Let us first transfer stability to high-pass scattering filters. This is the standard path-conjugation argument in Mallat \cite[Section~2.5, Lemmas~2.13--2.14, Appendix~E]{Mallat2012}, with the wavelet commutator estimate being replaced by Proposition~\ref{prop:critical-vector-field-commutator}.

\begin{proposition} \label{prop:appendix-high-pass-deformation-commutator}
    Let $\kappa\in(0,1)$. Under Assumption~\ref{ass:comm-filter-ass}, there exists a constant $C>0$ such that, whenever $\tau\in W^{1,\infty}(\R^2;\R^2)$ satisfies $\norm{D\tau}_{L^\infty} \le \kappa$, we have, for every $h\in H^{1-\beta}(\R^2)$,
    \begin{equation*}
        \left(\sum_{i\in I_\ast}\|(L_\tau h)*\psi_i-L_\tau(h*\psi_i)\|_{L^2}^2\right)^{1/2} \le  C \|D\tau\|_{L^\infty}\|h\|_{H^{1-\beta}}.
    \end{equation*}
    Therefore, for every countable family $q=(q_p)_{p\in P}\in\ell^2(H^{1-\beta})$,
    \begin{equation*}
        \|\calU L_\tau q-L_\tau\calU q\|_{\ell^2(P\times I_\ast;L^2)} \le  C \|D\tau\|_{L^\infty}\|q\|_{\ell^2(P;H^{1-\beta})}.
    \end{equation*}
\end{proposition}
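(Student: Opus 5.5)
We follow Mallat's path-conjugation argument: express the deformation as the time-one map of a flow of composition operators, differentiate in time, and reduce the defect to the infinitesimal commutator of Proposition~\ref{prop:critical-vector-field-commutator}. For $0\le t\le1$ let $F_t=\Id-t\tau$ and $V_t h=h\circ F_t$, so that $V_1=L_\tau$ and $V_0=\Id$. By Lemma~\ref{lem:composition-small-deformation}, $F_t$ is bi-Lipschitz and $V_t,V_t^{-1}$ are bounded on $H^s$ for $0\le s\le1$, uniformly in $t$ and depending only on $\kappa$. For $h\in\calS(\R^2)$ one has
\[
\tfrac{\dd}{\dd t}V_th=-(\tau\cdot\nabla h)\circ F_t=-V_t(\tau\cdot\nabla h).
\]
We then write
\[
\calC_iV_1h-V_1\calC_ih=\int_0^1\tfrac{\dd}{\dd t}\bigl(V_1V_t^{-1}\calC_iV_th\bigr)\dt .
\]
To avoid differentiating $V_t^{-1}$, it is cleaner to set $\Phi_i(t)=V_t^{-1}\calC_iV_th$, so that $V_1\Phi_i(1)-V_1\Phi_i(0)$ is the defect. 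This gives
\[
\Phi_i'(t)=V_t^{-1}\bigl(\calC_iV_t(-\tau\cdot\nabla h)\bigr)+\tfrac{\dd}{\dd t}(V_t^{-1})\,\calC_iV_th .
\]
Using $\tfrac{\dd}{\dd t}V_t^{-1}=V_t^{-1}\,(\text{transport by }b_t)$, this becomes $\Phi_i'(t)=-V_t^{-1}[\calC_i,b_t\cdot\nabla]V_th$. Here the vector field is
\[
b_t\coloneqq(DF_t)^{-1}\tau\circ\ldots,
\]
that is, $b_t\cdot\nabla(g\circ F_t)=(\tau\cdot\nabla g)\circ F_t$, which gives $b_t=(\Id-tD\tau)^{-\top}\tau$ in the appropriate arrangement. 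Each $b_t$ is Lipschitz, though $Db_t$ involves $D^2\tau$ if written naively. The fix is to push the field forward instead: $V_t(\tau\cdot\nabla g)=\widetilde b_t\cdot\nabla(V_tg)$ with $\widetilde b_t=(DF_t)^{-1}\tau$. The delicate point is then to choose the conjugation so that the resulting field is $\tau\circ F_t^{-1}$, i.e.\ to conjugate in the form $V_t\calC_iV_t^{-1}$, which makes the field $\tau\circ F_t^{-1}$ with $\norm{D(\tau\circ F_t^{-1})}_{L^\infty}\le\norm{D\tau}_{L^\infty}/(1-\kappa)$. We therefore use the conjugated form $\Gamma_i(t)=V_t\calC_iV_t^{-1}$ and write
\[
\calC_iV_1-V_1\calC_i=\bigl(\Gamma_i(1)\cdot{}-\calC_i\bigr)\ldots ,
\]
that is, $V_1\calC_i-\calC_iV_1=(\Gamma_i(1)-\calC_i)V_1$ and $\Gamma_i(1)-\calC_i=\int_0^1\Gamma_i'(t)\dt$. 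Since $\tfrac{\dd}{\dd t}V_t^{-1}g=(\tau\circ F_t^{-1}$-type transport$)$, one obtains
\[
\Gamma_i'(t)=V_t\,[\calC_i,\beta_t\cdot\nabla]\,V_t^{-1}
\]
with $\beta_t$ a Lipschitz field built from $\tau\circ F_t^{-1}$ and $(DF_t)^{-1}\circ F_t^{-1}$. We then verify that $\norm{D\beta_t}_{L^\infty}\lesssim_\kappa\norm{D\tau}_{L^\infty}$; this is the step to check carefully, and if an extra $D^2\tau$ term appears we would instead follow Mallat's Appendix~E, using the identity $\partial_tF_t^{-1}=\tau\circ F_t^{-1}\cdot(\ldots)$ to express the generator purely through $\tau\circ F_t^{-1}$.

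With the representation in hand, Minkowski's inequality in $\ell^2(I_\ast;L^2)$ and Lemma~\ref{lem:composition-small-deformation} ($L^2$-boundedness of $V_t$, and $H^{1-\beta}$-boundedness of $V_t^{-1}V_1$, which is composition with a bi-Lipschitz map of constants controlled by $\kappa$) give
\[
\Bigl(\sum_i\norm{\calC_iL_\tau h-L_\tau\calC_ih}_{L^2}^2\Bigr)^{1/2}\le\int_0^1\norm{V_t}\,\Bigl(\sum_i\norm{[\calC_i,\beta_t\cdot\nabla]V_t^{-1}V_1h}_{L^2}^2\Bigr)^{1/2}\dt .
\]
Proposition~\ref{prop:critical-vector-field-commutator} bounds the integrand by $C\norm{D\beta_t}_{L^\infty}\norm{V_t^{-1}V_1h}_{H^{1-\beta}}\lesssim_\kappa\norm{D\tau}_{L^\infty}\norm{h}_{H^{1-\beta}}$. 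The differentiation in $t$ is justified first for $h\in\calS$ and $\tau$ smooth. We then pass to general $h$ by density in $H^{1-\beta}$ and to $\tau\in W^{1,\infty}$ by mollification, using strong continuity of $L_{\tau_\eps}\to L_\tau$ on $L^2$ and weak lower semicontinuity as in the proof of Proposition~\ref{prop:critical-vector-field-commutator}.

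For the family statement, we apply the pointwise inequality $\abs{\abs z-\abs w}\le\abs{z-w}$, noting that $L_\tau\abs{g}=\abs{L_\tau g}$. This gives $\norm{\abs{(L_\tau q_p)*\psi_i}-L_\tau\abs{q_p*\psi_i}}_{L^2}\le\norm{(L_\tau q_p)*\psi_i-L_\tau(q_p*\psi_i)}_{L^2}$. Squaring, summing over $i$ with the first estimate, and then summing over $p$ yields the claim. The main obstacle is the conjugation identity producing a generator field whose Lipschitz norm involves only $D\tau$ and not $D^2\tau$.
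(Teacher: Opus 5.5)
Your overall strategy is the paper's. You conjugate by the flow via $\Gamma_i(t)=V_t\calC_iV_t^{-1}$, use $V_1\calC_i-\calC_iV_1=(\Gamma_i(1)-\calC_i)V_1$, and bound the integrand with Proposition~\ref{prop:critical-vector-field-commutator} and Lemma~\ref{lem:composition-small-deformation}. You then remove smoothness by mollification and density and pass to arrays with $\abs{\abs{z}-\abs{w}}\le\abs{z-w}$. All of that is fine.

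\textbf{The gap.} The one step the statement hinges on is never established: a formula for $\Gamma_i'(t)$ whose transport field has Lipschitz constant controlled by $\norm{D\tau}_{L^\infty}$ alone. The formulas you do write for it are wrong.
\begin{itemize}
\item Your starting identity $\frac{\dd}{\dd t}V_th=-V_t(\tau\cdot\nabla h)$ is false. The chain rule gives $\frac{\dd}{\dd t}V_th(x)=-\tau(x)\cdot(\nabla h)(F_t(x))$, whereas $V_t(\tau\cdot\nabla h)(x)=\tau(F_t(x))\cdot(\nabla h)(F_t(x))$. Taken literally, your identity would make the generator $\tau$ itself, which contradicts your later choice $\tau\circ F_t^{-1}$.
\item You then describe $\beta_t$ as built from $\tau\circ F_t^{-1}$ and $(DF_t)^{-1}\circ F_t^{-1}$, and leave open whether a $D^2\tau$ term appears. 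If a Jacobian factor really survived in the field, $D\beta_t$ would contain $D^2\tau$. That is not controlled for $\tau\in W^{1,\infty}$, and the bound would fail. So this point cannot be deferred to a fallback such as Mallat's Appendix~E.
\end{itemize}

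\textbf{The fix.} Set $X_t\coloneqq v_t\cdot\nabla$ with $v_t\coloneqq-\tau\circ F_t^{-1}$.
\begin{itemize}
\item Since $(X_tg)(F_t(x))=-\tau(x)\cdot\nabla g(F_t(x))$, one has $\frac{\dd}{\dd t}V_t=V_tX_t$.
\item Differentiating $V_tV_t^{-1}=\Id$ gives $\frac{\dd}{\dd t}V_t^{-1}=-X_tV_t^{-1}$. Equivalently, in $\partial_tF_t^{-1}=\bigl((DF_t)^{-1}\tau\bigr)\circ F_t^{-1}$ the Jacobian factor is exactly absorbed by the chain rule $\nabla(g\circ F_t^{-1})=\bigl((DF_t)^{-\top}\circ F_t^{-1}\bigr)(\nabla g\circ F_t^{-1})$. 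No Jacobian survives in the transport field.
\item Therefore $\Gamma_i'(t)=-V_t[\calC_i,X_t]V_t^{-1}$ exactly.
\item Since $Dv_t=-(D\tau\circ F_t^{-1})\,DF_t^{-1}$, you get $\norm{Dv_t}_{L^\infty}\le(1-\kappa)^{-1}\norm{D\tau}_{L^\infty}$.
\end{itemize}
The paper runs exactly this computation as $Y_i(t)=V_t\calC_iV_t^{-1}V_1h$, which is your $\Gamma_i(t)V_1h$. With this identity in place, the rest of your argument goes through unchanged.
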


\begin{proof}
    For $0\leq t\leq1$, set $F_t\coloneqq \Id-t\tau$ and $V_tf\coloneqq f\circ F_t$. We first assume that $\tau\in C_b^\infty(\R^2;\R^2)$, with $\norm{D\tau}_{L^\infty}\leq\kappa$, and that $h\in\calS(\R^2)$. By Lemma~\ref{lem:composition-small-deformation}, the maps $F_t$ are bi-Lipschitz homeomorphisms, and the operators $V_t$ and $V_t^{-1}$ are bounded on $H^{1-\beta}(\R^2)$, uniformly for $0\leq t\leq1$. Moreover,
    \[\norm{V_t}_{L^2\to L^2}\leq (1-\kappa)^{-1}.\]
    Define the time-dependent vector field $v_t\coloneqq -\tau\circ F_t^{-1},$
    and let $X_t\coloneqq v_t\cdot\nabla$ denote the associated first-order differential operator.
    Since $\partial_tF_t=-\tau$, we have, for every smooth $f$,
    \begin{equation*}
        \frac{\dd}{\dd t}V_tf=V_tX_tf, \qquad \frac{\dd}{\dd t}V_t^{-1}f=-X_tV_t^{-1}f.
    \end{equation*}
    For fixed $i\in I_\ast$, consider
    \[Y_i(t)\coloneqq V_t\calC_iV_t^{-1}V_1h.\]
    Then $Y_i(0)=\calC_iV_1h=(L_\tau h)*\psi_i$ and $Y_i(1)=V_1\calC_ih=L_\tau(h*\psi_i)$. Using the two identities above, we obtain
    \begin{equation*}
        \frac{\dd}{\dd t}Y_i(t)=V_tX_t\calC_iV_t^{-1}V_1h-V_t\calC_iX_tV_t^{-1}V_1h=-V_t[\calC_i,X_t]V_t^{-1}V_1h.
    \end{equation*}
    Hence
    \begin{equation*}
        (L_\tau h)*\psi_i - L_\tau (h*\psi_i)=\calC_iV_1h-V_1\calC_ih=\int_0^1V_t[\calC_i,X_t]V_t^{-1}V_1h\dt.
    \end{equation*}
    We now estimate the right-hand side in $\ell^2(I_\ast;L^2)$. By Minkowski's inequality,
    \begin{equation*}
        \left(\sum_{i\in I_\ast}\norm{(L_\tau h)*\psi_i - L_\tau (h*\psi_i)}_{L^2}^2\right)^{1/2}\leq\int_0^1\norm{V_t}_{L^2\to L^2}\left(\sum_{i\in I_\ast}\norm{[\calC_i,X_t]V_t^{-1}V_1h}_{L^2}^2\right)^{1/2}\dt.
    \end{equation*}
    Since $X_t=v_t\cdot\nabla$, Proposition~\ref{prop:critical-vector-field-commutator} gives
    \begin{equation*}
        \left(\sum_{i\in I_\ast}\norm{[\calC_i,X_t]V_t^{-1}V_1h}_{L^2}^2\right)^{1/2}\leq C\norm{Dv_t}_{L^\infty}\norm{V_t^{-1}V_1h}_{H^{1-\beta}}.
    \end{equation*}
    Furthermore,
    \[Dv_t=-(D\tau\circ F_t^{-1})DF_t^{-1},\]
    and therefore
    \begin{equation*}
        \norm{Dv_t}_{L^\infty}\leq\norm{D\tau}_{L^\infty}\operatorname{Lip}(F_t^{-1})\leq(1-\kappa)^{-1}\norm{D\tau}_{L^\infty}.
    \end{equation*}
    Lemma~\ref{lem:composition-small-deformation} also implies
    \begin{equation*}
        \norm{V_t^{-1}V_1h}_{H^{1-\beta}}\leq C_\kappa\norm{h}_{H^{1-\beta}},
    \end{equation*}
    uniformly in $t\in[0,1]$. Combining these estimates yields
    \begin{equation*}
        \left(\sum_{i\in I_\ast}\norm{(L_\tau h)*\psi_i-L_\tau(h*\psi_i)}_{L^2}^2\right)^{1/2}\leq C_\kappa\norm{D\tau}_{L^\infty}\norm{h}_{H^{1-\beta}}.
    \end{equation*}
    We next remove the smoothness assumption on $\tau$. Let $\rho\in C_c^\infty(\R^2)$ be a standard mollifier, set $\rho_\eps(x)\coloneqq \eps^{-2}\rho(x/\eps)$, and put $\tau_\eps\coloneqq \rho_\eps*\tau$ componentwise. Choosing the Lipschitz representative of $\tau$, we have $\tau_\eps\to\tau$ locally uniformly and $\norm{D\tau_\eps}_{L^\infty}\leq\norm{D\tau}_{L^\infty}\leq\kappa$. The maps $\Id-\tau_\eps$ and $\Id-\tau$ have uniformly controlled bi-Lipschitz constants, and a density argument gives $L_{\tau_\eps}u \to L_\tau u$ in $L^2(\R^2)$ for every $u\in L^2(\R^2)$. Therefore, the estimate just proved applies to $\tau_\eps$, with a constant independent of $\eps>0$. Fix $h\in\calS(\R^2)$ and set
    \begin{equation*}
        G_\eps\coloneqq \left((L_{\tau_\eps}h)*\psi_i-L_{\tau_\eps}(h*\psi_i)\right)_{i\in I_\ast}.
    \end{equation*}
    The family $(G_\eps)_{\eps>0}$ is bounded in $\ell^2(I_\ast;L^2)$. Thus, along every sequence $\eps_n\downarrow0$, we may extract a subsequence, not relabeled, such that $G_{\eps_n}$ converges weakly in $\ell^2(I_\ast;L^2)$ to some $G$. For each fixed $i$, strongly in $L^2(\R^2)$ we have
    \begin{equation*}
        (L_{\tau_{\eps_n}}h)*\psi_i-L_{\tau_{\eps_n}}(h*\psi_i)\longrightarrow (L_\tau h)*\psi_i-L_\tau(h*\psi_i).
    \end{equation*}
    Since the coordinate projection $\ell^2(I_\ast;L^2)\to L^2$ is continuous, the $i$-th component of the weak limit is the same distribution. Therefore $G_i=(L_\tau h)*\psi_i-L_\tau(h*\psi_i)$ for every fixed $i$. Weak lower semicontinuity gives
    \begin{equation*}
        \left(\sum_{i\in I_\ast}\norm{(L_\tau h)*\psi_i-L_\tau(h*\psi_i)}_{L^2}^2\right)^{1/2}\leq C_\kappa\norm{D\tau}_{L^\infty}\norm{h}_{H^{1-\beta}}.
    \end{equation*}
    Thus the scalar estimate holds for every $\tau\in W^{1,\infty}(\R^2;\R^2)$ and $h\in\calS(\R^2)$.

    Finally, let $h\in H^{1-\beta}(\R^2)$ and choose $h_n\in\calS(\R^2)$ with $h_n\to h$ in $H^{1-\beta}(\R^2)$. The estimate applied to $h_n-h_m$ shows that the vectors
    \[\left((L_\tau h_n)*\psi_i-L_\tau(h_n*\psi_i)\right)_{i\in I_\ast}\]
    are Cauchy in $\ell^2(I_\ast;L^2)$. Since $L_\tau$ and convolution by each $\psi_i$ are bounded on $L^2$, their $i$-th coordinates converge to $(L_\tau h)*\psi_i-L_\tau(h*\psi_i)$ for every $i\in I_\ast$; their limit is the natural commutator vector for $h$, and is independent of the chosen approximating sequence by the same difference estimate. Passing to the limit thus gives the asserted scalar bound.

    It remains to prove the array estimate. For each $p\in P$ and $i\in I_\ast$,
    \begin{equation*}
        (\calU L_\tau q)_{p,i}=\abs{(L_\tau q_p)*\psi_i},\qquad (L_\tau\calU q)_{p,i}=L_\tau\abs{q_p*\psi_i}=\abs{L_\tau(q_p*\psi_i)}.
    \end{equation*}
    Hence the pointwise inequality $\abs{\abs{z}-\abs{w}}\leq\abs{z-w}$ gives
    \begin{equation*}
        \abs{(\calU L_\tau q)_{p,i}-(L_\tau\calU q)_{p,i}}\leq\abs{(L_\tau q_p)*\psi_i-L_\tau(q_p*\psi_i)}.
    \end{equation*}
    Squaring, summing first over $i\in I_\ast$ and then over $p\in P$, and applying the scalar estimate to each $q_p$, we obtain
    \begin{equation*}
        \norm{\calU L_\tau q-L_\tau\calU q}_{\ell^2(P\times I_\ast;L^2)}^2\leq C_\kappa^2\norm{D\tau}_{L^\infty}^2\sum_{p\in P}\norm{q_p}_{H^{1-\beta}}^2.
    \end{equation*}
    This proves the second estimate and completes the proof.
\end{proof}

The corresponding treatment of the output-generating low-pass scattering filters is obtained, up to minor modifications, by adapting the techniques of \cite[Lemma~2.11 and Appendix~B]{Mallat2012}. We include the argument for completeness.

\begin{lemma} \label{lem:appendix-low-pass-deformation-estimate}
    Let $\kappa\in (0,1)$. Then, there exists $C>0$ such that, for every $\tau\in W^{1,\infty}(\R^2;\R^2)$ satisfying $\norm{D\tau}_{L^\infty} \le \kappa$, we have, for every $q\in L^2(\R^2)$,
    \begin{equation*}
        \norm{(L_\tau q)*\chi-L_\tau(q*\chi)}_{L^2} \le  C(\|\tau\|_{L^\infty}+\norm{D\tau}_{L^\infty})\|q\|_{L^2}
    \end{equation*}
    and
    \begin{equation*}
        \norm{L_\tau(q*\chi)-q*\chi}_{L^2} \le  C\|\tau\|_{L^\infty}\|q\|_{L^2}.
    \end{equation*}
    If $P$ is countable, the same bounds hold componentwise for arrays $q=(q_p)_{p\in P}\in\ell^2(P;L^2)$ after summing in $\ell^2(P)$.
\end{lemma}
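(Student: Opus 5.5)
We prove the second estimate first, since the first then follows from the triangle inequality. Note that $\chi$ is a fixed Schwartz-type low-pass filter. It lies in $L^1\cap L^2$ with $\widehat\chi$ supported in $B_4(0)$ and $\abs{\widehat\chi}\le 1$. So $q*\chi$ has Fourier support in $B_4(0)$ and $\norm{q*\chi}_{L^2}\le\norm{q}_{L^2}$.

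For the second estimate, we follow the proof of Proposition~\ref{prop:architecture-blind-deformation-stability}. With $F_t=\Id-t\tau$, the Sobolev chain rule gives
\[
L_\tau g-g=-\int_0^1 \tau\cdot(\nabla g)\circ F_t \dt .
\]
Lemma~\ref{lem:composition-small-deformation} then yields
\[
\norm{L_\tau g-g}_{L^2}\le(1-\kappa)^{-1}\norm{\tau}_{L^\infty}\norm{\nabla g}_{L^2}.
\]
Applying this to $g=q*\chi$, and using $\norm{\nabla(q*\chi)}_{L^2}\le 8\pi\norm{q}_{L^2}$ from the Fourier support in $B_4(0)$, gives the second bound with $C=8\pi/(1-\kappa)$. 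If one prefers not to rely on the support condition, the bound $\norm{\nabla\chi}_{L^1}<\infty$ works equally well.

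For the first estimate, we write
\[
(L_\tau q)*\chi-L_\tau(q*\chi)=\bigl[(L_\tau q)*\chi-q*\chi\bigr]+\bigl[q*\chi-L_\tau(q*\chi)\bigr].
\]
The second bracket is already controlled. The first bracket is the real issue, because $L_\tau q$ is not band-limited. We handle it by duality. The adjoint of $L_\tau$ is
\[
L_\tau^\ast u(y)=u(F_1^{-1}y)\,\abs{\det DF_1(F_1^{-1}y)}^{-1}.
\]
Hence the first bracket equals $q*\chi-(L_\tau q)*\chi$ up to sign, and testing against $u\in L^2$ gives
\[
\la (L_\tau q)*\chi - q*\chi,\,u\ra=\la q,\,L_\tau^\ast(u*\widetilde\chi)-u*\widetilde\chi\ra,\qquad \widetilde\chi(x)=\overline{\chi(-x)} .
\]
Set $g=u*\widetilde\chi$, which is band-limited and satisfies $\norm{\nabla g}_{L^2}\lesssim\norm{u}_{L^2}$. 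Split
\[
L_\tau^\ast g-g=\bigl(g\circ F_1^{-1}-g\bigr)\,\abs{\det DF_1\circ F_1^{-1}}^{-1}+g\Bigl(\abs{\det DF_1\circ F_1^{-1}}^{-1}-1\Bigr).
\]
In the first term, we estimate $g\circ F_1^{-1}-g$ exactly as above. We use $\abs{F_1^{-1}(y)-y}\le\norm{\tau}_{L^\infty}$ along the segment, together with the $L^2$-boundedness of composition with the interpolating maps. This gives a bound $\lesssim\norm{\tau}_{L^\infty}\norm{u}_{L^2}$. In the second term, we use
\[
\abs{\det(\Id-D\tau)}^{-1}-1=O(\norm{D\tau}_{L^\infty}),
\]
uniformly for $\norm{D\tau}\le\kappa$, since the determinant lies in $[(1-\kappa)^2,(1+\kappa)^2]$ and differs from $1$ by at most $2\norm{D\tau}+\norm{D\tau}^2$. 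Combining the two terms, the first bracket is bounded by $C(\norm{\tau}_{L^\infty}+\norm{D\tau}_{L^\infty})\norm{q}_{L^2}$.

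The only step needing care is the Jacobian term. It explains why $\norm{D\tau}_{L^\infty}$ appears in the first bound but not in the second. For the linear segment argument, the interpolation between $g$ and $g\circ F_1^{-1}$ should use the path $y\mapsto y+s(F_1^{-1}(y)-y)$. One has to check that this path is bi-Lipschitz for $s\in[0,1]$ with constants depending only on $\kappa$, which holds because its derivative is $\Id+s(DF_1^{-1}-\Id)$ with $\norm{DF_1^{-1}-\Id}\le\kappa/(1-\kappa)$. This is fine when $\kappa<1/2$. For general $\kappa<1$, we avoid the issue by differentiating along $t\mapsto L_{t\tau}^\ast g$ instead, using Lemma~\ref{lem:composition-small-deformation} for each $F_t$. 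Finally, the array statement follows by applying the scalar bounds to each $q_p$ and summing the squares in $\ell^2(P)$.
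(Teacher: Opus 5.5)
Your proof is correct. The second estimate is proved exactly as in the paper: the fundamental theorem of calculus along $F_t=\Id-t\tau$, applied to the bandlimited function $q*\chi$.

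For the first estimate you take a genuinely different route. The paper treats the commutator $(L_\tau q)*\chi-L_\tau(q*\chi)$ as a single integral operator with kernel $K_\tau(x,z)=J_G(z)\chi(x-G(z))-\chi(F(x)-z)$, where $G=F_1^{-1}$. It bounds both Schur integrals by $\norm{\tau}_{L^\infty}\norm{\nabla\chi}_{L^1}+\norm{D\tau}_{L^\infty}\norm{\chi}_{L^1}$, using the fundamental theorem of calculus inside the kernel together with the Jacobian bounds. You instead split the commutator into $(L_\tau q-q)*\chi$ and $q*\chi-L_\tau(q*\chi)$, and move $L_\tau$ onto the bandlimited test function $u*\widetilde\chi$ by duality. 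Everything then reduces to one FTC estimate for bandlimited functions plus the explicit Jacobian correction $\abs{\det DF_1\circ F_1^{-1}}^{-1}-1=O(\norm{D\tau}_{L^\infty})$.

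Each route buys something different:
\begin{itemize}
\item Your route needs no kernel estimates. It also bounds $\calA L_\tau q-\calA q$ directly, which is exactly the quantity used in Proposition~\ref{prop:scattering-commutator-plugin'}; the paper instead reassembles it from the two estimates of the lemma.
\item The paper's kernel route works with the genuine commutator. As in Mallat's original argument, it can be sharpened to a bound in $\norm{D\tau}_{L^\infty}$ alone, by using $\abs{\tau(x)-\tau(y)}\le\norm{D\tau}_{L^\infty}\abs{x-y}$ plus a moment condition on $\nabla\chi$. Your splitting cannot capture this, since each of your two pieces is genuinely of size $\norm{\tau}_{L^\infty}$.
\end{itemize}

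One simplification: the restriction $\kappa<1/2$ you worry about for the interpolating path is not real. The map $y\mapsto(1-s)y+sF_1^{-1}(y)$ equals $F_{1-s}\circ F_1^{-1}$, which is bi-Lipschitz for every $\kappa<1$ by Lemma~\ref{lem:composition-small-deformation}. More directly still, $g\circ F_1^{-1}-g=-V_1^{-1}(L_\tau g-g)$, so your second estimate together with $\norm{V_1^{-1}}_{L^2\to L^2}\le1+\kappa$ gives the bound immediately. If you keep the fallback of differentiating in $t$, differentiate only the composition $g\circ F_t^{-1}$; the map $t\mapsto F_t^{-1}(y)$ is Lipschitz with constant $\norm{\tau}_{L^\infty}/(1-\kappa)$. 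Do not differentiate the Jacobian factor of $L_{t\tau}^\ast$: its $t$-derivative involves $D^2\tau$, which is unavailable for $\tau\in W^{1,\infty}$.
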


\begin{proof}
    We first note that the standing assumptions imply $\chi\in W^{1,1}(\R^2)$. Indeed, choose $\eta\in C_c^\infty(\R^2)$ with $\eta \equiv 1$ on $Q_0=B_4(0)$, and set $k=\calF^{-1}\eta$. Since $\supp(\wh{\chi})\subset B_4(0)$, we have $\chi=\chi*k$. Consequently, $\nabla\chi=\chi*\nabla k$ in the sense of distributions, and hence
    \begin{equation*}
        \norm{\nabla\chi}_{L^1}\leq\norm{\chi}_{L^1}\norm{\nabla k}_{L^1}<\infty.
    \end{equation*}
    In particular, we may work with the smooth representative $\chi*k$ of $\chi$. All constants below may depend on $\kappa$ and $\chi$, but not on $\tau$ or $q$.
    
    For the Lipschitz representative of $\tau$, set
    \[F_t\coloneqq\Id-t\tau,\qquad 0\leq t\leq1,\]
    and write $F=F_1$. Since $\norm{D\tau}_{L^\infty}\leq\kappa<1$, each $F_t$ is a bi-Lipschitz homeomorphism of $\R^2$ satisfying
    \begin{equation*}
        (1-t\kappa)\abs{x-y}\leq\abs{F_t(x)-F_t(y)}\leq(1+t\kappa)\abs{x-y}.
    \end{equation*}
    In particular,
    \[(1-t\kappa)^2\leq\abs{\det DF_t(x)}\leq(1+t\kappa)^2\]
    for almost every $x\in\R^2$.
    
    Let $G=F^{-1}$ and put
    \[J_G(z)\coloneqq\abs{\det DG(z)}.\]
    Changing variables in the first convolution gives
    \begin{equation*}
        \bigl((L_\tau q)*\chi-L_\tau(q*\chi)\bigr)(x)=\int_{\R^2}K_\tau(x,z)q(z)\dz,
    \end{equation*}
    where
    \[K_\tau(x,z)=J_G(z)\chi(x-G(z))-\chi(F(x)-z).\]
    We estimate the $L^2\to L^2$ operator norm of this convolution integral kernel by Schur's test. Since
    \[\det DF(y)=\det(\Id-D\tau(y)),\]
    the two-dimensional determinant formula gives
    \[\abs{\abs{\det DF(y)}-1}\lesssim\norm{D\tau}_{L^\infty}.\]
    Together with the lower Jacobian bound, this yields
    \[\abs{J_G(F(y))-1}\lesssim_\kappa\norm{D\tau}_{L^\infty}\]
    for almost every $y\in\R^2$.
    
    Fix $z\in\R^2$ and write $z=F(y)$. Then
    \begin{align*}
        \int_{\R^2}\abs{K_\tau(x,z)}\dx
        &\leq\abs{J_G(F(y))-1}\norm{\chi}_{L^1} \\
        &\quad+\int_{\R^2}\abs{\chi(x-y)-\chi(F(x)-F(y))}\dx.
    \end{align*}
    The fundamental theorem of calculus gives
    \begin{equation*}
        \chi(x-y)-\chi(F(x)-F(y))=\int_0^1\nabla\chi(F_t(x)-F_t(y))\cdot\bigl(\tau(x)-\tau(y)\bigr)\dt.
    \end{equation*}
    Since
    \[\abs{\tau(x)-\tau(y)}\leq2\norm{\tau}_{L^\infty},\]
    a change of variables and the Jacobian bounds yield
    \begin{align*}
        &\int_{\R^2}\abs{\chi(x-y)-\chi(F(x)-F(y))}\dx \\
        &\qquad\leq2\norm{\tau}_{L^\infty}\int_0^1\int_{\R^2}\abs{\nabla\chi(F_t(x)-F_t(y))}\dx\dt \\
        &\qquad\leq\frac{2\norm{\tau}_{L^\infty}}{(1-\kappa)^2}\norm{\nabla\chi}_{L^1}.
    \end{align*}
    It follows that
    \begin{equation*}
        \operatorname*{ess\,sup}_{z\in\R^2}\int_{\R^2}\abs{K_\tau(x,z)}\dx\lesssim_{\kappa,\chi}\norm{\tau}_{L^\infty}+\norm{D\tau}_{L^\infty}.
    \end{equation*}
    
    Next fix $x\in\R^2$ and change variables $z=F(y)$. Since
    \[J_G(F(y))\abs{\det DF(y)}=1\]
    for almost every $y\in\R^2$, we obtain
    \begin{align*}
        \int_{\R^2}\abs{K_\tau(x,z)}\dz
        &=\int_{\R^2}\abs{\chi(x-y)-\abs{\det DF(y)}\chi(F(x)-F(y))}\dy \\
        &\leq\int_{\R^2}\abs{\chi(x-y)-\chi(F(x)-F(y))}\dy \\
        &\quad+\int_{\R^2}\abs{1-\abs{\det DF(y)}}\abs{\chi(F(x)-F(y))}\dy.
    \end{align*}
    The first term is bounded by
    \begin{equation*}
        \frac{2\norm{\tau}_{L^\infty}}{(1-\kappa)^2}\norm{\nabla\chi}_{L^1}
    \end{equation*}
    by the same argument as above. For the second term, the Jacobian bounds and the change of variables $z=F(y)$ yield
    \begin{equation*}
        \int_{\R^2}\abs{1-\abs{\det DF(y)}}\abs{\chi(F(x)-F(y))}\dy\lesssim_\kappa\norm{D\tau}_{L^\infty}\norm{\chi}_{L^1}.
    \end{equation*}
    Thus,
    \begin{equation*}
        \operatorname*{ess\,sup}_{x\in\R^2}\int_{\R^2}\abs{K_\tau(x,z)}\dz\lesssim_{\kappa,\chi}\norm{\tau}_{L^\infty}+\norm{D\tau}_{L^\infty}.
    \end{equation*}
    Schur's test now yields
    \begin{equation*}
        \norm{(L_\tau q)*\chi-L_\tau(q*\chi)}_{L^2}\lesssim_{\kappa,\chi}\bigl(\norm{\tau}_{L^\infty}+\norm{D\tau}_{L^\infty}\bigr)\norm{q}_{L^2}.
    \end{equation*}
    
    For the second estimate, set $h\coloneqq q*\chi$. Since $\chi\in W^{1,1}(\R^2)$, Young's inequality gives
    \[\norm{\nabla h}_{L^2}\leq\norm{\nabla\chi}_{L^1}\norm{q}_{L^2}.\]
    Moreover, $h$ is bandlimited and hence admits a smooth representative. Thus,
    \begin{equation*}
        L_\tau h(x)-h(x)=h(F(x))-h(x)=-\int_0^1\tau(x)\cdot\nabla h(F_t(x))\dt.
    \end{equation*}
    By Minkowski's inequality and the Jacobian bounds,
    \begin{align*}
        \norm{L_\tau h-h}_{L^2}
        &\leq\norm{\tau}_{L^\infty}\int_0^1\norm{\nabla h\circ F_t}_{L^2}\dt \\
        &\leq\frac{\norm{\tau}_{L^\infty}}{1-\kappa}\norm{\nabla h}_{L^2} \\
        &\leq\frac{\norm{\nabla\chi}_{L^1}}{1-\kappa}\norm{\tau}_{L^\infty}\norm{q}_{L^2}.
    \end{align*}
    This proves the second estimate.
    
    Finally, the constants in both estimates are independent of the input function. Applying the estimates to each component $q_p$, squaring, and summing over $p\in P$ proves the corresponding bounds in $\ell^2(P;L^2)$.
\end{proof}

We are finally ready to prove the desired plugin commutator estimate. 

\begin{proof}[Proof of Proposition~\ref{prop:scattering-commutator-plugin}]
    The high-pass contribution is precisely the array estimate in Proposition~\ref{prop:appendix-high-pass-deformation-commutator}:
    \begin{equation*}
        \|\calU L_\tau q-L_\tau\calU q\|_{\ell^2(P\times I_\ast;L^2)} \lesssim \|D\tau\|_{L^\infty}\|q\|_{\ell^2(P;H^{1-\beta})}.
    \end{equation*}
    It remains to control the low-pass output operator $\calA $. For each path component $q_p$, we split
    \[(\calA L_\tau q)_p-(\calA q)_p=(L_\tau q_p)*\chi-q_p*\chi\]
    as
    \begin{equation*}
        (L_\tau q_p)*\chi-q_p*\chi=\bigl((L_\tau q_p)*\chi-L_\tau(q_p*\chi)\bigr)+\bigl(L_\tau(q_p*\chi)-q_p*\chi\bigr).
    \end{equation*}
    Applying Lemma~\ref{lem:appendix-low-pass-deformation-estimate} to each component and summing in $\ell^2(P;L^2)$ gives
    \begin{equation*}
        \|\calA L_\tau q-\calA q\|_{\lp} \leq C(\|\tau\|_{L^\infty}+\norm{D\tau}_{L^\infty})\|q\|_{\lp},
    \end{equation*}
    where the constant also depends on the fixed low-pass filter through $\|\chi\|_{L^1}$ and $\|\nabla\chi\|_{L^1}$.
    Combining the high-pass and low-pass bounds gives the desired result. 
\end{proof}

\addtocontents{toc}{\protect\setcounter{tocdepth}{0}}
\section*{Acknowledgments}

This work was supported by the Deutsche Forschungsgemeinschaft (DFG, German Research Foundation)---project number 442047500/SFB 1481 Sparsity and Singular Structures. M. G. is grateful to S. I. T. and the Department of Mathematics ``G. L. Lagrange'' (DISMA) for their hospitality during a research visit where much of this work originated. The authors thank Hartmut F\"uhr and Sebastian Walcher for insightful comments on an earlier version of the manuscript. 

The authors used generative AI tools to assist with problem exploration and manuscript drafting. All outputs were reviewed and verified by the authors; in particular, the mathematical arguments and proofs reflect the authors' own development and validation, for which they take full responsibility.

\addtocontents{toc}{\protect\setcounter{tocdepth}{1}}

\bibliographystyle{plain}
\bibliography{refs}

\end{document}